\newif\ifmicrotype
  \def\tempaa{\thesection.\arabic{equation}}
     \def\theequation{\thesection\hbox{--}\arabic{equation}}
\DeclareFontFamily{OT1}{ptm}{}
\DeclareFontShape{OT1}{ptm}{m}{n} { <-> ptmr}{}
\DeclareFontShape{OT1}{ptm}{m}{it}{ <-> ptmri}{}
\DeclareFontShape{OT1}{ptm}{m}{sl}{ <->ptmro}{}
\DeclareFontShape{OT1}{ptm}{m}{sc}{ <-> ptmrc}{}
\DeclareFontShape{OT1}{ptm}{b}{n} { <-> ptmb}{}
\DeclareFontShape{OT1}{ptm}{b}{it}{ <-> ptmbi}{}     
\DeclareFontShape{OT1}{ptm}{bx}{n} {<->ssub * ptm/b/n}{}
\DeclareFontShape{OT1}{ptm}{bx}{it}{<->ssub * ptm/b/it}{}
\DeclareSymbolFont{bold}{OT1}{ptm}{b}{n}
\DeclareMathAlphabet{\mathbf}{OT1}{ptm}{b}{n}  
\DeclareMathAlphabet{\mathrm}{OT1}{ptm}{m}{n}
\let\Bbb=\mathbb  
\DeclareFontFamily{OT1}{psy}{}      
\DeclareFontShape{OT1}{psy}{m}{n}{ <-> s * [0.9] psyr}{}
\DeclareFontFamily{OMS}{ptm}{}     
\DeclareFontShape{OMS}{ptm}{m}{n}{ <8> <9> <10> gen * cmsy }{}
\DeclareFontFamily{OMS}{cmtt}{}     
\DeclareFontShape{OMS}{cmtt}{m}{n}{ <8> <9> <10> gen * cmsy }{}
\DeclareSymbolFont{emsy}{OT1}{ptm}{m}{it}
\DeclareSymbolFont{emsr}{OT1}{ptm}{m}{n}
\DeclareSymbolFont{emcmr}{OT1}{cmr}{m}{n}   
\DeclareSymbolFont{emsymb}{OT1}{psy}{m}{n}  
\DeclareMathSymbol a{\mathalpha}{emsy}{"61}
\DeclareMathSymbol b{\mathalpha}{emsy}{"62}
\DeclareMathSymbol c{\mathalpha}{emsy}{"63}
\DeclareMathSymbol d{\mathalpha}{emsy}{"64}
\DeclareMathSymbol e{\mathalpha}{emsy}{"65}
\DeclareMathSymbol f{\mathalpha}{emsy}{"66}
\DeclareMathSymbol g{\mathalpha}{emsy}{"67}
\DeclareMathSymbol h{\mathalpha}{emsy}{"68}
\DeclareMathSymbol i{\mathalpha}{emsy}{"69}
\DeclareMathSymbol j{\mathalpha}{emsy}{"6A}
\DeclareMathSymbol k{\mathalpha}{emsy}{"6B}
\DeclareMathSymbol l{\mathalpha}{emsy}{"6C}
\DeclareMathSymbol m{\mathalpha}{emsy}{"6D}
\DeclareMathSymbol n{\mathalpha}{emsy}{"6E}
\DeclareMathSymbol o{\mathalpha}{emsy}{"6F}
\DeclareMathSymbol p{\mathalpha}{emsy}{"70}
\DeclareMathSymbol q{\mathalpha}{emsy}{"71}
\DeclareMathSymbol r{\mathalpha}{emsy}{"72}
\DeclareMathSymbol s{\mathalpha}{emsy}{"73}
\DeclareMathSymbol t{\mathalpha}{emsy}{"74}
\DeclareMathSymbol u{\mathalpha}{emsy}{"75}
\DeclareMathSymbol v{\mathalpha}{emsy}{"76}
\DeclareMathSymbol w{\mathalpha}{emsy}{"77}
\DeclareMathSymbol x{\mathalpha}{emsy}{"78}
\DeclareMathSymbol y{\mathalpha}{emsy}{"79}
\DeclareMathSymbol z{\mathalpha}{emsy}{"7A}
\DeclareMathSymbol A{\mathalpha}{emsy}{"41}
\DeclareMathSymbol B{\mathalpha}{emsy}{"42}
\DeclareMathSymbol C{\mathalpha}{emsy}{"43}
\DeclareMathSymbol D{\mathalpha}{emsy}{"44}
\DeclareMathSymbol E{\mathalpha}{emsy}{"45}
\DeclareMathSymbol F{\mathalpha}{emsy}{"46}
\DeclareMathSymbol G{\mathalpha}{emsy}{"47}
\DeclareMathSymbol H{\mathalpha}{emsy}{"48}
\DeclareMathSymbol I{\mathalpha}{emsy}{"49}
\DeclareMathSymbol J{\mathalpha}{emsy}{"4A}
\DeclareMathSymbol K{\mathalpha}{emsy}{"4B}
\DeclareMathSymbol L{\mathalpha}{emsy}{"4C}
\DeclareMathSymbol M{\mathalpha}{emsy}{"4D}
\DeclareMathSymbol N{\mathalpha}{emsy}{"4E}
\DeclareMathSymbol O{\mathalpha}{emsy}{"4F}
\DeclareMathSymbol P{\mathalpha}{emsy}{"50}
\DeclareMathSymbol Q{\mathalpha}{emsy}{"51}
\DeclareMathSymbol R{\mathalpha}{emsy}{"52}
\DeclareMathSymbol S{\mathalpha}{emsy}{"53}
\DeclareMathSymbol T{\mathalpha}{emsy}{"54}
\DeclareMathSymbol U{\mathalpha}{emsy}{"55}
\DeclareMathSymbol V{\mathalpha}{emsy}{"56}
\DeclareMathSymbol W{\mathalpha}{emsy}{"57}
\DeclareMathSymbol X{\mathalpha}{emsy}{"58}
\DeclareMathSymbol Y{\mathalpha}{emsy}{"59}
\DeclareMathSymbol Z{\mathalpha}{emsy}{"5A}
\DeclareMathSymbol{\bullet}{\mathalpha}{emsymb}{"B7}
\DeclareMathSymbol{\regis}{\mathalpha}{emsymb}{"D2}
\def\Bullet{\leavevmode\unkern{$\m@th\bullet$}\kern.32em\ignorespaces}
\def\Regis{\leavevmode\raise.5ex\hbox{$\m@th\regis$}}
\DeclareMathSymbol +{\mathbin}{emcmr}{`+}
\DeclareMathSymbol ={\mathrel}{emcmr}{`=}  
\DeclareMathSymbol{\Gamma}{\mathalpha}{emcmr}{"00}
\DeclareMathSymbol{\Delta}{\mathalpha}{emcmr}{"01}
\DeclareMathSymbol{\Theta}{\mathalpha}{emcmr}{"02}
\DeclareMathSymbol{\Lambda}{\mathalpha}{emcmr}{"03}
\DeclareMathSymbol{\Xi}{\mathalpha}{emcmr}{"04}
\DeclareMathSymbol{\Pi}{\mathalpha}{emcmr}{"05}
\DeclareMathSymbol{\Sigma}{\mathalpha}{emcmr}{"06}
\DeclareMathSymbol{\Upsilon}{\mathalpha}{emcmr}{"07}
\DeclareMathSymbol{\Phi}{\mathalpha}{emcmr}{"08}
\DeclareMathSymbol{\Psi}{\mathalpha}{emcmr}{"09}
\DeclareMathSymbol{\Omega}{\mathalpha}{emcmr}{"0A}
\def\`#1{{\accent"12 #1}}            
\chardef\J="11                  
\chardef\AA="C8                      
\chardef\gbp="A3                    
\chardef\TIL="81                     
\chardef\endash="B1                
\chardef\emdash="D0                
\chardef\pourmille="BD              
\chardef\aoben="E3                   
\chardef\ooben="EB                  
\def\S{\leavevmode\unkern{\char"A7}\kern.1em\ignorespaces} 
\DeclareMathAccent{\dot}{\mathalpha}{operators}{"C7} 
\def\og{{\char"AB}\kern.15em}
\def\fg{\relax\ifhmode\unskip\kern.15em\fi{\char"BB}}
\def\cedpol#1{\setbox0=\hbox{#1}\ifdim\ht0=1ex \accent"CE #1%
  \else{\ooalign{\hidewidth\char"CE\hidewidth\crcr\unhbox0}}\fi}
\newskip\stdskip                      %
\newcommand{\stdspace}{\hskip 0.75em plus 0.15em \ignorespaces}
\newtheoremstyle{plain}{13.2pt plus6.6pt minus6.6pt}{6.6pt plus3.3pt minus3.3pt}%
{\sl}{}{\bf}{}{0.75em}{\thmname{#1}\thmnumber{ #2}\thmnote{\rm\stdspace(#3)}}
\newtheoremstyle{definition}{13.2pt plus6.6pt minus6.6pt}{6.6pt plus3.3pt minus3.3pt}%
{\rm}{}{\bf}{}{0.75em}{\thmname{#1}\thmnumber{ #2}\thmnote{\rm\stdspace(#3)}}
\newtheoremstyle{remark}{13.2pt plus6.6pt minus6.6pt}{6.6pt plus3.3pt minus3.3pt}%
{\rm}{}{\bf}{}{0.75em}{\thmname{#1}\thmnumber{ #2}\thmnote{\rm\stdspace(#3)}}
\theoremstyle{plain}
\newcommand{\tpitchfork}{%
  \vbox{
    \baselineskip\z@skip
    \lineskip-.52ex
    \lineskiplimit\maxdimen
    \m@th
    \ialign{##\crcr\hidewidth\smash{$-$}\hidewidth\crcr$\pitchfork$\crcr}
  }%
}
\newtheorem*{namedthm}{\namedthmname}
\newcounter{namedthm}
\renewcommand{\Bbb}{\mathbb}
\newtheorem{theorem}{Theorem}[subsection]
\newtheorem{lemma}[theorem]{Lemma}
\newtheorem{definition}[theorem]{Definition}
\newtheorem{remark}[theorem]{Remark}
\newtheorem{proposition}[theorem]{Proposition}
\newtheorem{subsubtheorem}{Theorem}[subsubsection]
\newtheorem{subsublemma}[subsubtheorem]{Lemma}
\newtheorem{subsubdefinition}[subsubtheorem]{Definition}
\newtheorem{subsubremark}[subsubtheorem]{Remark}
\newtheorem{subsubcorollary}[subsubtheorem]{Corollary}
\newtheorem{subsubproposition}[subsubtheorem]{Proposition}
\newtheorem*{mthe}{Theorem}
\newtheorem*{def*}{Definition}
\newtheorem*{rem*}{Remark}
\newtheorem{sectheorem}{Theorem}[section]
\newtheorem{secproposition}[sectheorem]{Proposition}
\newtheorem{claim}{Claim}[theorem]
\newcommand*{\sectionbookmark}[1][]{%
  \bookmark[%
    level=section,%
    dest=\@currentHref,%
    #1%
  ]%
}
\DeclareFontFamily{U}{mathb}{\hyphenchar\font45}
\DeclareFontShape{U}{mathb}{m}{n}{%
  <-6> mathb5
  <6-7> mathb6
  <7-8> mathb7
  <8-9> mathb8
  <9-10> mathb9
  <10-12> mathb10
  <12-> mathb12
  } {}%
\DeclareSymbolFont{mathb}{U}{mathb}{m}{n}
\DeclareMathSymbol{\abxpitchfork}{\mathord}{mathb}{"26}
\author{Sumanta Das}
\date{}
\title{Strong Topological Rigidity of Non-Compact Orientable Surfaces}
\newcommand*{\theorembookmark}{%
  \bookmark[
    dest=\@currentHref,
    rellevel=1,
    keeplevel,
  ]{%
    \thmt@thmname\space\csname the\thmt@envname\endcsname
    \ifx\@currentlabelname\@empty
    \else
      \space(\@currentlabelname)%
    \fi
  }%
}
\declaretheorem[title=Theorem, postheadhook=\theorembookmark]{theo}
\begin{document}

\maketitle

\begin{abstract}\mbox{}\\We show that every orientable infinite-type surface is properly rigid as a consequence of a more general result. Namely, we prove that if a homotopy equivalence between any two non-compact orientable surfaces is a proper map, then it is properly homotopic to a homeomorphism, provided surfaces are neither the plane nor the punctured plane. Thus all non-compact orientable surfaces, except the plane and the punctured plane, are \emph{topologically rigid in a strong sense}. \end{abstract}

\section{Introduction}All manifolds will be assumed to be second countable and Hausdorff. A surface is a two-dimensional manifold with an empty boundary. Throughout this note, all surfaces will be considered connected and orientable. We say a surface is of \emph{infinite-type} if its fundamental group is not finitely generated; otherwise, we say it is of \emph{finite-type}.

A fundamental question in topology is that if two closed $n$-manifolds are homotopy equivalent, are they homeomorphic? This has a positive answer in dimension two, as two closed surfaces having isomorphic fundamental groups are homeomorphic. But the same doesn't happen in other dimensions; for example, there are homotopy equivalent lens spaces (a particular type of spherical $3$-manifolds) that are not homeomorphic. A closed topological $n$-manifold $M$ is said to be \emph{topologically rigid} if any homotopy equivalence $N\to M$ with a closed topological $n$-manifold $N$ as the source is homotopic to a homeomorphism. The \emph{Borel conjecture} \cite[Conjecture (A. Borel)]{MR842428} asserts that every closed aspherical (i.e., $\pi_k=0$ if $k\neq 1$) manifold is topologically rigid. In dimension two, every closed surface is topologically rigid; it is known as the \emph{Dehn-Nielsen-Baer theorem} \cite[Appendix]{MR881797}. The Borel conjecture is known to be true in other dimensions under some additional hypotheses; for example, see \cite[Theorem 6.1.]{MR224099} and \cite[Theorem 0.1. i)]{MR1973051} for dimension three, and for high dimensions, consider \cite[Proof of Theorem 3.2]{MR1216623}.

Though non-compact manifolds are not rigid in the above sense, for example, in \cite[Theorem 2]{MR137105}, the author has constructed (generalizing a construction given by  J. H. C. Whitehead) uncountably many contractible open subsets of $\Bbb R^3$ such that any two of them are not homeomorphic. Similarly, for non-compact surfaces, we have several examples. In the case of finite-type surfaces, for example, we may consider the once-punctured torus and thrice-punctured sphere, which are homotopy equivalent but non-homeomorphic, as any homomorphism preserves the cardinality of the puncture set as well as the genus.  On the other hand, up to homotopy equivalence, there is precisely one infinite-type surface, but up to homeomorphism, there are $2^{\aleph_0}$ many infinite-type surfaces (see \Cref{upto}).   This note considers only non-compact surfaces and discusses their topological rigidity in the \emph{proper category}; here, proper category means the category of spaces with proper maps (recall that a map from a space $X$ to a space $Y$ is called a \emph{proper map} if the inverse image of each compact subset of $Y$ is a compact subset of $X$). At first, we define the analogs of homotopy, homotopy equivalence, etc., in the proper category.

If a homotopy $\mathcal H\colon X\times [0,1]\to Y$ is a proper map, then we call $\mathcal H$ a \emph{proper homotopy}. Two proper maps from $X$ to $Y$ are said to be \emph{properly homotopic} if there is a proper homotopy between them. We say that a proper map $f\colon X\to Y$ is a \emph{proper homotopy equivalence} if  there exists a proper map $g\colon Y\to X$ such that both $g\circ f$ and $f\circ g$ are properly homotopic to the identity maps (when such a $g$ exists, we say that $g$ is a \emph{proper homotopy inverse} of $f$.). Two spaces $X$ and $Y$ are said to have the same \emph{proper homotopy type} if there is a proper homotopy equivalence between them. It is worth noting that homotopy through proper maps is a weaker notion than proper homotopy. For example, consider $H\colon \Bbb C\times [0,1]\to \Bbb C$ given by $H(z,t)\coloneqq tz^2-z$. Being a polynomial, each $H(-,t)$ is proper. But, $H$ itself is not proper as $H\left(n,\frac{1}{n}\right)=0$ for all integers $n\geq 1$.

The analog of topological rigidity in the proper category is defined as follows: A non-compact topological manifold $M$ without boundary is said to be \emph{properly rigid} if, whenever $N$ is another boundaryless topological manifold of the same dimension and $h\colon N\to M$ is a proper homotopy equivalence, then $h$ is properly homotopic to a homeomorphism. The analog of the Borel conjecture in the proper category, often called \emph{proper Borel conjecture} \cite[Conjecure 3.1.]{MR4207574}, asserts that every non-compact aspherical topological manifold without boundary is properly rigid.

It is known that non-compact finite-type surfaces are properly rigid. Further, using the algebraic tools of classification of non-compact surfaces \cite[Theorem 4.1.]{MR275436}, Goldman showed that two non-compact surfaces of the same proper homotopy type are homeomorphic; see \cite[Corollary 11.1]{MR2616858}. We show that infinite-type surfaces are also properly rigid. In fact, we show the rigidity of all non-compact surfaces, except for the plane and the punctured plane, under a weaker assumption, namely only assuming the existence of homotopy inverse, which a priori may or may not be proper. For brevity, define a weaker version of proper homotopy equivalence: 

\begin{def*}\textup{A homotopy equivalence is said to be \emph{pseudo proper homotopy equivalence} if it is proper.}
\end{def*}
 Indeed, a proper homotopy equivalence is a pseudo proper homotopy equivalence, though not conversely: a pseudo proper homotopy equivalence has an ``ordinary'' homotopy inverse but may not have a proper homotopy inverse, for example, consider below given $\varphi$ and $\psi$. Our main theorem is the following:

\begin{mthe}
\emph{Let $f\colon \Sigma'\to \Sigma$ be a pseudo proper homotopy equivalence between two non-compact surfaces. Then $\Sigma'$ is homeomorphic to $\Sigma$. If we further assume that $\Sigma$ is homeomorphic to neither the plane nor the punctured plane, then $f$ is a proper homotopy equivalence, and there exists a homeomorphism $g_\text{homeo}\colon \Sigma\to \Sigma'$ as a proper homotopy inverse of $f$.}\label{mthe}
\end{mthe}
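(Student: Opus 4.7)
The plan is to prove the theorem in two stages matching its two assertions. The central tool for the homeomorphism claim is Richards' classification of non-compact orientable surfaces, which determines such a surface up to homeomorphism by the triple consisting of its genus, its space of ends, and the subspace of non-planar ends. So the first task is to show that $f$ matches these three invariants between $\Sigma'$ and $\Sigma$, and the second is to upgrade $f$ to a proper homotopy equivalence with a homeomorphism as proper homotopy inverse.

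For the first assertion, since $f$ is a homotopy equivalence we have $\pi_1(\Sigma') \cong \pi_1(\Sigma)$, so $\Sigma'$ is of finite type iff $\Sigma$ is. Since $f$ is proper, pulling back a compact exhaustion of $\Sigma$ produces a compact exhaustion of $\Sigma'$, and passing to inverse limits of $\pi_0$ of the complements yields a continuous map $\hat f \colon \operatorname{Ends}(\Sigma') \to \operatorname{Ends}(\Sigma)$ of compact totally disconnected spaces. The key technical claim is that $\hat f$ is a homeomorphism and restricts to a bijection between the non-planar ends. For injectivity, I would separate two distinct ends $e_1 \neq e_2$ of $\Sigma'$ by disjoint clopen end-neighbourhoods and detect them by peripheral elements in $\pi_1(\Sigma')$: if they collapsed to a common end of $\Sigma$, those peripheral elements would become conjugate in $\pi_1(\Sigma)$, contradicting the $f_*$-isomorphism. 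For surjectivity, given an end $e$ of $\Sigma$ and a nested sequence of its end-neighbourhoods, I would use the ordinary (possibly non-proper) homotopy inverse $g$ to argue that the image of $f$ is cofinal in each such neighbourhood, and then use properness of $f$ to extract an actual preimage end in $\Sigma'$. The non-planar/planar dichotomy of an end is encoded by the rank of the first homology of large complementary subsurfaces and transports across $\hat f$ because $f$ is simultaneously proper and a homotopy equivalence. Equality of genera is then forced: in the finite-type case by matching $\pi_1 \cong F_{2g+n-1}$ with the already-matched end count $n$; in the infinite-type case by comparing the genus of compact subsurfaces along matched exhaustions. Richards' theorem then delivers $\Sigma' \cong \Sigma$.

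For the second assertion, fix a homeomorphism $h \colon \Sigma \to \Sigma'$ supplied by the first assertion. Then $F \coloneqq f \circ h$ is a pseudo proper homotopy self-equivalence of $\Sigma$, and it suffices to show any such $F$ is properly homotopic to a self-homeomorphism $\psi$ of $\Sigma$. The plan is to use the end-space self-homeomorphism $\hat F$ together with the realisation direction of Richards' theorem to produce $\psi$ inducing the same map on ends and agreeing with $F$ on $\pi_1$, and then to homotope $F$ to $\psi$ through a proper homotopy built inductively along an exhaustion of $\Sigma$ by compact subsurfaces with boundary, invoking Dehn--Nielsen--Baer rigidity on each compact piece and carefully gluing across collar neighbourhoods. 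The excluded cases $\Bbb R^2$ and $\Bbb R^2 \setminus \{0\}$ are precisely where this step fails: the maps $z \mapsto z^n$ with $|n| \neq 1$ are pseudo proper homotopy self-equivalences of those surfaces that are not properly homotopic to any homeomorphism, because proper homotopy classes of self-maps there are distinguished by degree at infinity.

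The main obstacle is Stage 1, specifically the proof that $\hat f$ is bijective and preserves the non-planar stratum of ends without access to a proper homotopy inverse of $f$. Everything else is driven by Richards' classification (for Stage 1's conclusion and for the realisation step of Stage 2) together with a subsurface-exhaustion argument plus Dehn--Nielsen--Baer on compact pieces (for the construction of the proper homotopy in Stage 2).
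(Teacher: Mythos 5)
Your Stage 1 hinges on the ``key technical claim'' that the induced end map $\hat f$ of a pseudo proper homotopy equivalence is a homeomorphism carrying non-planar ends to non-planar ends, and this is precisely where the proposal breaks down. As stated the claim is false: the paper's own example $\psi\colon \Bbb S^1\times\Bbb R\to\Bbb S^1\times\Bbb R$, $(z,x)\mapsto(z,|x|)$, is a pseudo proper homotopy equivalence whose end map is neither injective nor surjective (its image misses an entire end), so any argument that purports to prove the claim from ``proper $+$ ordinary homotopy inverse'' alone must fail. For targets other than the plane and punctured plane the claim is true, but it is essentially a consequence of the theorem, not an available input, and your proposed proofs do not supply it. The injectivity argument via peripheral elements cannot work: most ends of an infinite-type surface carry no peripheral $\pi_1$-classes at all (up to homotopy equivalence there is exactly one infinite-type surface, \Cref{upto}, so $\pi_1$ by itself sees nothing of the end structure), and even for punctures an abstract isomorphism of free groups need not respect peripheral conjugacy classes (once-punctured torus versus thrice-punctured sphere), so no contradiction with ``$\pi_1(f)$ is an isomorphism'' arises. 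The surjectivity argument leans on the ordinary homotopy inverse being ``cofinal'' near each end, which is exactly the reasoning the counterexamples defeat ($\psi$ above, and the composite $M\to[0,\infty)\to M$, both have honest homotopy inverses yet miss ends). In the paper the end behaviour is an output, not an input: one decomposes $\Sigma$ into pants and punctured disks, makes $f$ transverse, performs disk and annulus removal \emph{simultaneously} on infinitely many circles (the locally finite pasting of \Cref{properhomotopy} is what keeps these homotopies proper, and $\Sigma'\not\cong\Bbb R^2$ is what rules out arbitrarily large nested preimage disks), and then proves $\deg f=\pm 1$ via an essential pair of pants or essential punctured disk together with the rigidity of compact bordered pieces; surjectivity, the matching of ends, and $\Sigma'\cong\Sigma$ all follow from that.

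Stage 2 has the same difficulty one level up. Producing a homeomorphism $\psi$ with the same end map and the same $\pi_1$-action as $F$, and then ``homotoping $F$ to $\psi$ through a proper homotopy built inductively along an exhaustion, invoking Dehn--Nielsen--Baer on compact pieces and carefully gluing,'' conceals the two genuinely hard points. First, to run the compact-piece rigidity you must already know that, after proper homotopies, the preimage of every decomposition circle is nonempty and is a single circle mapped homeomorphically; nonemptiness requires the degree-$\pm1$/surjectivity result, and reducing to a single circle requires the infinite simultaneous disk and annulus removal (done one circle at a time along an exhaustion, the homotopies need not assemble into a proper one). Second, agreeing on $\text{Ends}$ and on $\pi_1$ does not by itself yield a proper homotopy between $F$ and $\psi$: that is Brown's proper Whitehead theorem, which needs control of proper homotopy groups, and the paper's statement that equal $\pi_1$-maps force proper homotopy (\Cref{pseudoproperhomotopyequivalencesinducingsamemapsonthefundamentalgroupareproperlhomotopic}) is itself deduced from \Cref{MR1}. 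So as written the proposal assumes the substance of what is to be proved; the missing ingredient is the degree/surjectivity argument and the locally finite proper-homotopy constructions that the paper builds to obtain it.
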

The reason for the exclusion of the plane and the punctured plane from the hypothesis is almost immediate; for example, consider $\varphi\colon\Bbb C\ni z\longmapsto z^2\in \Bbb C$ and $\psi\colon \Bbb S^1\times \Bbb R\ni (z,x)\longmapsto \big(z,|x|\big)\in \Bbb S^1\times \Bbb R$; each of these proper maps is a homotopy equivalence, but none of them is a proper homotopy equivalence as the degree of a proper homotopy equivalence is $\pm 1$ (see \Cref{degreeofapropermap}), though  $\deg(\varphi)=\pm 2$ (as $\varphi$ is a two-fold branched covering) and $\deg(\psi)=0$ (as $\psi$ is not surjective; see \cref{non-surjectivepropermaphasdegreezero}).

 In general, additional assumptions must be imposed on a pseudo proper homotopy equivalence to become a proper homotopy equivalence.
For example, using the binary symmetry of the Cantor tree $\mathcal T_\text{Cantor}$, we have a two-fold branched covering $f_\text{Cantor}\colon\mathcal T_\text{Cantor}\to\mathcal T_\text{Cantor}$, which is undoubtedly a pseudo proper homotopy equivalence (trees are contractible) but not a proper homotopy equivalence \big(the induced map on $\text{Ends}(\mathcal T_\text{Cantor})$ by $f_\text{Cantor}$ is non-injective; see part (1) and (3) of \Cref{endssandproperhomotopy}\big). Here is another example. Let $M$ be a  connected, non-compact, contractible, boundaryless manifold of dimension $n\geq 2$; and let $f\colon M\to M$ be the composition of the following two proper maps: a proper map $M\to [0,\infty)$ (using partition of unity) and a non-surjective proper map $[0,\infty)\to M$ corresponding to an end of $M$ (using compact exhaustion by connected codimension $0$-submanifolds; see \cite[Exercise 3.3.18]{MR3598162}). Then $f$ is a pseudo proper homotopy equivalence ($M$ is contractible) but not a proper homotopy equivalence (a proper homotopy equivalence is a surjective map as its degree is $\pm 1$; see \cref{non-surjectivepropermaphasdegreezero}).

Brown showed that a pseudo proper homotopy equivalence between two connected, finite-dimensional, locally finite simplicial complexes is a proper homotopy equivalence if and only if it induces a homeomorphism on the spaces of ends and isomorphisms on all proper homotopy groups; see \cite[Whitehead theorem]{MR0356041}.  In \cite[Corollary 4.10.]{MR334226}, the authors have shown that if $f\colon M\to N$ is a pseudo proper homotopy equivalence between two  simply-connected, non-compact, boundaryless $n$-dimensional smooth manifolds, where both $M$ and $N$ both are simply-connected at infinity, then $f$ is a proper homotopy equivalence if and only if $\deg(f)=\pm 1$. Another interesting statement in this context is that a proper map $f\colon X\to Y$ between two locally finite, infinite, connected, $1$-dimensional CW-complexes is a proper homotopy equivalence if $\text{Ends}(f)$ is a homeomorphism and $f$ is an extension of a proper homotopy equivalence $X_g\to Y_g$ \big(where $X_g$ (resp. $Y_g$) denotes the smallest connected sub-complex of $X$ (resp. $Y$) that contains
all immersed loops of $X$ (resp. $Y$)\big); see  \cite[Corollary 3.7.]{arxiv}.

We conclude this section by citing a few more related results of two different flavors: when does a proper homotopy equivalence exist, and if it does exist, whether it determines the space up to homeomorphism. Similar to Kerékjártó’s classification theorem (see \Cref{richard2}), there exists a classification of graphs up to proper homotopy type: Two
locally finite, infinite, connected,
$1$-dimensional CW-complexes $X$ and $Y$ have the same proper homotopy type if and only if $\text{rank}\big(\pi_1(X)\big)=\text{rank}\big(\pi_1(Y)\big)$ and there exists a homeomorphism $\varphi\colon\text{Ends}(X)\to \text{Ends}(Y)$ with $\varphi\big(\text{Ends}(X_g)\big)=\text{Ends}(Y_g)$; see \cite[Theorem 2.7.]{MR1082009}.

As stated earlier, any two non-compact surfaces of the same proper homotopy type are homeomorphic. Sometimes this happens also in other dimensions; for instance, a boundaryless topological manifold of dimension $n\geq 3$ with the same proper homotopy type of $\Bbb R^n$ is homeomorphic to $\Bbb R^n$; see \cite[Theorem 1]{MR150745} for $n=3$, \cite[Corollary 1.2.]{MR679066} for $n=4$, and \cite[Corollary 1.4.]{MR238325} for $n\geq 5$.  In contrast, there are exotic pairs, for example,  two non-compact, connected, boundaryless manifolds $N$ and $M$ of the dimension $n\geq 5$ exist, where $N$ is smoothable, and $M$ is a nonuniform arithmetic manifold, such that $M$ and $N$ has the same proper homotopy type, but $M$ is not homeomorphic $N$; see \cite[Theorem 2.6.]{MR3403758}  with \cite[Pages 137 and 138]{MR4207574}.

\subsection{Main results}The analog of \cite[First proof of Theorem 8.9.]{MR2850125} in the proper category is \Cref{MC1.1.}, which follows almost directly from our main result \Cref{MR1}. Indeed, \Cref{MR1} is more general.

 \begin{theo}[Strong topological rigidity] 
\textup{Let $f\colon \Sigma'\to \Sigma$ be a pseudo proper homotopy equivalence between two non-compact surfaces. Suppose $\Sigma$ is homeomorphic to neither $\Bbb R^2$ nor $\Bbb S^1\times \Bbb R$. Then $\Sigma'$ is homeomorphic to $\Sigma$, and $f$ is properly homotopic to a homeomorphism.} \label{MR1}
\end{theo}

\begin{theo}[Proper rigidity]
 \textup{If $f\colon \Sigma'\to \Sigma$ is a proper homotopy equivalence between two non-compact surfaces, then $\Sigma'$ is homeomorphic $\Sigma$ and $f$ is properly homotopic to a homeomorphism.} \label{MC1.1.}
\end{theo}

A theorem of Edmonds ~\cite[Theorem 3.1.]{MR541331} says that any $\pi_1$-injective map of degree one between two closed surfaces is homotopic to a homeomorphism. The analog fact for non-compact surfaces is \Cref{MR2}, which classifies all $\pi_1$-injective degree one maps between two non-compact surfaces and also follows almost directly from \Cref{MR1}. 
\begin{theo}[Classification of \texorpdfstring{$\pi_1$}{π₁}-injective degree one maps] 
 \textup{Let $\Sigma,\Sigma'$ be any two non-compact oriented surfaces. Suppose there exists a $\pi_1$-injective proper map $f\colon \Sigma'\to \Sigma$ of degree $\pm 1$. Then $\Sigma$ is homeomorphic to $\Sigma'$, and $f$ is properly homotopic to a homeomorphism.} \label{MR2}
\end{theo}
 Proofs of \Cref{MR1}, \Cref{MC1.1.}, and \Cref{MR2} can be found in \Cref{allproofs}.
\subsection{Outline of the proof of \texorpdfstring{\Cref{MR1}}{Theorem \ref{MR1}}}

Let $f\colon \Sigma'\to \Sigma$ be a pseudo proper homotopy equivalence between two non-compact oriented surfaces. Suppose $\Sigma$ is homeomorphic to neither $\Bbb R^2$ nor $\Bbb S^1\times \Bbb R$.
\begin{subsubsection}{\textup{\textbf{Decomposition and transversality}}}
Let $\mathscr C$ be a locally finite pairwise disjoint collection  of smoothly embedded circles on $\Sigma$ such that $\mathscr C$ decomposes $\Sigma$ into bordered sub-surfaces, and a complementary component of this decomposition is homeomorphic to either the one-holed torus or the pair of pants or the punctured disk (see \Cref{completedecomposition}).

Properly homotope $f$ to make it smooth as well as transverse to $\mathscr C$. Thus, $f^{-1}(\mathcal C)$ is either \emph{empty} or a pairwise disjoint finite collection of smoothly embedded circles on $\Sigma'$ for each component $\mathcal C$ of $\mathscr C$. See \Cref{remarktranshomotopy}.
\end{subsubsection}

\begin{subsubsection}{\textup{\textbf{Removing unnecessary circles}}}

 Now, following the three steps given below, we properly homotope $f$ further so that for each component of $\mathcal C$ of $\mathscr C$, either $f^{-1}(\mathcal C)$ is empty or $f\vert f^{-1}(\mathcal C)\to \mathcal C$ is a homeomorphism.
\begin{enumerate}[leftmargin=\widthof{[(1.1.)]}+\labelsep]

\item[(1)] Notice that $f^{-1}(\mathscr C)$ may have infinitely many disk-bounding components. But, in such a case, an arbitrarily large disk in $\Sigma'$ bounded by a component of the locally finite collection $f^{-1}(\mathscr C)$ is not possible as $\Sigma'\not\cong \Bbb R^2$ (see \Cref{charofplane}), i.e., there always exists an ``outermost disk'' bounded by some component of $f^{-1}(\mathscr C)$. Now, properly homotope $f$ to remove all disk bounding components of $f^{-1}(\mathscr C)$ upon considering all these outermost disks simultaneously. See \Cref{diskremoval}.
\item[(2)] Thereafter, using $\pi_1$-bijectivity of $f$, properly homotope $f$ to map each (primitive) component of $f^{-1}(\mathscr C)$ onto a component of $\mathscr C$ homeomorphically. See \Cref{deg1tohomeo}.
\item[(3)] Since $f$ has homotopy left inverse,  any two components of $f^{-1}(\mathscr C)$ co-bound an annulus in $\Sigma'$ if and only if their $f$-images are the same,  i.e., arbitrarily large annulus in $\Sigma'$ co-bounded by two components of $f^{-1}(\mathscr C)$ is impossible. So, considering all these ``outermost annuli'' simultaneously, we complete the goal, as stated in the beginning. See \Cref{annulusremovalfinal}.
 \end{enumerate}\label{removingunnecessarycircles}\end{subsubsection}
\begin{subsubsection}{\textbf{\textup{Showing \texorpdfstring{$f$}{f} is a degree \texorpdfstring{$\pm 1$}{±1} map (see \texorpdfstring{\Cref{properplushomotopyequivalnceisofdegreeone}}{Theorem 2.7.1})}}}
To rule out the possibility that $f^{-1}(\mathcal C)$ is empty, where $\mathcal C$ is a component of $\mathscr C$, we prove  $\deg(f)=\pm 1$; this is because $\deg(f)$ remains the same after any proper homotopy of $f$, and a map of non-zero degree is surjective; see \Cref{non-surjectivepropermaphasdegreezero} and \Cref{properhomotopypreservessurjectivity}. Our aim is to properly homotope $f$ to obtain a closed disk $\mathcal D\subseteq \Sigma$ so that $f\vert f^{-1}(\mathcal D)\to \mathcal D$ becomes a homeomorphism, and thus we show $\deg(f)=\pm 1$; see \Cref{degreeonemapchecking}.  The argument is based on finding a smoothly embedded finite-type bordered surface $\mathbf S$ in $\Sigma$ such that for each component $c$ of $\partial \mathbf S$; we have $f^{-1}(c)\neq \varnothing$, even after any proper homotopy of $f$. Depending on the nature of $\mathbf S$, we consider two cases.

\begin{itemize}[leftmargin=\widthof{[(1.1)]}+\labelsep]
    \item[(1)]  If $\Sigma$ is either  an infinite-type surface or any $S_{g,0,p}$ with high complexity (i.e., $g+p\geq 4$ or $p\geq 6$), then using $\pi_1$-surjectivity of $f$, we can choose $\mathbf S$ as a smoothly embedded pair of pants in $\Sigma$ such that $\Sigma\setminus \mathbf S$ has at least two components and every component of $\Sigma\setminus \mathbf S$ has a non-abelian fundamental group; see \Cref{exitenceofessentialpairofpants1} and \Cref{exitenceofessentialpairofpants2}. Properly homotope $f$ so that it becomes transverse to $\partial \mathbf S$. Then  remove unnecessary components from the transversal pre-image $f^{-1}(\partial \mathbf S)$, i.e., after a proper homotopy, we may assume $f\vert f^{-1}(c)\to c$ is a homeomorphism for each component $c$ of $\partial \mathbf S$. Now, since $f$ is $\pi_1$-injective, by the rigidity of pair of pants (see \Cref{rigidityofpairofpants}), after a proper homotopy, one can show that $f\vert f^{-1}(\mathbf S)\to \mathbf S$ is a homeomorphism; see \Cref{inversepairofpants}. Therefore, the required $\mathcal D$ can be any disk in $\text{int}(\mathbf S)$.

    \item[(2)] If $\Sigma$ is a finite-type surface, then we choose a smoothly embedded punctured disk in $\Sigma$ as $\mathbf S$ so that the puncture of $ \mathbf S$ is an end $e\in \text{im}\big(\text{Ends}(f)\big)\subseteq \text{Ends}(\Sigma)$. By \Cref{surjectivityonendpreservesbyproperhomotopy}, it means every deleted neighborhood of $e$ in $\Sigma$ intersects $\text{im}(f)$, even after any proper homotopy of $f$.
    Now, properly homotope $f$ so that it becomes transverse to $\partial \mathbf S$. Then  remove unnecessary components from the transversal pre-image $f^{-1}(\partial \mathbf S)$, i.e., after a proper homotopy, we may assume $f\vert f^{-1}(\partial \mathbf S)\to\partial \mathbf S$ is a homeomorphism (as $\Sigma\not\cong \Bbb S^1\times \Bbb R$, the fundamental group of $\Sigma\setminus \mathbf S$ is non-abelian; and so $\pi_1$-surjectivity of $f$ says $f^{-1}(\partial \mathbf S)\neq \varnothing$, even after any proper homotopy of $f$). Now, since $f$ is $\pi_1$-injective, by the proper rigidity of the punctured disk (see \Cref{Alexandertrick}), after a proper homotopy, one can show that $f\vert f^{-1}(\mathbf S)\to \mathbf S$ is a homeomorphism; see \Cref{inversepunctureddisk}. Therefore, the required $\mathcal D$ can be any disk in $\text{int}(\mathbf S)$.
\end{itemize}\label{degreeone}\end{subsubsection}
 
\begin{subsubsection}{\textup{\textbf{Inverse decomposition}}}  By the last three parts, after a proper homotopy,  removing unnecessary components from the transversal pre-image $f^{-1}(\mathscr C)$, we may assume that $f\vert f^{-1}(\mathcal C)\to \mathcal C$ is a homeomorphism for each component $\mathcal C$ of $\mathscr C$. Thus, $\mathscr C$ and $f^{-1}(\mathscr C)$ decompose $\Sigma$, $\Sigma'$, respectively; and there is a shape-preserving bijective-correspondence between these two collections of complementary components (see \Cref{inversepairofpants} and \Cref{inversepunctureddisk}). On each complementary component, apply either the rigidity of compact bordered surfaces (see \Cref{rigidityofpairofpants}) or  the proper rigidity of the punctured disk (see \Cref{Alexandertrick}). Thus, we have a collection of  boundary-relative proper homotopies so that by pasting them,  a proper homotopy from $f$ to a homeomorphism $\Sigma'\to \Sigma$ can be constructed; see the proof of \Cref{MR1} in \Cref{allproofs}.

\end{subsubsection}

 \section{Background}
 \subsection{Conventions}
A \emph{bordered surface} (resp. \emph{surface}) is a connected, orientable two-dimensional manifold with a non-empty (resp. an empty) boundary. For integers $g\geq 0,\ b\geq 0, p\geq 0$, denote the  $2$-manifold of genus $g$ with $b$ boundary components by $S_{g,b}$; and let $S_{g,b, p}$ be the $2$-manifold after removing $p$ points from $\text{int}(S_{g,b})$. Note that for a manifold $M$, we use $\text{int}(M)$ to denote the interior of $M$. Sometimes $S_{0,0}$, $S_{0,2}$, $S_{0,3}$, $S_{1,2}$, and $S_{0,1,1}$ will be called a disk, an annulus, a pair of pants, a two-holed torus, and a punctured disk, respectively.

  We say a connected $2$-manifold with or without boundary is of \emph{infinite-type} if its fundamental group is not finitely generated; otherwise, we say it is of \emph{finite-type}.
 
  \begin{subsection}{Simple closed curves on two-manifolds} 
 
\begin{definition}
\textup{Let $\mathbf S$ be a connected, orientable two-dimensional manifold with or without boundary. A \emph{circle} (resp. \emph{smoothly embedded circle}) on $\mathbf S$ is the image of an embedding (resp. a smooth embedding) of $\Bbb S^1$ into $\mathbf S$. We say a circle $\mathcal C$ on $\mathbf S$ is a \emph{trivial circle} if there is an embedded disk $\mathcal D$ in $\mathbf S$ such that $\partial \mathcal D=\mathcal C$; and, we say a circle $\mathcal C$ on $\mathbf S$ is a \emph{primitive circle} if it is not a trivial circle.}
\end{definition} 
The following theorem justifies naming a non-disk bounding circle as a primitive circle: a primitive circle represents a primitive element of the fundamental group. Recall that an element $g$ of a group $G$ is \emph{primitive} if there does not exist any $h\in G$ so
that $g=h^k$, where $|k|>1$. 

\begin{theorem}{\textup{\cite[Theorems 1.7. and  4.2.]{MR214087}}}\textup{ Let $\mathbf S$ be a connected, orientable two-dimensional manifold with or without boundary. Let $\mathcal C$ be a primitive circle on $\mathbf S$, and let $f\colon \Bbb S^1\hookrightarrow \mathbf S$ be an embedding with $f(\Bbb S^1)=\mathcal C$. Then $[f]\in \pi_1(\mathbf S)$ is a primitive element. In particular, $[f]$ is a non-trivial element of $\pi_1(\mathbf S).$} \label{primitivecircle}\end{theorem}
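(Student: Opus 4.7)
The plan is to show $[f]\in\pi_1(\mathbf S)$ is a primitive element (whence non-trivial) by a covering-space argument, handling the potentially trivial root and the non-trivial root cases separately.

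First I would prove the ``in particular'' non-triviality clause: assuming $[f]=1$, I would derive that $\mathcal C$ bounds an embedded disk in $\mathbf S$, contradicting the primitivity of $\mathcal C$. If $\mathbf S\cong\mathbb S^2$, this follows immediately from the Jordan--Schoenflies theorem. Otherwise, the universal cover $p\colon\widetilde{\mathbf S}\to\mathbf S$ is a simply connected orientable $2$-manifold (with or without boundary) distinct from $\mathbb S^2$, and therefore embeds in $\mathbb R^2$. Since $[f]=1$, the embedding $f$ lifts to an embedding $\tilde f\colon\mathbb S^1\hookrightarrow\widetilde{\mathbf S}$, and its image bounds an embedded disk $\widetilde D\subseteq\widetilde{\mathbf S}$ by Jordan--Schoenflies. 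Next I would observe that for any non-trivial deck transformation $\gamma$, the boundaries of $\widetilde D$ and $\gamma(\widetilde D)$ are disjoint lifts of the embedded circle $\mathcal C$ and hence cannot cross; the two disks are therefore either disjoint or one contains the other. Freeness of the $\pi_1$-action on $\widetilde{\mathbf S}$ rules out an infinite nested chain of translates of $\widetilde D$, so $p|_{\widetilde D}$ is injective and $p(\widetilde D)$ is the required embedded disk with boundary $\mathcal C$.

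Next I would establish that $[f]$ is not a proper power. Suppose toward a contradiction $[f]=h^k$ with $|k|>1$ and $h\neq 1$. Since $\pi_1$ of an orientable surface (possibly with boundary, and not $\mathbb S^2$) is torsion-free, $h$ has infinite order, so $\langle h\rangle\cong\mathbb Z$. Let $q\colon\widetilde{\mathbf S}\to\mathbf S$ be the covering associated to $\langle h\rangle$. Because $f_*\pi_1(\mathbb S^1)=\langle h^k\rangle\subseteq\langle h\rangle$, the lifting criterion produces a continuous map $\tilde f\colon\mathbb S^1\to\widetilde{\mathbf S}$; it is injective because $q\circ\tilde f=f$ is, and hence an embedding by compactness of $\mathbb S^1$. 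Its homotopy class is $k$ times a generator of $\pi_1(\widetilde{\mathbf S})\cong\mathbb Z$. Now $\widetilde{\mathbf S}$ is a connected orientable $2$-manifold with $\pi_1\cong\mathbb Z$, so by the classification of surfaces it is homeomorphic to an annulus $\mathbb S^1\times J$ for some interval $J\subseteq\mathbb R$. Every embedded circle in such an annulus represents $0$ or $\pm$ a generator of $\pi_1$, contradicting $|k|>1$ and completing the proof.

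The chief obstacle is the non-triviality step: promoting the disk $\widetilde D$ in the universal cover to an embedded disk downstairs requires careful use of the fact that distinct lifts of an embedded circle are disjoint, together with the free, properly discontinuous action of $\pi_1(\mathbf S)$ on $\widetilde{\mathbf S}$. If this innermost-disk argument turns out to be technically delicate, an alternative is to cite Epstein's classical paper directly, which establishes exactly these facts about simple closed curves on $2$-manifolds.
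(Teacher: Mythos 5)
The paper offers no proof of this statement at all: it is imported verbatim from Epstein \cite{MR214087} (Theorems 1.7 and 4.2), so there is no internal argument to compare yours against. What you propose is a reconstruction of the standard covering-space proof, and its architecture is sound: non-triviality via lifting the embedded circle to the planar universal cover, Schoenflies, the disjointness of distinct lifts, and the free and properly discontinuous deck action; and the no-proper-power step via the intermediate cover associated to $\langle h\rangle$, where the lifted circle is still embedded but would represent $k$ times a generator of $\pi_1\cong\mathbb{Z}$. This is essentially Epstein's route, so your fallback of simply citing his paper is exactly what the author does.

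Two points need repair before your sketch is a complete proof. First, your classification claim is false as stated: a connected orientable $2$-manifold with $\pi_1\cong\mathbb{Z}$ need not be homeomorphic to $\mathbb{S}^1\times J$ once non-compact boundary components are allowed. For instance, the cover of a pair of pants corresponding to the cyclic subgroup generated by one boundary class has one boundary circle and infinitely many boundary lines. The fix is routine: push the lifted embedded circle off the boundary into the interior using a collar (a free homotopy suffices here, since $\pi_1\cong\mathbb{Z}$ is abelian, so free homotopy classes determine elements), and note that the interior is an open annulus by the classification of non-compact surfaces with infinite cyclic fundamental group, which is recorded in the paper as \Cref{finitetypenoncompactsurace}. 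Second, two assertions deserve justification: that a nesting $\gamma\widetilde D\subsetneq\widetilde D$ is impossible follows from the Brouwer fixed point theorem (giving a fixed point of $\gamma$ in $\widetilde D$) or from proper discontinuity (since then $\gamma^n\widetilde D\cap\widetilde D\neq\varnothing$ for all $n$), not from freeness of the action alone as literally written; and the statement that every embedded circle in an open annulus represents $0$ or $\pm$ a generator is precisely the base case of the theorem you are proving, so it needs its own short argument (for example, compactify the annulus by its two ends to obtain $\mathbb{S}^2$, apply the Jordan curve theorem, and run van Kampen on the two complementary disks) or an explicit citation. With these patches the proof is correct.
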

Recall that for a path-connected space $X$, there is a bijective correspondence between the set of all conjugacy classes of $\pi_1(X,*)$ and the set of all free homotopy classes of maps $\Bbb S^1\to X$. The next theorem says that two pairwise disjoint freely homotopic primitive circles on a two-manifold co-bound an annulus.
\begin{theorem}{\textup{\cite[Lemma 2.4.]{MR214087}}}\textup{ Let $\mathbf S$ be a connected, orientable two-dimensional manifold with or without boundary. Let $\ell_0,\ell_1\colon \Bbb S^1\hookrightarrow \mathbf S$ be two  embeddings such that $\ell_0(\Bbb S^1)$ is a smoothly embedded submanifold of $\mathbf S$ and $\ell_0(\Bbb S^1)\cap \ell_1(\Bbb S^1)=\varnothing$. If $\ell_0$ and $\ell_1$ represent the same non-trivial conjugacy class in $\pi_1(\mathbf S,*)$, then there is a  embedding $\mathcal L\colon \Bbb S^1\times [0,1]\hookrightarrow \mathbf S$ so that $\mathcal L(-,0)=\ell_0$ and $\mathcal L(-,1)=\ell_1$. }
\label{AnnulusEmbedding}
\end{theorem}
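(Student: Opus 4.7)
My plan is to reduce to an annular cover of $\mathbf{S}$ and then descend an obvious sub-annulus from the cover. Attach an arc from the basepoint $*$ to $\ell_0(\mathbb{S}^1)$ to obtain a based loop $\widehat\ell_0$; by \Cref{primitivecircle}, its class $[\widehat\ell_0]\in\pi_1(\mathbf{S},*)$ is primitive and non-trivial, so $H\coloneqq \langle[\widehat\ell_0]\rangle$ is infinite cyclic. Let $p\colon \widetilde{\mathbf{S}}\to\mathbf{S}$ be the connected cover with $p_*\pi_1(\widetilde{\mathbf{S}})=H$. Then $\ell_0$ lifts to a circle $\widetilde\ell_0\subset\widetilde{\mathbf{S}}$ on which $p$ restricts to a homeomorphism. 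Because $\ell_1$ is freely homotopic to $\ell_0$, its conjugacy class equals that of $[\widehat\ell_0]$; by choosing the connecting arc to $\ell_1(\mathbb{S}^1)$ appropriately we may arrange that the corresponding based loop $\widehat\ell_1$ also satisfies $[\widehat\ell_1]\in H$, so $\ell_1$ lifts to an embedded circle $\widetilde\ell_1\subset\widetilde{\mathbf{S}}$. Since $\ell_0\cap\ell_1=\varnothing$, the lifts are disjoint.

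Next I would classify $\widetilde{\mathbf{S}}$: it is a connected orientable $2$-manifold (with boundary iff $\mathbf{S}$ has boundary) satisfying $\pi_1(\widetilde{\mathbf{S}})\cong\mathbb{Z}$, so by the classification of (possibly bordered) surfaces it is homeomorphic to an annulus, namely one of $\mathbb{S}^1\times\mathbb{R}$, $\mathbb{S}^1\times[0,\infty)$, or $\mathbb{S}^1\times[0,1]$. In such a space every essential simple closed curve is isotopic to the core, so the full preimage $p^{-1}(\ell_0\cup\ell_1)$ is a locally finite pairwise disjoint family of parallel essential circles, linearly and discretely ordered along the $\mathbb{R}$-direction. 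I would then replace $\widetilde\ell_0$ and $\widetilde\ell_1$ by \emph{adjacent} lifts (one of each, chosen as close as possible) so that the closed sub-annulus $\mathcal{A}\subset\widetilde{\mathbf{S}}$ with $\partial\mathcal{A}=\widetilde\ell_0\sqcup\widetilde\ell_1$ contains no further component of $p^{-1}(\ell_0\cup\ell_1)$ in its interior. This innermost choice is the key geometric input.

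The heart of the argument, and the step I expect to be the main obstacle, is to verify that $p|_{\mathcal{A}}$ is injective. Once that is secured, local injectivity of $p$ together with the fact that $p|_{\mathcal{A}}$ is proper onto its image (its boundary maps homeomorphically onto $\ell_0\sqcup\ell_1$) promotes the injective continuous map $p|_{\mathcal{A}}$ to an embedding, and a parametrization $\mathbb{S}^1\times[0,1]\xrightarrow{\cong}\mathcal{A}$ matching $\ell_0,\ell_1$ on the boundary yields the required $\mathcal{L}$. To prove injectivity I would argue by contradiction: assume $p(x)=p(y)$ for distinct $x,y\in\mathcal{A}$. Join $x$ to $y$ by an arc inside $\mathcal{A}$ and project to obtain a loop $\beta$ at $p(x)$ whose class lies outside $H$. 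The subtlety is that $H$ need not be normal, so $\widetilde{\mathbf{S}}$ does not carry a global $\mathbb{Z}$-action by deck transformations and one cannot merely translate $\mathcal{A}$; instead, by tracking the unique path-lifts of $\beta$ and of a small homotopy pushing $\beta$ across the immersed annulus $p(\mathcal{A})$, one can extract another component of $p^{-1}(\ell_0\cup\ell_1)$ sitting strictly inside $\mathcal{A}$, contradicting the innermost choice. Making this extraction rigorous — producing the extra interior lift directly from the non-injectivity — is the technical crux of the proof.
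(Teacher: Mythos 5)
First, note that the paper does not prove \Cref{AnnulusEmbedding} at all: it is quoted verbatim from Epstein \cite[Lemma 2.4.]{MR214087} and used as background, so there is no internal proof to compare against; your proposal must therefore stand on its own as a complete argument, and it does not. The decisive step --- injectivity of $p$ on the sub-annulus $\mathcal A$ between the two circle lifts --- is exactly where the whole content of the lemma sits, and you leave it as ``the technical crux'' with only a sketched contradiction. That sketch does not work as described: if $p(x)=p(y)$ for $x\neq y\in\mathcal A$, the loop $\beta$ obtained by projecting an arc from $x$ to $y$ has no reason to represent a class outside $H=\langle[\ell_0]\rangle$, and even granting that, there is no mechanism exhibited that converts the self-overlap of the immersed annulus $p(\mathcal A)$ into a component of $p^{-1}\big(\ell_0(\Bbb S^1)\cup\ell_1(\Bbb S^1)\big)$ lying in $\textup{int}(\mathcal A)$; the offending preimage points created by overlapping sheets can perfectly well lie outside $\mathcal A$. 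Indeed, in the $H$-cover the circle elevation of each $\ell_i$ is unique (primitivity plus torsion-freeness), so there is no ``innermost'' choice available to exploit: proving $p\vert\mathcal A$ is injective is essentially equivalent to the lemma itself, and your reduction therefore begs the question.

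Two subsidiary claims are also inaccurate and would need repair even if the main gap were filled. The full preimage $p^{-1}\big(\ell_0(\Bbb S^1)\cup\ell_1(\Bbb S^1)\big)$ need not be a family of parallel essential circles: components covering a circle infinitely are properly embedded lines, and such line components disjoint from the core do occur, so the ``linearly and discretely ordered circles'' picture is wrong as stated (only the circle components are essential and ordered). Likewise, when $\partial\mathbf S\neq\varnothing$ the cover with $\pi_1\cong\Bbb Z$ need not be one of $\Bbb S^1\times\Bbb R$, $\Bbb S^1\times[0,\infty)$, $\Bbb S^1\times[0,1]$; for example the cyclic cover of a pair of pants dual to one boundary circle has infinitely many non-compact boundary lines. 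These points are fixable (work in the interior, restrict attention to circle components), but the injectivity step is a genuine missing idea, not a routine verification, which is precisely why the paper defers to Epstein's proof rather than arguing along these lines.
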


\label{notation}
\end{subsection}

 \begin{subsection}{Ends of spaces}\label{ends} Let $X$ be a connected, separable, locally compact, locally connected Hausdorff  ANR (absolute neighborhood retract) space. For example, $X$ can be any connected topological manifold. We say $X$ admits an \emph{efficient exhaustion by compacta} if there is a nested sequence $K_1\subseteq K_2\subseteq\cdots$  of compact, connected subsets of $X$ such that $\cup_i K_i=X$, $K_i\subseteq \text{int}(K_{i+1})$, $\cap_i (X\setminus K_i)=\varnothing$, and closure of each component of  any $X\setminus K_i$ is non-compact. For the existence of efficient exhaustion of $X$ by compacta,  see \cite[Exercise 3.3.4]{MR3598162}.

 Let $\text{Ends}(X)$ be the set of all sequences $(V_1, V_2,...)$, where $V_i$ is a component of $X\setminus K_i$ and $V_1\supseteq V_2\supseteq \cdots$. Give $X^\dag\coloneqq X\cup \text{Ends}(X)$ with the topology generated by the basis consisting of all open subsets of $X$, and all sets $V_i^\dag$, where
$$V_i^\dag\coloneqq V_i\cup \big\{(V_1', V_2',...)\in \text{Ends}(X)\big| V_i'= V_i\big\}.$$Then $X^\dag$ is separable, compact, and metrizable such that $X$ is an open dense subset of $X^\dag$; it is known as the \emph{Freudenthal compactification} of $X$ (recall that we say a space $X_\textbf{c}$ is a \emph{compactification} of $X$ if $X_\textbf{c}$ is compact Hausdorff space, and $X$ is a dense subset of $X_\textbf{c}$).  The subspace $\text{Ends}(X)$ of $X^\dag$ is a totally-disconnected space; hence $\text{Ends}(X)$ is a closed subset of the Cantor set.  

The Freudenthal compactification \emph{dominates} any other   compactification: If $\widetilde X$ is a compactification of $X$ such that  $\widetilde X\setminus X$ is totally-disconnected,  then there exists a map $f\colon X^\dag\to \widetilde X$ extending $\text{Id}_X$.

Also, the Freudenthal compactification is \emph{unique} in the following sense: If $X^{\dag\dag}$ is a compactification of $X$ such that $X^{\dag\dag}\setminus X$ is totally-disconnected and $X^{\dag\dag}$ dominates any other compactification, then there exists a homeomorphism $X^{\dag\dag}\to X^\dag$ extending $\text{Id}_X$; see \cite[Theorem 3.1]{MR2616858}. Thus, the definition of $\text{Ends}(X)$ is independent of the choice of efficient exhaustion of $X$ by compacta.

Now, we consider a relationship between \text{Ends} and proper maps.\begin{proposition}{\textup{\cite[Proposition 3.3.12]{MR3598162}}}\textup{ Let $X$ and $Y$ be two connected, separable, locally compact, locally connected Hausdorff  ANRs. Then we have the following:}
\begin{itemize}

\item[\textup{(1)}] \textup{Every proper map $f\colon X\to Y$ induces a map $\text{Ends}(f)\colon \text{Ends}(X)\to \text{Ends}(Y)$ that can be used to extend $f\colon X\to Y$ to a map $f^\dag\colon X^\dag\to Y^\dag$ between the Freudenthal compactifications.}
\item[\textup{(2)}] \textup{If two proper maps $f_0,f_1\colon X\to Y$ are properly homotopic, then $\text{Ends}(f_0)=\text{Ends}(f_1)$.}

\item[\textup{(3)}]\textup{If $f\colon X\to Y$ is a proper homotopy equivalence, then $\text{Ends}(f)\colon \text{Ends}(X)\to \text{Ends}(Y)$ is a homeomorphism.}
\end{itemize}  \label{endssandproperhomotopy}\end{proposition}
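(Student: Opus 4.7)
The plan is to fix efficient exhaustions $K_1\subseteq K_2\subseteq\cdots$ of $X$ and $L_1\subseteq L_2\subseteq\cdots$ of $Y$ by compacta and chase the definitions. For part (1), given a proper map $f\colon X\to Y$ and an end $e=(V_1,V_2,\ldots)\in\text{Ends}(X)$, properness yields for each $j$ an index $i(j)$ with $f^{-1}(L_j)\subseteq K_{i(j)}$. Then $f(V_{i(j)})\subseteq Y\setminus L_j$, and connectedness of $V_{i(j)}$ forces $f(V_{i(j)})$ into a unique component $W_j$ of $Y\setminus L_j$. Monotonicity gives $W_{j+1}\subseteq W_j$, so I would set $\text{Ends}(f)(e):=(W_1,W_2,\ldots)\in\text{Ends}(Y)$; independence from the choice of $i(j)$ follows from the uniqueness of the containing component. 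Extending by $f^\dag|_X=f$ and $f^\dag|_{\text{Ends}(X)}=\text{Ends}(f)$, continuity at an $X$-point is automatic, while at an end $e$ the basic neighborhood $W_j^\dag$ of $\text{Ends}(f)(e)$ pulls back to contain the basic neighborhood $V_{i(j)}^\dag$ of $e$.

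For part (2), let $H\colon X\times[0,1]\to Y$ be a proper homotopy between $f_0$ and $f_1$. For each $j$, $H^{-1}(L_j)$ is compact, so its projection to $X$ sits inside some $K_i$; hence $H_t^{-1}(L_j)\subseteq K_i$ for every $t\in[0,1]$, and $H(V_i\times[0,1])\subseteq Y\setminus L_j$. Since $V_i\times[0,1]$ is connected, its image lies in a single component $W_j$ of $Y\setminus L_j$, so the $j$-th coordinates of $\text{Ends}(f_0)(e)$ and $\text{Ends}(f_1)(e)$ coincide. As this holds for every $j$, we conclude $\text{Ends}(f_0)=\text{Ends}(f_1)$.

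For part (3), let $g$ be a proper homotopy inverse of $f$. The construction in (1) is functorial, $\text{Ends}(g\circ f)=\text{Ends}(g)\circ\text{Ends}(f)$, by an immediate check on compatible index choices. Part (2) applied to the proper homotopies $g\circ f\simeq\text{Id}_X$ and $f\circ g\simeq\text{Id}_Y$ then yields $\text{Ends}(g)\circ\text{Ends}(f)=\text{Id}_{\text{Ends}(X)}$ and the mirror identity, so $\text{Ends}(f)$ is a bijection with inverse $\text{Ends}(g)$. To upgrade this bijection to a homeomorphism, I would use that $\text{Ends}(X)$ is compact (a closed subset of the compact space $X^\dag$, as recorded in \Cref{ends}), that $\text{Ends}(Y)$ is Hausdorff, and that $\text{Ends}(f)$ is continuous as the restriction of the continuous extension $f^\dag$ built in (1); a continuous bijection from a compact space to a Hausdorff space is a homeomorphism.

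The main obstacle is concentrated in part (1): establishing that $\text{Ends}(f)$ is well-defined and that $f^\dag$ is continuous at the end-points. Both rely on the uniform bound $f^{-1}(L_j)\subseteq K_{i(j)}$, which is exactly what properness supplies, but the passage from the pointwise statement \emph{``$f(V_{i(j)})$ lands past $L_j$''} to a continuous extension on $X^\dag$ requires careful tracking of how the basic neighborhoods $V^\dag$ in $X^\dag$ are sent into basic neighborhoods $W^\dag$ in $Y^\dag$ — otherwise the extension could fail to be continuous at end-points even when $\text{Ends}(f)$ is correctly defined as a set map.
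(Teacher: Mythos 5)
Your proof is correct, and it is essentially the standard argument: the paper itself does not prove this proposition but quotes it from Guilbault (\cite[Proposition 3.3.12]{MR3598162}), where the construction is exactly the one you give — define the $j$-th coordinate of $\text{Ends}(f)(e)$ via $f^{-1}(L_j)\subseteq K_{i(j)}$ and the component of $Y\setminus L_j$ containing $f(V_{i(j)})$, check independence of choices, deduce (2) from the compact projection of $H^{-1}(L_j)$, and get (3) from functoriality plus (2) together with the compact-to-Hausdorff upgrade. The only steps worth writing out a bit more carefully are the nesting $W_{j+1}\subseteq W_j$ (note $W_{j+1}$ is a connected subset of $Y\setminus L_j$ meeting $W_j$, after choosing $i(j)$ increasing) and the functoriality check $\text{Ends}(g\circ f)=\text{Ends}(g)\circ\text{Ends}(f)$ via compatible indices, both of which you correctly indicate.
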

More about the ends of spaces and proper homotopy can be found in \cite{MR1361888} and \cite{MR1410261}. 
  \end{subsection}

 \subsection{Kerékjártó's classification theorem and Ian Richard's representation theorem} Let $\Sigma$ be a non-compact surface with an efficient exhaustion $\{K_i\}_1^\infty$. Let $e\coloneqq (V_1, V_2,...)\in \text{Ends}(\Sigma)$ be an end, where $V_i$ is a component of $X\setminus K_i$. We say $e$ is a \emph{planar end} if $V_i$ is embeddable in $\Bbb R^2$ for some positive integer $i$. An end is said to be \emph{non-planar} if it is not planar. Denote the subspace of $\text{Ends}(\Sigma)$ consisting of all planar (resp. non-planar) ends by $\text{Ends}_\text{p}(\Sigma)$ \big(resp. $\text{Ends}_\text{np}(\Sigma)$\big). Note that $\text{Ends}_\text{p}(\Sigma)$ is an open subset of $\text{Ends}(\Sigma)$. Define the \emph{genus} of $\Sigma$ as $g(\Sigma):=\sup g(\mathcal S)$, where $\mathcal S$ is a compact bordered subsurface of $\Sigma.$ Therefore, the genus counts the number of handles of a surface, i.e., the number of embedded copies of $S_{1,1}$ in a surface, which may be any non-negative integer or countably infinite.
 
 \begin{theorem}{\textup{\big(Kerékjártó's classification of non-compact surfaces \cite[Theorem 1]{MR143186}\big)}} \textup{Let $\Sigma$ and $\Sigma'$ be non-compact surfaces of genus $g,g'$, respectively.  Then $\Sigma$ is homeomorphic to $\Sigma'$ if and only if $g=g'$ and there is homeomorphism $\varphi\colon \text{Ends}(\Sigma)\to \text{Ends}(\Sigma')$ with $\varphi\big( \text{Ends}_\text{np}(\Sigma)\big)= \text{Ends}_\text{np}(\Sigma')$.}
\label{richard2}\end{theorem}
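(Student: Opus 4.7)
The forward direction is routine: a homeomorphism $h\colon\Sigma\to\Sigma'$ carries compact bordered subsurfaces to compact bordered subsurfaces of the same genus, so $g=g'$; and $h$ carries any efficient exhaustion of $\Sigma$ to one of $\Sigma'$, inducing a homeomorphism $\varphi\colon\text{Ends}(\Sigma)\to\text{Ends}(\Sigma')$. Planarity of an end is detected by the existence of a neighborhood embedding into $\Bbb R^2$, so $\varphi$ sends $\text{Ends}_{\text p}(\Sigma)$ onto $\text{Ends}_{\text p}(\Sigma')$, and hence also the complementary sets of non-planar ends onto one another.

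For the backward direction, the plan is a piece-by-piece back-and-forth construction. First I would fix efficient exhaustions $\{K_i\}$ of $\Sigma$ and $\{K_i'\}$ of $\Sigma'$ by compact connected bordered subsurfaces, arranging that $\partial K_i$ and $\partial K_i'$ are pairwise disjoint smoothly embedded circles and that each complementary component has non-compact closure (so each component corresponds to a non-empty clopen subset of the ends). Let $A_i:=\overline{K_{i+1}\setminus K_i}$ and $A_i':=\overline{K_{i+1}'\setminus K_i'}$; these are compact bordered subsurfaces that are entirely classified (up to homeomorphism fixing the boundary combinatorics) by genus and the matching of the two sides of the boundary onto their neighbors, by the classical classification of compact surfaces.

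The main work is to refine the exhaustions, passing to subsequences and adding or modifying finitely many further simple closed curves at each stage, so that:
\begin{enumerate}[leftmargin=\widthof{[(iii)]}+\labelsep]
\item[\textup{(i)}] The homeomorphism $\varphi$ induces, for every $i$, a bijection between the components of $\Sigma\setminus K_i$ and the components of $\Sigma'\setminus K_i'$ (using that each component corresponds to the clopen subset of ends it contains).
\item[\textup{(ii)}] Corresponding complementary components have matching ``end signature'': the subset of $\text{Ends}(\Sigma)$ it contains has non-planar part of size $0$ (resp.\ positive) exactly when the paired component does, and more finely, an appropriate genus-count of each complementary component is finite or infinite in matching fashion.
\item[\textup{(iii)}] Each annular/bordered piece $A_i$ has the same genus and the same number of boundary components, paired compatibly with $A_i'$, so that the classification of compact surfaces produces a homeomorphism $h_i\colon A_i\to A_i'$ respecting the pairing.
\end{enumerate}
Here the planarity hypothesis is used decisively: if a complementary component of $K_i$ contains only planar ends, the planar end subset being clopen lets me enlarge $K_i$ (absorbing handles finitely at a time) until the residual complementary component is planar; conversely if it contains a non-planar end I instead carve $K_i$ so the component retains unboundedly many handles. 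The global genus equality $g=g'$ ensures the running tallies of handles placed so far can be kept equal. The back-and-forth alternation (adjust $K_i$, then $K_i'$, then $K_{i+1}$, then $K_{i+1}'$, etc.) is what guarantees every compact set is eventually absorbed on both sides.

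Given the refined exhaustions, I would build the homeomorphism inductively: start with any homeomorphism $h_0\colon K_0\to K_0'$ (they are compact bordered surfaces with matching invariants), then at each stage extend $h_{i-1}$ across $A_i$ using a homeomorphism $A_i\to A_i'$ agreeing with $h_{i-1}$ on $\partial K_i\subseteq\partial A_i$; such an extension exists by the relative classification of compact surfaces, after possibly composing with a self-homeomorphism of $A_i'$ that realizes the necessary permutation/twist on the inner boundary. Taking the direct limit yields a homeomorphism $h\colon\Sigma\to\Sigma'$.

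The most delicate step will be (ii)--(iii): controlling the end-data combinatorics so that at every finite stage the genus on each side of every boundary curve matches, while still respecting the given end homeomorphism $\varphi$. This is where Kerékjártó's classical bookkeeping enters—one has to choose refinements of the clopen partitions of $\text{Ends}(\Sigma)$ finely enough that (using that both the planar ends and non-planar ends are realized as clopen subsets of a compact, totally disconnected space) each chosen clopen piece can be realized as the end-set of a single complementary component of a suitably enlarged $K_i$, with prescribed genus. Everything else is an application of the classification of compact bordered surfaces and a standard inductive extension argument.
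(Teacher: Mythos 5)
The paper never proves this statement: it is imported as a classical result, with a citation to Richards \cite{MR143186}, so there is no internal argument of the paper to measure your proposal against. On its own terms, your outline is the standard ``direct'' route: exhaust both surfaces, refine the exhaustions so that $\varphi$ matches complementary components, and extend homeomorphisms piece by piece using the classification of compact bordered surfaces, taking a direct limit. Your forward direction is fine, and the inductive extension step is handled correctly (you only prescribe the map on the inner boundary of each collar piece, so no obstruction arises). The comparison worth making is with Richards' published proof, which the paper itself leans on via the representation theorem (\Cref{richard1}): there one first puts every surface into a canonical form --- a disk minus a closed totally disconnected subset of its boundary circle, with handles attached so as to accumulate exactly at the prescribed non-planar ends --- and then a homeomorphism of the end pairs $\big(\text{Ends},\text{Ends}_\text{np}\big)$ extends to the normal forms almost tautologically. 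Your route avoids normal forms but shifts all the difficulty into your steps (ii)--(iii), which as written are a plan rather than a proof: you would still need to establish (a) that a complementary component has infinite genus precisely when its clopen set of ends meets $\text{Ends}_\text{np}$, (b) that finite-genus components can be absorbed into the compact core after finitely many enlargements so that the residual components are planar, and (c) that any sufficiently fine clopen refinement of $\text{Ends}(\Sigma)$ compatible with $\varphi$ is realized by boundary curves of subsurfaces on both sides with equal genus tallies at every stage (this uses $g=g'$ and, in the infinite-genus case, the matching of non-planar ends). Those three lemmas are exactly the bookkeeping content of Richards' Theorems 2--3, so your sketch is a correct outline of a workable alternative proof, but the crux is asserted rather than carried out.
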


\begin{theorem}{\textup{\big(Realization of ends and representation of a non-compact surface \cite[Theorems 2, 3]{MR143186}\big)}}
\textup{Let $\mathscr E_\text{np}\subseteq \mathscr E$ be two closed totally-disconnected subsets of $\Bbb S^1$, and let $\mathscr G$ be an at most countable set with the following properties: $\mathscr E\neq \varnothing$, and  $\mathscr E_\text{np}\neq \varnothing$ if and only if $\mathscr G$ is infinite. Define $\Bbb D\coloneqq \{z\in \Bbb C:0\leq |z|\leq 1\}$. Then there exists a pairwise disjoint collection $\{\mathcal D_i:i\in \mathscr G\}$ of disks in $\text{int}(\Bbb D)$ such that a  point $p\in \Bbb D$ is an element of $\mathscr E_\text{np}$ if and only if every neighborhood of $p$ in $\Bbb D$ contains infinitely many elements of $\{\mathcal D_i:i\in \mathscr G\}$. Moreover,   $\mathbf S\coloneqq\big(\Bbb D\setminus \mathscr E\big)\setminus\cup_{i\in \mathscr G}\text{int}(\mathcal D_i)$ is a non-compact bordered surface, and $$D\mathbf S\coloneqq\frac{(\mathbf S\times 0)\sqcup (\mathbf S\times 1)}{(p,0)\sim (p,1),\ p\in \partial \mathbf S}$$ is a $|\mathscr G|$-genus non-compact surface with $\text{Ends}(D\mathbf S)\cong \mathscr E$ and $\text{Ends}_\text{np}(D\mathbf S)\cong \mathscr E_\text{np}$.}

\textup{Thus, given any non-compact surface $\Sigma$, in this procedure, if we assume $\mathscr E_\textup{np}\subseteq \mathscr E$ is homeomorphic to the pair $\text{Ends}_\textup{np}(\Sigma)\subseteq\text{Ends}(\Sigma)$, and $|\mathscr G|$ is equal to $g(\Sigma)$; then $D\mathbf S\cong \Sigma$, by \Cref{richard2}. }\label{richard1}\end{theorem}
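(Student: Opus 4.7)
The plan is to build the disk collection by a recursive procedure keyed to the zero-dimensionality of $\mathscr{E}_\text{np}$, and then to read off all statements about $\mathbf S$ and $D\mathbf S$ from a compact exhaustion compatible with that construction.

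For the disks I would proceed as follows. If $\mathscr{E}_\text{np} = \varnothing$, the hypothesis forces $\mathscr G$ to be finite, and one simply picks $|\mathscr G|$ pairwise disjoint closed disks inside $\text{int}(\Bbb D)$. Otherwise $\mathscr G$ is countably infinite; fix a bijection $\mathscr G \leftrightarrow \Bbb N$. Since $\mathscr{E}_\text{np}$ is a nonempty compact totally disconnected subset of $\Bbb S^1$, it is zero-dimensional, so for every $n \geq 1$ one can choose a finite partition $\mathscr P_n$ of $\mathscr{E}_\text{np}$ into nonempty clopen pieces of diameter less than $1/n$, with $\mathscr P_{n+1}$ refining $\mathscr P_n$. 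Each piece $A \in \mathscr P_n$ has the form $\mathscr{E}_\text{np} \cap \alpha_A$ for an open arc $\alpha_A \subset \Bbb S^1$, and by shrinking, the arcs $\{\alpha_A : A \in \mathscr P_n\}$ may be taken pairwise disjoint. In the ``pie slice'' cut out by $\alpha_A$ and a concentric circular arc of radius close to $1$, place one small closed disk inside $\text{int}(\Bbb D)$ that is disjoint from $\mathscr E$ and from every previously placed disk. Enumerating the resulting disks bijectively with $\mathscr G$ yields a pairwise disjoint collection whose diameters shrink to $0$ and whose set of accumulation points in $\partial \Bbb D$ is exactly $\mathscr{E}_\text{np}$; in particular $\text{diam}(\mathcal D_i) + d(\mathcal D_i, \mathscr E) \to 0$ as $i \to \infty$.

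Next I would verify that $\mathbf S \coloneqq (\Bbb D \setminus \mathscr E) \setminus \bigcup_{i \in \mathscr G} \text{int}(\mathcal D_i)$ is a non-compact bordered surface. Deleting the closed set $\mathscr E \subset \partial \Bbb D$ from $\Bbb D$ preserves the manifold-with-boundary structure at every remaining point, and local finiteness of $\{\mathcal D_i\}$ away from $\mathscr E$ (a consequence of the shrinking property) ensures that removing each $\text{int}(\mathcal D_i)$ merely attaches a new boundary circle $\partial \mathcal D_i$; hence $\partial \mathbf S = (\Bbb S^1 \setminus \mathscr E) \sqcup \bigsqcup_i \partial \mathcal D_i$ and $\mathbf S$ is a non-compact bordered surface, so $D\mathbf S$ is a boundaryless $2$-manifold. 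For the genus, doubling a compact $(k{+}1)$-holed sphere obtained from $\Bbb D$ by removing the interiors of $k$ selected disks yields a closed genus-$k$ surface; therefore any compact bordered subsurface of $D\mathbf S$ enclosing the doubles of exactly $k$ circles $\partial \mathcal D_i$ has genus $k$, and letting $k \to |\mathscr G|$ gives $g(D\mathbf S) = |\mathscr G|$.

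Finally I would compute the ends. The sets $\widetilde K_n \coloneqq \{z \in \Bbb D : d(z,\mathscr E) \geq 1/n\} \setminus \bigcup_i \text{int}(\mathcal D_i)$ form a compact exhaustion of $\mathbf S$ (only finitely many $\mathcal D_i$ meet $\widetilde K_n$, by the shrinking property), and their doubles give a compact exhaustion of $D\mathbf S$. Each complementary component of $\widetilde K_n$ has closure in $\Bbb D$ meeting $\Bbb S^1$ in a single clopen piece of $\mathscr E$, and these pieces form a nested system whose intersection is a single point of $\mathscr E$; doubling preserves complementary components of a compact exhaustion, so this produces a continuous bijection $\text{Ends}(D\mathbf S) \to \mathscr E$, which is automatically a homeomorphism between compact Hausdorff spaces. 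An end corresponding to $e \in \mathscr E$ has a planar neighborhood in $D\mathbf S$ iff some neighborhood of $e$ in $\Bbb D$ meets only finitely many $\mathcal D_i$, which by construction is iff $e \notin \mathscr{E}_\text{np}$, giving $\text{Ends}_\text{np}(D\mathbf S) \cong \mathscr{E}_\text{np}$. The delicate step is the first one: placing the disks so that their accumulation set in $\partial \Bbb D$ is precisely $\mathscr{E}_\text{np}$ (neither larger nor smaller) while keeping them pairwise disjoint and cofinitely far from every end in $\mathscr E \setminus \mathscr{E}_\text{np}$; the finite clopen decompositions of $\mathscr{E}_\text{np}$ at every scale handle this, but the bookkeeping of the nested construction is where the real technical work lies.
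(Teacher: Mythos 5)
The paper does not prove this statement at all: it is quoted as Richards' realization/representation theorem with a citation to \cite[Theorems 2, 3]{MR143186}, and is then used only as a black box (e.g.\ in \Cref{upto}, together with \Cref{richard2}). So there is no in-paper argument to compare against; what you have written is a reconstruction of Richards' original construction, and in outline it is the right one: place the disks near $\partial \Bbb D$ guided by finite clopen partitions of $\mathscr E_\text{np}$ at scale $1/n$, so that the accumulation set of the collection is exactly $\mathscr E_\text{np}$; observe that local finiteness of $\{\mathcal D_i\}$ away from $\mathscr E$ makes $\mathbf S$ a non-compact bordered surface; and read off the genus and the ends of the double from an exhaustion such as your $\widetilde K_n$.

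Two steps are thinner than they should be. First, the identification $\text{Ends}(D\mathbf S)\cong \mathscr E$: you assert that each complementary component of $\widetilde K_n$ traces out a clopen piece of $\mathscr E$ and that the nested pieces intersect in a single point. The latter is precisely the fact that the $\epsilon$-chain components of a compact totally disconnected set shrink to singletons as $\epsilon\to 0$, and this (together with injectivity and surjectivity of the induced map on ends) is exactly where total disconnectedness of $\mathscr E$ is used, so it should be spelled out. Second, and more substantively, the non-planarity direction: if every neighborhood of $e$ contains infinitely many $\mathcal D_i$, you still must exhibit positive genus inside \emph{every} end-neighborhood of $D\mathbf S$. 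Containing the doubled collar of a single circle $\partial\mathcal D_i$ is not enough, since the double of an annulus along one boundary circle is again an annulus; instead take, inside the end-neighborhood, a compact pair of pants whose two ``inner'' boundary circles are $\partial\mathcal D_i$ and $\partial\mathcal D_j$ for two disks close to $e$ (or a closed curve dual to $\partial\mathcal D_i$, crossing it once via a nearby arc of $\Bbb S^1\setminus\mathscr E$); its double along those circles has genus one, which shows the end is non-planar. Relatedly, your genus argument only yields $g(D\mathbf S)\geq |\mathscr G|$; when $\mathscr G$ is finite the matching upper bound follows because $D\mathbf S$ then embeds in the closed genus-$|\mathscr G|$ surface obtained by doubling the disk with $|\mathscr G|$ holes. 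With these points filled in, your sketch is a sound proof of the quoted theorem.
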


\begin{remark}
\textup{The classification of non-compact bordered surfaces is also possible:  When the boundary is compact, it follows from \Cref{richard2} together with \cite[Proposition A.3.]{MR561583}. When each boundary component is compact, this follows from \cite{unclassified} (based on the classification of their interiors) or \cite[Theorem A.7]{MR561583} (based on the classification of non-compact surfaces obtained from gluing a disk along each boundary component). For arbitrary boundary, see \cite[Theorem 2.2.]{MR542887}.}
\end{remark}

\subsection{Goldman's inductive procedure of constructing all non-compact surfaces}
A non-compact surface $\Sigma_\text{std}$ is said to be in \emph{standard form} if it is built up from four building blocks, $S_{0,1}$, $S_{0,2}$, $S_{0,3}$, and $S_{1,2}$, in the following inductive manner: Start with $S_{0,1}$. Suppose the $i$-th step of the induction has already been done. Let $K_i$ be the compact bordered subsurface of $\Sigma_\text{std}$ after the $i$-th step of induction. In particular, $K_1\cong S_{0,1}$. Now, to obtain $K_{i+1}$ from $K_i$, consider one of the last three building blocks, say $\mathcal S$ (i.e., $\mathcal S$ is homeomorphic to either $S_{0,2}$, $S_{0,3}$, or $S_{1,2}$); finally, suitably identify one boundary circle of $\mathcal S$ with a boundary circle of $K_i$. See \Cref{figureofinductiveconstruction}.

\begin{theorem}{\textup{\cite[Section 2.6.]{MR275436} and \cite[Page 173]{v_Ker_kj_rt__1923}}}
\textup{Let $\Sigma$ be a non-compact surface. Then $\Sigma$ is homeomorphic to a non-compact surface $\Sigma_\text{std}$ in standard form. Thus every non-compact surface is homeomorphic to a non-compact surface constructed using an inductive procedure as above, though two non-compact surfaces obtained from two different inductive procedures may be homeomorphic.}\label{incudctiveconstruction} 
\end{theorem}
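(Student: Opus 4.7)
My plan is to combine Kerékjártó's classification theorem (\Cref{richard2}) with an explicit construction of standard-form surfaces realizing prescribed invariants. By \Cref{richard2} any non-compact surface $\Sigma$ is determined up to homeomorphism by its genus $g(\Sigma)$ together with the pair $\text{Ends}_\text{np}(\Sigma) \subseteq \text{Ends}(\Sigma)$, so it suffices to produce some $\Sigma_\text{std}$ in standard form with the same invariants as $\Sigma$.

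In the finite-type case $\Sigma \cong S_{g, 0, p}$, the construction is direct: start with $K_1 \cong S_{0,1}$, attach $g$ copies of $S_{1,2}$ consecutively to obtain $S_{g, 1}$, then attach $p - 1$ copies of $S_{0,3}$ along distinct boundary circles to reach $S_{g, p}$, and finally interleave infinitely many $S_{0,2}$ attachments cyclically across the $p$ boundary circles, opening each into a cylindrical planar end. The union $\bigcup_i K_i$ is then a non-compact surface of genus $g$ with $p$ planar ends, hence homeomorphic to $\Sigma$ by \Cref{richard2}. In the infinite-type case, I would take an efficient exhaustion $L_1 \subseteq L_2 \subseteq \cdots$ of $\Sigma$ (existence via \cite[Exercise 3.3.4]{MR3598162}) with $L_1$ a disk, and interpose between each consecutive pair $L_n \subseteq L_{n+1}$ a finite sequence of single-building-block attachments inside $\Sigma$. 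The key lemma to prove is: given any compact bordered subsurface $K \subseteq \Sigma$ already built by the inductive procedure and any $K' \supseteq K$ with $K \subseteq \text{int}(K')$, one can reach some $K^{(m)} \supseteq K'$ from $K$ by a finite sequence of single-building-block attachments; this follows from a decomposition of $K' \setminus \text{int}(K)$ into pieces homeomorphic to $S_{0,2}$, $S_{0,3}$, and $S_{1,2}$, processed in an order determined by a spanning tree of the adjacency graph rooted at the inner boundary $\partial K$.

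The main obstacle will be that the procedure only permits attaching a building block along a single boundary circle of the current $K_i$, so a complementary region meeting $\partial K_i$ in several circles (for instance, an annulus bridging two circles of $\partial K_i$, or a two-holed torus glued at both of its boundary circles) cannot be absorbed as a single step. I would handle this by refining the exhaustion so that each complementary component of every $L_n$ meets $\partial L_n$ in exactly one circle; any ``bridging'' complementary component is swallowed into $L_n$ beforehand, which preserves the exhaustion property because no component of $\Sigma \setminus L_n$ is dropped. With this preparation, the pants-and-handles decomposition of each $L_{n+1} \setminus \text{int}(L_n)$ has a tree-like adjacency structure, so it can be processed along a single boundary circle at each step, yielding the required infinite inductive construction with $\bigcup_i K_i = \Sigma$.
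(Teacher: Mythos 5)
You should first note that the paper does not prove \Cref{incudctiveconstruction} at all: it is quoted from Goldman and Kerékjártó, so there is no in-paper argument to compare with, and your proposal has to stand on its own. Your finite-type construction plus \Cref{richard2} is fine, and in the infinite-type case you have correctly located the one real difficulty; but your ``key lemma'' is false in the generality in which you state it, and this is not a cosmetic issue. If $K\subseteq \text{int}(K')$ and some component $P$ of $\overline{K'\setminus \text{int}(K)}$ meets $\partial K$ in two circles (a bridge), then \emph{no} surface $K^{(m)}\supseteq K'$ is reachable from $K$ by single-circle attachments, no matter how cleverly you choose blocks or how far you overshoot. Indeed, each new block meets the previously built surface in one full boundary circle, distinct boundary circles of a bordered surface are disjoint, and the first block touching a given circle of $\partial K$ must be glued along exactly that circle; an induction on the number of blocks then shows that every component of $\overline{K^{(m)}\setminus K}$ meets $\partial K$ in exactly one circle. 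A bridging $P$ would lie inside a single such component and force two circles, a contradiction. So the hypothesis that every component of $\overline{K'\setminus\text{int}(K)}$ meets $\partial K$ in exactly one circle must be built into the lemma, and the entire weight of the proof shifts to arranging an exhaustion with this property.

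That is exactly what your ``refinement'' is meant to do, but the justification you give does not work as stated: a complementary component of $\Sigma\setminus L_n$ is non-compact, so it cannot be ``swallowed into $L_n$''; and if you instead mean swallowing a bridging component of $\overline{L_{n+1}\setminus\text{int}(L_n)}$ into $L_n$, this only transfers the same bridging problem to the step from $L_{n-1}$ to the enlarged $L_n$, with no termination argument. The correct repair (essentially Richards' canonical exhaustion) is: for each component $V$ of $\overline{\Sigma\setminus\text{int}(L_n)}$ meeting $\partial L_n$ in $k\geq 2$ circles, adjoin to $L_n$ a regular neighborhood in $V$ of those $k$ circles together with $k-1$ connecting arcs; since this neighborhood deformation retracts to a graph of Euler characteristic $1-k$ and already has the $k$ given circles in its boundary, an Euler characteristic count forces it to be planar with exactly one additional boundary circle, so afterwards $V$ contributes a single complementary component meeting the new boundary in a single circle, still non-compact; doing this for every bad $V$ and re-nesting (pass to a subsequence so each enlarged piece lies in the interior of the next) yields the exhaustion you need. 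With that lemma proved or cited, your tree-ordered absorption of each piece goes through, but you should also record two points your sketch omits: every boundary circle created must eventually serve as a gluing circle (otherwise the union has nonempty boundary), and each absorbed piece must have at least one outer boundary circle, since the disk may only be used at the very first step; both follow from the efficiency (non-compactness of complementary components) of the exhaustion, which is therefore not optional.
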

\begin{figure}[ht]$$\adjustbox{trim={0.0\width} {0.0\height} {0.0\width} {0.0\height},clip}{\def\svgwidth{\linewidth}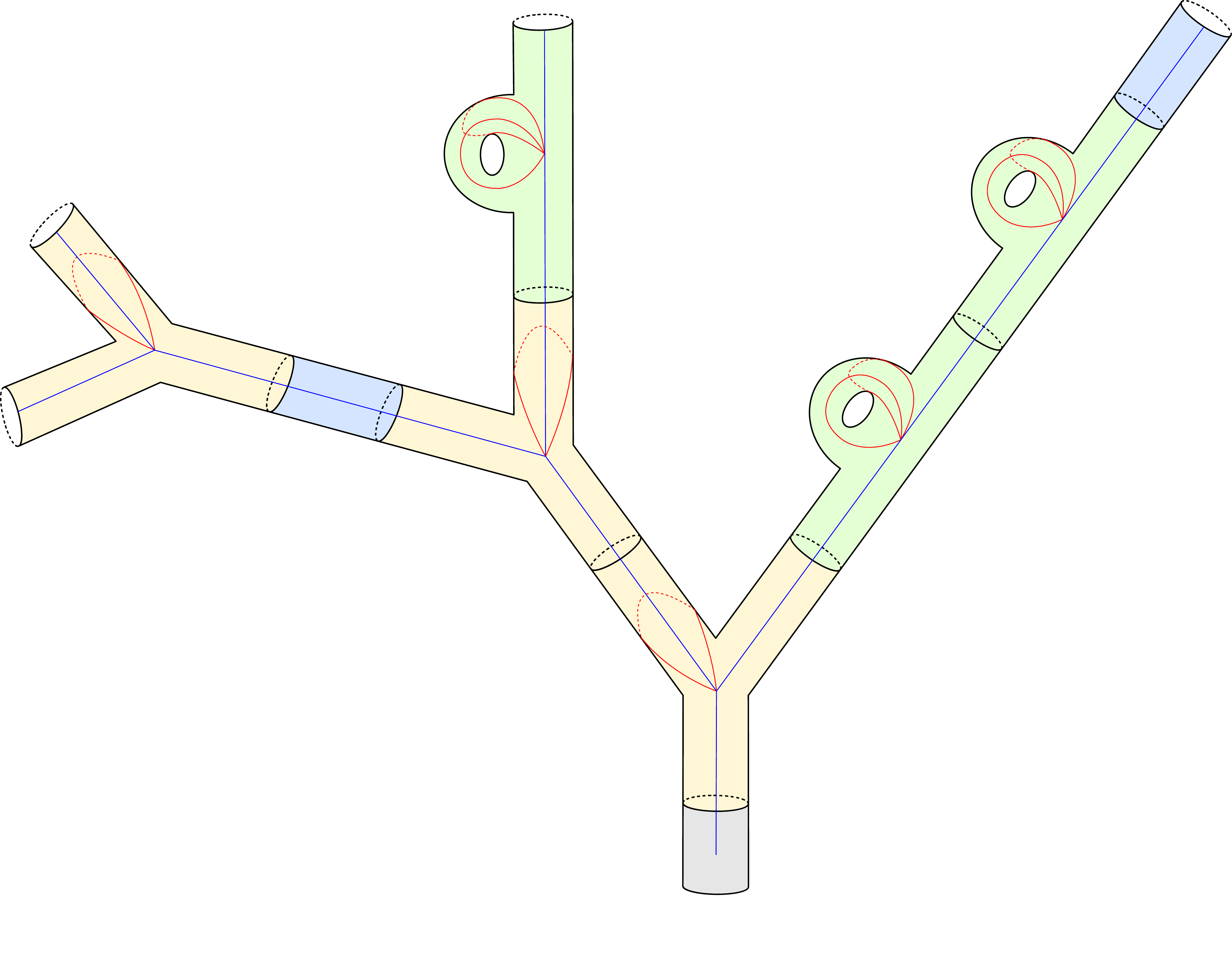}$$\caption{Inductive construction of any non-compact surface and its spine uses four compact bordered surfaces: disk, annulus, pair of pants, and torus with two holes.  }\label{figureofinductiveconstruction}\end{figure}

 \begin{theorem}{\textup{\cite[Section 2.6. and Section 7.3.]{MR275436}}}
 \textup{The graph in \Cref{figureofinductiveconstruction} consisting of blue straight line segments and red circles is a deformation retract of the surface $\Sigma$. Thus, $\Sigma$ is homotopy equivalent to the wedge of at most countably many circles. In particular, $\pi_1(\Sigma)$ is free.} \label{spine}
 \end{theorem}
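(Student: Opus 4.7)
My plan is to induct on the standard form of $\Sigma$ provided by \Cref{incudctiveconstruction}, and assemble a deformation retraction of $\Sigma$ onto the spine $G$ of the figure one building block at a time. Write $\Sigma = \bigcup_{i \ge 1} K_i$ with $K_1 \cong S_{0,1}$ and $K_{i+1} = K_i \cup_{c_i} \mathcal{S}_{i+1}$, where $\mathcal{S}_{i+1} \in \{S_{0,2}, S_{0,3}, S_{1,2}\}$ is attached along a boundary circle $c_i \subseteq \partial K_i$. For each of the four building blocks $\mathcal{S}$, I would construct explicitly the local spine $G_{\mathcal{S}} \subseteq \mathcal{S}$ drawn in the figure (a point for the disk, a core circle with a connecting arc for the annulus, a wedge-of-two-circles together with three boundary-adjacent arcs for the pair of pants, and the analogous three-loop graph for $S_{1,2}$), together with a deformation retraction $H_{\mathcal{S}} \colon \mathcal{S} \times [0,1] \to \mathcal{S}$ onto $G_{\mathcal{S}}$. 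The construction is arranged so that $G_{\mathcal{S}}$ meets each boundary circle $c \subseteq \partial \mathcal{S}$ in exactly one marker point $v_c$ and, near $c$, the homotopy performs a prescribed standard collar-sweep that carries $c$ along a fixed loop ending at $v_c$. With a mirror-symmetric collar template on either side of every gluing circle $c_i$, the two sweeps agree on $c_i$, so the pieces concatenate.

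Inductively, this yields a compatible system of spines $G_i \subseteq K_i$ together with deformation retractions of $K_i$ onto $G_i$. For the limit, I would extend each $H_{\mathcal{S}_{i+1}}$ by the identity outside $\mathcal{S}_{i+1}$ to a homotopy $\widetilde{H}_i$ of $\Sigma$, and concatenate them on the shrinking time intervals $[1 - 2^{-i},\, 1 - 2^{-i-1}] \subseteq [0,1]$, obtaining $H \colon \Sigma \times [0,1] \to \Sigma$. Continuity on $\Sigma \times [0,1)$ is local because any compactum meets only finitely many blocks, and continuity at $t = 1$ follows from the observation that every $x \in \text{int}(K_n)$ is fixed by $\widetilde{H}_i$ for all $i \ge n$ (since $\mathcal{S}_{i+1} \cap \text{int}(K_n) = \varnothing$ then), so each point's trajectory under $H$ stabilises after finitely many stages. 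The limit image $G := \bigcup_i G_i$ is precisely the spine of the figure.

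The principal technical hurdle is the base step, namely constructing the per-block homotopies so that they match exactly on gluing circles without needing later adjustment; this is handled by fixing once and for all a canonical collar template on each boundary circle and insisting that the two sides of every gluing use its mirror image. The $S_{1,2}$ case is the most delicate because of the handle, and I would reduce it to the pair-of-pants case by cutting along a non-separating arc before applying the collar-sweep. Once the deformation retraction $\Sigma \to G$ is in place, $G$ is a connected one-complex with at most countably many cells; contracting a spanning tree exhibits $G \simeq \bigvee_{I} \Bbb S^1$ with $|I| \le \aleph_0$, hence $\Sigma \simeq G$ and $\pi_1(\Sigma)$ is a free group of at most countable rank, as claimed.
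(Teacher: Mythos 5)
First, note that the paper does not prove \Cref{spine} at all: it is quoted from \cite[Sections 2.6 and 7.3]{MR275436} (with \cite[Lemma 3.2.2]{scottbook} indicated for the last two sentences), and Goldman's argument runs through thickenings/regular neighborhoods of the graph rather than a block-by-block concatenation. So your proposal has to stand on its own, and as written it has two genuine gaps.

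The first gap is that the graph you build is not the spine of \Cref{figureofinductiveconstruction}, and in fact is not even homotopy equivalent to $\Sigma$. In the figure, an annulus block contributes only a blue arc (no core circle), a pair of pants contributes exactly one new red circle, and an $S_{1,2}$ block exactly two; this is forced by the count $2r+s$ used in the proof of \Cref{finitetypenoncompactsurace}, and reflects the fact that the class of the incoming gluing circle is already carried by the previously built portion of the spine. Your local spines (core circle for each annulus, wedge of two circles for each pair of pants, a three-loop graph for $S_{1,2}$) are glued at single marker points along a tree pattern, so the first Betti number of their union is the sum of the local ones, which strictly exceeds $\mathrm{rank}\,\pi_1(\Sigma)$ whenever any annulus, pair of pants, or handle block occurs. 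For example, the standard form of $\Bbb R^2$ is a disk followed only by annuli, and your graph is then an infinite chain of circles joined by arcs, of infinite rank, while $\Bbb R^2$ is contractible; similarly for $\Bbb S^1\times\Bbb R$ you would get rank at least $2$ instead of $1$. The conceptual point you are missing is that the global spine's intersection with a block is \emph{not} an absolute spine of that block, so the desired retraction cannot be assembled from deformation retractions of each block onto its own spine.

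The second gap is that the extension-and-concatenation step is inconsistent. You define $\widetilde H_i$ to be $H_{\mathcal S_{i+1}}$ on $\mathcal S_{i+1}$ and the identity elsewhere, but your collar sweep necessarily moves every boundary circle of $\mathcal S_{i+1}$ (it must, since $G_{\mathcal S_{i+1}}$ meets each boundary circle only in the marker point), while points of the neighbouring blocks arbitrarily close to that circle are held fixed during $[1-2^{-i},1-2^{-i-1}]$; hence $\widetilde H_i$ is discontinuous along $\partial\mathcal S_{i+1}$. The mirror-symmetric template only reconciles two blocks retracted \emph{simultaneously}, not on disjoint time intervals, and you cannot repair the step by taking $H_{\mathcal S_{i+1}}$ rel $\partial\mathcal S_{i+1}$, since a boundary-relative deformation retraction would force $\partial\mathcal S_{i+1}\subseteq G_{\mathcal S_{i+1}}$. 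Indeed, in any correct construction the gluing circle has to be swept across the previously attached blocks into the earlier part of the spine, so no homotopy supported in the new block can do the job; this is exactly why the cited proof works globally, taking a closed regular neighborhood $N$ of the whole graph, checking that the closure of $\Sigma\setminus N$ is a disjoint union of half-open annuli $\Bbb S^1\times[0,\infty)$ attached along $\partial N$, pushing each such annulus onto its boundary circle by one globally defined homotopy (no time-interval bookkeeping), and then retracting $N$ onto the graph. Your continuity-at-$t=1$ observation (stages $i\geq n$ do not meet $\mathrm{int}(K_n)$) is the right kind of remark, but it cannot rescue stages that are already ill-defined.
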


\begin{remark}
\textup{An alternative way of proving the last two sentences of \Cref{spine} is given in \cite[Lemma 3.2.2]{scottbook}.}
\end{remark}

\begin{subsection}{The degree of a proper map}\label{degreeofapropermap}
We use singular cohomology with compact support  to define the notion of the degree of a proper map. Recall that for a topological manifold $X$, the $r$-th singular cohomology with compact support $H^r_\textbf{c}(X,\partial X;\Bbb Z)$ is equal to the direct limit $\varinjlim H^r\big(X, \partial X\cup(X\setminus K);\Bbb Z\big)$, where $K$ is a compact subset of $X$ and various maps to define this direct system are inclusion induced maps. It is worth noting that when $X$ is compact topological manifold, $H_\textbf{c}^r(X, \partial X;\mathbb Z)=H^r(X, \partial X;\mathbb Z)$ for all $r$. 

Let $X$ and $Y$ be two topological manifolds. If $f\colon X\to Y$ is a proper map with $f(\partial X)\subseteq \partial Y$, then for each $r$, $f$ induces a map $H^r_\textbf{c}(f)\colon H^r_\textbf{c}(Y,\partial Y;\Bbb Z)\to  H^r_\textbf{c}(X,\partial X;\Bbb Z)$ so that $H^r_\textbf{c}$ becomes a functor in the following sense: the induced map of the identity is the identity, and the induced map of a (well-defined) composition of two proper maps (each of which sends boundary into boundary) is the composition of their induced maps. Moreover, if $\mathcal H\colon X\times [0,1]\to Y$ is a proper homotopy such that $\mathcal H(\partial X, t)\subseteq \partial Y$ for each $t\in [0,1]$, then $H^r_\textbf{c}\big(\mathcal H(-,0)\big)=H^r_\textbf{c}\big(\mathcal H(-,1)\big)$ for all $r$. For more details, see \cite[Pages 320, 322, 323, 339, 341]{MR1325242}.

Let $M$ be a connected, orientable, topological $n$-manifold. Then $H_\textbf{c}^n(M, \partial M;\mathbb Z)$ is an infinite cyclic group; see \cite[Page 342]{MR1325242}. If we choose an orientation of $M$ (i.e., $M$ is oriented), then there exists a unique element $[M]\in H_\textbf{c}^n(M, \partial M;\mathbb Z)$ such that the following hold: $(1)$ $[M]$ generates $H_\textbf{c}^n(M, \partial M;\mathbb Z)$, and $(2)$ for each $x\in M\setminus \partial M$, the unique generator of $H^n(M,M\setminus x;\Bbb Z)$, which comes from the chosen orientation of $M$, is sent to $[M]$ by the inclusion-induced isomorphism $H^n(M,M\setminus x;\Bbb Z)\to H^n_\textbf{c}(M,\partial M;\Bbb Z)$; see \cite[Proof of Lemma 2.1]{MR192475}. Thus, if $f\colon M\to N$ is a proper map between two connected, oriented, topological $n$-manifolds with $f(\partial M)\subseteq \partial N$, then the \emph{(compactly supported cohomological) degree} of $f$ is the unique integer $\deg(f)$ defined as follows: $H^n_\textbf{c}(f)\big([N]\big)=\deg(f)\cdot [M]$. 

By the previous two paragraphs, we have the following: $(i)$ When manifolds are compact, the notion of compactly supported cohomological degree agrees with the notion of the usual degree defined by singular cohomology. $(ii)$ The degree is proper homotopy invariant: If $f, g\colon M\to N$ are proper maps between two connected, oriented, topological $n$-manifolds with $f(\partial M)\cup g(\partial M)\subseteq \partial N$ such that there is a proper homotopy $\mathcal H\colon M\times [0,1]\to N$ with $\mathcal H\big(\partial M\times[0,1]\big)\subseteq \partial N$, then $\deg(f)=\deg(g)$. $(iii)$ The degree is multiplicative: The degree of the (well-defined) composition of two proper maps (each of which sends boundary into boundary) is the product of their degrees.

Therefore, the degree of a proper homotopy equivalence between two oriented, connected, boundaryless $n$-manifolds is $\pm 1$ due to $(ii)$ and $(iii)$ above. We use the following well-known characterizations of a map of degree $\pm 1$. In the below two theorems, $D$ is a disk in a smooth $n$-manifold $X$ means $D$ is the image of $\{z\in\Bbb R^n:|z|\leq 1\}$ under a smooth embedding $\{z\in\Bbb R^n:|z|\leq 2\}\hookrightarrow X$.

\begin{theorem}{\textup{\cite[Lemma 2.1b.]{MR192475}}}
\textup{Let $f\colon M\to N$ be a proper map between two connected, oriented, smooth manifolds of the same dimension such that $f^{-1}(\partial N)=\partial M$. Suppose for a disk $D$ in $\textup{int}(N)$, $f^{-1}(D)$ is a disk in $\text{int}(M)$ such that $f$ maps $f^{-1}(D)$ homeomorphically onto $D$. Then $\deg (f)=+1$ or $-1$ according as $f\vert f^{-1}(D)\to D$ is orientation-preserving or orientation-reversing.}
\label{degreeonemapchecking}\end{theorem}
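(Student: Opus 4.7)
The plan is to reduce the computation of $\deg(f)$ to a local calculation at a single interior point of the disk $D$, using the characterization of the fundamental class $[M]$ via local orientation generators recorded in the paragraph preceding the statement.

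First I would pick any $p\in\text{int}(D)$ and set $q\coloneqq f^{-1}(p)$. Because $f\vert f^{-1}(D)\to D$ is a homeomorphism and $f^{-1}(D)\subseteq\text{int}(M)$, the preimage $q$ is a single interior point of $M$ and $f^{-1}(N\setminus p)=M\setminus q$; hence $f$ restricts to a map of pairs $(M,M\setminus q)\to(N,N\setminus p)$. This fits into the square
\[
\begin{array}{ccc}
H^n(N,N\setminus p;\Bbb Z) & \xrightarrow{\ f^\ast\ } & H^n(M,M\setminus q;\Bbb Z)\\[2pt]
\big\downarrow & & \big\downarrow\\[2pt]
H^n_{\mathbf{c}}(N,\partial N;\Bbb Z) & \xrightarrow{H^n_{\mathbf{c}}(f)} & H^n_{\mathbf{c}}(M,\partial M;\Bbb Z),
\end{array}
\]
in which the vertical arrows are the structural maps into the colimits defining $H^n_{\mathbf{c}}$ (at the compact exhaustion stage $K=\{p\}$ in $N$ and its preimage $f^{-1}(K)=\{q\}$ in $M$); commutativity is functoriality of the direct limit together with naturality of $f$.

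Next, the excerpt recalls that these vertical arrows send the local orientation generator at $p$ (resp.\ at $q$), induced by the chosen orientation of $N$ (resp.\ of $M$), to the fundamental class $[N]$ (resp.\ $[M]$). For the top row, excision identifies $H^n(N,N\setminus p;\Bbb Z)\cong H^n(D,D\setminus p;\Bbb Z)$ and $H^n(M,M\setminus q;\Bbb Z)\cong H^n\big(f^{-1}(D),f^{-1}(D)\setminus q;\Bbb Z\big)$, and under these identifications $f^\ast$ becomes the isomorphism induced by the homeomorphism $f\vert f^{-1}(D)\to D$. Therefore $f^\ast$ carries the local orientation generator at $p$ to $+$ (resp.\ $-$) the local orientation generator at $q$ exactly when $f\vert f^{-1}(D)$ preserves (resp.\ reverses) the orientations that $N,M$ restrict to $D,f^{-1}(D)$.

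Chasing the local generator at $p$ around the square then yields $H^n_{\mathbf{c}}(f)\big([N]\big)=\pm[M]$, which by definition gives $\deg(f)=\pm 1$ with the claimed sign convention. The step I expect to be the most delicate is the commutativity of the square: one must unwind the direct-limit definition of $H^n_{\mathbf{c}}(f)$ and verify that the cone map at the stage $K=\{p\}$ in $N$ is naturally identified, via the proper map $f$, with the cone map at the stage $\{q\}$ in $M$, so that the local-cohomology computation really transports to the compactly supported groups. Once this bookkeeping is done, the remainder is the standard local-to-global degree argument.
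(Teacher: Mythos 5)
Your argument is correct, and it is worth noting that the paper itself does not prove this statement: it is quoted verbatim from Epstein--Olum's work (\cite[Lemma 2.1b.]{MR192475}), so there is no in-paper proof to match against. Your local-to-global computation is the standard proof of that lemma and is fully compatible with the way \Cref{degreeofapropermap} sets things up: since $D\subseteq\textup{int}(N)$ and $f^{-1}(D)\subseteq\textup{int}(M)$, the colimit stages at $K=\{p\}$ and $f^{-1}(K)=\{q\}$ are exactly $H^n(N,N\setminus p;\Bbb Z)$ and $H^n(M,M\setminus q;\Bbb Z)$ (the boundary gets absorbed, as $\partial N\subseteq N\setminus p$ and $\partial M\subseteq M\setminus q$), and the square you draw commutes \emph{by definition} of $H^n_{\mathbf{c}}(f)$ as the map induced on the direct limit by the compatible family $H^n\big(N,\partial N\cup(N\setminus K)\big)\to H^n\big(M,\partial M\cup(M\setminus f^{-1}K)\big)$; the only inputs are properness and the fact (which you verify) that $f^{-1}(p)=\{q\}$, so $f$ is a map of pairs $(M,M\setminus q)\to(N,N\setminus p)$. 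Thus the step you flag as delicate requires no extra work. The remaining points are also fine: the inclusions $(D,D\setminus p)\hookrightarrow(N,N\setminus p)$ and $\big(f^{-1}(D),f^{-1}(D)\setminus q\big)\hookrightarrow(M,M\setminus q)$ are excision isomorphisms because $p\in\textup{int}(D)$ and $q\in\textup{int}\big(f^{-1}(D)\big)$, functoriality identifies the top arrow with $(f\vert f^{-1}(D))^\ast$, and the sign comparison of local orientation generators is precisely the (co)homological meaning of orientation-preserving/reversing for the homeomorphism $f\vert f^{-1}(D)\to D$ with the restricted orientations; combining this with property $(2)$ characterizing $[M]$ and $[N]$ gives $H^n_{\mathbf{c}}(f)([N])=\pm[M]$, i.e.\ $\deg(f)=\pm1$ with the stated sign.
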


The following theorem is due to Hopf, which says that for a degree one map, we can achieve such a disk with nice properties, as mentioned in \Cref{degreeonemapchecking} above, after a proper homotopy. 
\begin{theorem}{\textup{\cite[Theorems 3.1 and 4.1]{MR192475}}} \textup{Let  $f\colon M\to N$ be a proper map between two connected, oriented, smooth manifolds of the same dimension such that $f^{-1}(\partial N)\subseteq \partial M$. Suppose $\deg(f)=\pm 1$. Then there is a proper map $g\colon M\to N$ with $g(\partial M)\subseteq \partial N$ and a homotopy $\mathcal H\colon M\times [0,1]\to N$ with the following properties:} 
\begin{itemize}
    \item \textup{There exists a compact subset $K\subseteq \text{int}(M)$ such that $\mathcal H(x,t)=f(x)$ for all $(x,t)\in (M\setminus K)\times [0,1]$. In particular,  $H$ is a proper homotopy and $\mathcal H(\partial M,t)\subseteq \partial N$ for all $t\in [0,1]$.}
    \item\textup{There exists a disk $D\subseteq \textup{int}(N)$ such that $g^{-1}(D)$ is a disk in $\text{int}(M)$ and $g\vert g^{-1}(D)\to D$ is a homeomorphism.}
\end{itemize}
\label{reverseprocessindegreefinding}
\end{theorem}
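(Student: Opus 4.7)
My plan is a classical regular-value-and-cancellation argument. First, approximate $f$ by a smooth proper map through a homotopy constant off a compact subset of $\mathrm{int}(M)$; smooth approximation in the strong Whitney topology can be arranged to preserve the boundary condition $f^{-1}(\partial N) \subseteq \partial M$. Then apply Sard's theorem to $f$ and to $f|_{\partial M}$ to find regular values of both maps dense in $\mathrm{int}(N) \setminus f(\partial M)$; pick such a $y$. Since $y$ is regular and $y \notin f(\partial M)$, the preimage $f^{-1}(y)$ is a discrete subset of $\mathrm{int}(M)$, and properness of $f$ forces it to be finite: $f^{-1}(y) = \{p_1, \dots, p_m\}$.

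Next, I would choose a disk $D_0 \subseteq \mathrm{int}(N) \setminus f(\partial M)$ of regular values about $y$ small enough that $f^{-1}(D_0) = \widetilde{D}_1 \sqcup \cdots \sqcup \widetilde{D}_m$ is a disjoint union of disks, each $\widetilde{D}_i \subseteq \mathrm{int}(M)$ mapped diffeomorphically onto $D_0$ with orientation sign $\epsilon_i \in \{\pm 1\}$. The local-degree formula yields $\sum_i \epsilon_i = \deg(f) = \pm 1$; assume the sign is $+1$ without loss of generality, so exactly $(m-1)/2$ of the disks carry sign $-1$. The cancellation step is the heart of the argument: while $m > 1$, pick indices $i, j$ with $\epsilon_i = +1$ and $\epsilon_j = -1$, connect $p_i$ and $p_j$ by an embedded arc $\alpha$ in $\mathrm{int}(M) \setminus \{p_k : k \neq i, j\}$ (possible since $\mathrm{int}(M)$ is connected and we are deleting a finite set), and perform a local modification of $f$ in a tubular neighborhood of the disk $\Delta \coloneqq \widetilde{D}_i \cup (\text{tube of } \alpha) \cup \widetilde{D}_j \subseteq \mathrm{int}(M)$. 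Because $\epsilon_i + \epsilon_j = 0$, a compactly supported proper homotopy of $f$ removes $y$ from the image of a slight enlargement of $\Delta$ without introducing new preimages, thereby reducing $m$ by $2$. After $(m-1)/2$ iterations I obtain a smooth proper map $g$, properly homotopic to $f$ via a homotopy constant off a compact $K \subseteq \mathrm{int}(M)$, such that $g^{-1}(y)$ is a single point. Shrinking $D_0$ to a small disk $D$ about $y$ then gives $g^{-1}(D)$ as a single disk in $\mathrm{int}(M)$ mapped diffeomorphically onto $D$, which is the required structure.

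The main obstacle is making the cancellation step precise: one must explicitly construct a compactly supported proper homotopy that removes $y$ from the image of $f$ over $\Delta$, decreases $|f^{-1}(y)|$ by exactly two, keeps $f(\partial M) \subseteq \partial N$, and leaves all other preimages of $y$ untouched. The vanishing of the algebraic intersection number $\epsilon_i + \epsilon_j = 0$ is precisely the obstruction whose vanishing enables such a local null-homotopy; realizing it geometrically — while tracking properness and compact support — is the substance of \cite[Theorems 3.1 and 4.1]{MR192475}, and it uses a Whitney-style isotopy inside $\Delta$ together with a compactly supported push near $\partial \Delta$ to transfer the two preimage disks off $y$ without side effects elsewhere.
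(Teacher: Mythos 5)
The paper itself offers no proof of this statement---it is quoted from Hopf--Epstein (the cited Theorems 3.1 and 4.1 of MR192475)---so your sketch has to stand on its own, and it follows the right classical line: smooth locally, pick a regular value $y$, count preimages with signs, and cancel oppositely signed pairs by compactly supported homotopies. The genuine gap is exactly at the step you flag and then defer. To delete the pair $p_i,p_j$ you must redefine $f$ on a ball $B\supseteq\Delta$ so as to miss $y$ while agreeing with $f$ on $\partial B$; this is possible if and only if $[f|\partial B]$ vanishes in $\pi_{n-1}\bigl(N\setminus\{y\}\bigr)$. The vanishing of the algebraic count $\epsilon_i+\epsilon_j$ only kills the corresponding relative homology class (the local winding/intersection number about $y$). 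For $n\ge 3$ this does upgrade to the required null-homotopy, since $\pi_1(N\setminus\{y\})\to\pi_1(N)$ is an isomorphism and relative Hurewicz identifies $\pi_n(N,N\setminus\{y\})$ with $\mathbb Z$. But for $n=2$---the only case this paper actually uses---it does not: $\pi_1(N\setminus\{y\})\to\pi_1(N)$ has a large kernel, and $[f|\partial B]$ can be a nontrivial element of that kernel with zero winding number about $y$ (for instance a commutator of the peripheral loop with another loop), in which case the cancellation along your chosen arc $\alpha$ is impossible. Whether a pair can be cancelled depends on the choice of $\alpha$, i.e.\ on a Nielsen-type pairing of the preimage points, and showing that the preimages can always be paired and cancelled down to a single point when $\deg(f)=\pm1$ is the real content of Hopf--Kneser--Epstein. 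Your proposal asserts this and refers back to the very theorem being proved, so as a proof it is circular at its central step rather than merely terse.

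Two secondary points. First, you cannot in general smooth $f$ by a homotopy that is constant off a compact subset of $\mathrm{int}(M)$: a map that fails to be smooth outside every compact set cannot be repaired by a compactly supported perturbation. What the argument needs, and what properness gives you, is smoothness only on a compact neighborhood of $f^{-1}(\overline{D_0})$, with the modification supported there; the final $g$ is then merely continuous elsewhere, which is all the statement requires. Second, under the stated hypothesis $f^{-1}(\partial N)\subseteq\partial M$ the set $f(\partial M)$ may a priori meet (even fill) $\mathrm{int}(N)$, so the existence of a regular value $y\notin f(\partial M)$ with $f^{-1}(y)\subseteq\mathrm{int}(M)$ needs justification; in the intended reading $f(\partial M)\subseteq\partial N$ (which the conclusion forces anyway, since $\mathcal H$ is constant near $\partial M$) this is automatic, but you should say so rather than rely on density of regular values in $\mathrm{int}(N)\setminus f(\partial M)$, which could be empty as written.
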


The theorem below is due to Olum, which roughly says that when there is a degree one map, the domain is more massive than the co-domain.
\begin{theorem}{\textup{\cite[Corollary 3.4]{MR192475}}} \textup{Let  $f\colon M\to N$ be a proper map between two connected, oriented, topological manifolds of the same dimension such that $f(\partial M)\subseteq \partial N$. If $\deg(f)=\pm 1$, then $\pi_1(f)\colon \pi_1(M)\to \pi_1(N)$ is surjective.}
\label{degreeonemapsarepi1surjective}
\end{theorem}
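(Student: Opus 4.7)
The plan is to argue by contradiction via covering space theory and a fibre-wise comparison of degrees. Suppose $H\coloneqq \pi_1(f)\big(\pi_1(M)\big)$ is a proper subgroup of $\pi_1(N)$. Let $p\colon \widetilde{N}\to N$ be the connected covering corresponding to $H$, oriented by pulling back the orientation of $N$. The lifting criterion produces a continuous $\tilde{f}\colon M\to \widetilde{N}$ with $p\circ \tilde{f}=f$. First I would verify that $\tilde{f}$ is itself proper and carries $\partial M$ into $\partial \widetilde{N}=p^{-1}(\partial N)$: for any compact $K\subseteq \widetilde{N}$, the set $\tilde{f}^{-1}(K)$ is closed in the compact set $f^{-1}\big(p(K)\big)$, and the boundary condition is automatic since $p$ is a covering. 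Consequently $\tilde{f}$ has a well-defined integer degree $\deg(\tilde{f})$.

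The main step is to relate the two degrees by a fibre count. Since the intended application is to surfaces, I would pass to a smooth representative up to proper homotopy and choose a regular value $y\in \textup{int}(N)$ of $f$; properness of $f$ makes $f^{-1}(y)$ a finite subset of $\textup{int}(M)$. Because $p$ is an orientation-preserving local diffeomorphism, every $\tilde{y}\in p^{-1}(y)$ is a regular value of $\tilde{f}$, and the local signs of $f$ and $\tilde{f}$ coincide at each preimage. Decomposing $f^{-1}(y)=\bigsqcup_{\tilde{y}\in p^{-1}(y)}\tilde{f}^{-1}(\tilde{y})$ and summing local signs yields the key identity
\[
\deg(f)\;=\;\sum_{\tilde{y}\in p^{-1}(y)}\deg(\tilde{f}),
\]
where I have used the standard fact that the signed preimage count of a proper smooth map into a connected oriented manifold at any regular value equals its degree, irrespective of whether the fibre is empty. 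For general topological manifolds the same identity holds via the local cohomology description of degree computed in a Euclidean chart around $y$ together with the sheet-wise disjoint preimage charts of $p$.

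Finally I would split into two cases. If $[\pi_1(N):H]=k$ is finite, the displayed identity reads $\pm 1=k\cdot \deg(\tilde{f})$, forcing $k=1$ and contradicting $H\subsetneq \pi_1(N)$. If $[\pi_1(N):H]=\infty$, then $p^{-1}(y)$ is infinite while $f^{-1}(y)$ is finite, so at least one fibre $\tilde{f}^{-1}(\tilde{y})$ is empty; this pins $\deg(\tilde{f})=0$, whence the displayed sum is zero and $\deg(f)=0$, contradicting $\deg(f)=\pm 1$. Either way a contradiction arises, so $\pi_1(f)$ is surjective. I expect the infinite-index case to be the main subtlety: there $p$ itself is not proper, so the multiplicativity $\deg(p\circ \tilde{f})=\deg(p)\deg(\tilde{f})$ cannot be invoked directly, and the identity above must be justified by the fibre-wise count (or its local cohomology analog) rather than by factoring on compactly supported cohomology.
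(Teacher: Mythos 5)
Your argument is essentially correct, but note that the paper does not prove this statement at all: it is quoted from Olum \cite[Corollary 3.4]{MR192475}, so there is no internal proof to compare against. Your route (lift $f$ through the cover $p\colon\widetilde N\to N$ corresponding to $H=\operatorname{im}\pi_1(f)$, check $\tilde f$ is proper, and compare $\deg(f)$ with $\deg(\tilde f)$ sheet by sheet) is the standard modern argument and is close in spirit to Olum's, which is carried out cohomologically rather than by counting preimages. All the delicate points are handled correctly: $\tilde f^{-1}(K)$ closed in the compact $f^{-1}(p(K))$, orientation pulled back so local signs agree, the observation that multiplicativity of degree is unavailable because $p$ is not proper for infinite index, and the dichotomy finite index $\Rightarrow \pm1=k\cdot\deg(\tilde f)\Rightarrow k=1$ versus infinite index $\Rightarrow$ some empty fibre $\Rightarrow\deg(\tilde f)=0\Rightarrow\deg(f)=0$. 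In the infinite-index case you could also just quote \Cref{non-surjectivepropermaphasdegreezero}: an empty fibre means $\tilde f$ is not surjective, hence has degree zero.

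The one soft spot is the topological case, which is what the theorem actually asserts. For topological manifolds there is no regular value and $f^{-1}(y)$ need not be finite or even discrete, so ``signed preimage count'' literally makes no sense; your closing sentence gestures at the right fix but does not carry it out. The correct replacement is: $f^{-1}(y)$ is compact and is partitioned by the open sheets of an evenly covered neighborhood of $y$ into clopen pieces $\tilde f^{-1}(\tilde y)$, only finitely many of which are nonempty (an infinite clopen partition of a compact set is impossible); one then uses local degrees over these compact pieces, their additivity, and the fact that the local degree of $\tilde f$ over any single point of $\widetilde N$ equals $\deg(\tilde f)$ --- which follows from property $(2)$ in the paper's definition of $[M]$ via local classes $H^n(N,N\setminus y;\Bbb Z)\to H^n_\textbf{c}(N,\partial N;\Bbb Z)$ and naturality. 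This is essentially Olum's actual proof. For the purposes of this paper the smooth version you proved in detail suffices, since every application is to surfaces and the maps are smoothed by \Cref{whiteny} before the degree is used; but as a proof of the quoted theorem in full generality your sketch of the topological case would need that local-degree bookkeeping made explicit.
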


\end{subsection}

\section{Ingredients For Proving \texorpdfstring{\Cref{MR1}}{Theorem \ref{MR1}}}

\subsection{Decomposition of a non-compact surface into pair of pants and punctured disks}
  Every compact surface of genus $g\geq 2$ is the union (with pairwise disjoint interiors) of $2g-2$ many copies of the pair of pants. But the same thing doesn't happen for non-compact surfaces; for example, the thrice punctured sphere is not a union (with pairwise disjoint interiors) of copies of the pair of pants; we need copies of the punctured disk. The main aim of this section is to prove that every non-compact surface, except the plane and the once punctured torus, decomposes into copies of the pair of pants and copies of the punctured disk when we cut it along a collection of circles, where each circle of this collection has an open neighborhood that does not intersect with any other circles of this collection.

First, we define a few terminologies.

\begin{definition}
\textup{Let $X$ be a space, and let $\{X_\alpha:\alpha\in \mathscr I\}$ be a collection of subsets of $X$. We say $\{X_\alpha:\alpha\in \mathscr I\}$ is a \emph{locally finite collection} and write $X_\alpha\to \infty$ if, for each compact subset $K$ of $X$, $X_\alpha\cap K=\varnothing$ for all but finitely many $\alpha\in \mathscr I$.}
\end{definition}
\begin{definition}
\textup{Let $\mathscr A$ be a pairwise disjoint collection of smoothly embedded circles on a surface $\Sigma$. We say $\mathscr A$ is a \emph{locally finite curve system} (in short, \textup{LFCS}) on $\Sigma$ if $\mathscr A$ is a locally finite collection.} \label{LFCS}
\end{definition}
\begin{remark}
\textup{Let $\mathscr A$ be an \textup{LFCS} on a surface $\Sigma$. Notice that $\cup \mathscr A$ (i.e., the union of all elements of $\mathscr A)$ is a closed subset of $\Sigma$ as well as a smoothly embedded submanifold of $\Sigma$ so that the set of all components of $\cup\mathscr A$ is $\mathscr A$. But to avoid too many notations,  whenever needed, we will think of $\mathscr A$ and $\cup\mathscr A$ as the same without any harm.}
\end{remark}
\begin{definition}
\textup{Let $\mathscr A$ be an \textup{LFCS} on a surface $\Sigma$. Suppose there exists an at most countable collection $\{\Sigma_n\}$ of bordered sub-surfaces of $\Sigma$ such that the following hold: $(1)$ each $\Sigma_n$ is a closed subset of $\Sigma$; $(2)$ $\textup{int}(\Sigma_n)\cap \textup{int}(\Sigma_m)=\varnothing$ if $n\neq m$; $(3)$ $\cup_n\Sigma_n=\Sigma$; and $(4)$ $\cup_n\  \partial\Sigma_n=\cup \mathscr A$. In this case, we say $\mathscr A$ \emph{decomposes $\Sigma$ into bordered sub-surfaces}, where \emph{complementary components} are  $\{\Sigma_n\}$. Also, we call each component of $\mathscr A$ a \emph{decomposition circle}.} \label{deompositiondefinition}
\end{definition}

The following theorem asserts that any non-compact surface other than the plane has a decomposition, where each complementary part is either a pair of pants, a one-holed torus, or a punctured disk. This way of decomposition of the co-domain of a pseudo proper homotopy equivalence will be used in all cases.

\begin{theorem}
\textup{Let $\Sigma$ be a non-compact surface not homeomorphic to $\Bbb R^2$. Then there is an \textup{LFCS} $\mathscr C$ on $\Sigma$ such that $\mathscr C$ decomposes $\Sigma$ into bordered sub-surfaces, and a complementary component of this decomposition is homeomorphic to either $S_{1,1}$ (used at most once), $S_{0,3}$, or $S_{0,1,1}$.} \label{completedecomposition}
\end{theorem}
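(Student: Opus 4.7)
Proof plan.

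The plan is to start from Goldman's inductive construction (\Cref{incudctiveconstruction}) and perform local surgery on the resulting system of gluing circles until every complementary piece has one of the prescribed topologies. Putting $\Sigma$ in standard form with exhaustion $K_1\subseteq K_2\subseteq\cdots$, where $K_1\cong S_{0,1}$ and each $K_{i+1}$ is obtained from $K_i$ by gluing a building block $\mathcal B_{i+1}\in\{S_{0,2},\,S_{0,3},\,S_{1,2}\}$ along a smoothly embedded circle $\gamma_{i+1}\subseteq\partial K_i$, I get a preliminary \textup{LFCS} $\mathscr C_0:=\{\gamma_i\}_{i\geq 2}$ (any compact subset of $\Sigma$ meets only finitely many $\mathcal B_i$). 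Its complementary pieces are $K_1$ together with the $\mathcal B_i$, so the forbidden pieces I must eliminate are the starting disk $K_1$, the annular blocks $\cong S_{0,2}$, and the two-holed-torus blocks $\cong S_{1,2}$.

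I modify $\mathscr C_0$ in three steps. \textbf{(a)} Inside each $\mathcal B_i\cong S_{1,2}$ I add two disjoint smoothly embedded circles — a non-separating one followed by a separating one — which split $\mathcal B_i$ into two copies of $S_{0,3}$. \textbf{(b)} I process annular chains: the sub-multigraph of the adjacency graph of the pieces spanned by annulus blocks decomposes into connected ``chains.'' For each finite chain, both of whose ends abut non-annulus pieces, I delete all its internal $\gamma_i$'s, so the chain merges with an adjoining piece without altering that piece's homeomorphism type (attaching an annulus along a boundary circle just lengthens the boundary). For each infinite chain — necessarily running out to a planar end of $\Sigma$ — I keep only the single circle separating the chain from the rest of $\Sigma$, so the chain collapses to one $S_{0,1,1}$ piece. \textbf{(c)} I absorb $K_1$ by deleting $\gamma_2$: in the generic case this produces a disk-plus-pants $\cong S_{0,2}$, which I feed back into the annular-chain processing of step (b); in the exceptional case where the only non-annulus block adjacent to $K_1$ in the construction is a single $\mathcal B_2\cong S_{1,2}$ whose cancellation against $K_1$ leaves a genus-one core, I skip the refinement of that block in step (a) and take $K_1\cup\mathcal B_2\cong S_{1,1}$ as my single allowed $S_{1,1}$ piece (this occurs, for example, when $\Sigma\cong S_{1,0,1}$).

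The main obstacle is the bookkeeping in step (c): the annulus produced by absorbing the disk must be re-routed through step (b), and in rare ``sandwiched'' configurations where its two boundary circles both lie on the same pant, the only way to remove that $S_{0,2}$ is to merge it with the adjacent pant, producing an $S_{1,1}$; the content of the ``at most once'' clause is the claim that across the entire procedure this happens at most once, which follows from the fact that such a sandwich requires all the genus of $\Sigma$ to be concentrated in one block. Local finiteness is preserved throughout because each surgery either deletes circles or inserts at most two circles inside a single compact block, and the original blocks form a locally finite cover of $\Sigma$. The hypothesis $\Sigma\not\cong\Bbb R^2$ enters in step (c) precisely to guarantee that at least one non-annulus block exists beyond $K_1$, so the absorption terminates in an allowed piece rather than leaving a bare disk.
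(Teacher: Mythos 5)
Your proposal is correct and follows essentially the same route as the paper's proof: start from Goldman's standard form, absorb finite annulus runs and the initial disk into adjacent blocks (producing the single $S_{1,1}$ exactly when the first non-annulus block is a two-holed torus), split each remaining $S_{1,2}$ into two pairs of pants, and collapse each infinite annulus run to a punctured disk $S_{0,1,1}$. The only remark worth making is that your ``sandwiched'' $S_{0,2}$ scenario is vacuous: in Goldman's construction every building block is glued to the previously built surface along exactly one circle, so the adjacency structure of blocks is a tree and no single pair of pants can meet the absorbed annulus along both of its boundary circles.
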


\begin{proof}Enough to find a collection $\{\Sigma_n\}$ of bordered sub-surfaces of $\Sigma$ with four properties, as mentioned in \Cref{deompositiondefinition}, so that each $\Sigma_n$ is homeomorphic to either $S_{0,3}$, $S_{1,1}$, or $S_{0,1,1}$. For that, consider an inductive construction of  $\Sigma$; see \Cref{incudctiveconstruction}. Now, a finite sequence of annuli, when added to the compact bordered surface used just before it, can be ignored. Thus, we may assume $S_{0,3}$ or $S_{1,2}$ is used after $S_{0,1}$ without loss of generality because of $\Sigma\not\cong \Bbb R^2$, and hence pushing $S_{0,1}$ into $\text{int}(S_{0,3})$ or $\text{int}(S_{1,2})$, we end up with $S_{0,2}$ (which can be ignored) or $S_{1,1}$. Now, the proof will be completed by observing the following:  $S_{1,2}$ can be decomposed into two copies of $S_{0,3}$, and $S_{0,1,1}$ is the union (with pairwise disjoint interiors) of countably many copies of $S_{0,2}$. \end{proof}

\begin{remark}
\textup{A statement closely related to \Cref{completedecomposition} is in \cite[Theorem 1.1.]{MR2025333}, which says that \emph{``every surface except for the sphere, the plane, and the torus is the union (with pairwise disjoint interiors) of copies of the pair of pants and copies of the punctured disk''}. But due to the part (4) of \Cref{deompositiondefinition}, if we want that any complementary component is homeomorphic to only either $S_{0,3}$ or $S_{0,1,1}$, then $\Sigma\not\cong S_{1,0,1}$ also needs to consider; see \Cref{oneend} and  \Cref{completedecomposition1} below.}
\end{remark}

\begin{theorem}
\textup{Let $\Sigma$ be a non-compact surface that is not homeomorphic to either $\Bbb R^2$ or $S_{1,0,1}$. Then there is an \textup{LFCS} $\mathscr C'$ on $\Sigma$ such that $\mathscr C'$ decomposes $\Sigma$ into bordered sub-surfaces, and a complementary component of this decomposition is homeomorphic to either  $S_{0,3}$ or $S_{0,1,1}$.} \label{completedecomposition1}
\end{theorem}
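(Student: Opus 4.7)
The plan is to bootstrap off of \Cref{completedecomposition} and then perform a local surgery on the resulting LFCS to eliminate the at-most-one $S_{1,1}$-piece whenever it appears. The exclusion $\Sigma\not\cong S_{1,0,1}$ enters exactly to rule out the one configuration in which such a surgery would be obstructed.

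First I would apply \Cref{completedecomposition} to obtain an LFCS $\mathscr C$ whose complementary components are each homeomorphic to $S_{0,3}$, $S_{0,1,1}$, or (used at most once) $S_{1,1}$. If no $S_{1,1}$ occurs, take $\mathscr C' := \mathscr C$ and we are done. Otherwise let $T$ be the unique complementary component with $T \cong S_{1,1}$, let $C \in \mathscr C$ be its single boundary circle, and let $P$ be the other complementary component incident to $C$. The key case distinction is the topological type of $P$. If $P \cong S_{0,1,1}$, then both $T$ and $P$ have $C$ as their only boundary, so a tubular neighborhood of $C$ is contained in $T\cup P$; this makes $T\cup P$ both open and closed in $\Sigma$, and connectedness of $\Sigma$ forces $\Sigma = T\cup P \cong S_{1,0,1}$, contradicting the hypothesis. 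Hence $P \cong S_{0,3}$, and $T\cup P$ is a compact genus-one bordered surface with exactly two boundary components, i.e.\ $T \cup P \cong S_{1,2}$. Using the standard decomposition of $S_{1,2}$ into two pair of pants (already invoked at the end of the proof of \Cref{completedecomposition}), choose two disjoint smoothly embedded simple closed curves $c_1, c_2 \subset \text{int}(T\cup P)$ that cut $T\cup P$ into two copies of $S_{0,3}$, and set $\mathscr C' := (\mathscr C \setminus \{C\}) \cup \{c_1, c_2\}$.

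Since the modification is supported in the compact set $T\cup P$, the collection $\mathscr C'$ is still a pairwise disjoint family of smoothly embedded circles and is locally finite on $\Sigma$. Its complementary components coincide with those of $\mathscr C$ outside $T \cup P$, and inside $T\cup P$ they consist of exactly two pair of pants, so every complementary component of $\mathscr C'$ is homeomorphic to $S_{0,3}$ or $S_{0,1,1}$, as required. The only genuinely substantive point is the dichotomy for $P$: handing off the surgery to the case $P \cong S_{0,3}$ requires showing that the alternative $P \cong S_{0,1,1}$ forces $\Sigma \cong S_{1,0,1}$, which is exactly where the hypothesis $\Sigma \not\cong S_{1,0,1}$ is consumed; everything else is a routine reuse of the pants decomposition of $S_{1,2}$.
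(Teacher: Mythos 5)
Your proof is correct, but it takes a genuinely different route from the paper's. The paper goes back to Goldman's inductive construction: when $\Sigma$ has at least two ends it invokes \Cref{interchange} to move a copy of $S_{0,3}$ to the step right after the initial disk (so that absorbing the disk produces an annulus rather than an $S_{1,1}$), and when $\Sigma$ has exactly one end it handles the only two remaining surfaces (the Loch Ness monster and $S_{g,0,1}$ with $g\geq 2$) by the explicit decompositions of \Cref{oneend}. You instead post-process the output of \Cref{completedecomposition}: if the (at most one) $S_{1,1}$-piece $T$ occurs, you examine the complementary piece $P$ on the other side of its boundary circle $C$; the case $P\cong S_{0,1,1}$ forces $\Sigma=T\cup P\cong S_{1,0,1}$ by an open-and-closed argument, which is precisely what the hypothesis excludes, and in the remaining case $P\cong S_{0,3}$ you re-cut the compact subsurface $T\cup P\cong S_{1,2}$ along two new circles into two pairs of pants, replacing $C$ in the LFCS. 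This is a clean, local surgery that avoids both \Cref{interchange} and the case analysis on the number of ends; its only inputs are \Cref{completedecomposition} and the pants decomposition of $S_{1,2}$, which the paper uses anyway. The verifications you leave implicit (that the piece on the far side of $C$ is well defined and distinct from $T$, that $T\cap P=C$ so that $T\cup P$ is indeed an embedded copy of $S_{1,2}$, and that the surgered collection still satisfies all four conditions of \Cref{deompositiondefinition} and remains locally finite) are routine and at the level of detail the paper itself adopts. What the paper's approach buys is uniformity with its other arguments, which repeatedly reuse the inductive-construction-plus-interchange machinery; what yours buys is a shorter, more self-contained derivation from \Cref{completedecomposition} and a sharper view of exactly where the exclusion of $S_{1,0,1}$ is consumed.
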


\begin{figure}[ht]$$\adjustbox{trim={0.0\width} {0.0\height} {0.0\width} {0.0\height},clip}{\def\svgwidth{\linewidth}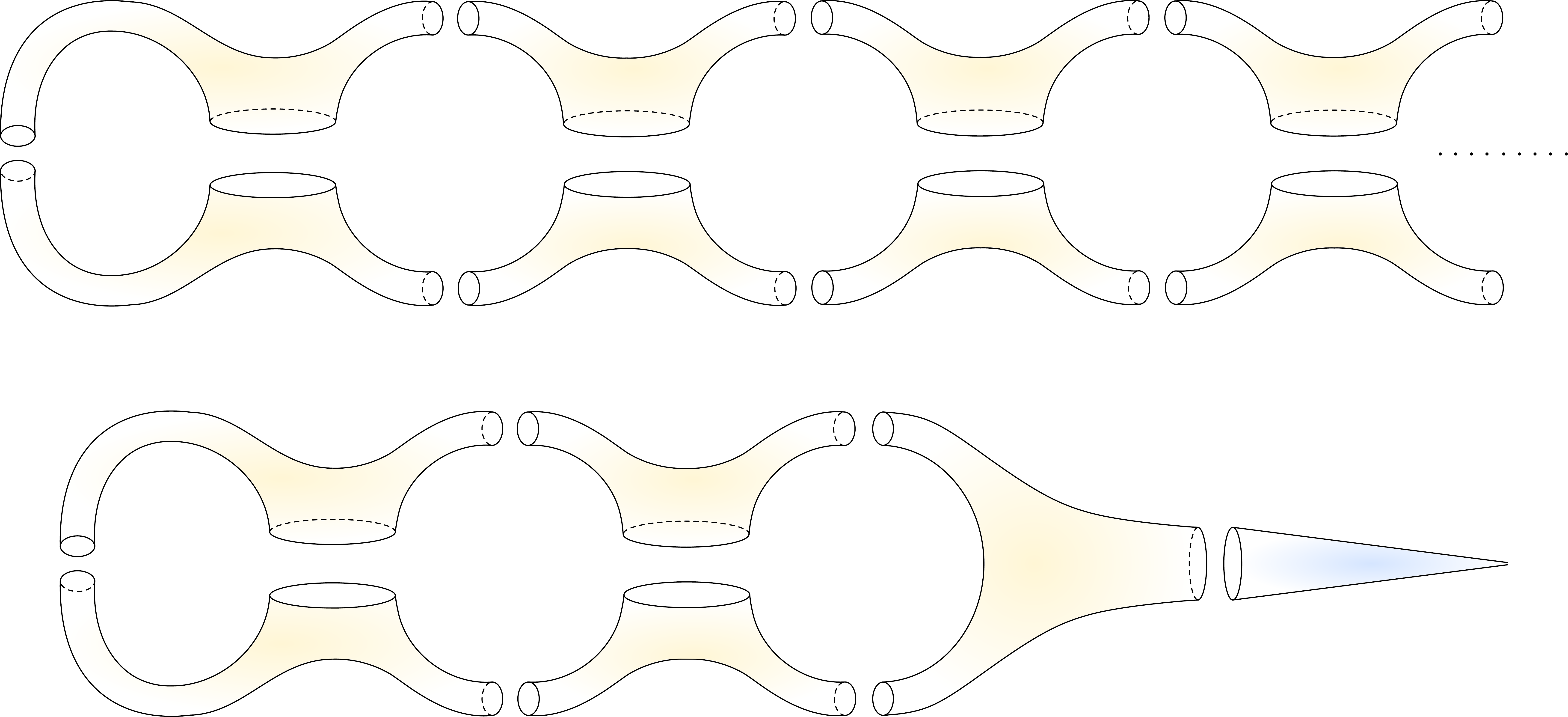}$$\caption{On the top: Decomposition of Loch Ness Monster into countably infinitely many copies of the pair of pants.  At the bottom: Decomposition of $S_{3,0,1}$ into five copies of pair of pants and a copy of the punctured disk. }\label{oneend}\end{figure}

\begin{proof}
Enough to find a collection $\{\Sigma_n\}$ of bordered sub-surfaces of $\Sigma$ with four properties, as mentioned in \Cref{deompositiondefinition}, so that each $\Sigma_n$ is homeomorphic to either $S_{0,3}$ or $S_{0,1,1}$. For that, consider an inductive construction of  $\Sigma$; see \Cref{incudctiveconstruction}. We will divide the whole proof into two cases, depending on whether $\Sigma$ has at least two ends.

At first, suppose the number of ends of $\Sigma$ is at least two. Now, the definition of the space of ends tells us that we need to use at least one pair of pants in the inductive construction of $\Sigma$. By \Cref{interchange}, we may assume that in this inductive construction, a pair of pants is used just after the disk.  Now, an argument similar to before (see the proof of \Cref{completedecomposition}) concludes this case.

Next, consider the case when the number of ends of $\Sigma$ is precisely one. That is, $\Sigma$ can be either Loch Ness Monster (the infinite genus surface with one end) or $S_{g,0,1}$ with $g\geq 2$. Loch Ness Monster decomposes into countably infinitely many copies of the pair of pants, and $S_{g,0,1}$ with $g\geq 2$ decomposes into $2g-1$ many copies of the pair of pants and a copy of the punctured disk. See \Cref{oneend}.\end{proof}

To prove \Cref{completedecomposition1}, we used \Cref{interchange} below, which says that in an inductive construction of a non-compact surface, interchanging the positions of the compact bordered surfaces used in the first few inductive steps doesn't change the homeomorphism type, and its proof is based on the observation that the portions outside compact subsets determine the space of ends.

\begin{lemma}
\textup{Let $\Sigma$ be a non-compact surface with some inductive construction $\mathscr I$. Denote the compact bordered subsurface of $\Sigma$ after the $i$-th step of $\mathscr I$ by $K_i$. Suppose $\{\mathcal B_1, ..., \mathcal B_n:$ each $\mathcal B_\ell$ is homeomorphic to either $S_{0,2}$, $S_{0,3}$, or $S_{1,2}\}$ is a finite collection of compact bordered surfaces such that $\mathcal B_\ell$ is used to construct $K_{i_\ell+1}$ from $K_{i_\ell}$ for each $\ell=1,...,n$. Then there exists a non-compact surface $\Sigma'$ with an inductive construction $\mathscr I'$ such that $\Sigma'\cong \Sigma$ and  $\mathcal B_\ell$ is used to construct $K_{\ell+1}'$ from $K_{\ell}'$ for each $\ell=1,...,n$; where $K_i'$ denotes the compact bordered subsurface of $\Sigma$ after the $i$-th step of $\mathscr I'$.} \label{interchange}
\end{lemma}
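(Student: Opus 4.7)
The plan is to construct $\mathscr I'$ in two phases: first, reshuffle the first $i_n$ blocks of $\mathscr I$ so that $\mathcal B_1, \dots, \mathcal B_n$ are used in steps $1, \dots, n$ of $\mathscr I'$; second, transport the remaining (infinitely many) attachments of $\mathscr I$ across a homeomorphism between the two resulting compact bordered surfaces to obtain the rest of $\mathscr I'$. The argument hinges on the following elementary observation about the inductive construction: after $i$ steps the compact bordered subsurface $K_i$ has genus equal to the number of $S_{1,2}$-blocks used in the first $i-1$ attachments, and it has $1 + \#\{S_{0,3}\text{-blocks used so far}\}$ boundary components. This is because $K_1 = S_{0,1}$ has genus $0$ and one boundary component, attaching $S_{0,2}$ changes neither invariant (one boundary of $K_i$ is consumed and one new boundary is added), attaching $S_{0,3}$ raises the boundary count by one, and attaching $S_{1,2}$ raises the genus by one. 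Consequently, by the classification $S_{g,b}$ of compact bordered surfaces, the homeomorphism type of $K_i$ depends only on the multiset of blocks used in the first $i-1$ attachments, and not on their order or on the particular gluing maps. The main (and essentially only) non-trivial input is this count; everything else is transport by homeomorphism.

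Put $m := i_n$ and let $\mathcal S_1, \dots, \mathcal S_m$ denote the blocks used at steps $1, \dots, m$ of $\mathscr I$, so that $\{\mathcal B_1, \dots, \mathcal B_n\} \subseteq \{\mathcal S_1, \dots, \mathcal S_m\}$ as multisets. I would define the first $m$ steps of $\mathscr I'$ by attaching $\mathcal B_1, \dots, \mathcal B_n$ in that order during steps $1, \dots, n$, and then attaching the remaining $m - n$ blocks of $\{\mathcal S_1, \dots, \mathcal S_m\} \setminus \{\mathcal B_1, \dots, \mathcal B_n\}$ (in any order) during steps $n+1, \dots, m$, each time gluing one boundary circle of the new block to any available boundary circle of the current stage via any gluing homeomorphism. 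Since attaching $S_{0,2}$, $S_{0,3}$, or $S_{1,2}$ cannot destroy all boundary components, this is always possible. By the observation above, $K'_{m+1}$ and $K_{m+1}$ have equal genus and equal number of boundary components, so I can fix a homeomorphism $\varphi \colon K_{m+1} \to K'_{m+1}$.

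For $i > m$, I would extend $\mathscr I'$ by transporting the attaching data of $\mathscr I$ through a growing family of homeomorphisms $\varphi_i \colon K_i \to K'_i$ with $\varphi_{m+1} := \varphi$. At step $i$ of $\mathscr I$, the block $\mathcal S_i$ is attached to $K_i$ along a boundary circle $c_i \subseteq \partial K_i$ via some gluing map $\alpha_i$; in $\mathscr I'$, attach a copy of the same $\mathcal S_i$ to $K'_i$ along $\varphi_i(c_i) \subseteq \partial K'_i$ via $\varphi_i|_{c_i} \circ \alpha_i$, producing $K'_{i+1}$, and let $\varphi_{i+1}$ extend $\varphi_i$ by the identity on $\mathcal S_i$. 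Then $\varphi_{i+1} \colon K_{i+1} \to K'_{i+1}$ is a homeomorphism. Taking the direct limit along the exhaustions $\{K_i\}$ and $\{K'_i\}$, the maps $\varphi_i$ assemble into a homeomorphism $\Sigma \to \Sigma'$, so $\Sigma' \cong \Sigma$, and by construction $\mathcal B_\ell$ is used to build $K'_{\ell+1}$ from $K'_\ell$ for each $\ell = 1, \dots, n$.
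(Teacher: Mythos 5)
Your proposal is correct, but it reaches the conclusion by a genuinely different route than the paper. Both arguments share the first move: the rearranged initial compact piece is homeomorphic to the original one because the genus and the number of boundary circles of the stage-$i$ surface depend only on the multiset of blocks attached so far (your bookkeeping genus $=\#S_{1,2}$, boundary $=1+\#S_{0,3}$ is exactly what the paper uses to see $g(K_{n_0})=g(K_{n_0}')$ and $\partial K_{n_0}\cong\partial K_{n_0}'$). They diverge afterwards: the paper freezes the complement $\mathbf S=\Sigma\setminus\textup{int}(K_{n_0})$, reglues it to the rearranged piece, and then concludes $\Sigma'\cong\Sigma$ by appealing to Kerékjártó's classification (\Cref{richard2}), matching the genus and the spaces of (non-planar) ends; you instead avoid the classification of non-compact surfaces entirely, transporting the remaining attaching data of $\mathscr I$ across a homeomorphism $K_{m+1}\to K_{m+1}'$ and assembling the compatible homeomorphisms $\varphi_i$ into an explicit homeomorphism $\Sigma\to\Sigma'$ in the limit. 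The paper's route is shorter given that \Cref{richard2} is already a standing tool in the paper; yours is more elementary and self-contained (only the classification of compact bordered surfaces is needed) and produces an explicit homeomorphism rather than a mere existence statement. Two details you gloss over are standard but worth acknowledging: the glued stage $K_{i+1}'=K_i'\cup_{\varphi_i|_{c_i}\circ\alpha_i}\mathcal S_i$ is well defined and $\varphi_i\cup\mathrm{id}_{\mathcal S_i}$ descends to a homeomorphism; and the union of the $\varphi_i$ is a homeomorphism because every boundary circle of every stage is eventually absorbed, so $\bigcup_i\textup{int}(K_i)=\Sigma$ (this is implicit in the notion of inductive construction, \Cref{incudctiveconstruction}, and is also what makes your phase-one reshuffling legitimate).
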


\begin{figure}[ht]$$\adjustbox{trim={0.0\width} {0.0\height} {0.0\width} {0.14\height},clip}{\def\svgwidth{\linewidth}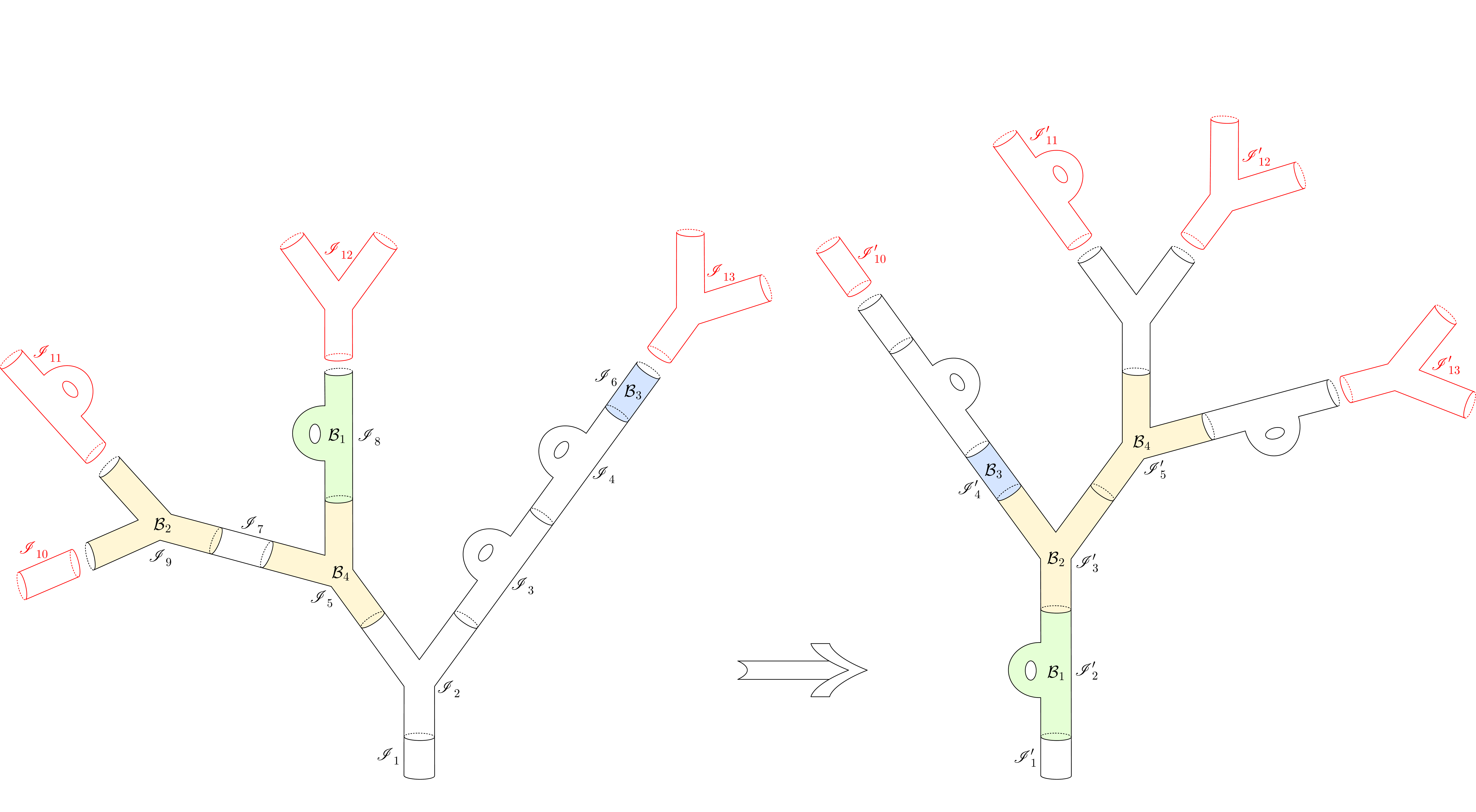}$$\caption{$\mathscr I_r$ (resp. $\mathscr I_r')$ denotes the $r$-th step of $\mathscr I$ (resp. $\mathscr I'$). Notice that here, $n_0=9$ and $n=4$. Also, the red-colored compact bordered surfaces are the portions of $\mathbf S$, and the inductive construction of $\mathbf S$ given by $\mathscr I'$ is inherited from the inductive construction of $\mathbf S$ given by $\mathscr I$.}\label{arrange}\end{figure}

\begin{proof}
Let $n_0$ be a positive integer such that $K_{n_0}$ contains each $\mathcal B_\ell$. Define $\mathbf S\coloneqq \Sigma\setminus\text{int}(K_{n_0})$. Thus $\mathbf S$ is a bordered subsurface of $\Sigma$ with $\partial \mathbf S=\partial K_{n_0}$. Now, consider all copies of different building blocks used up to the $n_0$-th step of $\mathscr I$, and inside $K_{n_0}$ interchange them so that $\mathcal B_1,..., \mathcal B_n$ comes just after the initial disk $K_1$ one by one following the increasing order of their indices. Denote the resultant of this interchange process by $K_{n_0}'$. So $ K_{n_0}\cong  K_{n_0}'$ as $g(K_{n_0})=g(K_{n_0}')$ and $\partial K_{n_0}\cong \partial K_{n_0}'$. Define a non-compact surface $\Sigma'$ as $\Sigma'\coloneqq K_{n_0}'\cup_{\partial K_{n_0}'\equiv\partial \mathbf S} \mathbf S$. Therefore, $\Sigma\setminus K_{n_0}= \text{int}(\mathbf S)=\Sigma'\setminus K_{n_0}'$ (notice that we are thinking $\mathbf S$ as a subset of $\Sigma'$ using the obvious embedding $\mathbf S\hookrightarrow \Sigma'$).

Choose an inductive construction $\mathscr I'_{\leq n_0}$ of $K_{n_0}'$ such that $i$-th element of the ordered sequence $K_1, \mathcal B_1,..., \mathcal B_\ell$ is used in the $i$-th step of $\mathscr I'_{\leq n_0}$. Also, $\mathscr I$ gives a truncated inductive construction $\mathscr I\vert \mathbf S$ on $\mathbf S$ starting from the $(n_0+1)$-th step. Now, $\mathscr I'_{\leq n_0}$ followed by $\mathscr I\vert \mathbf S$ together gives an inductive construction $\mathscr I'$ of $\Sigma'$. Roughly it means $\mathscr I'$ is the same as the inductive construction of $\Sigma$, except for the first few steps.  Denote the compact bordered subsurface of $\Sigma'$ after the $i$-th step of $\mathscr I'$ by $K_i'$. To complete the proof, we show $\Sigma'\cong \Sigma$ using \Cref{richard2}.

Consider the efficient exhaustion $\{K_i\}$ \big(resp. $\{K_i'\}$\big) of $\Sigma$ \big(resp. $\Sigma'$\big) by compacta to define $\text{Ends}(\Sigma)$ \big(resp. $\text{Ends}(\Sigma')$\big). Recall that the space of ends remains the same up to homeomorphism even if we choose a different efficient exhaustion by compacta; see \Cref{ends}. By $\Sigma\setminus K_{n_0}= \text{int}(\mathbf S)= \Sigma'\setminus K_{n_0}'$, for every sequence $(V_1, V_2,...)\in \text{Ends}(\Sigma)$, there exists a unique sequence $(V_1', V_2',...)\in \text{Ends}(\Sigma')$ such that $V_i= V_i'$ for all integer $i\geq n_0$, and conversely. Thus, there exists a homeomorphism $\varphi\colon \text{Ends}(\Sigma)\to \text{Ends}(\Sigma')$ with $\varphi\big( \text{Ends}_\text{np}(\Sigma)\big)= \text{Ends}_\text{np}(\Sigma')$. Also, $\Sigma\setminus K_{n_0}= \text{int}(\mathbf S)= \Sigma'\setminus K_{n_0}'$ and $ K_{n_0}\cong  K_{n_0}'$ together imply $g(\Sigma)=g(\Sigma')$. Therefore,  $\Sigma'\cong \Sigma$ by \Cref{richard2}.
\end{proof}

The spine construction of Goldman's inductive procedure shows that every non-compact surface $\Sigma$ ($\Sigma$ may be of infinite-type) is the interior of a bordered surface: consider the graph $\mathcal G$ consisting of blue straight line segments and red circles, as given in \Cref{figureofinductiveconstruction}. Any thickening \cite[Definition 7.2.]{MR275436} of $\mathcal G$ in $\Sigma$ is the interior of a bordered subsurface $\mathbf S$ of $\Sigma$. Now, \cite[Corollary 7.2. and section 7.3.]{MR275436} says that $\text{int}(\mathbf S)\cong \Sigma$.  When $\Sigma$ is of finite-type, we prove the same thing differently in the following theorem.

\begin{theorem}
\textup{A non-compact finite-type surface is the interior of a compact bordered surface. In particular, if a non-compact surface has infinite cyclic (resp. trivial) fundamental group, then it is homeomorphic to $\Bbb S^1\times \Bbb R$ (resp. $\Bbb R^2$).}\label{finitetypenoncompactsurace}
\end{theorem}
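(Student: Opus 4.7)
The plan is to exploit Goldman's inductive construction (\Cref{incudctiveconstruction}) together with the freeness of $\pi_1(\Sigma)$ (\Cref{spine}). Write $\Sigma = \bigcup_{i \geq 1} K_i$ with $K_1 = S_{0,1}$ and each $K_{i+1}$ obtained from $K_i$ by gluing one boundary circle of a building block $\mathcal{S}_i \in \{S_{0,2}, S_{0,3}, S_{1,2}\}$ to a boundary circle of $K_i$. Inspection of the spine in \Cref{figureofinductiveconstruction} (or a direct van Kampen calculation) shows that attaching $S_{0,2}$ contributes no new circle to the spine and leaves $\pi_1$ unchanged, while attaching $S_{0,3}$ or $S_{1,2}$ strictly increases the rank of the free group $\pi_1(K_i)$ by $1$ or $2$, respectively. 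Since $\pi_1(\Sigma) = \varinjlim \pi_1(K_i)$ is finitely generated, only finitely many of the $\mathcal{S}_i$ are non-annuli, so there exists $N$ with $\mathcal{S}_i \cong S_{0,2}$ for every $i \geq N$.

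Next I would identify $\Sigma \setminus \text{int}(K_N)$ with a disjoint union of half-open cylinders, one attached to each component of $\partial K_N$. Each annulus attachment consumes exactly one boundary circle of $K_i$ and produces exactly one new boundary circle, so distinct components of $\partial K_N$ give rise to disjoint sequences of ``descendant'' boundary circles along the tail. If any such sequence were finite, its last circle would persist as a component of $\partial \Sigma$, contradicting $\partial \Sigma = \varnothing$. Hence every component of $\partial K_N$ receives infinitely many annuli, yielding
\[
\Sigma \;\cong\; K_N \cup_{\partial K_N}\bigl(\partial K_N \times [0,\infty)\bigr),
\]
which is homeomorphic to $\text{int}(K_N)$ by absorbing the cylindrical ends into an open collar neighborhood of $\partial K_N$ in $K_N$. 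This proves the first assertion.

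For the ``in particular'' statement, observe that $K_N$ is a connected, compact, orientable bordered surface with $\partial K_N \neq \varnothing$ (each step leaves at least one boundary circle), so $K_N \cong S_{g,b}$ for some $g \geq 0$ and $b \geq 1$, with $\pi_1(K_N)$ free of rank $2g + b - 1$. Trivial $\pi_1(\Sigma)$ forces $2g + b - 1 = 0$ with $b \geq 1$, hence $(g,b) = (0,1)$ and $\Sigma \cong \text{int}(S_{0,1}) = \mathbb{R}^2$. Infinite cyclic $\pi_1(\Sigma)$ forces $2g + b - 1 = 1$ with $b \geq 1$, hence $(g,b) = (0,2)$ and $\Sigma \cong \text{int}(S_{0,2}) = \mathbb{S}^1 \times \mathbb{R}$.

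The main obstacle is the combinatorial bookkeeping in the middle paragraph: verifying that the annulus tail of the inductive construction combines cleanly into exactly one cylinder on each component of $\partial K_N$. This relies on the two facts that annulus attachments never identify two distinct boundary circles of $K_i$ and that every boundary component of $K_N$ must be ``exhausted to infinity'' along the tail in order that $\Sigma$ be boundaryless.
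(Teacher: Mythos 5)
Your argument is correct, and it shares the paper's skeleton — Goldman's inductive construction (\Cref{incudctiveconstruction}) plus the observation, via the spine (\Cref{spine}), that a finitely generated $\pi_1$ forces all but finitely many building blocks to be annuli, so that beyond some stage the complement of $K_N$ is a union of half-open cylinders, one per component of $\partial K_N$. Where you diverge is the final identification: the paper stops at ``$\overline{\Sigma\setminus K_n}$ is a finite collection of punctured disks'' and then invokes Ker\'ekj\'art\'o's classification (\Cref{richard2}), matching the genus and the (finitely many, planar) ends of $\Sigma$ with those of $\mathrm{int}(K_n)$, whereas you build the homeomorphism $\Sigma\cong K_N\cup_{\partial K_N}\bigl(\partial K_N\times[0,\infty)\bigr)\cong \mathrm{int}(K_N)$ directly by absorbing the cylindrical tails into a collar, with the bookkeeping that descendant chains of boundary circles never merge and never terminate (termination would leave a circle along which no further block is ever attached, so its points would never become interior points of any $K_j$, contradicting $\Sigma=\bigcup_j K_j$ being a boundaryless surface). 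Your route is therefore more self-contained: it avoids the classification theorem entirely for this statement, at the cost of the explicit end-absorption argument; the paper's route is shorter given that \Cref{richard2} is already quoted and used throughout. The ``in particular'' clause is also handled differently: the paper re-examines which blocks can occur (no $S_{1,2}$, at most one $S_{0,3}$), while you deduce it from the first part via the rank formula $2g+b-1$ for $\pi_1(S_{g,b})$ with $b\geq 1$; both are sound, and your version makes the exclusion of $(g,b)=(1,0)$ in the infinite cyclic case explicit through $b\geq 1$.
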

\begin{proof}
Let $\Sigma$ be a finite-type non-compact surface. Consider an inductive construction of  $\Sigma$; see \Cref{incudctiveconstruction}. Since $\pi_1(\Sigma)$ is finitely generated, \Cref{spine} says that $\Sigma$ is homotopy equivalent to $\bigvee^{2r+s}\Bbb S^1$, where in this inductive construction, $r\in \Bbb N$ is the total number of copies of $S_{1,2}$, and $s\in \Bbb N$ is the total number of copies of $S_{0,3}$; see \Cref{figureofinductiveconstruction}. Thus there is an integer $n$ such that $\overline{\Sigma\setminus K_n}$ (where $K_n$ is the compact bordered subsurface of $\Sigma$ after $n$-th inductive step) is a finite collection of punctured disks. Now, $g(\Sigma)=g\big(\text{int}(K_n)\big)$. Also, each end of $\Sigma$ \big(resp. $\text{int}(K_n)$\big) is planar, and the total number of ends of $\Sigma$ \big(resp. $\text{int}(K_n)$\big) is the same as the number of components of $\partial K_n$. By \Cref{richard2}, $\Sigma\cong \text{int}(K_n)$.

If $\Sigma$ is a non-compact surface with the infinite-cyclic fundamental group, then any inductive construction of $\Sigma$ contains no copy of $S_{1,2}$ but precisely one copy of $S_{0,3}$, i.e., $\Sigma\cong \Bbb S^1\times \Bbb R$. Similarly, if $\Sigma$ is a non-compact surface with the trivial fundamental group, then any inductive construction of $\Sigma$ has no copy of $S_{0,3}$ as well as no copy of $S_{1,2}$, i.e., $\Sigma\cong \Bbb R^2$.
\end{proof}

The proposition below follows directly from Goldman's inductive construction, so we quote it without proof. It says that an infinite-type surface has a finite genus only if it has infinitely many ends. On the other hand, \Cref{richard1} guarantees the existence of an infinite-type surface of the infinite genus with infinitely many ends. 
\begin{proposition}\textup{A non-compact surface is of a finite genus if and only if the total number of copies of $S_{1,2}$ used in any inductive construction of $\Sigma$ is finite. Thus, if an infinite-type surface has a finite genus, then it must have infinitely many ends.}
\label{infinitetypefinitegenusmeansinfinitelymanyends}
\end{proposition}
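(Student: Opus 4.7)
The plan is to track the genus through the inductive construction for the ``if and only if'', and then deduce the ends statement via the spine description of $\pi_1(\Sigma)$ combined with a short tree argument.

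Fix an inductive construction of $\Sigma$, let $K_n$ be the compact bordered subsurface built after $n$ steps, and let $r_n$ be the number of $S_{1,2}$ blocks used among the first $n$ steps. A direct induction on $n$ gives $g(K_n)=r_n$: attaching a genus-$g$ compact bordered surface to $K_n$ along a single boundary circle produces a surface of genus $g(K_n)+g$, and only $S_{1,2}$ among the three attaching blocks has positive genus (namely $1$). Hence $g(\Sigma)=\sup_n g(K_n)$ equals the total number of $S_{1,2}$ blocks used, which proves the ``if and only if''.

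For the ``Thus'' statement, assume $\Sigma$ is infinite-type with $g(\Sigma)<\infty$. By \Cref{spine} together with the explicit spine in \Cref{figureofinductiveconstruction} (where each $S_{0,3}$ contributes one loop and each $S_{1,2}$ contributes two), $\pi_1(\Sigma)$ is free of rank $2r+s$, where $r$ and $s$ count the $S_{1,2}$ and $S_{0,3}$ blocks respectively. The infinite-type hypothesis together with $r<\infty$ forces $s=\infty$. I would then organize the boundary circles appearing during the construction into a rooted tree $T$: the root is $\partial K_1$; a node $c$ has one child when $S_{0,2}$ or $S_{1,2}$ is next attached to $c$, and two children when $S_{0,3}$ is next attached. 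Since $\partial\Sigma=\varnothing$, every boundary circle must eventually be glued over, so $T$ has no leaves and every subtree of $T$ is infinite. Because every attachment extends only one boundary circle of $K_n$ (so distinct boundary circles of $K_n$ do not reconnect at later stages), the components of $\Sigma\setminus K_n$ are in bijection with the boundary circles of $K_n$. After slightly thickening to obtain an efficient exhaustion, this yields a bijection between $\text{Ends}(\Sigma)$ and the set of infinite paths in $T$ starting at the root. A short induction (with K\"onig's lemma ensuring each subtree contains at least one infinite path) shows that a leafless rooted tree with at least $k$ branching nodes carries at least $k+1$ infinite paths, so $s=\infty$ forces $|\text{Ends}(\Sigma)|=\infty$.

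The main obstacle I expect is cleanly identifying $\text{Ends}(\Sigma)$ with the infinite paths in $T$; once that is in place, the tree estimate and the bookkeeping of how each of the four building blocks contributes to $g$, $\pi_1$, and $T$ are routine.
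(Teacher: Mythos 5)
Your argument is correct. The paper in fact states this proposition without proof (asserting it ``follows directly from Goldman's inductive construction''), and your write-up is precisely that bookkeeping made explicit: genus is additive under gluing a block along one circle, so $g(\Sigma)=\sup_n g(K_n)$ counts the $S_{1,2}$ blocks, and the tree of boundary circles (leafless because $\partial\Sigma=\varnothing$, with one branching node per $S_{0,3}$ and its rays corresponding to ends via the components of $\Sigma\setminus K_n$) gives the end count — the only points needing the standard care you already flag are passing to an honest efficient exhaustion and the monotonicity of genus for compact subsurfaces.
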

This section's final fact (as promised in the introduction) says that the fundamental group alone can't detect the homeomorphism type of an infinite-type surface.

\begin{proposition}
\textup{Up to homotopy equivalence, there is exactly one infinite-type surface, but up to homeomorphism, there are $2^{\aleph_0}$ many infinite-type surfaces.}\label{upto}
\end{proposition}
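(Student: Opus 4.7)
The plan is to address the two claims separately. First, for the homotopy-equivalence claim, I would invoke \Cref{spine}: every non-compact surface $\Sigma$ is homotopy equivalent to a wedge of at most countably many circles, the number of summands being $2r + s$, where $r$ (resp.\ $s$) counts the $S_{1,2}$-blocks (resp.\ $S_{0,3}$-blocks) in some inductive construction of $\Sigma$. Being infinite-type means precisely that $\pi_1(\Sigma)$ is not finitely generated, so $2r + s = \aleph_0$ and $\Sigma \simeq \bigvee_{n \in \mathbb N} \mathbb S^1$ for every such $\Sigma$; a single homotopy type therefore exhausts all infinite-type surfaces.

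Second, for the homeomorphism count, I plan to combine Ker\'ekj\'art\'o's theorem (\Cref{richard2}) with Richards' realisation theorem (\Cref{richard1}): homeomorphism types of non-compact orientable surfaces correspond bijectively to triples $(g, \mathscr E, \mathscr E_{\text{np}})$ with $g \in \{0, 1, \ldots, \infty\}$ and $\mathscr E_{\text{np}} \subseteq \mathscr E \subseteq \mathbb S^1$ closed (subject to the compatibility condition of \Cref{richard1}), modulo the natural equivalence of pairs. The upper bound $2^{\aleph_0}$ is then immediate, as there are countably many genera and $\mathbb S^1$, being second countable, has at most $2^{\aleph_0}$ closed subsets.

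For the matching lower bound, I would fix $g = 0$ and $\mathscr E_{\text{np}} = \varnothing$: \Cref{richard1} produces a genus-zero non-compact orientable surface $\Sigma_{\mathscr E}$ with end space homeomorphic to $\mathscr E$ for each non-empty infinite closed totally-disconnected $\mathscr E \subseteq \mathbb S^1$, and this surface is of infinite type because its fundamental group is free of countably infinite rank (by \Cref{spine} again). By \Cref{richard2}, $\Sigma_{\mathscr E} \cong \Sigma_{\mathscr E'}$ iff $\mathscr E \cong \mathscr E'$ as topological spaces, so the task reduces to exhibiting $2^{\aleph_0}$ pairwise non-homeomorphic infinite compact metrisable totally-disconnected spaces.

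The hard part will be this last reduction. The Mazurkiewicz--Sierpi\'nski classification of countable compact Hausdorff spaces by pairs $(\alpha, n)$ with $\alpha < \omega_1$ and $n < \omega$ only supplies $\aleph_1$ types, which in ZFC (without CH) may be strictly less than $2^{\aleph_0}$. To reach the full continuum I would either invoke Stone duality, reducing the problem to the classical fact that there are $2^{\aleph_0}$ countable Boolean algebras up to isomorphism, or else construct an explicit continuum-sized family of closed subsets of $\mathbb S^1$ each containing a Cantor set as perfect core and distinguished by refined Cantor--Bendixson-type invariants interacting with that core.
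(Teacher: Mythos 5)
Your plan is correct and is essentially the paper's own argument: \Cref{spine} pins down the single homotopy type (the paper phrases this via uniqueness of $K(\pi,1)$'s, which is equivalent to your direct comparison with $\bigvee_{\mathbb N}\Bbb S^1$), and the count is obtained exactly as you propose, by combining \Cref{richard1} and \Cref{richard2} to reduce to exhibiting $2^{\aleph_0}$ pairwise non-homeomorphic closed subsets of the Cantor set. The step you flag as the hard part is simply cited in the paper \big(\cite[Theorem 2]{MR133103}: there are $2^{\aleph_0}$ homeomorphism types of closed subsets of the Cantor set\big), with the realization done by infinite-genus surfaces having all ends non-planar rather than your planar ones; your genus-zero family and the Stone-duality/countable-Boolean-algebra reduction are equally valid, provided you also note (via \Cref{finitetypenoncompactsurace}) that a surface with infinitely many ends cannot be of finite type.
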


\begin{proof}
Any infinite-type surface is homotopy equivalent to the wedge of countably infinitely many circles; see \Cref{spine}. In particular, for an infinite-types surface, the fundamental group is a free group of countably infinite rank, and  each higher homotopy group is trivial. Now, every surface has a $C^\infty$-smooth structure, hence has a CW-complex structure. Therefore, every infinite-type surface $\Sigma$ is a CW-complex $K\big(\pi_1(\Sigma), 1\big)$. Thus, any two infinite-type surfaces are homotopy equivalent; see \cite[Theorem 1B.8.]{MR1867354}.

Now, we prove that up to homeomorphism, there are $2^{\aleph_0}$ many infinite-type surfaces. Notice that except for the first step, in each step of Goldman's inductive procedure, we use either $S_{0,3}$, or $S_{0,2}$, or $S_{1,2}$. Thus we have at most $3^{\aleph_0}=2^{\aleph_0}$ many non-compact surfaces, up to homeomorphism. Therefore, it is enough to show that this upper bound is reachable. Let $\tau$ be a non-empty closed subset of the Cantor set. By \Cref{richard1}, there exists an infinite genus surface $\Sigma_\tau$ such that $\text{Ends}(\Sigma_\tau)=\text{Ends}_\text{np}(\Sigma_\tau)\cong\tau$. Therefore, if $\tau_1$ and $\tau_2$ are two non-homeomorphic non-empty closed subsets of the Cantor set, then  $\Sigma_{\tau_1}$ is not homeomorphic to $\Sigma_{\tau_2}$ by \Cref{richard2}. Now, \cite[Theorem 2]{MR133103} says that up to homeomorphism, there are $2^{\aleph_0}$ many closed subsets of the Cantor set. So, we are done.\end{proof}

\subsection{Transversality of a proper map with respect to all decomposition circles}
In the previous section, the co-domain of a pseudo proper homotopy equivalence has been decomposed into finite-type bordered surfaces by a locally finite pairwise disjoint collection of circles. This section aims to properly homotope the pseudo proper homotopy equivalence to make it transverse to each decomposition circle.

The theorem below follows from the theory developed in the \Cref{appendix}.  We aim to use it to impose a one-dimensional submanifold structure on the inverse image of each decomposition circle.
\begin{theorem}
\textup{Let $f\colon \Sigma'\to \Sigma$ be a proper map between non-compact surfaces, and let $\mathscr A$ be an \textup{LFCS} on $\Sigma$. Then $f$ can be properly homotoped to make it smooth as well as transverse to the manifold $\mathscr A$.} \label{transtoLFCS} 
\end{theorem}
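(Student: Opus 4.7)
The plan is to carry out the argument in two phases. First, properly homotope $f$ to a smooth map, and then modify the smooth map through a proper homotopy to achieve transversality to every component of $\mathscr A$ simultaneously. Both phases are adaptations of classical Whitney-type results to the proper category, which should be exactly what the cited appendix develops.

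For smoothing, I would fix a smooth Riemannian metric on $\Sigma$, a locally finite cover of $\Sigma'$ by relatively compact coordinate domains, and a subordinate smooth partition of unity. Using Whitney approximation chart-by-chart on this cover, I can produce a smooth map $\tilde f$ together with a positive continuous function $\varepsilon \colon \Sigma' \to (0,\infty)$ so that $\mathrm{dist}_\Sigma(f(x), \tilde f(x)) < \varepsilon(x)$ for all $x$, and I would choose $\varepsilon$ so small that the geodesic straight-line homotopy $\mathcal H_0$ between $f$ and $\tilde f$ is defined and its track $\mathcal H_0(\{x\}\times[0,1])$ lies in a metric ball of radius $\varepsilon(x)$ around $f(x)$. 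Since $f$ is proper, choosing $\varepsilon$ to tend to zero as $x\to\infty$ in $\Sigma'$ (e.g.\ relative to an exhaustion) ensures the track stays close to the image of $f$ on compacta, which is enough to force $\mathcal H_0$ itself to be proper.

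For the transversality step, enumerate the components of $\mathscr A$ as $C_1,C_2,\ldots$ and fix compact exhaustions $K_1\subseteq K_2\subseteq\cdots$ of $\Sigma$ and $L_1\subseteq L_2\subseteq\cdots$ of $\Sigma'$ with $\tilde f^{-1}(K_j)\subseteq \mathrm{int}(L_j)$; local finiteness of $\mathscr A$ guarantees that only finitely many $C_i$ meet each $K_j$. I would construct inductively a sequence of smooth maps $\tilde f = f_0, f_1, f_2, \ldots$ and smooth homotopies $\mathcal H_j \colon \Sigma' \times [0,1] \to \Sigma$ from $f_{j-1}$ to $f_j$ such that
\begin{enumerate}[leftmargin=\widthof{[(1.1)]}+\labelsep]
\item[(i)] $f_j$ is transverse to each $C_i$ that intersects $K_j$, on a neighborhood of $f_j^{-1}(K_j)$,
\item[(ii)] the homotopy $\mathcal H_j$ is stationary on a neighborhood of $f_{j-1}^{-1}(K_{j-1})$, so the transversality already achieved is preserved,
\item[(iii)] $\mathcal H_j$ is compactly supported in $\Sigma'$, with tracks contained in a prescribed small neighborhood of $\mathrm{im}(f_{j-1})$ avoiding $K_{j-1}$.
\end{enumerate}
Each step uses standard Thom transversality applied in a tubular neighborhood of $C_i\cap (K_j\setminus K_{j-1})$ inside $\Sigma$, combined with a bump function in $\Sigma'$ supported in $L_j\setminus L_{j-1}$; this is routine since the ambient submanifold is compact at each stage. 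I would then concatenate the $\mathcal H_j$ on successive subintervals, say $[1-2^{-(j-1)},1-2^{-j}]$, with the limiting map at $t=1$ defined by $f(x)=f_j(x)$ on $L_j\setminus L_{j-1}$; (ii) makes this well-defined and continuous.

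The main obstacle is ensuring that the concatenated homotopy $\mathcal H$ is proper in its entirety as a map $\Sigma'\times[0,1]\to\Sigma$, not merely that each $\mathcal H(\cdot,t)$ is proper. Properness here means $\mathcal H^{-1}(K)$ is compact for every compact $K\subseteq\Sigma$, and this is where condition (iii) is decisive: since the track of $\mathcal H_j$ lies in a small neighborhood of $f_{j-1}(L_j\setminus L_{j-1})$ and the latter sets eventually leave $K$ (because $f_0$ is proper and the tubular radii are chosen to shrink fast enough), only finitely many $\mathcal H_j$ can contribute preimage of any given compact $K$. Spelling out that the shrinking-neighborhood choices can genuinely be made to force this is the technical heart, but it is a standard telescoping-homotopy argument in the proper category, and presumably what the appendix formalizes.
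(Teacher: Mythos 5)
Your Phase 1 (smoothing) is fine and is essentially the same idea the paper uses in its proper Whitney approximation theorem: a smooth approximation at bounded distance from a proper map, joined to it by a short homotopy, is automatically properly homotopic to it. The gap is in Phase 2, in the bookkeeping of your induction: conditions (i) and (iii) are in general incompatible. You require $f_j$ to be transverse, on a neighborhood of $f_j^{-1}(K_j)$, to every circle $C_i$ meeting $K_j$, while allowing the stage-$j$ perturbation to be supported only in $L_j\setminus L_{j-1}$. But a point $x\in \operatorname{int}(L_{j-1})$ may well satisfy $f_{j-1}(x)\in C_i\cap K_j$ for a circle $C_i$ that meets $K_j$ but not $K_{j-1}$ (nothing in the choice $\tilde f^{-1}(K_j)\subseteq\operatorname{int}(L_j)$ forces $f(L_{j-1})\subseteq K_{j-1}$). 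Transversality at such an $x$ was not arranged at any earlier stage (stage $m<j$ only treats circles meeting $K_m$), and it cannot be arranged at stage $j$ or later, since all subsequent perturbations are supported outside $L_{j-1}$. So the limiting map need not be transverse to $\mathscr A$. The scheme can be repaired by indexing the induction on the domain exhaustion instead: at stage $j$ make the map transverse to all of $\cup\mathscr A$ over the compact set $L_j$ (only finitely many circles meet the compact set $f_{j-1}(L_j)$, by local finiteness), with the perturbation supported in $\operatorname{int}(L_{j+1})$ and stationary near $L_{j-1}$, where full transversality has already been secured. With that fix, plus a genuine verification (not a deferral) that the infinite concatenation is proper as a map $\Sigma'\times[0,1]\to\Sigma$, your route works; but note that your closing appeal to the appendix is misplaced, because the appendix does not formalize a telescoping argument.

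For contrast, the paper avoids the exhaustion entirely: it properly embeds $\Sigma$ in some $\Bbb R^\ell$, takes a tubular neighborhood with retraction $r$, and considers the global family $F(p,\mathbf s)=r\big(f(p)+e(p)\mathbf s\big)$ over the unit ball, which is a submersion and hence transverse to the single (possibly non-compact) submanifold $\cup\mathscr A$; the parametric transversality theorem then yields one parameter $\mathbf s_0$ making $F(-,\mathbf s_0)$ transverse to $\cup\mathscr A$ everywhere at once, and the straight-line homotopy composed with $r$ has uniformly bounded displacement, which forces properness (\Cref{whiteny}, \Cref{transhomotopy}). This one-shot argument buys you freedom from all the stage-by-stage preservation and gluing issues that are the delicate part of your proposal.
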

\begin{proof}
Using \Cref{whiteny}, after a proper homotopy, we may assume that $f$ is a smooth proper map. After that, using \Cref{transhomotopy}, properly homotope $f$ so that it becomes transverse to $\mathscr A$.
\end{proof}

\begin{remark}
\textup{Note that in \Cref{transtoLFCS}, we have no control over those proper homotopies, which make the proper map $f$ as smooth as well as transversal to $\mathscr A$, i.e., after these proper homotopies, $f^{-1}(\mathscr A)$ can be empty, even if these proper homotopies start with a surjective proper map. A remedy for this is: Assume $\deg(f)\neq 0$; this is because the degree is invariant under proper homotopy, and a map of non-zero degree is surjective; see \Cref{non-surjectivepropermaphasdegreezero} and \Cref{properhomotopypreservessurjectivity}.
If $f$ is a proper homotopy equivalence, then $f$ has a proper homotopy inverse; hence, $\deg(f)\neq 0$ (see \Cref{degreeofapropermap}). But if $f$ is a pseudo proper homotopy equivalence, then we don't know (at least till this stage) whether $f$ has a proper homotopy inverse or not (though it has a homotopy inverse). Later in \Cref{degreeofapseudoproperhomotopyequivalence}, using $\pi_1$-bijectivity, we will show that most pseudo proper homotopy equivalence is a map of  degree $\pm 1$.}
\end{remark}
The following theorem says that the transversal pre-image of an LFCS under a proper map is an LFCS.
\begin{theorem}
\textup{Let $f\colon \Sigma'\to \Sigma$ be a smooth proper map between two non-compact surfaces, and let $\mathscr A$ be an \textup{LFCS} on $\Sigma$ such that $f\ \stackinset{c}{}{c}{0.1ex}{$\top$}{$\abxpitchfork$}\ \mathscr A$. Then for each component $\mathcal C$ of $\mathscr A$, either $f^{-1}(\mathcal C)$ is empty or a pairwise disjoint finite collection of smoothly embedded circles on $\Sigma'$. Therefore, $f^{-1}(\mathscr A)$ is an \textup{LFCS} on $\Sigma'$.}\label{remarktranshomotopy}
\end{theorem}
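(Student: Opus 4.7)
The plan is to argue in two stages: first a local/topological analysis of $f^{-1}(\mathcal C)$ for a single component $\mathcal C$ of $\mathscr A$, and then a global local-finiteness argument for the full collection $f^{-1}(\mathscr A)$.

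For the first stage, I would invoke the standard consequence of transversality: if a smooth map is transverse to a smooth submanifold, the preimage is itself a smooth submanifold of the same codimension. Since $\mathscr A$ is a smoothly embedded $1$-submanifold of the $2$-manifold $\Sigma$ and $f \transv \mathscr A$, the preimage $f^{-1}(\mathcal C)$ is a smooth $1$-submanifold of $\Sigma'$ (possibly empty). By the classification of $1$-manifolds without boundary, each component is diffeomorphic to $\Bbb S^1$ or $\Bbb R$. To rule out $\Bbb R$-components and to obtain finiteness simultaneously, I would use properness of $f$: since $\mathcal C$ is a circle, hence compact, $f^{-1}(\mathcal C)$ is compact in $\Sigma'$. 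A compact $1$-submanifold without boundary is a finite pairwise disjoint union of smoothly embedded circles, which is exactly the first conclusion.

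For the second stage, I need to verify that $f^{-1}(\mathscr A)$, interpreted as the collection of all component-circles obtained as $\mathcal C$ ranges over components of $\mathscr A$, is locally finite on $\Sigma'$. Given any compact $K \subseteq \Sigma'$, properness of $f$ gives that $f(K)$ is compact in $\Sigma$. Local finiteness of $\mathscr A$ then yields only finitely many components $\mathcal C_1, \dots, \mathcal C_k$ of $\mathscr A$ meeting $f(K)$; hence only the preimages $f^{-1}(\mathcal C_1), \dots, f^{-1}(\mathcal C_k)$ can intersect $K$. By the first stage each such preimage consists of finitely many circles, so only finitely many components of $f^{-1}(\mathscr A)$ meet $K$. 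Pairwise disjointness of the full collection comes for free: distinct components of $\mathscr A$ are disjoint, so their preimages are disjoint, and hence the components of $f^{-1}(\mathscr A)$ are precisely the disjoint union over $\mathcal C$ of the components of $f^{-1}(\mathcal C)$.

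I do not anticipate a genuine obstacle — the statement is essentially a packaging of the transverse-preimage theorem with properness of $f$. The only item requiring care is that the generic transverse-preimage result provides a smoothly embedded submanifold only, and one must separately appeal to properness to promote ``$1$-submanifold'' to ``finite union of circles'' (to exclude non-compact $\Bbb R$-components and accumulation), and then again to upgrade this component-by-component statement to an \textup{LFCS} on $\Sigma'$ in the sense of \Cref{LFCS}. Both appeals are direct.
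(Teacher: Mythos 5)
Your proposal is correct and follows essentially the same route as the paper: transversality plus properness makes each $f^{-1}(\mathcal C)$ a compact boundaryless $1$-submanifold, hence a finite disjoint union of circles, and local finiteness of $f^{-1}(\mathscr A)$ follows by pushing a compact $K\subseteq\Sigma'$ forward to the compact set $f(K)$ and using local finiteness of $\mathscr A$. The only cosmetic difference is that the paper phrases the second step as a contradiction, while you argue it directly.
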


\begin{proof}
By the definition of transversality,  $f\ \stackinset{c}{}{c}{0.1ex}{$\top$}{$\abxpitchfork$}\ \mathscr A$ implies $f\ \stackinset{c}{}{c}{0.1ex}{$\top$}{$\abxpitchfork$}\ \mathcal C$ for each component $\mathcal C$ of $\mathscr A$. Thus $f^{-1}(\mathcal C)$ is either empty or is a compact (since $f$ is proper) one-dimensional boundaryless smoothly embedded submanifold of $\Sigma'$. Now, by classification of closed one-dimensional manifolds, we complete the first part.

Next, if possible, let $K'$ be a compact subset of $\Sigma'$ such that $K'$ intersects infinitely many components of $f^{-1}(\mathscr A)$. By the first part, it means the compact set $f(K')$ intersects infinitely many components of $\mathscr A$, which contradicts the fact that $\mathscr A$ is a locally finite collection.
\end{proof}

\subsection{Disk removal}Previously, as observed, after a proper homotopy, the number of components in the collection of transversal pre-images of all decomposition circles can be infinite, and many components (possibly infinitely many) of this collection, maybe trivial circles. Here, at first, our goal is to group all these trivial circles in terms of the size of the disk bounded by a trivial circle and then remove all groups of trivial circles simultaneously by a proper homotopy.

At first, our intended grouping requires a technical lemma, which asserts that on a non-simply connected surface, an LFCS consisting of concentric trivial circles doesn't exist. Roughly it means,  on a non-simply connected surface, arbitrarily large disks bounded by components of an LFCS don't exist.

\begin{lemma}
\textup{Let $\Sigma$ be a surface, and let $\mathscr A\coloneqq\{\mathcal C_i:i\in \Bbb N\}$ be an \textup{LFCS} on $\Sigma$ such that for each $i$ the circle $\mathcal C_i$ bounds a disk $\mathcal D_i\subset \Sigma$ with $ \mathcal C_i\subset \textup{int}(\mathcal D_{i+1})$. Then $\Sigma$ is homeomorphic to $\Bbb R^2$.}\label{charofplane}
\end{lemma}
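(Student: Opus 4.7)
The plan is to prove that $\Sigma$ is non-compact and simply connected, then conclude $\Sigma \cong \mathbb{R}^2$ by invoking \Cref{finitetypenoncompactsurace}. The strategy is to first upgrade the hypothesis ``$\mathcal{C}_i$ bounds a disk $\mathcal{D}_i$ with $\mathcal{C}_i \subset \text{int}(\mathcal{D}_{i+1})$'' into honest nesting $\mathcal{D}_i \subset \text{int}(\mathcal{D}_{i+1})$, and then to exhibit the $\mathcal{D}_i$'s as an exhaustion of $\Sigma$ by disks.

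First I would verify that $\Sigma$ is non-compact: a locally finite collection on a compact space is finite, but $\mathscr{A}$ is countably infinite. The crucial intermediate step is to observe that on \emph{any} non-compact surface, an embedded circle bounds at most one disk. Indeed, if two distinct disks $D_1, D_2 \subset \Sigma$ both had boundary $\mathcal{C}$, then $D_1 \cup_{\mathcal{C}} D_2 \cong \mathbb{S}^2$; since the two disks sit on opposite sides of $\mathcal{C}$, every point of $\mathcal{C}$ has a collar neighborhood entirely contained in $D_1 \cup D_2$, so this $\mathbb{S}^2$ would be clopen in connected $\Sigma$, forcing $\Sigma \cong \mathbb{S}^2$ and contradicting non-compactness.

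Next I would apply Schoenflies inside the disk $\mathcal{D}_{i+1}$: the smoothly embedded Jordan curve $\mathcal{C}_i \subset \text{int}(\mathcal{D}_{i+1}) \cong \mathbb{R}^2$ bounds a closed sub-disk $\overline{E_i} \subset \text{int}(\mathcal{D}_{i+1})$, and by the uniqueness step above, $\mathcal{D}_i = \overline{E_i}$, so $\mathcal{D}_i \subset \text{int}(\mathcal{D}_{i+1})$ automatically. To see that $U := \bigcup_i \mathcal{D}_i = \bigcup_i \text{int}(\mathcal{D}_i)$ equals all of $\Sigma$, I note that $U$ is open, and for closedness I take $p \in \overline{U}$ together with a connected neighborhood $V$ of $p$ (with compact closure) meeting only finitely many $\mathcal{C}_j$, say those with $j \leq N$. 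For each $j > N$ the set $V \cap \mathcal{D}_j = V \cap \text{int}(\mathcal{D}_j)$ is clopen in the connected set $V$, so either $V \subset \mathcal{D}_j$ (whence $p \in U$ at once) or $V \cap \mathcal{D}_j = \varnothing$; combining the latter for all $j > N$ with $p \in \overline{U}$ forces $p \in \overline{\bigcup_{j \leq N} \mathcal{D}_j} = \bigcup_{j \leq N} \mathcal{D}_j \subset U$. Connectedness of $\Sigma$ then yields $U = \Sigma$.

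Finally, since $\Sigma$ is an ascending union of nested closed disks, every compact subset (and in particular the image of every loop $\mathbb{S}^1 \to \Sigma$) lies in some $\mathcal{D}_N$; as $\mathcal{D}_N$ is a disk, every loop is null-homotopic and $\pi_1(\Sigma) = 0$. Invoking \Cref{finitetypenoncompactsurace} then delivers $\Sigma \cong \mathbb{R}^2$. The main obstacle I anticipate is the uniqueness of the bounding disk on a non-compact surface; once that is established, the nesting is immediate and the rest is a careful combination of local finiteness with connectedness.
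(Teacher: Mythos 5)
Your proof is correct, but it reaches simple connectivity by a genuinely different route from the paper. The paper never proves that the disks are nested or exhaust $\Sigma$: it takes an exhaustion $\{\mathbf S_j\}$ of $\Sigma$ by compact bordered subsurfaces coming from Goldman's inductive construction (\Cref{incudctiveconstruction}), uses local finiteness of $\mathscr A$ to find a circle $\mathcal C_{i_0}$ missing a given $\mathbf S_{j_0}$, and then argues by connectedness that $\mathbf S_{j_0}\subseteq\textup{int}(\mathcal D_{i_0})$, so every loop lies in some $\mathcal D_i$. You instead make the disks themselves the exhaustion: you first prove that on a non-compact surface an embedded circle bounds at most one disk (the $\mathbb S^2$/clopen argument), upgrade $\mathcal C_i\subset\textup{int}(\mathcal D_{i+1})$ to honest nesting $\mathcal D_i\subset\textup{int}(\mathcal D_{i+1})$ via Schoenflies inside $\textup{int}(\mathcal D_{i+1})$, and then show $\bigcup_i\mathcal D_i$ is clopen using local finiteness. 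Both arguments end identically, citing \Cref{finitetypenoncompactsurace} for ``simply connected and non-compact implies $\Bbb R^2$.'' What your route buys is self-containedness (only Jordan--Schoenflies and point-set topology, no appeal to the classification machinery) plus the stronger structural conclusion that $\{\mathcal D_i\}$ is a nested compact exhaustion; what the paper's route buys is brevity, since the exhaustion by compact subsurfaces is already available and it sidesteps any discussion of uniqueness of bounding disks. One small point you assert without proof: that two distinct disks $D_1\neq D_2$ with $\partial D_1=\partial D_2=\mathcal C$ must lie on opposite sides of $\mathcal C$. This is true and easy, but deserves a line: $\textup{int}(D_k)$ is open with frontier contained in $\mathcal C$, hence is clopen in $\Sigma\setminus\mathcal C$ and so is a full complementary component of $\mathcal C$; if both interiors were the same component the disks would coincide, so they are distinct components, which is exactly what your collar/clopen step needs to make $D_1\cup D_2$ open in $\Sigma$.
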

\begin{proof}
At first, notice that $\Sigma$ must be non-compact as $\mathscr A$ is a locally finite, pairwise disjoint, infinite collection of circles. Using inductive construction (see \Cref{incudctiveconstruction}), we have a sequence $\{\mathbf S_j:j\in \Bbb N\}$ of compact bordered sub-surfaces of $\Sigma$ such that $\cup_j\mathbf S_j=\Sigma$ and for each $j\in \Bbb N$, $\mathbf S_j\subset\text{int}(\mathbf S_{j+1})$. Consider any $p\in \Sigma$. So, a $j_0\in \Bbb N$ exists such that $p\in \mathbf S_{j_0}$ and $\mathbf S_{j_0}\cap\big(\cup_i\mathcal C_i\big)\neq\varnothing$. Since $\mathscr A$ is a locally finite collection, only finitely many components of $\mathscr A$ intersect the compact set $\mathbf S_{j_0}$. Let $\mathcal C_{i_1},...,\mathcal C_{i_\ell}$ be the only components of $\mathscr A$ intersecting $\mathbf S_{j_0}$, where $i_1<\cdots<i_\ell$. Pick an integer $i_0>i_\ell$. Then $\mathcal C_{i_0}\cap \mathbf S_{j_0}=\varnothing.$ Now, since $\mathcal C_{i_\ell}\subset\text{int}(\mathcal D_{i_0})$, $\mathbf S_{j_0}$ is connected, and $\Sigma$ is locally Euclidean, we can say that $\mathbf S_{j_0}\subseteq\text{int}(\mathcal D_{i_0})$. Thus, every point $x\in\Sigma$ has an open neighborhood $\mathcal U_x$ in $\Sigma$ such that $\mathcal U_x\subseteq\mathcal D_i$ for some $i\in \Bbb N$. Therefore, every loop on $\Sigma$ is contained in a disk $\mathcal D_i$ for some large $i\in \Bbb N$, i.e., $\Sigma$ is simply-connected. By \Cref{finitetypenoncompactsurace}, $\Sigma\cong \Bbb R^2$. 
\end{proof}

The following lemma is the primary tool for showing that a homotopy is proper. It tells how a proper map can be properly homotoped so that it changes on infinitely many pairwise disjoint compact sets.

\begin{lemma}
\textup{Let $f\colon \Sigma'\to \Sigma$ be a proper map between two non-compact surfaces, and let $\{\Sigma_n':n\in \Bbb N\}$ be a pairwise disjoint  collection of compact bordered sub-surfaces of $\Sigma'$. For each $n\in\Bbb N$, suppose $H_n\colon \Sigma_n'\times [0,1]\to \Sigma$ is a homotopy relative to $\partial \Sigma_n'$ such that $H_n(-,0)=f\vert\Sigma_n'$ and $\textup{im}(H_n)\to \infty$. Then $\mathcal H\colon \Sigma'\times [0,1]\to \Sigma$ defined by$$\mathcal H(p,t)\coloneqq\begin{cases} H_n(p,t)&\text{ if }p\in \Sigma_n'\text{ and }t\in [0,1],\\ f(p)&\text{ if } p\in \Sigma'\setminus\big(\cup_{n\in\Bbb N}  \Sigma_n'\big)\text{ and }t\in [0,1]\end{cases}$$ is a proper map.}\label{properhomotopy}
\end{lemma}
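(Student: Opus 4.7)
There are two things to verify: (a) the formula for $\mathcal H$ defines a continuous map, and (b) the preimage of every compact subset of $\Sigma$ under $\mathcal H$ is compact. My plan is to derive both from a single preliminary observation: the hypothesis $\textup{im}(H_n)\to \infty$, combined with properness of $f$, forces the family $\{\Sigma_n':n\in \Bbb N\}$ to be locally finite in $\Sigma'$. Once this is in hand, (a) reduces to the pasting lemma and (b) reduces to bounding $\mathcal H^{-1}(K)$ inside a manifest compact set.

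To establish local finiteness, I would take an arbitrary compact $K'\subseteq \Sigma'$ and set $K\coloneqq f(K')$, which is compact in $\Sigma$. By properness of $f$, the set $f^{-1}(K)$ is compact and contains $K'$. Since $H_n(-,0)=f\vert \Sigma_n'$, we have $f(\Sigma_n')\subseteq \textup{im}(H_n)$, so the hypothesis $\textup{im}(H_n)\to\infty$ forces $f(\Sigma_n')\cap K=\varnothing$, and hence $\Sigma_n'\cap f^{-1}(K)=\varnothing$, for all but finitely many $n$. A fortiori $\Sigma_n'\cap K'=\varnothing$ for all but finitely many $n$, proving local finiteness. In particular, $\bigcup_n\Sigma_n'$ is a closed subset of $\Sigma'$.

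For continuity, I would argue locally at each $(p,t)\in\Sigma'\times[0,1]$. If $p\notin\bigcup_n\Sigma_n'$, local finiteness gives a neighbourhood of $p$ disjoint from every $\Sigma_n'$, on which $\mathcal H$ equals $f\circ\pi_{\Sigma'}$ and is therefore continuous. If $p\in\Sigma_n'$, I would use local finiteness (and Hausdorff-ness) to pick a neighbourhood $U$ of $p$ meeting no $\Sigma_m'$ for $m\neq n$; on $U\times[0,1]$ the map $\mathcal H$ is pasted from $H_n$ on the closed piece $(U\cap\Sigma_n')\times[0,1]$ and from $f\circ\pi_{\Sigma'}$ on the closed piece $(U\setminus\textup{int}(\Sigma_n'))\times[0,1]$, and the rel-$\partial\Sigma_n'$ assumption on $H_n$ guarantees that the two definitions agree on the overlap $U\cap\partial\Sigma_n'$. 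The pasting lemma then yields continuity at $p$.

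For properness, I would reason directly: given a compact $K\subseteq \Sigma$, the finite set $F\coloneqq\{n\in\Bbb N: \textup{im}(H_n)\cap K\neq\varnothing\}$ captures all indices contributing to $\mathcal H^{-1}(K)$ from inside the $\Sigma_n'$, while the complement of $\bigcup_n\Sigma_n'$ contributes only points in $f^{-1}(K)$. Hence $\mathcal H^{-1}(K)\subseteq \bigl(\bigcup_{n\in F}\Sigma_n'\cup f^{-1}(K)\bigr)\times[0,1]$, a finite union of compacta times $[0,1]$, hence compact. Since $\mathcal H$ is continuous, $\mathcal H^{-1}(K)$ is closed, and a closed subset of a compact set is compact. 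The main obstacle is the preliminary local finiteness step, because the hypothesis is phrased in the target $\Sigma$ while the conclusion is about the source $\Sigma'$; once the transfer via $f^{-1}$ is made, the remainder is standard.
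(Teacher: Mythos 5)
Your proof is correct and its core properness argument is exactly the paper's: use $\textup{im}(H_n)\to\infty$ to reduce to finitely many indices, note $f^{-1}(K)$ and each $\Sigma_n'\times[0,1]$ are compact, and trap the closed set $\mathcal H^{-1}(K)$ inside that finite union. The only difference is that you also verify continuity of $\mathcal H$ (via local finiteness of $\{\Sigma_n'\}$ and the pasting lemma), a point the paper's proof simply takes for granted, so this is added care rather than a different route.
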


\begin{proof}
 Let $\mathcal K$ be a compact subset of $\Sigma$. By continuity of $\mathcal H$, $\mathcal H^{-1}(\mathcal K)$ is closed in $\Sigma'$. Since $\textup{im}(H_n)\to \infty$, there exists $n_0\in \Bbb N$ such that $\textup{im}(H_\ell)\cap \mathcal K=\varnothing$ for all integers $\ell\geq n_0+1$. Now, $f^{-1}(\mathcal K)$ is compact as $f$ is proper. Also, the domain of each $H_n$ is compact. Hence, the closed subset $\mathcal H^{-1}(\mathcal K)$ of $\Sigma'$ is contained in the compact set $f^{-1}(\mathcal K)\cup \bigcup_{\ell=1}^{n_0} H_n^{-1}(\mathcal K)$. So, we are done.
\end{proof}

To remove trivial components from the transversal pre-image of an LFCS with infinitely many components, we need to impose some conditions on this LFCS. One such preferred LFCS is given in \Cref{completedecomposition}. But for future use, not only this type of LFCS, we require other kinds of LFCS on the co-domain. So, here is the list of different preferred LFCS.

\begin{definition}\label{preferredLFCS} \textup{Let $\Sigma$ be a non-compact surface such that $\Sigma\not\cong \Bbb R^2$. Suppose,  $\mathscr A$ is a given \textup{LFCS} on $\Sigma$. We say $\mathscr A$ is a \emph{preferred \textup{LFCS}} on $\Sigma$ if either of the following happens: $(i)$ $\mathscr A$ is a finite collection of primitive circles on $\Sigma$; $(ii)$ $\mathscr A$ decomposes $\Sigma$ into bordered sub-surfaces, and a complementary component of this decomposition is homeomorphic to either $S_{1,1}$, $S_{0,3}$, $S_{0,2}$, or $S_{0,1,1}$.}\end{definition} 

\begin{remark}
\textup{The only use of case $(i)$ of \Cref{preferredLFCS} is in \Cref{degreeofapseudoproperhomotopyequivalence}, where we consider the process of removing unnecessary circles from the transversal pre-image of the boundary of an essential pair of pants or an essential punctured disk. It is worth noting that by a finite LFCS, one can't decompose an infinite-type surface into finite-type bordered surfaces.}
\end{remark}

In the theorem below, we construct a proper homotopy, which removes all trivial components keeping a neighborhood of each primitive component stationary from the transversal pre-image of a preferred LFCS. Recall that a homotopy $H\colon X\times [0,1]\to Y$ is said to be \emph{stationary on a subset $A$ of $X$} if $H(a,t)=H(a,0)$ for all $(a,t)\in A\times [0,1]$.

\begin{theorem}\textup{Let $f\colon \Sigma'\to \Sigma$ be a smooth proper map between two non-compact surfaces, where $\Sigma'\not\cong \Bbb R^2\not \cong \Sigma$; and let $\mathscr A$ be a preferred \textup{LFCS} on $\Sigma$ such that $f\ \stackinset{c}{}{c}{0.1ex}{$\top$}{$\abxpitchfork$}\ \mathscr A$. Then we can properly homotope $f$ to remove all trivial components of the \textup{LFCS} $f^{-1}(\mathscr A)$ such that each primitive component of $f^{-1}(\mathscr A)$ has an open neighborhood on which this proper homotopy is stationary.} \label{diskremoval}\end{theorem}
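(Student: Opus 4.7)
The plan is to isolate, for each trivial component of $f^{-1}(\mathscr A)$, a disk in $\Sigma'$ that is \emph{outermost} in the sense of being maximal under disk-containment; then to homotope $f$ on a small thickening of each such disk relative to its boundary so as to clear its interior of $f^{-1}(\mathscr A)$; and finally to paste all these local homotopies into a single proper homotopy via \Cref{properhomotopy}.

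First, by \Cref{remarktranshomotopy}, $f^{-1}(\mathscr A)$ is an \textup{LFCS} on $\Sigma'$, and \Cref{charofplane} applied to $\Sigma' \not\cong \Bbb R^2$ precludes an infinite strictly ascending chain of nested disks bounded by its components. Hence every trivial component lies inside some outermost disk, and I form the collection $\{D_n\}_n$ of all outermost disks. I would argue $\{D_n\}$ is pairwise disjoint: two disjoint simple closed curves in a surface bound disks that are either disjoint or nested, and a nested pair would contradict maximality of the outer one. Local finiteness of $\{D_n\}$ in $\Sigma'$ follows by combining the local finiteness of $f^{-1}(\mathscr A)$ (controlling those $D_n$ whose boundary meets a given compact $K\subset\Sigma'$) with the fact that at most one $D_n$ can contain a given connected compact set in its interior, since the $D_n$ are disjoint.

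Now fix an outermost $D=D_n$ with $\partial D=\mathcal C$ and $f(\mathcal C)\subseteq \mathcal C'\in\mathscr A$. Because $\mathcal C$ bounds $D$, one has $f_*[\mathcal C]=0$ in $\pi_1(\Sigma)$; combined with $[\mathcal C']$ being primitive in the free group $\pi_1(\Sigma)$ (\Cref{primitivecircle} and \Cref{spine}), this forces $\deg\!\bigl(f|_{\mathcal C}\colon\mathcal C\to\mathcal C'\bigr)=0$. I would then choose a thin compact thickening $D^+\supset D$ so that $\{D_n^+\}$ remains pairwise disjoint, locally finite, and disjoint from every primitive component of $f^{-1}(\mathscr A)$, together with a tubular neighborhood $N(\mathcal C')=\mathcal C'\times(-\delta,\delta)$ in $\Sigma$ disjoint from $\mathscr A\setminus\{\mathcal C'\}$. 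Then $f|_{\partial D^+}$ lies in the side-annulus of $N(\mathcal C')$ opposite to $f(D)$ and winds zero times about $\mathcal C'$, so it extends to a map $\tilde f\colon D^+\to\mathcal C'\times(0,\delta)$, with $\tilde f(D^+)\cap\mathscr A=\varnothing$. Since $\pi_2(\Sigma)=0$ (non-compact surfaces are $K(\pi,1)$), $f|_{D^+}$ and $\tilde f$ agree on $\partial D^+$ and are therefore homotopic rel $\partial D^+$ by some $H_D$; lifting the associated $S^2\to\Sigma$ to the contractible universal cover of $\Sigma$ and taking a bounded null-homotopy there lets me confine $\operatorname{im}(H_D)$ to a small neighborhood of $f(D^+)\cup N(\mathcal C')$.

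Pasting the $H_{D_n}$ (extended by $f$ outside each $D_n^+$) produces the desired homotopy $\mathcal H\colon\Sigma'\times[0,1]\to\Sigma$. Because $f$ is proper and $\{D_n^+\}$ is locally finite in $\Sigma'$, the family $\{f(D_n^+)\}$ is locally finite in $\Sigma$; together with local finiteness of $\{N(\mathcal C_n')\}$ (inherited from $\mathscr A$), this yields $\operatorname{im}(H_{D_n})\to\infty$, and \Cref{properhomotopy} gives that $\mathcal H$ is proper. The terminal map satisfies $\mathcal H(-,1)^{-1}(\mathscr A)\cap D_n^+=\varnothing$ for every $n$, which clears every trivial component of $f^{-1}(\mathscr A)$ (each lying inside some $D_n\subset D_n^+$); and since primitive components bound no disk in $\Sigma'$, they sit in the open set $\Sigma'\setminus\bigsqcup_n D_n^+$ on which $\mathcal H$ is stationary. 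The main obstacle I anticipate is precisely the control of $\operatorname{im}(H_D)$ needed for properness: the $\pi_2$-null-homotopy produced abstractly is not automatically localized, and when many $D_n$'s have $f$-image in a common non-compact complementary region (e.g.\ a punctured disk whose puncture the $f(D_n)$ accumulate towards), it takes extra care—via the universal-cover argument above, or an explicit two-stage construction that first contracts $f|_{\mathcal C}$ within $\mathcal C'$ using its degree-zero property and then pushes radially off $\mathcal C'$ along the collar $N(\mathcal C')$—to ensure that the $H_{D_n}$'s also escape to infinity in $\Sigma$.
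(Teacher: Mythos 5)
Your overall architecture matches the paper's: locate outermost disks (via \Cref{charofplane} and the LFCS property from \Cref{remarktranshomotopy}), homotope $f$ on each of them relative to the boundary so that the new map misses $\mathscr A$ there, and paste everything with \Cref{properhomotopy}. Several of your supporting claims are correct and even slightly cleaner than the paper's: the winding-number-zero argument (a trivial circle maps to a null-homotopic loop, while a primitive circle of $\mathscr A$ represents an infinite-order element of the free group $\pi_1(\Sigma)$ by \Cref{primitivecircle}), hence the explicit replacement map $\tilde f\colon D^+\to \mathcal C'\times(0,\delta)$; and the observation that properness of $f$ alone makes $\{f(D_n^+)\}$ locally finite in $\Sigma$, since $f(D_n^+)\cap K\neq\varnothing$ iff $D_n^+\cap f^{-1}(K)\neq\varnothing$.

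The genuine gap is exactly at the point you flag: you never actually produce rel-boundary homotopies $H_{D_n}$ from $f\vert D_n^+$ to $\tilde f$ whose images form a locally finite family, and the mechanism you propose does not deliver it. Lifting the glued sphere to the universal cover and taking a ``bounded'' null-homotopy there only confines the homotopy to a compact set whose size is governed by the diameter of the \emph{lift}; that diameter is not controlled by the downstairs set $f(D_n^+)\cup N(\mathcal C_n')$ and is not uniformly bounded in $n$, so the projected homotopies need not escape to infinity and \Cref{properhomotopy} cannot be applied. (Your alternative ``two-stage'' sketch has the same problem: after contracting the boundary inside $\mathcal C'$, the rel-boundary homotopy of the disk map still comes from an unlocalized $\pi_2(\Sigma)=0$ argument.) The missing idea is to run the null-homotopy inside a \emph{specified} subsurface that goes to infinity. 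The paper does this by building, for each $n$, a compact bordered subsurface $\Sigma_n$ out of the complementary pieces of the decomposition hit by $f(\mathcal D_n')$ (truncating punctured disks), proving $\Sigma_n\to\infty$, and getting the homotopy from $\pi_2(\Sigma_n,\mathcal C_{\delta_n,n})=0$ after the collar analysis of \Cref{trans1}, \Cref{technicallemma}, and \Cref{thick}. Within your setup the minimal repair is: take $U_n$ to be a connected open neighborhood of $f(D_n^+)\cup N(\mathcal C_n')$; any such $U_n$ is an open subsurface of a non-compact surface, hence aspherical (cf.\ \Cref{spine}), so $\pi_2(U_n)=0$ yields the homotopy from $f\vert D_n^+$ to $\tilde f$ rel $\partial D_n^+$ \emph{inside} $U_n$; and since $f^{-1}(\mathcal C')$ has only finitely many components for each $\mathcal C'\in\mathscr A$, only finitely many outermost disks share a given collar, so the $U_n$ can be chosen to form a locally finite family, after which \Cref{properhomotopy} applies. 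Without some such localization step, the proof as written does not establish that the pasted homotopy is proper.
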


\begin{proof}
Since $\Sigma'\not \cong \Bbb R^2$ and $f^{-1}(\mathscr A)$ is an \textup{LFCS} (see \Cref{remarktranshomotopy}), by \Cref{charofplane}, there don't exist infinitely many components $\mathcal C_1',\mathcal C_2',...$ of $f^{-1}(\mathscr A)$ bounding the disks $\mathcal D_1',\mathcal D_2',...$, respectively such that $\mathcal C_n'$ is contained in the interior of $\mathcal D_{n+1}'$ for each $n$. Thus, if $f^{-1}(\mathscr A)$ has a trivial component, we can introduce the notion of an outermost disk bounded by a component of $f^{-1}(\mathscr A)$ in the following way: A disk $\mathcal D'\subset \Sigma'$ bounded by a component of $f^{-1}(\mathscr A)$ is called an outermost disk; if given another disk $\mathcal D''\subset\Sigma$ bounded by a component of $f^{-1}(\mathscr A)$, then either $\mathcal D''\subseteq \mathcal D'$ or $\mathcal D'\cap \mathcal D''=\varnothing$.

Let $\{\mathcal D_{n}'\}$ be the pairwise disjoint collection (which may be an infinite collection) of all outermost disks. Assume $\mathcal C_n$ represents that component of $\mathscr A$ for which $f(\partial \mathcal D_{n}')\subseteq \mathcal C_n$. Note $\mathcal C_n$ may equal to $\mathcal C_m$ even if $m\neq n$.

Now, for each integer $n$, we will construct a compact bordered subsurface $\mathcal Z_n$ with $f(\mathcal D_n')\subseteq \mathcal Z_n$ such that $\mathcal Z_n\to \infty$. Roughly, $\mathcal Z_n$ will be obtained from taking the union of all those complementary components of $\Sigma$ (if a punctured disk appears, truncate it), which are hit by $f(\mathcal D_n')$.

Fix an integer $n$. Let $\mathcal X_{n,1}',...,\mathcal X_{n, k_n}'$ be the all connected components of $\mathcal D_n'\setminus f^{-1}(\mathscr A)$. By continuity of $f$, for each $\mathcal X_{n,\ell}'$, there exists a complementary component $\mathcal Y_{n,\ell}$ of $\Sigma$ decomposed by $\mathscr A$ such that $f(\mathcal X_{n,\ell}')\subseteq \mathcal Y_{n,\ell}$ and $\partial \mathcal X_{n,\ell}'\subseteq f^{-1}(\partial \mathcal Y_{n,\ell})$. See \Cref{figureforoutermostdisk}. For each $\ell$, define a compact bordered subsurface $\mathcal Z_{n,\ell}$ of $\Sigma$ as follows: If $\mathcal Y_{n,\ell}$ is homeomorphic to either $S_{1,1}$, $S_{0,3}$, or $S_{0,2}$; define $\mathcal Z_{n,\ell}\coloneqq \mathcal Y_{n,\ell}$. On the other hand, if $\mathcal Y_{n,\ell}$ is homeomorphic to $S_{0,1,1}$, then let $\mathcal Z_{n,\ell}$ be an annulus in $\mathcal Y_{n,\ell}$ such that $\partial \mathcal Z_{n,\ell}\cap \partial \mathcal Y_{n,\ell}=\partial \mathcal Y_{n,\ell}$ and  $f\left(\overline{\mathcal X_{n,\ell}'}\right)\subseteq \mathcal Z_{n,\ell}$. Finally, define $\mathcal Z_n\coloneqq \mathcal Z_{n,1}\cup \cdots\cup \mathcal Z_{n,k_n}$. 

  Now, we show $\mathcal Z_n\to \infty$. So, consider a compact subset $\mathcal K$ of $\Sigma$. Let $\mathbf S_1,..., \mathbf S_m$ be a collection of complementary components of $\Sigma$ decomposed by $\mathscr A$ such that $\mathcal K\subseteq \text{int}\left(\bigcup_{\ell=1}^m\mathbf S_\ell\right)$. Define $\mathbf S\coloneqq\bigcup_{\ell=1}^m\mathbf S_\ell$.   Thus, for an integer $n$, $f(\mathcal D_n')\cap  \mathbf S\neq \varnothing$ if and only if $\mathcal D_n'$ contains at least one component of $\bigcup_{\ell=1}^{m}f^{-1}\left(\partial \mathbf S_\ell\right)$. This is due to the construction of each $\mathcal Z_n$; see \Cref{figureforoutermostdisk}. For each component $\mathcal C$ of $\mathscr A$, \Cref{remarktranshomotopy} tells that $f^{-1}(\mathcal C)$ has only finitely many components.  So $\mathcal D_n'$ doesn't contain any component of $\bigcup_{\ell=1}^{m}f^{-1}\left(\partial \mathbf S_\ell\right)$ for all sufficiently large $n$, i.e.,  $f(\mathcal D_n')\cap \mathbf S=\varnothing$ for all sufficiently large $n$. Since $\mathcal K\subseteq\text{int}(\mathbf S)$ and each $\mathcal Z_n$ is obtained from taking the union of all those complementary components of $\Sigma$ (if a punctured disk appears, truncate it), which are hit by $f(\mathcal D_n')$, we can say that $\mathcal Z_n\cap \mathcal K= \varnothing$ for all sufficiently large $n$. Therefore, $\mathcal Z_n\to \infty$ as $\mathcal K$ is an arbitrary compact subset of $\Sigma$.

  \begin{figure}[ht]$$\adjustbox{trim={0.0\width} {0.0\height} {0.0\width} {0.0\height},clip}{\def\svgwidth{1.23\linewidth}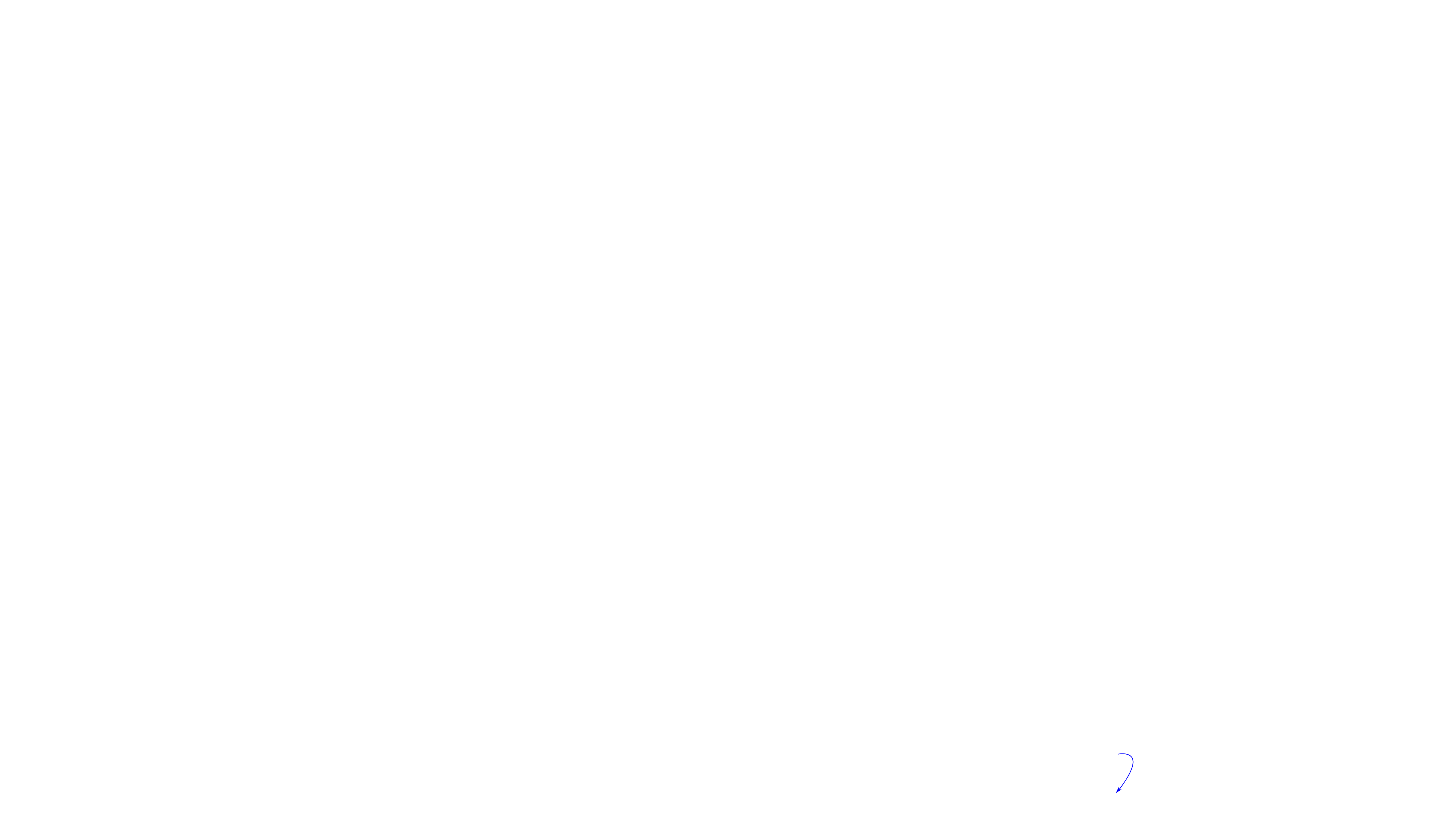}$$\caption{Each component of $\mathcal D_n'\setminus f^{-1}(\mathscr A)$ maps into a component of $\Sigma\setminus \mathscr A$. This fact, together with \Cref{thick}, provides $\Sigma_n$. A black circle denotes a component of either $\mathscr A$ or a component of $f^{-1}(\mathscr A)$. }\label{figureforoutermostdisk}\end{figure}

For each $n$, adding a small external collar to one of the boundary components of $\mathcal Z_n$ (if needed), we can construct a compact bordered surface $\Sigma_n$ with $\mathcal C_n\subseteq\text{int}(\Sigma_n),\   f(\mathcal D_{n}')\subseteq\Sigma_n$ such that $\{\Sigma_n\}$ is a locally finite collection, i.e., $\Sigma_n\to \infty$. See \Cref{figureforoutermostdisk}.

For each $n$, write $\mathcal C_n' \coloneqq \partial \mathcal D_n'$. Thus $f(\mathcal C_n')\subseteq \mathcal C_n$. Since $\mathcal C_n\subseteq \text{int}(\Sigma_n)$, using \Cref{trans1}, choose a one-sided tubular neighborhood $\mathcal C_n\times [0, \varepsilon_n]$ of $\mathcal C_n$ in $\Sigma$ with $\mathcal C_n\times 0\equiv \mathcal C_n$ such that  $f\ \stackinset{c}{}{c}{0.1ex}{$\top$}{$\abxpitchfork$}\ (\mathcal C_{n}\times t_n)$ for each $t_n\in [0, \varepsilon_n]$ and $\mathcal C_n\times [0,\varepsilon_n]\subseteq \Sigma_n$. Without loss of generality, we may further assume that  $f(x')\in \mathcal C_n\times [0,\varepsilon_n]$ for each $x'\in \Sigma'\setminus \mathcal D_n'$ sufficiently near to $\mathcal C_n'$.  Next, since $f^{-1}(\mathscr A)$ is an $\textup{LFCS}$, for each $n$, \Cref{technicallemma} gives a one-sided compact tubular neighborhood $\mathcal U_n'$ of $\mathcal C_n'$ such that the following hold: $\mathcal U_n'\cap \mathcal D_n'=\mathcal C_n'=\mathcal U_n'\cap f^{-1}(\mathscr A)$, $f(\mathcal U_n')\subseteq \mathcal C_n\times [0, \varepsilon_n]$ for each $n$; and $\mathcal U_n'\cap \mathcal U_m'=\varnothing$ for $m\neq n$. Finally, \Cref{thick} gives $\delta_n\in (0,\varepsilon_n)$ and a component $\mathcal C_{\delta_n,n}'$ of $f^{-1}(\mathcal C_{\delta_n,n})$ such that $\mathcal C_{\delta_n,n}'$ bounds a disk $\mathcal D_{\delta_n,n}'$ in $\Sigma'$ with $(\mathcal U_n'\cup \mathcal D_n')\supseteq\mathcal D_{\delta_n,n}'\supset \text{int}(\mathcal D_{\delta_n,n}')\supset \mathcal D'_n$ \big(equivalently, $\mathcal U_n'$ contains the annulus co-bounded by $\mathcal C_{\delta_n,n}'$ and $\mathcal C_n'$\big) and $f\big(\mathcal D_{\delta_n,n}'\setminus \text{int}(\mathcal D_n')\big)\subseteq \mathcal C_n\times [0,\varepsilon_n]$. Thus, $\mathcal D_{\delta_n,n}'\cap f^{-1}(\mathscr A)=\mathcal D_{n}'\cap f^{-1}(\mathscr A)$, $f(\mathcal D_{\delta_n,n}')\subseteq \Sigma_n$ for each $n$; and $\mathcal D_{\delta_n,n}'\cap \mathcal D_{\delta_m,m}'=\varnothing$ when $m\neq n$.

 Since $\mathcal C_{\delta_n,n}$ co-bounds an annulus with the primitive circle $\mathcal C_n$,  the inclusion $\mathcal C_{\delta_n,n}\hookrightarrow \Sigma_n$ is $\pi_1$-injective (see \Cref{primitivecircle}). Also, $\Sigma_n$ is homotopy equivalent to $ \bigvee_{\text{finite}}\Bbb S^1$, which implies that the universal cover of $\Sigma_n$ is contractible, and thus $\pi_2(\Sigma_n)=0$. Therefore, exactness of $$\cdots \longrightarrow \pi_2(\Sigma_n)\longrightarrow \pi_2(\Sigma_n, \mathcal C_{\delta_n,n})\longrightarrow \pi_1(\mathcal C_{\delta_n,n})\longrightarrow \pi_1(\Sigma_n)\longrightarrow\cdots$$ gives $\pi_2(\Sigma_n, \mathcal C_{\delta_n,n})=0$, i.e., we have a homotopy $H_n\colon \mathcal D'_{\delta_n,n}\times [0,1]\to \Sigma_n$ relative to $\mathcal C_{\delta_n,n}'$ from $f\vert(\mathcal D'_{\delta_n,n},\mathcal C'_{\delta_n,n})\to (\Sigma_n, \mathcal C_{\delta_n,n})$ to a map $\mathcal D'_{\delta_n,n}\to \mathcal C_{\delta_n,n}$ for each $n$; see \cite[Lemma 4.6.]{MR1867354}. Now, to conclude, apply \Cref{properhomotopy} on $\{H_n\}$. \end{proof}

\begin{remark}
\textup{In \Cref{diskremoval}, the number of components of $\mathscr A$ can be infinite; thus, the number of trivial components of $f^{-1}(\mathscr A)$ can be infinite. That's why we have removed all trivial components of $f^{-1}(\mathscr A)$ by a single proper homotopy upon considering all outermost disks simultaneously. This process is in contrast to the finite-type surface theory, where the number of decomposition circles is finite, and therefore all trivial circles in the collection of transversal pre-images of all decomposition circles can be removed one by one, considering the notion of an innermost disk.  }
\end{remark}

\subsection{Homotope a degree-one map between circles to a homeomorphism}
Previously, we have removed all trivial components keeping a neighborhood of each primitive component stationary from the transversal pre-image $f^{-1}(\mathscr A)$ of a preferred LFCS $\mathscr A$.  In this section, we properly homotope our pseudo proper homotopy equivalence $f\colon \Sigma'\to \Sigma$ to send each component $\mathcal C'$ of $f^{-1}(\mathscr A)$ homeomorphically onto a component $\mathcal C$ of $\mathscr A$ so that the restriction of $f$ to a small one-sided tubular neighborhood $\mathcal C'\times [1,2]$ of $\mathcal C'$ (on either side of $\mathcal C'$) can be described by the following homeomorphism: $$\mathcal C'\times [1,2]\ni (z,t)\longmapsto \big(f(z),t\big)\in \mathcal C\times [1,2].$$

First, we fix a few notations. Define $\partial_\varepsilon\coloneqq\Bbb S^1\times \varepsilon$ for $\varepsilon\in \Bbb R$ and $\mathbf I\coloneqq[0,1]$. Let $p\colon \Bbb S^1\times \Bbb R\to \Bbb S^1$ be the projection. The following lemma roughly says that a self-map of $\Bbb S^1\times [0,2]$ can be homotoped rel. $\Bbb S^1\times 0$ to map $\Bbb S^1\times [1,2]$ into itself by the product $\theta\times \text{Id}_{[1,2]}$, where $\theta$ is a self-map of $\Bbb S^1$.

\begin{lemma}\textup{Let $\Phi$ be a self-map of $A\coloneqq\Bbb S^1\times [0,2]$ such that $\Phi^{-1}(\partial_b)=\partial_b$ for each $b\in\{0,2\}$. If we are given a map $\varphi_2\colon \partial_2\to \partial_2$ and a homotopy $h_{(2)}\colon \partial_2\times \mathbf I\to \partial_2$ from $\Phi\vert\partial_2\to \partial_2$ to $\varphi_2$, then $\Phi$ can be homotoped relative to $\partial_0$ to map $\Bbb S^1\times [0,1]$ into $\Bbb S^1\times [0,1]$ and to satisfy $\Phi(-,r)=\big(p\circ\varphi_2(-,2),r\big)$ for each $r\in [1,2]$.} \label{primitiveimpliesdegreeone}
\end{lemma}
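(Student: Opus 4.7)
The plan is to straighten $\Phi$ in three successive homotopies, each rel $\partial_0$, and compose. Write $\Phi=(\Phi_1,\Phi_2)$ with $\Phi_1\colon A\to\mathbb S^1$ and $\Phi_2\colon A\to[0,2]$; set $q\coloneqq p\circ\varphi_2$ (viewed as a self-map of $\mathbb S^1$ via $\partial_2\equiv\mathbb S^1\times\{2\}$) and $g_s(z)\coloneqq p\bigl(h_{(2)}((z,2),s)\bigr)\in\mathbb S^1$, so $g_0=\Phi_1(\cdot,2)$ and $g_1=q$.

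\emph{Stage 1 (apply $h_{(2)}$ on a collar).} Modify $\Phi$ only on a collar $\mathbb S^1\times[\tfrac32,2]$ of $\partial_2$, leaving $\Phi_2$ fixed and twisting $\Phi_1$. Identifying $\mathbb S^1\subset\mathbb C$, take
\[
H^s(z,r)=\Bigl(\Phi_1(z,r)\cdot\overline{\Phi_1(z,2)}\cdot g_{s\chi(r)}(z),\ \Phi_2(z,r)\Bigr),
\]
where $\chi\colon[\tfrac32,2]\to[0,1]$ is continuous with $\chi(\tfrac32)=0$ and $\chi(2)=1$. A direct check gives $H^0=\Phi$, $H^s|_{\mathbb S^1\times\{3/2\}}=\Phi|_{\mathbb S^1\times\{3/2\}}$, and $H^1|_{\partial_2}=\varphi_2$. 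Because $\Phi_2$ is untouched, the condition $\Phi^{-1}(\partial_b)=\partial_b$ persists. After Stage~1 we may therefore assume $\Phi|_{\partial_2}=\varphi_2$, so that $\Phi_1(z,2)=q(z)$ for all $z$.

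\emph{Stages 2 and 3 (straighten $\Phi_2$; reparametrise the $r$-argument of $\Phi_1$).} Run the affine homotopy $\Phi_2^{(s)}(z,r)=(1-s)\Phi_2(z,r)+sr$ on the second coordinate alone. The hypothesis $\Phi^{-1}(\partial_b)=\partial_b$ ensures $\Phi_2^{(s)}$ takes values in $[0,2]$ and the homotopy is rel $\partial_0\cup\partial_2$; at $s=1$ we have $\Phi(z,r)=(\Phi_1(z,r),r)$. Finally, with $\gamma(r)=\min(2r,2)$ and $\gamma_s(r)=(1-s)r+s\gamma(r)$, run $\Psi_s(z,r)=(\Phi_1(z,\gamma_s(r)),r)$; since $\gamma_s(0)=0$ this is rel $\partial_0$. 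At $s=1$ one obtains $(\Phi_1(z,2r),r)\in\mathbb S^1\times[0,1]$ on $\mathbb S^1\times[0,1]$, and $(\Phi_1(z,2),r)=(q(z),r)$ on $\mathbb S^1\times[1,2]$, which is exactly the required form. Concatenating the three homotopies yields the desired homotopy.

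The only delicate step is Stage~1: the homotopy $h_{(2)}$ lives a priori on $\partial_2$ alone, and must be realised as an ambient homotopy of $\Phi$ without disturbing the strict boundary condition $\Phi^{-1}(\partial_b)=\partial_b$. The twist formula above accomplishes this using only the group structure on $\mathbb S^1$, so no appeal to the general homotopy-extension property is needed. Stages~2 and~3 are then entirely elementary.
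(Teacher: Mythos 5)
Your proof is correct: Stage~1 is a genuine homotopy rel $\mathbb S^1\times[0,\tfrac32]$ (so rel $\partial_0$), it glues with the stationary homotopy on the inner part because $\chi(\tfrac32)=0$ forces $g_0=\Phi_1(\cdot,2)$ there, and it leaves the second coordinate untouched, so $\Phi^{-1}(\partial_b)=\partial_b$ survives; Stages~2 and~3 are then rel $\partial_0$ (indeed rel $\partial_0\cup\partial_2$) and end at exactly the required product form. The route differs from the paper's in a worthwhile way. The paper first asserts, without formulas, a homotopy rel $\partial_0\cup\partial_2$ bringing $\Phi$ to the product form $(z,r)\mapsto\big(p\circ\Phi(z,2),r\big)$ on $\mathbb S^1\times[1,2]$ with $\mathbb S^1\times[0,1]$ mapped into itself, and only afterwards feeds in $h_{(2)}$: it transports $h_{(2)}$ to each level $\partial_r$, applies the homotopy extension theorem to the pair $\big(\mathbb S^1\times[0,1],\partial_0\cup\partial_1\big)$ with the constant homotopy on $\partial_0$ and $h_{(1)}$ on $\partial_1$, and pastes the extension with the level homotopies $h_{(r)}$, $1\le r\le 2$. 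You reverse the order: you first realize $h_{(2)}$ on a collar of $\partial_2$ by the explicit multiplicative twist $\Phi_1(z,r)\cdot\overline{\Phi_1(z,2)}\cdot g_{s\chi(r)}(z)$, which exploits the group structure of $\mathbb S^1$ and thereby avoids any appeal to the homotopy extension property, and you then carry out the straightening by explicit affine and reparametrization formulas --- i.e.\ you actually prove the step the paper's first sentence leaves implicit. What each buys: your argument is more self-contained and completely formulaic; the paper's HEP-based argument does not use that the fiber is a topological group, so it would transfer verbatim to situations where no such twist is available, at the cost of invoking HEP and of leaving the initial straightening unproved.
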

\begin{remark}
\textup{In \Cref{primitiveimpliesdegreeone}, up to homotopy, $\varphi_2$ is either a constant map or a covering map.}\label{remarkprimitiveimpliesdegreeone}
\end{remark}

\begin{proof}
Homotope $\Phi$ relative to $\partial_0\cup \partial_2$ so that $\Phi\big(\Bbb S^1\times [0,1]\big)\subseteq \Bbb S^1\times [0,1]$ and $\Phi(z,r)=\big(p\circ \Phi(z,2),r\big)$ for all $(z,r)\in\Bbb S^1\times [1,2]$. For each $ r\in [1,2]$, $h_{(2)}$ provides a homotopy $h_{(r)}\colon \partial_r\times \mathbf I\to \partial_r$. Let $H\colon(\partial_0\cup \partial_1)\times \mathbf I\to\partial_0\cup \partial_1$ be the homotopy defined as follows: $H\vert\partial_1\times \mathbf I=h_{(1)}$ and $H(-,t)\vert\partial_0=\Phi\vert\partial_0$ for any $t\in [0,1]$. Homotopy extension theorem gives a homotopy $\widetilde H\colon \Bbb S^1\times [0,1]\times \mathbf I\to \Bbb S^1\times [0,1]$ such that $\widetilde H\vert(\partial_0\cup \partial_1)\times \mathbf I=H$. Finally, paste $\widetilde H$ with the collection $h_{(r)},\ 1\leq r\leq 2$.\end{proof}

The following theorem is the simple modification (in the proper category) of the analog theorem for closed surfaces.

\begin{theorem}
\textup{Let $f\colon \Sigma'\to \Sigma$ be a smooth pseudo proper homotopy equivalence between two non-compact surfaces, where $\Sigma'\not\cong \Bbb R^2\not \cong \Sigma$; and let $\mathscr A$ be a preferred \textup{LFCS} on $\Sigma$ such that $f\ \stackinset{c}{}{c}{0.1ex}{$\top$}{$\abxpitchfork$}\ \mathscr A$. Then $f$ can be properly homotoped to remove all trivial components of the $f^{-1}(\mathscr A)$ as well as to map each primitive component of $f^{-1}(\mathscr A)$ homeomorphically onto a component of $\mathscr A$. Moreover, after this proper homotopy, near each component of $f^{-1}(\mathscr A)$, the map $f$ can be described as follows:}

\textup{Let $\mathcal C_{\textbf{\textup{p}}}'$ (resp. $\mathcal C$) be a component of $f^{-1}(\mathscr A)$ (resp. $\mathscr A$) such that $f\vert \mathcal C_{\textbf{\textup{p}}}'\to \mathcal C$ is a homeomorphism. Then $\mathcal C_{\textbf{\textup{p}}}'$ (resp. $\mathcal C$)  has two one-sided tubular neighborhoods $\mathcal M'$ and $\mathcal N'$ (resp. $\mathcal M$ and $\mathcal N$) with some specific identifications $(\mathcal M',\mathcal C_{\textbf{\textup{p}}}')\cong(\mathcal C'_\textbf{p}\times [1,2],\mathcal C_{\textbf{\textup{p}}}'\times 2)\cong (\mathcal N',\mathcal C_{\textbf{\textup{p}}}')$ \big(resp. $(\mathcal M,\mathcal C)\cong (\mathcal C\times [1,2],\mathcal C\times 2)\cong (\mathcal N,\mathcal C)$\big) such that the following hold:}

\begin{itemize}
    \item \textup{$\mathcal M'\cup \mathcal N'$ is a (two-sided) tubular neighborhood of $\mathcal C_{\textbf{\textup{p}}}'$;}
    \item \textup{$f\vert \mathcal M'\to \mathcal M$ and $f\vert \mathcal N'\to \mathcal N$ are homeomorphisms described by $\mathcal C_{\textbf{\textup{p}}}'\times [1,2]\ni (z,t)\longmapsto \big(f(z),t\big)\in \mathcal C\times [1,2].$}
\end{itemize} 
\label{deg1tohomeo}
\end{theorem}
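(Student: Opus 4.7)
The plan has three phases. First, since $\mathscr A$ is a preferred LFCS and $f$ is smooth and transverse to $\mathscr A$, I apply \Cref{diskremoval} to properly homotope $f$ so that every trivial component of $f^{-1}(\mathscr A)$ is eliminated. Crucially, this proper homotopy is stationary on an open neighborhood of every primitive component of $f^{-1}(\mathscr A)$, so after this step the set $f^{-1}(\mathscr A)$ consists only of primitive circles, each still a transverse preimage of the corresponding component of $\mathscr A$.

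Second, for every primitive component $\mathcal C'_n\subseteq f^{-1}(\mathcal C_n)$ (where $\mathcal C_n$ denotes the corresponding component of $\mathscr A$), the restriction $f\vert\mathcal C'_n\to\mathcal C_n$ has degree $\pm 1$. Indeed, by \Cref{primitivecircle} both $[\mathcal C'_n]\in\pi_1(\Sigma')$ and $[\mathcal C_n]\in\pi_1(\Sigma)$ are primitive elements. Since $f$ is a homotopy equivalence, $f_*$ is an isomorphism on fundamental groups, so $f_*[\mathcal C'_n]$ is primitive in $\pi_1(\Sigma)$. But $f_*[\mathcal C'_n]$ is conjugate to $[\mathcal C_n]^{d_n}$, where $d_n$ is the degree of $f\vert\mathcal C'_n\to\mathcal C_n$, and primitivity of $[\mathcal C_n]$ forces $|d_n|=1$. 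In particular, $f\vert\mathcal C'_n$ is homotopic to a homeomorphism $\varphi_n\colon\mathcal C'_n\to\mathcal C_n$, because every degree $\pm 1$ self-map of $\Bbb S^1$ is.

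Third, I produce local proper homotopies near each $\mathcal C'_n$ and paste them. By transversality, choose a compact two-sided tubular neighborhood $\mathcal V'_n\cong\mathcal C'_n\times[-2,2]$ of $\mathcal C'_n$ in $\Sigma'$ and a tubular neighborhood $\mathcal V_n\cong\mathcal C_n\times[-2,2]$ of $\mathcal C_n$ in $\Sigma$, such that $f$ maps $\mathcal V'_n$ into $\mathcal V_n$, sends $\mathcal C'_n\times\{0\}$ to $\mathcal C_n\times\{0\}$ and each outer boundary to the corresponding outer boundary, with $f^{-1}(\mathcal C_n)\cap \mathcal V'_n=\mathcal C'_n\times\{0\}$. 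After shrinking, the $\mathcal V'_n$ form a locally finite collection in $\Sigma'$ and the $\mathcal V_n$ a locally finite collection in $\Sigma$. On each of the two sides of $\mathcal C'_n$, I apply \Cref{primitiveimpliesdegreeone} with $\partial_2$ identified with $\mathcal C'_n$, with $\varphi_2=\varphi_n$, and with $h_{(2)}$ a chosen homotopy from $f\vert\mathcal C'_n$ to $\varphi_n$. The lemma yields a homotopy that is stationary on the outer boundary $\partial_0$ and after which $f$ attains the product form $(z,t)\mapsto(\varphi_n(z),t)$ on the inner collar $\mathcal C'_n\times[1,2]$; using the same $\varphi_n$ on both sides guarantees agreement along $\mathcal C'_n$, yielding the desired $\mathcal M',\mathcal N'$ and $\mathcal M,\mathcal N$.

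Finally, each local homotopy is supported in the compact set $\mathcal V'_n$ with image in $\mathcal V_n$, so the images tend to infinity in $\Sigma$, and \Cref{properhomotopy} pastes them into a single proper homotopy from $f$ to a map satisfying every conclusion of the theorem. The main anticipated obstacle is the careful setup in Phase 3: \Cref{primitiveimpliesdegreeone} requires preserving both boundaries of the model annulus, so I must use the tubular product structure around $\mathcal C_n$ together with transversality to identify outer boundaries precisely, and I must choose the $\mathcal V'_n$ small enough that their $f$-images are genuinely confined to the locally finite collection $\{\mathcal V_n\}$ so that the final properness check via \Cref{properhomotopy} goes through.
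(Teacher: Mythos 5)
Your Phases 1 and 2 are fine and essentially match the paper (the degree $\pm 1$/primitivity argument for $f\vert\mathcal C'_n$ is the same use of $\pi_1$-bijectivity that the paper makes, just placed earlier). The genuine gap is in Phase 3, in the sentence ``By transversality, choose \dots such that $f$ maps $\mathcal V'_n$ into $\mathcal V_n$, sends $\mathcal C'_n\times\{0\}$ to $\mathcal C_n\times\{0\}$ and each outer boundary to the corresponding outer boundary, with $f^{-1}(\mathcal C_n)\cap\mathcal V'_n=\mathcal C'_n\times\{0\}$.'' This cannot be arranged merely by shrinking tubular neighborhoods. First, $f$ of the outer boundary circle of a small collar around $\mathcal C'_n$ is generally not contained in any level circle $\mathcal C_n\times\{t\}$; to get a collar whose outer boundary is a genuine preimage circle of a parallel copy of $\mathcal C_n$ you need the work of \Cref{trans1}, \Cref{technicallemma} and especially \Cref{thick}. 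Second, and more seriously, \Cref{primitiveimpliesdegreeone} requires $\Phi^{-1}(\partial_b)=\partial_b$ for \emph{both} boundary circles of the model annulus, i.e.\ the preimage inside $\mathcal U'_n$ of the outer boundary circle of $\mathcal U_n$ must be exactly the outer boundary circle of $\mathcal U'_n$. Even after \Cref{thick}, that preimage can contain extra disk-bounding components, and your single application of \Cref{diskremoval} to $\mathscr A$ in Phase 1 does not touch them, because the collar boundary circles are not components of $\mathscr A$. This is precisely why the paper forms the augmented LFCS $\widetilde{\mathscr A}=\mathscr A\cup\big\{(\partial\mathcal U_n\cup\partial\mathcal V_n)\setminus\mathcal C_n\big\}_n$ (Claim 3.4.3.1) and applies \Cref{diskremoval} to $\widetilde{\mathscr A}$ \emph{after} constructing the collars, keeping neighborhoods of the primitive components stationary; only then does $f\vert\mathcal U'_n\to\mathcal U_n$ satisfy the hypotheses of \Cref{primitiveimpliesdegreeone}. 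In short, your Phase 1 and Phase 3 are in the wrong order relative to the collar construction, and the boundary-preimage condition is asserted rather than achieved.

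A secondary inaccuracy: you require $f$ to send ``each outer boundary to the corresponding outer boundary'' of a two-sided model $\mathcal C_n\times[-2,2]$, implicitly assuming the two sides of $\mathcal C'_n$ map to the two different sides of $\mathcal C_n$. That need not hold (the paper's remark after the theorem notes $\mathcal M$ and $\mathcal N$ may lie on the same side of $\mathcal C$), which is why the paper works with two one-sided collars $\mathcal U'_n,\mathcal V'_n$ mapped separately into one-sided collars $\mathcal U_n,\mathcal V_n$ and applies \Cref{primitiveimpliesdegreeone} to each side independently. Your pasting step via \Cref{properhomotopy} is fine once the local homotopies exist, so the proof is repairable, but as written the key hypotheses of the annulus lemma are not established.
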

\begin{remark}
\textup{In \Cref{deg1tohomeo}, though $\mathcal M'\cup \mathcal N'$ is a (two-sided) tubular neighborhood of $\mathcal C_{\textbf{\textup{p}}}'$, both $\mathcal M$ and $\mathcal N$ may lie on the same side of $\mathcal C$, i.e., $\mathcal M\cup\mathcal N$ may not be a two-sided tubular neighborhood of $\mathcal C$.}
\end{remark}

\begin{proof}\label{proofdeg1tohomeo}
Let $\{\mathcal C_{\textbf{p}n}'\}$ be the collection of all primitive components of $f^{-1}(\mathscr A)$. Assume $\mathcal C_n$ represents that component of $\mathscr A$ for which $f(\mathcal C_{\textbf{p}n}')\subseteq \mathcal C_n$. Note $\mathcal C_n$ may equal to $\mathcal C_m$ even if $m\neq n$.

\begin{claim} \label{claim00}
    \textup{There are one-sided compact tubular neighborhoods $\mathcal U'_n,\mathcal V'_n(\subseteq \Sigma')$ of $\mathcal C_{\textbf{p}n}'$, and there are one-sided compact tubular neighborhoods $\mathcal U_n,\mathcal V_n(\subseteq \Sigma)$ of $\mathcal C_n$ such that after defining $\mathcal T_n'\coloneqq\mathcal U'_n\cup \mathcal V'_n$, the following hold:}

\begin{enumerate}
    \item[$(1)$] \textup{$\widetilde{\mathscr A}\coloneqq\mathscr A\cup \big\{(\partial \mathcal U_n\cup \partial \mathcal V_n)\setminus \mathcal C_n\big\}_n$ is an \textup{LFCS} and   $f\ \stackinset{c}{}{c}{0.1ex}{$\top$}{$\abxpitchfork$}\ \widetilde{\mathscr A}$;}
    \item[$(2)$] \textup{$\partial\mathcal U_n'\setminus \mathcal C'_{\textbf{p}n}$ (resp. $\partial\mathcal V_n'\setminus \mathcal C'_{\textbf{p}n}$) is the only component of $f^{-1}(\partial \mathcal U_n\setminus \mathcal C_n)\cap \mathcal U_n'$ \big(resp. $f^{-1}(\partial \mathcal V_n\setminus \mathcal C_n)\cap \mathcal V_n'$\big) that co-bounds an annulus with $\mathcal C'_{\textbf{p}n}$ (see \Cref{DegreeOneToHomeomorphism});}
    \item[$(3)$] \textup{each point of $\text{int}(\mathcal U_n')$ \big(resp. $\text{int}(\mathcal V_n')$\big) that is sufficiently near to $\mathcal C_{\mathbf{p}n}'$ is mapped into $\text{int}(\mathcal U_n)$ \big(resp. $\text{int}(\mathcal V_n)$\big);}
    \item[$(4)$]  \textup{$\mathcal T_n'$ is a two-sided tubular neighborhood of $\mathcal C'_{\textbf{p}n}$ with $f^{-1}(\mathscr A)\cap \mathcal T_n'=\mathcal C'_{\textbf{p}n}$; and }
    \item[$(5)$]\textup{$\mathcal T_n'\cap\mathcal T_m'=\varnothing$ if $m\neq n$, and $(\mathcal U_n\cup \mathcal V_n)\to \infty$.}
\end{enumerate} 
   
\end{claim}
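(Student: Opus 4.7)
The plan is to build the required tubular neighborhoods in $\Sigma$ first (using local finiteness of $\mathscr{A}$ and Theorem \ref{trans1}), then analyze the submersion-like behavior of $f$ near each primitive component of $f^{-1}(\mathscr{A})$ to construct matching tubes in $\Sigma'$ with the right side-matching.

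\textbf{Step 1 (tubes in $\Sigma$).} Since $\mathscr{A}$ is an \textup{LFCS}, for each $n$ I would first choose a two-sided tubular neighborhood $W_n \cong \mathcal{C}_n \times [-\varepsilon_n,\varepsilon_n]$ of $\mathcal{C}_n$ in $\Sigma$ so that $\{W_n\}$ is locally finite and disjoint from every component of $\mathscr{A}$ other than $\mathcal{C}_n$; this is possible by shrinking each $\varepsilon_n$. Invoking Theorem \ref{trans1} on each side of $\mathcal{C}_n$ and shrinking $\varepsilon_n$ further if necessary, I may assume $f\transv (\mathcal{C}_n\times t)$ for every $t\in[-\varepsilon_n,\varepsilon_n]$ (in particular for $t=\pm\varepsilon_n$). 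Now set $\mathcal{U}_n \coloneqq \mathcal{C}_n\times[0,\varepsilon_n]$ and $\mathcal{V}_n \coloneqq \mathcal{C}_n\times[-\varepsilon_n,0]$. Local finiteness of $\{W_n\}$ gives $(\mathcal{U}_n\cup\mathcal{V}_n)\to\infty$, and $\widetilde{\mathscr{A}}$ is an \textup{LFCS} with $f\transv\widetilde{\mathscr{A}}$, which is condition (1).

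\textbf{Step 2 (local structure near $\mathcal{C}'_{\mathbf{p}n}$).} By smoothness and $f\transv\mathcal{C}_n$, around any $x\in\mathcal{C}'_{\mathbf{p}n}$ there are local coordinates $(u,v)$ on $\Sigma'$ and $(s,t)$ on $\Sigma$ in which $\mathcal{C}'_{\mathbf{p}n}=\{u=0\}$, $\mathcal{C}_n=\{s=0\}$, and $f$ has the form $(u,v)\mapsto(u\,g(u,v),\,t(u,v))$ with $g(0,v)\neq 0$. Connectedness of $\mathcal{C}'_{\mathbf{p}n}$ forces the sign of $g$ to be constant along it, so one side of $\mathcal{C}'_{\mathbf{p}n}$ maps (locally) consistently into $\text{int}(\mathcal{U}_n)$ and the other into $\text{int}(\mathcal{V}_n)$. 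Label the two sides accordingly.

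\textbf{Step 3 (tubes in $\Sigma'$).} I would pick a two-sided collar $\mathcal{C}'_{\mathbf{p}n}\times[-\delta_n,\delta_n]$ inside $f^{-1}(W_n)$, small enough to be disjoint from every other component of the \textup{LFCS} $f^{-1}(\mathscr{A})$ (using Theorem \ref{remarktranshomotopy}) and so that the two open sides map into $\text{int}(\mathcal{U}_n)$ and $\text{int}(\mathcal{V}_n)$ respectively — this already delivers (3). The local description of Step 2, together with the transversality of $f$ to each $\mathcal{C}_n\times s$, implies that for every sufficiently small $s\in(0,\varepsilon_n)$, the set $f^{-1}(\mathcal{C}_n\times s)$ meets the positive half of the collar in a unique circle parallel to $\mathcal{C}'_{\mathbf{p}n}$ (all other components of $f^{-1}(\mathcal{C}_n\times s)$ must exit the collar). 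Shrinking the original $\varepsilon_n$ to such a value, let $\partial\mathcal{U}'_n\setminus\mathcal{C}'_{\mathbf{p}n}$ be that unique parallel circle and define $\mathcal{U}'_n$ as the annulus it co-bounds with $\mathcal{C}'_{\mathbf{p}n}$ in the collar; do the analogous thing on the negative side to obtain $\mathcal{V}'_n$. By construction $\mathcal{T}'_n=\mathcal{U}'_n\cup\mathcal{V}'_n$ is a two-sided tubular neighborhood of $\mathcal{C}'_{\mathbf{p}n}$ with $f^{-1}(\mathscr{A})\cap\mathcal{T}'_n=\mathcal{C}'_{\mathbf{p}n}$, giving (4); further shrinking the $\delta_n$ uniformly, the local finiteness of $f^{-1}(\mathscr{A})$ provides $\mathcal{T}'_n\cap\mathcal{T}'_m=\varnothing$ for $m\neq n$, completing (5).

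\textbf{Main obstacle.} The delicate point is condition (2): ensuring that among the (a priori several) components of $f^{-1}(\partial\mathcal{U}_n\setminus\mathcal{C}_n)\cap\mathcal{U}'_n$, only $\partial\mathcal{U}'_n\setminus\mathcal{C}'_{\mathbf{p}n}$ is parallel to $\mathcal{C}'_{\mathbf{p}n}$. Because $\mathcal{U}'_n$ is an annulus, any other component is either trivial in $\mathcal{U}'_n$ (allowed) or parallel to $\mathcal{C}'_{\mathbf{p}n}$ (forbidden). The fix is the careful two-scale shrinking of Step 3 — first choose $\delta_n$ so that only one parallel preimage circle can live in the collar for each small level $s$, then pick $\varepsilon_n$ at such a level. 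The local projection model from Step 2 is what guarantees such a joint choice exists, and a standard compactness/continuity argument along the compact curve $\mathcal{C}'_{\mathbf{p}n}$ makes this choice uniform.
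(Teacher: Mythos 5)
Your proposal is correct and keeps the paper's overall architecture — locally finite two-sided collars $\mathcal C_n\times[-\varepsilon_n,\varepsilon_n]$ in $\Sigma$ with transversality at every level via \Cref{trans1}, pairwise disjoint small collars around the circles $\mathcal C'_{\textbf{p}n}$ in $\Sigma'$ using local finiteness of $f^{-1}(\mathscr A)$, then an adjustment so that the outer boundary of each domain tube is a full component of the preimage of a level circle — but you settle the delicate condition $(2)$ by a genuinely different mechanism. The paper quotes its appendix results: \Cref{technicallemma} to push $f(\mathcal T_n')$ into the collar of $\mathcal C_n$, and \Cref{thick}, a point-set argument that produces \emph{some} level $\delta$ together with the parallel component of $f^{-1}(\mathcal C_\delta)$ closest to $\mathcal C'_{\textbf{p}n}$, at the price of tolerating additional disk-bounding components inside the tube — which is exactly why $(2)$ is phrased as ``the only component that co-bounds an annulus'' rather than ``the only component''. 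You instead argue differentially: writing $h=s\circ f$ for the collar coordinate, transversality forces the transverse derivative of $h$ to be nonvanishing along $\mathcal C'_{\textbf{p}n}$ (your Hadamard-type normal form), and compactness gives a uniform lower bound for it on $\mathcal C'_{\textbf{p}n}\times[0,t_0]$, so every sufficiently small level pulls back inside the collar to a single graph-like circle parallel to $\mathcal C'_{\textbf{p}n}$; this gives a stronger form of $(2)$ (no spurious circles at all) and makes $(3)$--$(5)$ routine. Each route has its advantages: the paper's \Cref{thick} needs no derivative estimates and is reused elsewhere (e.g.\ in the disk-removal step), while yours yields cleaner tubes — though the uniform-monotonicity estimate you only gesture at in the last paragraph is precisely the content that must be written out, since ``unique parallel circle at each small level'' does not follow from transversality at a single level alone. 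Two further remarks: your Step 2 side-matching (the two sides of $\mathcal C'_{\textbf{p}n}$ map near the curve to opposite sides of $\mathcal C_n$) is indeed a correct consequence of transversality, but the paper deliberately never asserts it (see the remarks following \Cref{technicallemma} and \Cref{deg1tohomeo}) and the claim does not require it; and since $\mathcal C_n=\mathcal C_m$ can happen for $n\neq m$ (only finitely often per circle, by \Cref{remarktranshomotopy}), you should choose the levels so that the new boundary circles entering $\widetilde{\mathscr A}$ are pairwise disjoint — the same bookkeeping the paper does when it arranges $\{\mathcal C_n\times[-\varepsilon_n,\varepsilon_n]\}_n$ to be locally finite.
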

\begin{proof}[Proof of \Cref{claim00}]
    For any positive integer $n_0$, \Cref{remarktranshomotopy} says that the set $\{m\in \Bbb N: \mathcal C_m=\mathcal C_{n_0}\}$ is finite. Also, $\mathscr A$ is locally finite. Thus $\{\mathcal C_n: n\in \Bbb N\}$ is locally finite. So, for each $n$, there exists a two-sided tubular neighborhood $\mathcal C_n\times [-\varepsilon_n,\varepsilon_n]$ of  $\mathcal C_n$ with $\mathcal C_n\times 0\equiv \mathcal C_n$ such that $\big\{\mathcal C_n\times [-\varepsilon_n,\varepsilon_n]:n \in \Bbb N\big\}$ is a locally finite collection. Further, for each $n\in \Bbb N$, we may assume that $f\ \stackinset{c}{}{c}{0.1ex}{$\top$}{$\abxpitchfork$}\ \ (\mathcal C_n\times t_n)$ whenever  $t_n\in [-\varepsilon_n,\varepsilon_n]$ by \Cref{trans1}. 
    
    Now, since $f^{-1}(\mathscr A)$ is a locally finite collection, for each $n$, there are one-sided compact tubular neighborhoods $\mathcal U'_n,\mathcal V'_n$ of $\mathcal C_{\textbf{p}n}'$ in $\Sigma'$ such that after defining $\mathcal T_n'\coloneqq\mathcal U'_n\cup \mathcal V'_n$, the following hold: $\mathcal T_n'$ is a two-sided tubular neighborhood of $\mathcal C'_{\textbf{p}n}$, $f^{-1}(\mathscr A)\cap \mathcal T_n'=\mathcal C'_{\textbf{p}n}$,  and $\mathcal T_n'\cap\mathcal T_m'=\varnothing$ if $m\neq n$. Moreover, using \Cref{technicallemma}, $f(\mathcal T_n')\subseteq \mathcal C_n\times [-\varepsilon_n,\varepsilon_n]$ can also be assumed for each $n$.
    
    Next, by  \Cref{thick}, we may further assume $\partial \mathcal U_n'\setminus \mathcal C_{\textbf{p}n}'$ (resp. $\partial \mathcal V_n'\setminus \mathcal C_{\textbf{p}n}'$) is a component of $f^{-1}(\mathcal C_n\times x_n)$ \big(resp. $f^{-1}(\mathcal C_n\times y_n)$\big) for some $x_n,y_n\in (-\varepsilon_n,0)\cup (0, \varepsilon_n)$ such that after defining $\mathcal U_n$ (resp. $\mathcal V_n$) as the annulus in $\mathcal C_n\times [-\varepsilon_n,\varepsilon_n]$ co-bounded by $\mathcal C_n\times 0$ and $\mathcal C_n\times x_n$ (resp. $\mathcal C_n\times y_n$), both $(2)$ and $(3)$ of \Cref{claim00} do hold. Finally, $\mathcal C_n\times [-\varepsilon_n, \varepsilon_n]\to \infty$ implies $(\mathcal U_n\cup \mathcal V_n)\to \infty$.
\end{proof}

Using \Cref{diskremoval}, keeping stationary a neighborhood of each primitive component of $f^{-1}(\widetilde{\mathscr A})$, we can properly homotope $f$ to remove all trivial components from $f^{-1}(\widetilde{\mathscr A})$. So, after this proper homotopy,  $(2)$ and $(3)$ of \Cref{claim00} imply that $f(\mathcal U_n')\subseteq \mathcal U_n,\ f^{-1}(\partial \mathcal U_n)\cap \mathcal U_n'=\partial \mathcal U_n'$ and $f(\mathcal V_n')\subseteq \mathcal V_n,\ f^{-1}(\partial \mathcal V_n)\cap  \mathcal V_n'=\partial \mathcal V_n'$. Notice the abuse of notation, the initial and final maps of this proper homotopy both are denoted by $f$. 
 \begin{figure}[ht]$$\adjustbox{trim={0.0\width} {0.0\height} {0.0\width} {0.0\height},clip}{\def\svgwidth{1.286\linewidth}
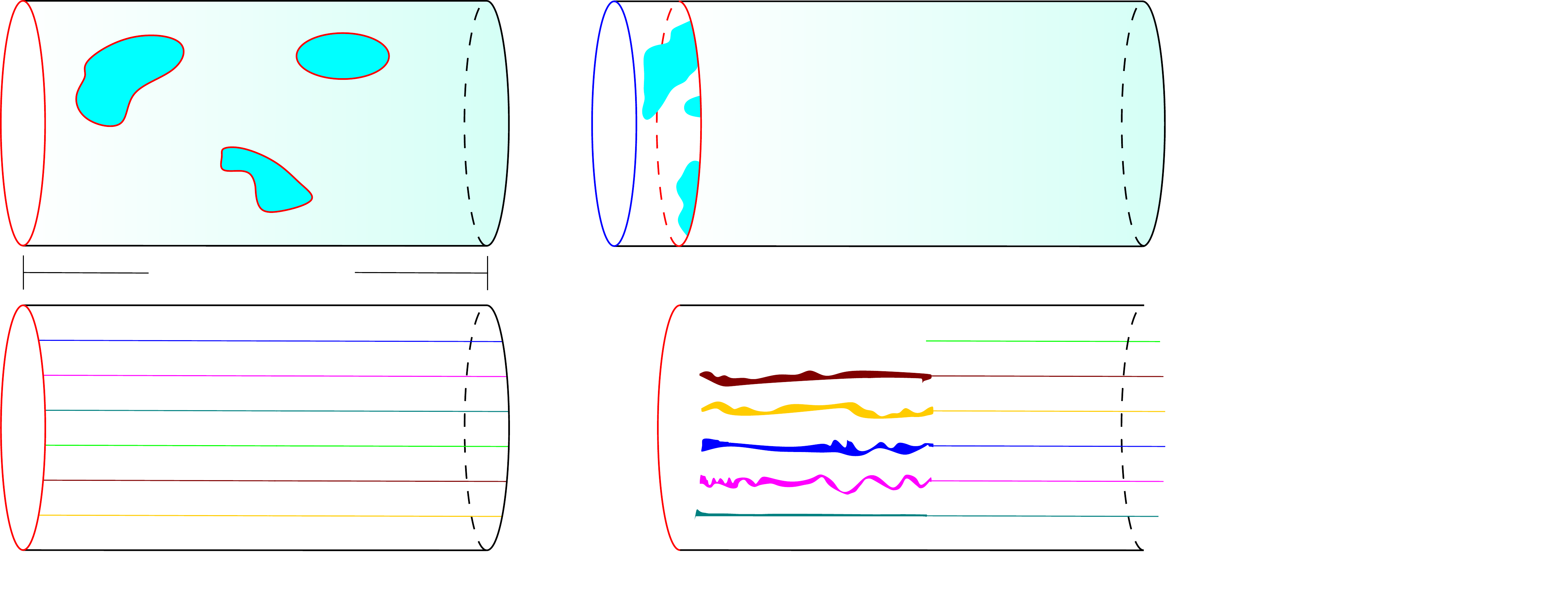}$$\caption{On the top: description of $f\vert\mathcal U_n'\to \mathcal U_n$, using \Cref{thick}. At the bottom: after removing all trivial components of $f^{-1}(\partial \mathcal U_n\backslash \mathcal C_n)$ from $\mathcal U_n'$ and then applying \Cref{primitiveimpliesdegreeone} to $f\vert\mathcal U_n'\to \mathcal U_n$, we obtain $H_n(-,1)\vert\mathcal U_n'\to \mathcal U_n$.  }\label{DegreeOneToHomeomorphism}\end{figure}

 Now, let $h_n\colon\mathcal C_{\textbf{p}n}'\times [0,1]\to \mathcal C_n$ be a homotopy from $f\vert\mathcal C_{\textbf{p}n}'\to \mathcal C_n$ such that $h_n(-,1)$ is either a constant map or a covering map between two circles.  Applying \Cref{primitiveimpliesdegreeone} on $f\vert\mathcal U_n'\to \mathcal U_n$ and $f\vert\mathcal V_n'\to \mathcal V_n$ separately upon considering $h_n$; a homotopy $H_n\colon\mathcal T'_n\times [0,1]\to \mathcal U_n\cup\mathcal V_n$ relative to $\partial \mathcal T_n'$ exists such that $H_n(-,0)=f\vert\mathcal T_n'$, $\big(H_n(-,1)\big)^{-1}(\mathcal C_n)=\mathcal C'_{\textbf{p}n}$, and  $H_n(-,1)\vert\mathcal C'_{\textbf{p}n}\to \mathcal C_n$  is the same as $h_n(-,1)$.  See \Cref{DegreeOneToHomeomorphism}.
 
 Next, $(5)$ of \Cref{claim00} tells that we can apply \Cref{properhomotopy} on $\{H_n\}$ to obtain a proper homotopy $\mathcal H\colon \Sigma'\times [0,1]\to \Sigma$ starting from $f$. Next, being an isomorphism, $\pi_1(f)=\pi_1\big(\mathcal H(-,1)\big)$ preserves primitiveness, i.e.,  $h_n(-,1)=\mathcal H(-,1)\vert\mathcal C'_{\textbf{p}n}\to \mathcal C_n$ must be a homeomorphism.  Thus, $\mathcal H$ is our ultimate required homotopy.

 Finally, we need to describe $f$ near each component of $f^{-1}(\mathscr A)$ after the proper homotopy $\mathcal H$. Abusing notation, the final map of $\mathcal H$ will be denoted by $f$. Since \Cref{primitiveimpliesdegreeone} is being used, we have $\mathcal M_n'\subseteq \mathcal U_n'$ and $\mathcal M_n\subseteq \mathcal U_n$ with the identifications $\big(\mathcal M_n',\mathcal C_{\textbf{\textup{p}}n}'\big)\cong\big(\mathcal C_{\textbf{\textup{p}}n}'\times [1,2],\mathcal C_{\textbf{\textup{p}}n}'\times 2\big),\ \big(\mathcal M_n,\mathcal C_{n}\big)\cong \big(\mathcal C_n\times [1,2],\mathcal C_{n}\times 2\big)$ such that after the proper homotopy $\mathcal H\colon \Sigma'\times [0,1]\to \Sigma$, the map $f$ sends $\mathcal C_{\textbf{\textup{p}}n}'\times r$ onto $\mathcal C_{n}\times r$ using the homeomorphism $f\vert\mathcal C_{\textbf{\textup{p}}n}'\to \mathcal C_{n}$ for all $r\in [1,2]$. See \Cref{DegreeOneToHomeomorphism}. Similar reasoning for $f\vert\mathcal V_n'\to \mathcal V_n$.
\end{proof}
The following proposition, \emph{which we don't need to use anywhere}, tells what happens if we drop the phrase ``homotopy equivalence'' in the statement of \Cref{deg1tohomeo}. Its proof is almost the same.

\begin{proposition}
    \textup{Let $f\colon \Sigma'\to \Sigma$ be a smooth proper map between two non-compact surfaces, where $\Sigma'\not\cong \Bbb R^2\not \cong \Sigma$; and let $\mathscr A$ be a preferred \textup{LFCS} on $\Sigma$ such that $f\ \stackinset{c}{}{c}{0.1ex}{$\top$}{$\abxpitchfork$}\ \mathscr A$. Then $f$ can be properly homotoped to remove all trivial components of the $f^{-1}(\mathscr A)$ as well as to map each primitive component of $f^{-1}(\mathscr A)$  into a component of $\mathscr A$  so that for any component $\mathcal C$ of $\mathscr A$ and any primitive component $\mathcal C_\textbf{p}'$ of $f^{-1}(\mathcal C)$, after this proper homotopy, $f\vert \mathcal C_\textbf{p}'\to \mathcal C$ is either a constant map or a covering map. }
\end{proposition}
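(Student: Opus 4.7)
The plan is to repeat, almost verbatim, the four-step architecture used in the proof of Theorem~\ref{deg1tohomeo}, pausing only at the final step, which is the sole place where the hypothesis of homotopy equivalence actually entered. Concretely, I will (i) apply Theorem~\ref{diskremoval}, (ii) redo the tubular neighborhood construction of Claim~\ref{claim00}, (iii) invoke Lemma~\ref{primitiveimpliesdegreeone} on each neighborhood, and (iv) paste via Lemma~\ref{properhomotopy}; then observe that the final end map is forced to be either constant or a covering map, rather than a homeomorphism.

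First, I would form the enlarged LFCS $\widetilde{\mathscr A}$ exactly as in Claim~\ref{claim00}, using that $\mathscr A$ is locally finite on $\Sigma$ and $f^{-1}(\mathscr A)$ is locally finite on $\Sigma'$ (Theorem~\ref{remarktranshomotopy}). This gives one-sided tubular neighborhoods $\mathcal U_n',\mathcal V_n'$ of each primitive component $\mathcal C_{\mathbf p n}'$ of $f^{-1}(\mathscr A)$ and matching one-sided tubular neighborhoods $\mathcal U_n,\mathcal V_n$ of the corresponding $\mathcal C_n\in\mathscr A$, enjoying all five properties of that claim. Since nothing in Claim~\ref{claim00} uses that $f$ is a homotopy equivalence, this goes through verbatim.

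Second, because $\Sigma'\not\cong\Bbb R^2\not\cong\Sigma$ and $\widetilde{\mathscr A}$ is again a preferred LFCS, Theorem~\ref{diskremoval} applies to $f$ and $\widetilde{\mathscr A}$, so I properly homotope $f$ (keeping neighborhoods of all primitive components of $f^{-1}(\widetilde{\mathscr A})$ stationary) to kill every trivial component of $f^{-1}(\widetilde{\mathscr A})$. After this proper homotopy, properties (2) and (3) of Claim~\ref{claim00} yield $f(\mathcal U_n')\subseteq\mathcal U_n$ with $f^{-1}(\partial\mathcal U_n)\cap\mathcal U_n'=\partial\mathcal U_n'$, and likewise for $\mathcal V_n'$. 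Again, no use has been made of any homotopy inverse.

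Third, for each $n$, pick any homotopy $h_n\colon\mathcal C_{\mathbf p n}'\times[0,1]\to\mathcal C_n$ from $f\vert\mathcal C_{\mathbf p n}'$ to a map $h_n(-,1)$ which, by classification of maps between circles up to homotopy, can be chosen to be either a constant map or a covering map (this is exactly Remark~\ref{remarkprimitiveimpliesdegreeone}). Apply Lemma~\ref{primitiveimpliesdegreeone} to $f\vert\mathcal U_n'\to\mathcal U_n$ and to $f\vert\mathcal V_n'\to\mathcal V_n$ separately, with the boundary data supplied by $h_n$, obtaining a homotopy $H_n\colon\mathcal T_n'\times[0,1]\to\mathcal U_n\cup\mathcal V_n$ relative to $\partial\mathcal T_n'$ from $f\vert\mathcal T_n'$ to a map whose restriction to $\mathcal C_{\mathbf p n}'$ is $h_n(-,1)$ and whose only preimage of $\mathcal C_n$ inside $\mathcal T_n'$ is $\mathcal C_{\mathbf p n}'$. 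By property (5) of Claim~\ref{claim00} (namely $\mathcal U_n\cup\mathcal V_n\to\infty$), Lemma~\ref{properhomotopy} assembles the family $\{H_n\}$ into a single proper homotopy $\mathcal H$ of $f$. The final map of $\mathcal H$ then satisfies all the desired conclusions.

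The only step in the original proof of Theorem~\ref{deg1tohomeo} where the hypothesis that $f$ was a pseudo proper homotopy equivalence was ever used is the very last sentence, where $\pi_1$-bijectivity of $f$ was invoked to upgrade $h_n(-,1)$ from a covering map (of unspecified degree) to a homeomorphism of circles. Dropping that hypothesis removes precisely that refinement and nothing else, yielding the stated weaker conclusion. The ``main obstacle'' is therefore purely verification-level: auditing the earlier steps (most importantly Theorem~\ref{diskremoval} and Claim~\ref{claim00}) to confirm that each one really is stated for arbitrary smooth proper maps and not secretly for homotopy equivalences; a direct inspection shows this is indeed the case.
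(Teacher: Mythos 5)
Your proposal is correct and is exactly what the paper intends: the paper dismisses this proposition with the remark that ``its proof is almost the same'' as that of \Cref{deg1tohomeo}, and your walkthrough reproduces that proof (Claim~\ref{claim00}, \Cref{diskremoval} applied to $\widetilde{\mathscr A}$, \Cref{primitiveimpliesdegreeone}, and pasting via \Cref{properhomotopy}), correctly isolating the final $\pi_1$-bijectivity step as the only place the homotopy-equivalence hypothesis was used. No gaps.
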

\subsection{Annulus removal}
In the previous two sections, after removing all trivial components from the transversal pre-image of a decomposition circle, the remaining primitive circles have been mapped homeomorphically to that decomposition circle. This section aims to remove all these primitive circles except one from the inverse image of each decomposition circle using the following three steps: annulus bounding, then annulus compression, and finally, annulus pushing.

At first, annulus bounding: Consider the collection of inverse images of all decomposition circles. The following lemma says that any two circles in this collection co-bound an annulus in the domain if and only if their images are the same. In other words, in the domain, by pasting all small annuli, we get the outermost annulus corresponding to a decomposition circle. 
\begin{lemma}
\textup{Let $f\colon \Sigma'\to \Sigma$ be a homotopy equivalence between two non-compact surfaces, and let $\mathscr A'$, $\mathscr A$ be two \textup{LFCS} on $\Sigma'$, $\Sigma$, respectively, such that $f$ maps each component of $\mathscr A'$ homeomorphically  onto a component of $\mathscr A$. Suppose each component of $\mathscr A$ is primitive, and any two distinct components of $\mathscr A$ don't co-bound an annulus in $\Sigma$. Let $\mathcal C'_0,\mathcal C_1'$ be two distinct components of $\mathscr A'$. Then $\mathcal C'_0,\mathcal C_1'$ co-bound an annulus in $\Sigma'$ if and only if $f(\mathcal C_0')=f(\mathcal C_1')$.} \label{exitenceofoutermostannulus}
\end{lemma}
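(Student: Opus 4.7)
The plan is to reduce both directions of the equivalence to \Cref{AnnulusEmbedding}, using free homotopy as the bridge between the hypotheses on $f$ and the geometric conclusion about annuli. The key data: distinct components of $\mathscr A$ are disjoint primitive circles that pairwise do not co-bound annuli in $\Sigma$; $f$ is a homotopy equivalence (hence $\pi_1$-injective on free conjugacy classes); and each component of $\mathscr A'$ is smoothly embedded and mapped homeomorphically by $f$ onto a component of $\mathscr A$.

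For the forward direction, I would assume $\mathcal C'_0$ and $\mathcal C'_1$ co-bound an embedded annulus in $\Sigma'$. This embedded annulus provides a free homotopy between (compatibly oriented) copies of $\mathcal C'_0$ and $\mathcal C'_1$. Post-composing this free homotopy with $f$ yields a free homotopy in $\Sigma$ between $f(\mathcal C'_0)$ and $f(\mathcal C'_1)$, i.e., these two circles represent the same conjugacy class in $\pi_1(\Sigma)$. If $f(\mathcal C'_0)\neq f(\mathcal C'_1)$, they are two distinct components of $\mathscr A$, hence disjoint, smoothly embedded, and primitive (non-trivial conjugacy class by \Cref{primitivecircle}); \Cref{AnnulusEmbedding} then yields an embedded annulus co-bounded by them in $\Sigma$, contradicting the hypothesis on $\mathscr A$. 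Hence $f(\mathcal C'_0)=f(\mathcal C'_1)$.

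For the reverse direction, assume $f(\mathcal C'_0)=f(\mathcal C'_1)=:\mathcal C$. Orient $\mathcal C$ and pull the orientation back through the homeomorphisms $f|_{\mathcal C'_i}\to \mathcal C$ to orient $\mathcal C'_0$ and $\mathcal C'_1$. With these orientations, $f|_{\mathcal C'_0}$ and $f|_{\mathcal C'_1}$ are orientation-preserving homeomorphisms onto $\mathcal C$, so they are homotopic as maps into $\mathcal C\subseteq\Sigma$, giving a free homotopy in $\Sigma$ between $\mathcal C'_0$ and $\mathcal C'_1$ after composing with the inclusion. Applying a homotopy inverse $g$ of $f$, and using $g\circ f\simeq \mathrm{Id}_{\Sigma'}$, transports this to a free homotopy between $\mathcal C'_0$ and $\mathcal C'_1$ in $\Sigma'$; equivalently, $\pi_1$-injectivity of $f$ (a consequence of being a homotopy equivalence) forces $[\mathcal C'_0]$ and $[\mathcal C'_1]$ to lie in the same conjugacy class of $\pi_1(\Sigma')$. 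This class is non-trivial, because its image $[\mathcal C]$ is primitive hence non-trivial in $\pi_1(\Sigma)$, and $\pi_1(f)$ is injective. The two circles $\mathcal C'_0,\mathcal C'_1$ are disjoint (distinct components of $\mathscr A'$) and smoothly embedded, so \Cref{AnnulusEmbedding} gives the desired annulus co-bounded by them in $\Sigma'$.

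The only delicate step is the orientation bookkeeping when invoking \Cref{AnnulusEmbedding}: one must ensure that the free homotopies produced (from the annulus in the forward direction, from equality of images in the reverse direction) actually realize the same oriented free homotopy class, not a class and its inverse. This is handled cleanly by choosing a single orientation on $\mathcal C$ (or on the annulus $\mathcal A'$) and propagating it through the homeomorphic restrictions of $f$; all other steps are formal applications of results already in the excerpt.
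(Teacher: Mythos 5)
Your proposal is correct and follows essentially the same route as the paper: the forward direction pushes the free homotopy coming from the annulus in $\Sigma'$ through $f$ and contradicts the no-annulus hypothesis on $\mathscr A$ via \Cref{AnnulusEmbedding}, while the reverse direction uses a homotopy inverse $g$ together with $g\circ f\simeq \mathrm{Id}_{\Sigma'}$ to transport the free homotopy back and applies \Cref{AnnulusEmbedding} in $\Sigma'$. Your orientation bookkeeping is exactly what the paper accomplishes by choosing parametrizations $\ell_0,\ell_1$ with $f\ell_0=j=f\ell_1$ for a fixed homeomorphism $j\colon \Bbb S^1\to\mathcal C$, so there is no substantive difference.
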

\begin{proof} To prove only if part, let $\Phi\colon \Bbb S^1\times [0,1]\hookrightarrow \Sigma'$ be an embedding such that $\Phi(\Bbb S^1, k)=\mathcal C_k'$ for $k=0,1$. Note that $f$ maps each component of $\mathscr A'$ homeomorphically  onto a component of $\mathscr A$, and each component of $\mathscr A$ is a primitive circle on $\Sigma$. Thus, the embeddings $f \Phi(-,0), f \Phi(-,1)\colon \Bbb S^1\hookrightarrow \Sigma$ are freely homotopic; and hence $f \Phi(-,0), f \Phi(-,1)\colon \Bbb S^1\hookrightarrow \Sigma$ represent the same non-trivial conjugacy class in $\pi_1(\Sigma,*)$. Since any two distinct components of $\mathscr A$ don't co-bound an annulus in $\Sigma$, by \Cref{AnnulusEmbedding}, $f(\mathcal C_0')=f(\mathcal C_1')$.

To prove if part, let $g\colon \Sigma\to \Sigma'$ be a homotopy inverse of $f$, and let $\mathcal C$ be the component of $\mathscr A$ defined by $\mathcal C\coloneqq f(\mathcal C_0')=f(\mathcal C_1')$. Now, $f\vert \mathcal C_k'\to f(\mathcal C_k')$ is a homeomorphism for $k=0,1$. Thus, for a homeomorphism $j\colon \mathbb S^1\xrightarrow{\cong} \mathcal C$, there are homeomorphisms $\ell_0\colon \mathbb S^1\xrightarrow{\cong} \mathcal C'_{0}$ and $\ell_1\colon \mathbb S^1\xrightarrow{\cong} \mathcal C'_{1}$ such that $f\ell_0=j=f\ell_1$. Since $\ell_0\simeq  gf\ell_0=gj= g f \ell_1\simeq \ell_1$,  applying \Cref{AnnulusEmbedding} to $\ell_0,\ell_1$, we are done.\end{proof}

The following theorem, which will be used to compress each annulus bounded by two primitive circles of the domain, roughly says that most homotopies of a circle embedded in a surface are trivial.

\begin{theorem}{\textup{\cite[Lemma 4.9.15.]{scottbook}}}
\textup{Let $\mathbf S$ be a compact bordered surface  other than the disk, and let $\Phi$ be a map from $\mathbf A\coloneqq\Bbb S^1\times [0,1]$ to $\mathbf S$ such that $\Phi\big(\textup{int}(\mathbf A)\big)\subseteq \textup{int}(\mathbf S)$ and there is a boundary component $\mathbf C$ of $\mathbf S$ for which $\Phi(-,0),\Phi(-,1)\colon \Bbb S^1\xrightarrow{\cong}\mathbf C$ are the same homeomorphisms. Then $\Phi$ can be homotoped relative to $\partial \mathbf A$ to map $\mathbf A$ onto $\mathbf C$.} \label{compressingannulus}\end{theorem}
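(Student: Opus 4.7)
The plan is to pass to an intermediate covering space of $\mathbf{S}$ in which the distinguished lift of $\mathbf{C}$ serves as a deformation retract, perform the homotopy upstairs, and then project back. Since $\mathbf{S}$ is a compact bordered orientable surface other than a disk, $\pi_1(\mathbf{S})$ is a non-trivial free group, and the boundary class $[\mathbf{C}] \in \pi_1(\mathbf{S})$ is primitive and non-trivial; moreover, when $\mathbf{S}$ is not an annulus, $\langle [\mathbf{C}] \rangle$ is malnormal in $\pi_1(\mathbf{S})$.

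Let $p \colon \widetilde{\mathbf{S}} \to \mathbf{S}$ be the covering corresponding to the cyclic subgroup $\langle [\mathbf{C}] \rangle$. Then $\pi_1(\widetilde{\mathbf{S}}) \cong \mathbb{Z}$, and $\widetilde{\mathbf{S}}$ is an orientable bordered surface whose interior is a $K(\mathbb{Z},1)$. The preimage $p^{-1}(\mathbf{C})$ contains a distinguished compact component $\widetilde{\mathbf{C}}$ that maps homeomorphically onto $\mathbf{C}$, while the remaining components of $p^{-1}(\mathbf{C})$ are copies of $\mathbb{R}$ (by malnormality, or trivially in the annulus case where $\widetilde{\mathbf{S}}=\mathbf{S}$). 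In particular, $\widetilde{\mathbf{C}}$ is the unique compact component of $p^{-1}(\mathbf{C})$, and $\widetilde{\mathbf{S}}$ deformation retracts onto $\widetilde{\mathbf{C}}$ via a homotopy that fixes $\widetilde{\mathbf{C}}$ pointwise.

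Next, I would lift $\Phi$ to $\widetilde{\mathbf{S}}$. Since $\Phi|_{\mathbb{S}^1 \times \{0\}}$ is a homeomorphism onto $\mathbf{C}$, the core circle of $\mathbf{A}$ maps under $\Phi$ to a loop representing $[\mathbf{C}]$, so $\Phi_{*}\pi_1(\mathbf{A}) = \langle [\mathbf{C}] \rangle = p_{*}\pi_1(\widetilde{\mathbf{S}})$. The lifting criterion then yields a continuous lift $\widetilde{\Phi} \colon \mathbf{A} \to \widetilde{\mathbf{S}}$ with $p \circ \widetilde{\Phi} = \Phi$. Each of the restrictions $\widetilde{\Phi}|_{\mathbb{S}^1 \times \{j\}}$, $j = 0, 1$, is a homeomorphism of $\mathbb{S}^1$ onto a compact lift of $\mathbf{C}$; since $\widetilde{\mathbf{C}}$ is the only such compact lift, we conclude $\widetilde{\Phi}(\partial \mathbf{A}) \subseteq \widetilde{\mathbf{C}}$.

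Finally, let $R \colon \widetilde{\mathbf{S}} \times [0,1] \to \widetilde{\mathbf{S}}$ be the deformation retract onto $\widetilde{\mathbf{C}}$ with $R(-,0)=\mathrm{id}$ and $R(x,t)=x$ for all $x \in \widetilde{\mathbf{C}}$, $t \in [0,1]$. Define $H \colon \mathbf{A} \times [0,1] \to \mathbf{S}$ by $H(z,s,t) \coloneqq p\bigl(R(\widetilde{\Phi}(z,s),t)\bigr)$. Then $H(-,-,0) = \Phi$, the homotopy is stationary on $\partial \mathbf{A}$ (since $\widetilde{\Phi}(\partial \mathbf{A}) \subseteq \widetilde{\mathbf{C}}$ is fixed by $R$), and $H(\mathbf{A},1) \subseteq p(\widetilde{\mathbf{C}}) = \mathbf{C}$, with surjectivity onto $\mathbf{C}$ already forced by the unchanged boundary values. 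The main obstacle is justifying the structural claim that the chosen cover has a unique compact lift of $\mathbf{C}$ and deformation retracts onto it; this reduces to the malnormality of the primitive boundary-cyclic subgroup in the free fundamental group, which is the one non-elementary input to the argument.
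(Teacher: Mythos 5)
Your argument is correct in outline, and it takes a genuinely different route from the paper: the paper does not prove \Cref{compressingannulus} at all but simply quotes it from the cited reference, where the proof is of the cut-and-paste type that exploits the hypothesis $\Phi\big(\textup{int}(\mathbf A)\big)\subseteq \textup{int}(\mathbf S)$. Your covering-space argument never uses that hypothesis, nor even that the two boundary restrictions are the \emph{same} homeomorphism (only that each is a homeomorphism onto $\mathbf C$), so it proves a slightly more general statement. What makes it work is exactly what you isolate: $[\mathbf C]$ is a non-trivial element of the free group $\pi_1(\mathbf S)$ which is not a proper power (this is the content of \Cref{primitivecircle}, since a boundary circle of a compact bordered surface other than the disk bounds no disk in it), and in a free group the cyclic subgroup generated by a non-proper-power is malnormal: if $g c^m g^{-1}=c^n$ with $m,n\neq 0$, then $|m|=|n|$ by cyclically reduced length, the case $n=-m$ is impossible since no non-trivial element of a free group is conjugate to its inverse, and $n=m$ forces $g$ into the centralizer of $c^m$, which is $\langle c\rangle$. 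This gives the double-coset description of $p^{-1}(\mathbf C)$ with exactly one circle component $\widetilde{\mathbf C}$, mapped homeomorphically to $\mathbf C$, all other components being lines; the annulus case is the trivial cover, as you say.

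Two points deserve explicit justification in a full write-up. First, the lifting step needs the usual basepoint care: choose $a_0=(z_0,0)$, $b_0=\Phi(a_0)\in\mathbf C$ and the lift $\tilde b_0\in\widetilde{\mathbf C}$, so that $\Phi_*\pi_1(\mathbf A,a_0)=\langle c_{b_0}\rangle=p_*\pi_1(\widetilde{\mathbf S},\tilde b_0)$; and to see that each lifted boundary circle lands in a \emph{circle} component one should note that an injective map of $\Bbb S^1$ into a $1$-manifold component of $p^{-1}(\mathbf C)$ forces that component to be compact. Second, the strong deformation retraction of $\widetilde{\mathbf S}$ onto $\widetilde{\mathbf C}$ fixing $\widetilde{\mathbf C}$ pointwise is asserted rather than proved; it does hold, e.g.\ because the inclusion $\widetilde{\mathbf C}\hookrightarrow\widetilde{\mathbf S}$ is a $\pi_1$-isomorphism (both sides infinite cyclic, compatible with $p_*$) between aspherical spaces, hence a homotopy equivalence, and it is a cofibration (inclusion of a subcomplex after triangulating), so it is a strong deformation retract. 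With these two items filled in, your proof is complete and arguably cleaner than the quoted one; the price is the group-theoretic input (malnormality), whereas the cited argument stays elementary and surface-theoretic.
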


The following theorem considers the last two steps - annulus compressing and annulus pushing. At first, by a proper homotopy, each outermost annulus will be mapped onto its decomposition circle; after that, by another proper homotopy, each outermost annulus will be pushed into a one-sided tubular neighborhood of one of its boundary components. 
\begin{theorem}
\textup{Let $f\colon \Sigma'\to \Sigma$ be a smooth pseudo proper homotopy equivalence between two non-compact surfaces, where $\Sigma'\not\cong \Bbb R^2\not \cong \Sigma$; and let $\mathscr A$ be a preferred \textup{LFCS} on $\Sigma$ such that $f\ \stackinset{c}{}{c}{0.1ex}{$\top$}{$\abxpitchfork$}\ \mathscr A$. Suppose any two distinct components of $\mathscr A$ don't co-bound an annulus in $\Sigma$. In that case, $f$ can be properly homotoped to a proper map $g$ such that for each component $\mathcal C$ of $\mathscr A$, either $g^{-1}(\mathcal C)$ is empty or $g^{-1}(\mathcal C)$ is a component of $f^{-1}(\mathscr A)$ that is mapped homeomorphically onto $\mathcal C$ by $g$.} \label{annulusremovalfinal}
\end{theorem}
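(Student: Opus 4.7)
The plan is as follows. First apply \Cref{diskremoval} and then \Cref{deg1tohomeo} to $f$; after these proper homotopies every component of $f^{-1}(\mathscr A)$ is primitive and is mapped homeomorphically onto a component of $\mathscr A$, and near each such pre-image component $f$ has the explicit product form on two one-sided tubular neighborhoods provided by \Cref{deg1tohomeo}. Enumerate as $\mathcal C_1,\mathcal C_2,\ldots$ the components of $\mathscr A$ with non-empty pre-image.

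For each $n$, \Cref{remarktranshomotopy} makes $f^{-1}(\mathcal C_n)=\{\mathcal C'_{n,1},\ldots,\mathcal C'_{n,k_n}\}$ a finite set of primitive circles, all mapping homeomorphically onto $\mathcal C_n$. Since by hypothesis distinct components of $\mathscr A$ do not co-bound an annulus in $\Sigma$, \Cref{exitenceofoutermostannulus} implies that any two of the $\mathcal C'_{n,i}$ co-bound an annulus in $\Sigma'$. Orientability of $\Sigma'$ then forces these circles to be arranged as parallel copies inside a single embedded annulus $A_n\subseteq\Sigma'$, whose boundary consists of the two extremal circles (after reindexing, $\mathcal C'_{n,1}$ and $\mathcal C'_{n,k_n}$), with the remaining $\mathcal C'_{n,i}$ sitting in $\text{int}(A_n)$ parallel to the core. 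The family $\{A_n\}$ is pairwise disjoint: if a component of $f^{-1}(\mathcal C_m)$, $m\neq n$, were contained in $\text{int}(A_n)$, it would either bound a disk in $\Sigma'$ (excluded since no trivial component survives) or be parallel to the core of $A_n$; in the latter case \Cref{exitenceofoutermostannulus} applied in the reverse direction would force its $f$-image to equal $\mathcal C_n$, contradicting $m\neq n$. Local finiteness of $\{A_n\}$ then follows from properness of $f$ together with local finiteness of $\mathscr A$.

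For each $n$ construct a homotopy $H_n\colon A_n^+\times[0,1]\to\Sigma$ on a slight collar-thickening $A_n^+$ of $A_n$, stationary on a neighborhood of $\mathcal C'_{n,1}$ and outside $A_n^+$, which moves every other $\mathcal C'_{n,i}$ off $\mathcal C_n$. After first reparameterizing $A_n\cong \Bbb S^1\times[0,1]$ so that the two boundary homeomorphisms onto $\mathcal C_n$ agree, \Cref{compressingannulus} supplies a homotopy rel $\partial A_n$ that collapses $f\vert A_n$ to a map into $\mathcal C_n$. Next, using a one-sided tubular neighborhood $\mathcal C_n\times[0,\varepsilon]$ in $\Sigma$ together with the product description of $f$ on the outer collar of $\mathcal C'_{n,k_n}$ from \Cref{deg1tohomeo}, push the image of $A_n\setminus \mathcal C'_{n,1}$ into $\mathcal C_n\times(0,\varepsilon]$, absorbing the resulting jump across $\mathcal C'_{n,k_n}$ inside this collar (which already maps into the same tubular neighborhood) without perturbing $f$ beyond $A_n^+$. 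The final map $g$ then satisfies $g^{-1}(\mathcal C_n)=\mathcal C'_{n,1}$ and restricts to a homeomorphism $\mathcal C'_{n,1}\to \mathcal C_n$. Local finiteness of $\{A_n^+\}$ and the fact that $\mathrm{im}(H_n)$ lies in a neighborhood of $\mathcal C_n$ with $\mathcal C_n\to\infty$ let \Cref{properhomotopy} glue the $H_n$'s into a single proper homotopy from $f$ to $g$. The principal technical obstacle is precisely this two-step construction of $H_n$: aligning boundary parameterizations so that \Cref{compressingannulus} applies, and then extending the push-off across $\mathcal C'_{n,k_n}$ compatibly with the preset tubular form of $f$ on the outer collar, so that $f$ is left untouched beyond $A_n^+$ and the global pasting remains proper.
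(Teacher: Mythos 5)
Your proposal is correct and follows essentially the same route as the paper: outermost annuli via \Cref{exitenceofoutermostannulus}, compression via \Cref{compressingannulus}, a push-off through the collar data of \Cref{deg1tohomeo} (the paper packages this as \Cref{pushingleft}), and pasting via \Cref{properhomotopy}. The only point you gloss relative to the paper is the properness of the compression step: the compressing homotopies take values in the (truncated) complementary components adjacent to $\mathcal C_n$ rather than in an arbitrarily small neighborhood of $\mathcal C_n$, so one must check, as the paper does with its compact pieces $\Sigma_n\to\infty$, that this family escapes every compact set -- a routine verification.
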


\begin{proof}
Using \Cref{deg1tohomeo}, we may assume each component of $f^{-1}(\mathscr A)$ is primitive and also mapped homeomorphically onto a component of $\mathscr A$. So for simplicity, we may drop the subscript $\mathbf p$ to indicate a primitive component of $f^{-1}(\mathscr A)$. Let $\{\mathcal C_n\}$ be the pairwise disjoint collection of all those components of $\mathscr A$ so that for each $n$, $f^{-1}(\mathcal C_n)$ has more than one component. By \Cref{exitenceofoutermostannulus}, for each $n$, an annulus $\mathcal A_n'$ (say the $n$-th outermost annulus) exists with the following properties: $(i)$ $\partial \mathcal A'_n\subseteq f^{-1}(\mathcal C_n)$,  $(ii)$ $\mathcal A'_n$ is not contained in the interior of an annulus bounded by any two components of $f^{-1}(\mathscr A)$. Thus $\mathcal A_n'\cap f^{-1}(\mathscr A)=f^{-1}(\mathcal C_n)$ and $\mathcal A_n'\cap \mathcal A_m'=\varnothing$ for $m\neq n$. Now, using \Cref{AnnulusEmbedding}, find a parametrization $\tau_n\colon \Bbb S^1\times [0,k_n]\xrightarrow{\cong}\mathcal A'_n$ for some integer $k_n\geq 1$ so that $\tau_n\big(\Bbb S^1\times \{0,...,k_n\}\big)=f^{-1}(\mathcal C_n)$ and  $f\tau_n(-,\ell)\colon \Bbb S^1\xrightarrow{\cong}\mathcal C_n$ represents the same homeomorphism of $\mathcal C_n$ for each $\ell=0,...,k_n$. 
\begin{claim}
\textup{The proper map $f\colon \Sigma'\to \Sigma$ can be properly homotoped relative to $\Sigma'\setminus \bigcup_n\textup{int}(\mathcal A_n')$ so that $f(\mathcal A_n')=\mathcal C_n$ for each $n$.}\label{claim1}
\end{claim}
\begin{figure}[ht]$$\adjustbox{trim={0.0\width} {0.0\height} {0.0\width} {0.0\height},clip}{\def\svgwidth{.99\linewidth}
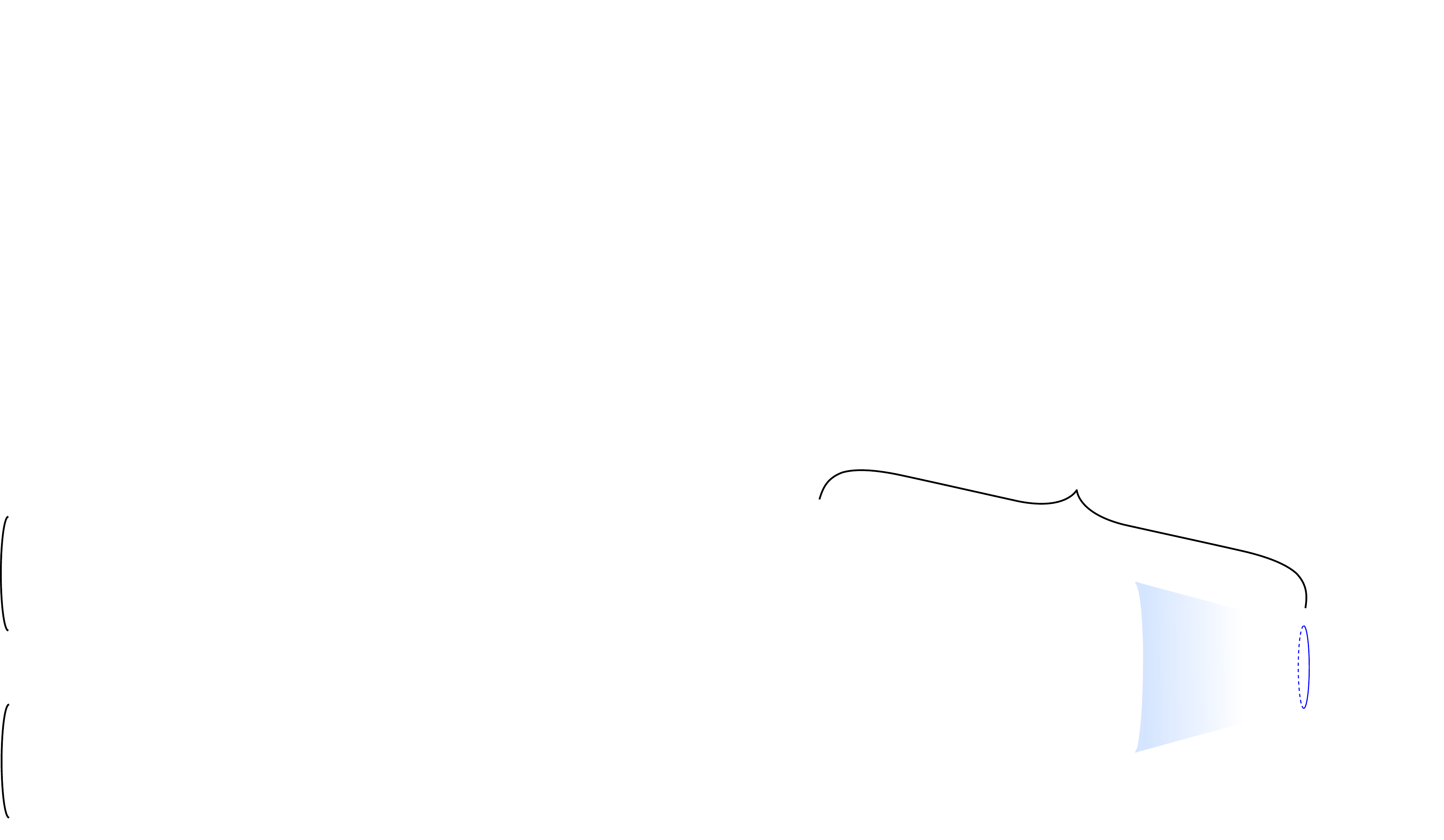
}$$\caption{Illustration of parts (1) and (3) of the definition of $\Sigma_n$ given in the proof of \Cref{claim1}. Only black circles denote a component of either $\mathscr A$ or a component of $f^{-1}(\mathscr A)$.}\label{AnnulusCompression}\end{figure}

\begin{proof}[Proof of \Cref{claim1}]
For each integer $n$, we will construct a compact bordered sub-surface $\Sigma_n$ of $\Sigma$ with $f(\mathcal A_n')\subseteq \text{int}(\Sigma_n)$  such that $\Sigma_n\to \infty$. Roughly, $\Sigma_n$ will be obtained from taking the union of all those complementary components of $\Sigma$ (if a punctured disk appears, truncate it), which are hit by $f(\mathcal A_n')$.

Using continuity of $f\vert\Sigma'\setminus f^{-1}(\mathscr A)\to \Sigma\setminus \mathscr A$, we can say that $f(\mathcal A_n')\subseteq \mathcal X_n\cup \mathcal Y_n$, where $\mathcal X_n$ and $\mathcal Y_n$ are complementary components of $\Sigma$ decomposed by $\mathscr A$ such that $\mathcal C_n\subseteq \partial \mathcal X_n\cap \partial\mathcal Y_n$.   \begin{itemize}
    \item[(1)]  We define $\Sigma_n$ as $\Sigma_n\coloneqq\mathcal X_n\cup \mathcal Y_n$ if either of the following happens: $(i)$ $\mathcal X_n\cong S_{0,3}\cong \mathcal Y_n$; or $(ii)$ $\mathcal X_n\cong S_{1,1}$ and $\mathcal Y_n\cong S_{0,3}$; or $(iii)$ $\mathcal Y_n\cong S_{1,1}$ and $\mathcal X_n\cong S_{0,3}$. See \Cref{AnnulusCompression}.
    \item[(2)] If $\mathcal X_n\cong S_{0,1,1}\cong \mathcal Y_n$ (in this case, $\Sigma$ is homeomorphic to the punctured plane), then using compactness of $f(\mathcal A_n')$, let $\Sigma_n$ be an annulus in $\mathcal X_n\cup\mathcal Y_n$ so that $f(\mathcal A_n')\subseteq\text{int}(\Sigma_n)$.
    \item[(3)]  If $\mathcal X_n\cong S_{0,1,1}$, and $\mathcal Y_n$ is homeomorphic to either  $S_{0,3}$ or $S_{1,1}$, then using compactness of $f(\mathcal A_n')$, find an annulus $\mathcal A_n$ in $\mathcal X_n$ so that $f(\mathcal A_n')\subseteq\text{int}(\mathcal A_n\cup \mathcal Y_n)$. Define $\Sigma_n\coloneqq\mathcal A_n\cup \mathcal Y_n$. See \Cref{AnnulusCompression}.
    \item[(4)] If $\mathcal Y_n\cong S_{0,1,1}$, and $\mathcal X_n$ is homeomorphic to either  $S_{0,3}$ or $S_{1,1}$, define $\Sigma_n$ similarly, as given in $(3)$.
\end{itemize}   Thus, $f(\mathcal A_n')\subseteq \text{int}(\Sigma_n)$ for each $n$. Now, we show $\Sigma_n\to \infty$. So, consider a compact subset $\mathcal K$ of $\Sigma$. Let $\mathbf S_1,..., \mathbf S_m$ be a collection of complementary components of $\Sigma$ decomposed by $\mathscr A$ such that $\mathcal K\subseteq \text{int}\left(\bigcup_{\ell=1}^m\mathbf S_\ell\right)$. Define $\mathbf S\coloneqq\bigcup_{\ell=1}^m\mathbf S_\ell$.   Notice that for an integer $n$, $f(\mathcal A_n')\cap  \mathbf S\neq \varnothing$ if and only if $\mathcal C_n$ is a component of $\bigcup_{\ell=1}^{m}\partial \mathbf S_\ell$. This is due to the construction of each $\Sigma_n$; see \Cref{AnnulusCompression}. Since $\mathcal C_n\to \infty$ and $\bigcup_{\ell=1}^{m}\partial \mathbf S_\ell$ is compact, we can say that $f(\mathcal A_n')\cap \mathbf S=\varnothing$ for all sufficiently large $n$. Now, $\mathcal K\subseteq\text{int}(\mathbf S)$ and each $\Sigma_n$ is obtained from taking the union of all those complementary components of $\Sigma$ (if a punctured disk appears, truncate it), which are hit by $f(\mathcal A_n')$. Thus, $\Sigma_n\cap \mathcal K=\varnothing$ for all sufficiently large $n$. Therefore, $\Sigma_n\to \infty$, as $\mathcal K$ is an arbitrary compact subset of $\Sigma$.

Next, for each $\ell\in\{1,...,k_n\}$, applying \Cref{compressingannulus} to each $f\tau_n\vert\Bbb S^1\times [\ell-1,\ell]\to \mathcal Z_n$, where $\mathcal Z_n$ can be either $\Sigma_n\cap \mathcal X_n$ or $\Sigma_n\cap \mathcal Y_n$, we have a homotopy $H_n\colon\mathcal A_n'\times [0,1]\to \Sigma_n$ relative to $\partial \mathcal A_n'$ such that $H_n(-,0)=f\vert\mathcal A_n'$ and $H_n(\mathcal A_n',1)=\mathcal C_n$. Finally, apply \Cref{properhomotopy} on $\{H_n\}$ to complete proof of the \Cref{claim1}.\label{proofofclaim1}
\end{proof}
\begin{figure}[ht]$$\adjustbox{trim={0.0\width} {0.0\height} {0.3\width} {0.0\height},clip}{\def\svgwidth{1.436\linewidth}
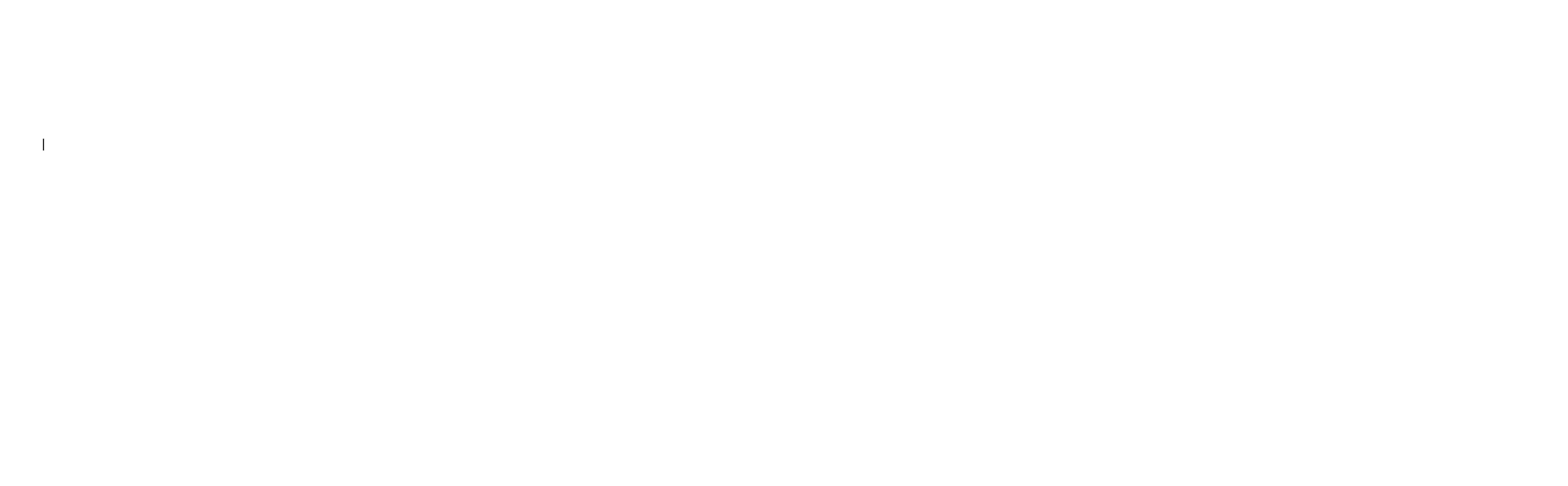
}$$\caption{Description of $f\vert\mathcal A_{\varepsilon n}'\to \mathcal M_n$ \big(resp. $H_n(-,1)\colon \mathcal A_{\varepsilon n}'\to \mathcal M_n$\big)  using \Cref{deg1tohomeo}, \Cref{claim1} (resp. \Cref{pushingleft}). Only black circles denote a component of either $\mathscr A$ or a component of $f^{-1}(\mathscr A)$.}\label{AnnulusRemoval}\end{figure} 
Now, consider \Cref{AnnulusRemoval}, where    $\mathcal M'_{n\alpha},\mathcal M_{n}$ are provided by \Cref{deg1tohomeo} such that after defining $\mathcal A_{\varepsilon n}'$ as $\mathcal A_n'\cup \mathcal M_{n\alpha}'$, we can think $$
    \big(\mathcal A_{\varepsilon n}',\mathcal M_{n\alpha}',\mathcal A'_n\big)\cong \big( \Bbb S^1\times [1,3],\Bbb S^1\times[1,2],\Bbb S^1\times [2,3]\big)\text{ and }\big(\mathcal M_{n},\mathcal C_n\big)\cong\big(\Bbb S^1\times[1,2],\Bbb S^1\times2\big)$$ resulting in the following description of $f$:  If $\theta\colon\Bbb S^1\to \Bbb S^1$ describes the homeomorphism $f\vert\mathcal C'_{n\alpha}\to \mathcal C_{n}$ under the above identification, then $f(z,t)=\big(\theta(z),t\big)$ for $z\in \Bbb S^1\times[1,2] $ and $f(z,t)\in \Bbb S^1\times 2$ for $(z,t)\in \Bbb S^1\times [2,3]$. Consider \Cref{claim1} to see why $f\big(\Bbb S^1\times [2,3]\big)=\Bbb S^1\times 2$.
    
    Now, use \Cref{pushingleft} to construct a homotopy $H_n\colon \mathcal A_{\varepsilon n}'\times [0,1]\to \mathcal M_n$ relative to $\partial \mathcal A'_{\varepsilon n}$ from $f\vert\mathcal A_{\varepsilon n}'\to \mathcal M_n$ to the map $H_n(-,1)$ so that $\big(H_n(-,1)\big)^{-1}(\mathcal C_n)=\mathcal C_{n\beta}'$ and $H_n(-,1)\big|\mathcal C_{n\beta}'\to \mathcal C_n$ is a homeomorphism.

Notice that we are using the setup of proof of \Cref{deg1tohomeo}. By (4) and  (5) of \Cref{claim00} given in the proof of \Cref{deg1tohomeo} show that $\mathcal A'_{\varepsilon n}\cap f^{-1}(\mathscr A)=f^{-1}(\mathcal C_n)$,   $\mathcal A'_{\varepsilon n}\cap \mathcal A'_{\varepsilon m}=\varnothing$ if $m\neq n$,  and $\mathcal M_{n}\to \infty$. Now, consider \Cref{properhomotopy} with $\{H_n\}$ to obtain the desired homotopy.\end{proof}

Now, we prove the annulus-pushing lemma used in the proof of the previous theorem.

\begin{lemma}
\textup{Any map $\varphi\colon \Bbb S^1\times [1,3]\to \Bbb S^1\times [1,2]$ which sends $\Bbb S^1\times r$ into $\Bbb S^1\times r$ for $1\leq r\leq 2$ and sends $\Bbb S^1\times r$ into $\Bbb S^1\times 2$ for $2\leq r\leq 3$; can be homotoped relative to  $\Bbb S^1\times \{1,3\}$ to satisfy $\varphi^{-1}(\Bbb S^1\times 2)=\Bbb S^1\times 3$.} \label{pushingleft}\end{lemma}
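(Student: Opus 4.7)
The plan is to leave the $\Bbb S^1$-coordinate of $\varphi$ untouched and only deform the $[1,2]$-coordinate by a straight-line homotopy in the convex target interval $[1,2]$. Write $\varphi(z,t)=\bigl(a(z,t),\,b(z,t)\bigr)$, where $a\colon\Bbb S^1\times[1,3]\to \Bbb S^1$ and $b\colon\Bbb S^1\times[1,3]\to[1,2]$ are the two coordinate functions. The hypothesis says that $b(z,t)=t$ for $t\in[1,2]$ and $b(z,t)=2$ for $t\in[2,3]$; in particular $b$ is independent of $z$, and I may regard it as a map $B_0\colon[1,3]\to[1,2]$ with $B_0(t)=\min(t,2)$.

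Next I would pick a target ``level function'' $B_1\colon[1,3]\to[1,2]$ that is a homeomorphism with $B_1(1)=1$, $B_1(3)=2$, and $B_1^{-1}(2)=\{3\}$; the simplest choice is the affine map $B_1(t)=1+(t-1)/2$. Since $[1,2]$ is convex, the straight-line homotopy
\[
B_s(t)\;\coloneqq\;(1-s)B_0(t)+sB_1(t),\qquad s\in[0,1],
\]
stays in $[1,2]$, and one checks directly that $B_s(1)=1$ and $B_s(3)=2$ for every $s$.

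Now define the homotopy $\varphi_s\colon \Bbb S^1\times[1,3]\to \Bbb S^1\times[1,2]$ by $\varphi_s(z,t)\coloneqq\bigl(a(z,t),\,B_s(t)\bigr)$. Then $\varphi_0=\varphi$; on the boundary $\Bbb S^1\times\{1,3\}$ one has $\varphi_s(z,1)=(a(z,1),1)=\varphi(z,1)$ and $\varphi_s(z,3)=(a(z,3),2)=\varphi(z,3)$, so the homotopy is relative to $\Bbb S^1\times\{1,3\}$. Finally, since $B_1$ is a homeomorphism with $B_1^{-1}(2)=\{3\}$, the terminal map satisfies $\varphi_1(z,t)\in \Bbb S^1\times 2$ iff $B_1(t)=2$ iff $t=3$, giving $\varphi_1^{-1}(\Bbb S^1\times 2)=\Bbb S^1\times 3$ as required.

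There is no real obstacle here: the whole argument rests on the observation that the second coordinate of $\varphi$ depends only on $t$, which reduces the problem to homotoping a path in the convex (hence contractible) interval $[1,2]$ relative to its endpoints, a move available by the straight-line interpolation.
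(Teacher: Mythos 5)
Your proposal is correct and follows essentially the same route as the paper: keep the circle coordinate of $\varphi$ fixed and use the straight-line homotopy in the convex interval $[1,2]$ from the second coordinate to a homeomorphism $[1,3]\to[1,2]$ sending $1\mapsto 1$, $3\mapsto 2$, which is constant on $\Bbb S^1\times\{1,3\}$ because the hypothesis pins the second coordinate to $1$ and $2$ there. The only cosmetic difference is that you observe the second coordinate depends only on $t$ (which the argument does not actually need, since convexity of $[1,2]$ already makes the interpolation work), while the paper runs the same interpolation with a general homeomorphism $\ell$ in place of your affine $B_1$.
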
\begin{proof}
Let $\varphi_1\colon \Bbb S^1\times [1,3]\to \Bbb S^1$ and $\varphi_2\colon \Bbb S^1\times [1,3]\to [1,2]$ be the components of $\varphi$. Consider a homeomorphism $\ell\colon [1,3]\to [1,2]$ with $\ell(1)=1,\ \ell(3)=2$.  Now, $H\colon \Bbb S^1\times [1,3]\times [0,1]\to \Bbb S^1\times [1,2]$ defined by $$H\big((z,s),t\big)\coloneqq\big(\varphi_1(z,s), (1-t)\varphi_2(z,s)+t\ell(s)\big)\text{ for }(z,s)\in \Bbb S^1\times [1,3]\text{ and }t\in [0,1]$$ is our required homotopy. \end{proof}

\begin{remark}
\textup{In \Cref{annulusremovalfinal}, the number of components of $\mathscr A$ can be infinite; thus, the number of outermost annuli (one outermost annulus for each component of $\mathscr A$, if any) can be infinite. That's why we have removed all outermost annuli simultaneously by a single proper homotopy, not one by one. Also, to prove the topological rigidity of closed surfaces, one may ignore the annulus removal process considering induction on the genus; see \cite[Theorem 3.1.]{MR541331} or \cite[Theorems 4.6.2 and 4.6.3]{scottbook}. But, since the genus of a non-compact surface can be infinite, we can't ignore the annulus removal process here.}
\end{remark}

\begin{subsection}{Is pseudo proper homotopy equivalence a map of degree \texorpdfstring{$\pm 1$}{±1}?}\label{degreeofapseudoproperhomotopyequivalence}
Let $f\colon \Sigma'\to \Sigma$ be a pseudo proper homotopy equivalence between two non-compact oriented surfaces, where surfaces are homeomorphic to neither the plane nor the punctured plane. Our aim in this section is to properly homotope $f$ to obtain a closed disk $\mathcal D\subseteq \Sigma$ so that $f\vert f^{-1}(\mathcal D)\to \mathcal D$ becomes a homeomorphism, and thus we show $\deg(f)=\pm 1$; see \Cref{degreeonemapchecking}. Having got this and then using \Cref{non-surjectivepropermaphasdegreezero}, it can be said that $f$ is surjective, which further implies that after a proper homotopy for removing unnecessary components from the transversal pre-image of a decomposition circle $\mathcal C$, a single circle will still be left that can be mapped onto $\mathcal C$ homeomorphically; see \Cref{annulusremovalfinal2}.

The argument for finding such a disk  $\mathcal D$ is based on finding a finite-type bordered surface $\mathbf S$ in $\Sigma$ such that for each component $\mathcal C$ of $\partial \mathbf S$, we have $f^{-1}(\mathcal C)\neq \varnothing$, even after any proper homotopy of $f$. Once we get $\mathbf S$, after a proper homotopy, we may assume that $f\vert f^{-1}(\partial \mathbf S)\to \partial \mathbf S$ is a homeomorphism; see \Cref{annulusremovalfinal}. Now, since $f$ is $\pi_1$-injective, by the topological rigidity of pair of pants together with the proper rigidity of the punctured disk, after a proper homotopy, one can show that $f\vert f^{-1}(\mathbf S)\to \mathbf S$ is a homeomorphism. Therefore, the required $\mathcal D$ can be any disk in $\text{int}(\mathbf S)$.

Now, to find such an $\mathbf S$, we consider two cases: If $\Sigma$ is either  an infinite-type surface or any $S_{g,0,p}$ with high complexity (to us, high complexity always means $g+p\geq 4$ or $p\geq 6$), then using $\pi_1$-surjectivity of $f$, we can choose $\mathbf S$ as a pair of pants in $\Sigma$ so that $\Sigma\setminus \mathbf S$ has at least two components and every component of $\Sigma\setminus \mathbf S$ has a non-abelian fundamental group. On the other hand, if $\Sigma$ is a finite-type surface, then we choose a punctured disk in $\Sigma$ as $\mathbf S$ so that the puncture of $ \mathbf S$ is an end $e\in \text{im}\big(\text{Ends}(f)\big)\subseteq \text{Ends}(\Sigma)$.

We can recall our earlier two examples to show that the plane and the punctured plane are the only surfaces for which our theory fails, i.e., consider the pseudo proper homotopy equivalences $\varphi\colon\Bbb C\ni z\longmapsto z^2\in \Bbb C$ and $\psi\colon \Bbb S^1\times \Bbb R\ni (z,x)\longmapsto \big(z,|x|\big)\in \Bbb S^1\times \Bbb R$. The local-homeomorphism $\varphi$ is a map of $\deg=\pm 2$ by \cite[Lemma 2.1b.]{MR192475} (note that for any local-homeomorphism $p\colon X\to Y$ between two manifolds, an orientation of $Y$ can be pulled back to give an orientation on $X$ so that $p$ becomes an orientation-preserving map). On the other and, $\deg(\psi)=0$ as $\psi$ is not surjective; see \Cref{non-surjectivepropermaphasdegreezero}. 

\subsubsection{Essential pair of pants and the degree of a pseudo proper homotopy equivalence}\label{exitenceofessentialpairofpants}
\begin{subsubdefinition}
\textup{A smoothly embedded pair of pants $\mathbf P$ in a surface $\Sigma$ is said to be an \emph{essential pair of pants} of $\Sigma$ if $\Sigma\setminus \mathbf P$ has at least two components and every component of $\Sigma\setminus \mathbf P$ has a non-abelian fundamental group.}
\end{subsubdefinition}

Finding an essential pair of pants in a non-compact surface will be divided into two cases: when the genus is at least two and when the space of ends has at least six elements.

\begin{definition}
\textup{Let $\mathbf P$ be a smoothly embedded copy of the pair of pants in a two-holed torus $\mathcal S$ (i.e., $\mathcal S$ is a copy of $S_{1,2}$).  We say \emph{$\mathbf P$ is obtained from  decomposing $\mathcal S$ into two copies of the pair of pants} if there exists another smoothly embedded copy $\widetilde{\mathbf P}$ of the pair of pants in $\mathcal S$ such that $\mathbf P\cup \widetilde{\mathbf P}=\mathcal S$ and $\mathbf P\cap \widetilde {\mathbf P}=\partial\mathbf P\cap \partial\widetilde {\mathbf P}$ is the union of two smoothly embedded disjoint circles in the interior of $\mathcal S$ (i.e., $\partial \mathbf P$ shares exactly two of its components with $\partial\widetilde {\mathbf P}$).}
\end{definition}

The following lemma says that every non-compact surface with a genus of at least two has an essential pair of pants with some additional properties.

\begin{subsublemma}
\textup{Let $\Sigma$ be a non-compact surface of the genus of at least two. Then $\Sigma$ has an essential pair of pants $\mathbf P$ with the following additional properties: $(1)$ $\Sigma$ contains a smoothly embedded copy $\mathcal S$ of $S_{1,2}$ such that $\Sigma\setminus \mathcal S$ has precisely two components and each component of $\Sigma\setminus \mathcal S$ has a non-abelian fundamental group, $(2)$ $\mathbf P$ is a smoothly embedded copy of the pair of pants in $\mathcal S$ obtained by decomposing $\mathcal S$ into two copies of the pair of pants.}\label{exitenceofessentialpairofpants1}  
\end{subsublemma}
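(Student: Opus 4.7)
The plan is to combine Goldman's inductive construction (\Cref{incudctiveconstruction}) with the rearrangement lemma \Cref{interchange} to realize $\mathcal{S}$ as an $S_{1,2}$-building block sitting early in a strategically chosen inductive construction of $\Sigma$. Since $g(\Sigma) \geq 2$, every inductive construction of $\Sigma$ uses at least two copies of $S_{1,2}$; by \Cref{interchange} I may rearrange so that these two copies are attached at the first two steps after the initial disk $K_0 = S_{0,1}$. This yields $K_1 \cong S_{1,1}$ (after gluing on the first copy) and $K_2 \cong S_{2,1}$ (after gluing on the second copy). Define $\mathcal{S}$ to be the $S_{1,2}$-block added at step $2$, regarded as a smoothly embedded subsurface of $\Sigma$.

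For property $(1)$, observe that $\partial \mathcal{S}$ consists of two disjoint smoothly embedded primitive circles $c_1 = \partial K_1$ and $c_2 = \partial K_2$, with $c_1$ separating $\mathcal{S}$ from $\textup{int}(K_1) \cong \textup{int}(S_{1,1})$ and $c_2$ separating $\mathcal{S}$ from $\Sigma \setminus K_2$. Therefore $\Sigma \setminus \mathcal{S}$ has exactly two components, $U_1 \cong \textup{int}(S_{1,1})$ and $U_2 = \Sigma \setminus K_2$. By \Cref{spine}, $\pi_1(U_1)$ is free of rank $2$, hence non-abelian. To ensure $\pi_1(U_2)$ is also non-abelian, I apply \Cref{interchange} once more to place an additional non-annular building block — either a second $S_{0,3}$ or a third $S_{1,2}$ — at step $3$, attached to $\partial K_2$; then $U_2$ strictly contains the interior of that block, so by \Cref{spine} $\pi_1(U_2)$ is free of rank $\geq 2$. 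This rearrangement succeeds whenever $\Sigma$ has infinite genus, or finite genus $\geq 3$, or at least two ends (equivalently, the construction uses at least one copy of $S_{0,3}$). The only surface with $g(\Sigma) \geq 2$ not in these cases is $\Sigma \cong S_{2,0,1}$, which is excluded by the ``high complexity'' hypothesis in the paper's intended application of this lemma (see the outline of the proof of \Cref{MR1}); a short Euler-characteristic count shows that no such $\mathcal{S}$ exists for $S_{2,0,1}$, so this degenerate case is genuinely outside the lemma's scope.

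For property $(2)$ and the essentiality of $\mathbf{P}$, I decompose $\mathcal{S} \cong S_{1,2}$ into two pairs of pants $\mathbf{P}$ and $\widetilde{\mathbf{P}}$ by cutting along two disjoint non-separating parallel simple closed curves $\gamma_1, \gamma_2$ in $\textup{int}(\mathcal{S})$; under this standard cut, each resulting pants contains exactly one boundary component of $\mathcal{S}$ (so $\partial \mathbf{P}$ shares the two curves $\gamma_1, \gamma_2$ with $\partial \widetilde{\mathbf{P}}$, as required). Take $\mathbf{P}$ to be the pants containing $c_2$, so $\widetilde{\mathbf{P}}$ contains $c_1$. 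Then $\Sigma \setminus \mathbf{P}$ equals $U_1 \cup \widetilde{\mathbf{P}} \cup U_2$, with $\widetilde{\mathbf{P}}$ joined to $U_1$ along $c_1$ but disjoint from $U_2$; hence $\Sigma \setminus \mathbf{P}$ has exactly two components, $U_1 \cup \widetilde{\mathbf{P}}$ and $U_2$, and both carry non-abelian fundamental groups — the first contains $U_1$, the second equals $U_2$ — so $\mathbf{P}$ is an essential pair of pants. The main technical obstacle throughout is the verification of non-abelianness of $\pi_1(U_2)$, and it is exactly this step that pinpoints $S_{2,0,1}$ as the unique genuinely degenerate surface outside the lemma's reach.
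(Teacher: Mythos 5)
Your construction is essentially the paper's: both run Goldman's inductive procedure (\Cref{incudctiveconstruction}), use \Cref{interchange} to bring two $S_{1,2}$-blocks immediately after the initial disk, take $\mathcal S$ to be the second block, and split it into two pants sharing two interior circles. Where you genuinely add something is the outer complementary component: the paper's one-line proof never checks that the tail beyond the two handles has non-abelian $\pi_1$, whereas you notice this needs a further application of \Cref{interchange} to pull a third non-annular block forward, and you correctly isolate the exceptional case. Indeed your Euler-characteristic count is right: $\chi(S_{2,0,1})=-3$ and $\chi(S_{1,2})=-2$, so the two complementary components of any embedded $\mathcal S\cong S_{1,2}$ in $S_{2,0,1}$ have total $\chi=-1$ and cannot both have free rank $\geq 2$; property (1) is therefore impossible for $S_{2,0,1}$, not merely missed by this construction. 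So, strictly speaking, you prove a corrected statement (all genus-$\geq 2$ surfaces except $S_{2,0,1}$) and refute the statement as written for $S_{2,0,1}$, while the paper's proof and \Cref{infinitegenus} (which even displays $S_{2,0,1}$) silently overclaim: for $S_{2,0,1}$ an essential pair of pants still exists (take the pants of the splitting that meets the boundary circle shared with the one-holed torus), but no $\mathcal S$ satisfying (1) does. As you observe, nothing downstream breaks, since \Cref{essentialpairofpants} only invokes this lemma for infinite-type or high-complexity surfaces, $S_{2,0,1}$ being covered instead by \Cref{cl2}; and in fact the proof of \Cref{applicationofsurjectivitybetweenfundamentalgroups} only ever uses that the single separating boundary circle $c_1$ of $\mathbf P$ has non-abelian groups on both sides, a weaker condition that $S_{2,0,1}$ does satisfy for the other choice of pants.

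Two small points to tighten. First, the circles along which you cut $\mathcal S$ must not be ``parallel'' in $\mathcal S$: cutting $S_{1,2}$ along two disjoint \emph{isotopic} non-separating curves yields an annulus and a copy of $S_{0,4}$, not two pants. What you need (and what your stated outcome -- each pants carrying exactly one component of $\partial\mathcal S$ and sharing $\gamma_1,\gamma_2$ with the other -- already pins down) are two disjoint non-separating, non-isotopic meridians of the handle, separated by the two boundary circles of $\mathcal S$. Second, the inferences ``$U_2$ contains the interior of a non-annular block, hence $\pi_1(U_2)$ is non-abelian'' and ``the first component contains $U_1$, hence is non-abelian'' implicitly use $\pi_1$-injectivity of those inclusions; it is cleaner either to invoke part (2) of \Cref{HNN} (the attaching circles are primitive in $\Sigma$) or to argue directly from \Cref{spine} that the closure of the tail deformation retracts onto a graph containing at least two independent loops as soon as a pants or an $S_{1,2}$ occurs in it. Neither point affects the substance of your argument.
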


\begin{figure}[ht]$$\adjustbox{trim={0.0\width} {0.00\height} {0.0\width} {0.0\height},clip, scale=1}{\def\svgwidth{\linewidth}
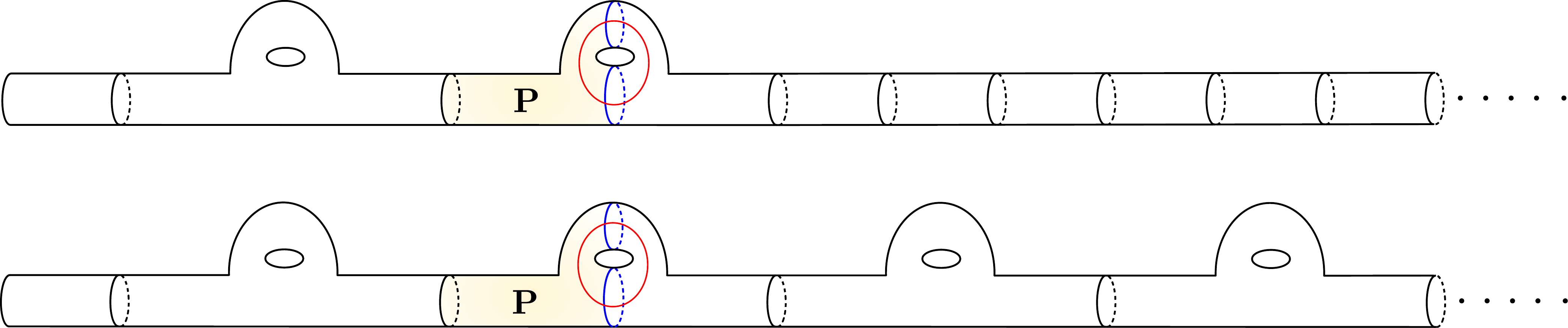
}$$\caption{Finding an essential pair of pants $\mathbf P$ in each of $S_{2,0,1}$ and Loch Ness Monster by decomposing a two-holed torus into two copies of the pair of pants.}\label{infinitegenus}\end{figure}

\begin{proof}
Consider an inductive construction of $\Sigma$; see \Cref{incudctiveconstruction}. Since $g(\Sigma)\geq 2$, at least two smoothly embedded copies of $S_{1,2}$ are used in this inductive construction. By \Cref{interchange}, without loss of generality, we may assume that two smoothly embedded copies of $S_{1,2}$ are used successively just after the initial disk; see \Cref{infinitegenus}. Among these two copies of $S_{1,2}$, breaking the last one (i.e., that copy of $S_{1,2}$ which we just used to construct $K_3$ from $K_2$) into two copies of the pair of pants, as illustrated in \Cref{infinitegenus}, we get the required essential pair of pants.\end{proof}
\begin{subsublemma}
 \textup{Let $f\colon \Sigma'\to \Sigma$ be a $\pi_1$-surjective map between two non-compact surfaces, where $\Sigma$ has the genus of at least two. Consider an essential pair of pants $\mathbf P$ in $\Sigma$ given by \Cref{exitenceofessentialpairofpants1}. Then $f^{-1}(\text{int }\mathbf P)\neq \varnothing$ and $f^{-1}(c)\neq \varnothing$ for each component $c$ of $\partial \mathbf P$.}\label{applicationofsurjectivitybetweenfundamentalgroups}
\end{subsublemma}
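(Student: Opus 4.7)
The plan is to argue each of the two nonemptiness assertions by contradiction, using only the $\pi_1$-surjectivity of $f$.

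For $f^{-1}(\text{int}\,\mathbf P)\neq\varnothing$: suppose the preimage is empty, so that $f(\Sigma')\subseteq \Sigma\setminus\text{int}(\mathbf P)$. By the essential pair-of-pants property of $\mathbf P$ and the construction of $\mathbf P$ from $\mathcal S\cong S_{1,2}$, the set $\Sigma\setminus\text{int}(\mathbf P)$ has exactly two connected components $\overline{\Sigma_1},\overline{\Sigma_2}$, each with non-abelian fundamental group. Connectedness of $\Sigma'$ then forces $f(\Sigma')\subseteq\overline{\Sigma_k}$ for one $k\in\{1,2\}$, so $\pi_1(f)$ factors through $\pi_1(\overline{\Sigma_k})\to\pi_1(\Sigma)$. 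I claim this last map is not surjective: apply Seifert--van Kampen to $\Sigma=\overline{\Sigma_k}\cup Y_k$, where $Y_k$ is the closure of $\Sigma\setminus\overline{\Sigma_k}$ and contains both $\mathbf P$ and the other complementary component $\overline{\Sigma_{k'}}$, so $\pi_1(Y_k)$ is non-abelian and, in particular, strictly larger than $\pi_1$ of the gluing boundary. The resulting amalgamated product (for $k=1$, where $\partial\overline{\Sigma_1}$ is the single circle $c_1$) or HNN-type product (for $k=2$, where $\partial\overline{\Sigma_2}=c'\cup c''$) is strictly larger than $\pi_1(\overline{\Sigma_k})$, contradicting $\pi_1$-surjectivity of $f$.

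For each boundary component $c$ of $\mathbf P$: write $\partial\mathbf P=c_1\cup c'\cup c''$, with $c_1=\partial\mathbf P\cap\partial\mathcal S$ separating in $\Sigma$ and $c',c''\subset\text{int}(\mathcal S)$ non-separating in $\mathcal S$ (hence in $\Sigma$). If $c=c_1$ and $f^{-1}(c_1)=\varnothing$, then $f(\Sigma')$ lies in one of the two connected components of $\Sigma\setminus c_1$; both components have non-abelian $\pi_1$ (one side is homeomorphic to $\overline{\Sigma_1}$, and the other contains $\widetilde{\mathbf P}$ glued to the component of $\Sigma\setminus\mathcal S$ opposite to $\overline{\Sigma_1}$), so Seifert--van Kampen, as above, exhibits $\pi_1$ of that component as a proper subgroup of $\pi_1(\Sigma)$. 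For a non-separating $c\in\{c',c''\}$, define the intersection-number homomorphism $\phi_c\colon\pi_1(\Sigma)\to\mathbb Z$ by sending a free homotopy class $[\gamma]$ to the algebraic intersection $\gamma\cdot c$ (well-defined since $\Sigma$ is oriented and algebraic intersection is a free-homotopy invariant). Since $c$ is non-separating, a small transverse arc across $c$ can be closed up through the connected space $\Sigma\setminus c$ to produce a loop $\gamma_0$ with $\phi_c([\gamma_0])=\pm 1$, so $\phi_c$ is surjective. If $f^{-1}(c)=\varnothing$, then $f(\Sigma')\subseteq\Sigma\setminus c$, so every loop of the form $f\circ\delta$ misses $c$ and hence $\phi_c(f_*[\delta])=0$; combined with surjectivity of $f_*$, this forces $\phi_c\equiv 0$, contradicting the surjectivity of $\phi_c$.

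The principal technical subtlety is the Seifert--van Kampen bookkeeping in the first paragraph when $k=2$, where $\partial\overline{\Sigma_2}=c'\cup c''$ is disconnected and the resulting decomposition is an HNN-type extension rather than a straight amalgamated product; however the obstruction element witnessing non-surjectivity of $\pi_1(\overline{\Sigma_2})\hookrightarrow\pi_1(\Sigma)$ can be exhibited explicitly as the loop obtained by concatenating an arc in $\overline{\Sigma_2}$ from $c'$ to $c''$ with an arc in $Y_2=\mathbf P\cup\overline{\Sigma_1}$ from $c''$ back to $c'$, which has non-zero algebraic crossing number across $c'$ and so cannot lie in the subgroup $\pi_1(\overline{\Sigma_2})\subseteq\pi_1(\Sigma)$ (elements of which have zero crossing number across any component of $\partial\overline{\Sigma_2}$). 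The remainder of the argument is direct.
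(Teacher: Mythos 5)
Your proof is correct, and it follows the same overall case division as the paper: containment of the image in a single complementary component rules out $f^{-1}(\text{int}\,\mathbf P)=\varnothing$ and $f^{-1}(c_1)=\varnothing$ for the separating boundary circle, while a curve dual to each non-separating boundary circle handles the remaining two. The differences are in the tools. For the non-separating components the paper takes the explicit primitive circle $\mathcal C\subset\text{int}(\mathcal S)$ meeting each of them transversally in one point and invokes the bigon criterion to conclude that no loop in the class $[\mathcal C]$ can avoid them; you instead use the intersection-number homomorphism $\phi_c\colon\pi_1(\Sigma)\to\Bbb Z$, surjective because $c$ is non-separating and vanishing on every class represented off $c$ --- a homological argument that avoids the bigon criterion (to make $\phi_c$ precise on a non-compact surface, one can take the degree of the map to $\Bbb S^1$ obtained by collapsing the complement of a tubular neighborhood of $c$). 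For the containment cases the paper merely asserts that an image inside a complementary component with non-abelian fundamental group contradicts $\pi_1$-surjectivity; you supply the justification, via the amalgamated product over $c_1$ in the separating case (where $\pi_1$-injectivity of the gluing circle is needed for the normal-form argument and holds since $c_1$ is primitive, cf. \Cref{primitivecircle} and \Cref{HNN}) and, for the component glued along the two circles $c',c''$, the explicit crossing-number obstruction, which neatly sidesteps the HNN normal-form bookkeeping. So your write-up is sound and is somewhat more detailed than the paper's precisely at the points the paper leaves implicit, at the cost of being longer; the paper's version is shorter because it leans on the pictured dual circle and on a terse proper-subgroup assertion.
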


\begin{proof}
Let $\mathcal S$ be a smoothly embedded copy  of $S_{1,2}$ in $\Sigma$ such that $\mathbf P$ is obtained from decomposing $\mathcal S$ into two copies of the pair of pants. If possible, let $f^{-1}(\text{int }\mathbf P)\neq \varnothing$. By continuity of $f$, the image of $f$ is contained in precisely one of the two components of $\Sigma\setminus \text{int}(\mathbf P)$. But each component of $\Sigma\setminus \text{int}(\mathbf P)$ has a non-abelian fundamental group, i.e., $\pi_1(f)\colon \pi_1(\Sigma')\to \pi_1(\Sigma)$ is not surjective, a contradiction. Therefore, $f^{-1}(\text{int }\mathbf P)$ must be non-empty.

To prove the second part, let $c_1$, $c_2$, and $c_3$ denote all three components of $\mathbf P$ such that both $\Sigma\setminus c_1$ and $\Sigma\setminus (c_2\cup c_2)$ are disconnected, but neither $\Sigma\setminus c_2$ nor $\Sigma\setminus c_3$ is disconnected. In \Cref{infinitegenus},  $c_2$ and $c_3$ are blue circles, whereas the color of the third component $c_1$ is black.  Notice that we have a smoothly embedded primitive circle $\mathcal C\subseteq \text{int}(\mathcal S)$ (in \Cref{infinitegenus}, each red circle denotes $\mathcal C$) so that for each $k=2,3$, $c_k\cap \mathcal C$ is a single point, where $c_k$ intersects $\mathcal C$ transversally. Therefore, for each $k=2,3$, using the bigon criterion \cite[Proposition 1.7]{MR2850125}, any loop belonging to class $[\mathcal C]\in \pi_1(\Sigma)$ must intersect $c_k$. That is, if any of $f^{-1}(c_2)$ or $f^{-1}(c_3)$ were empty, then $[\mathcal C]$ would not belong to the image of $\pi_1(f)\colon \pi_1(\Sigma')\to \pi_1(\Sigma)$. But $f$ is $\pi_1$-surjective. Thus $f^{-1}(c_2)\neq \varnothing \neq f^{-1}(c_3)$. On the other hand, $\Sigma\setminus c_1$ has precisely two components, and each component of $\Sigma\setminus c_1$ has a non-abelian fundamental group, i.e., by continuity and $\pi_1$-surjectivity of $f$, we can say that $f^{-1}(c_1)\neq \varnothing$. 
\end{proof}

Now, we consider the second case of finding an essential pair of pants in a non-compact surface, namely when the space of ends has at least six elements.

\begin{subsublemma}\textup{Let $\Sigma$ be a non-compact surface with at least six ends. Then $\Sigma$ has an essential pair of pants $\mathbf P$ such that $\Sigma\setminus \mathbf P$ has precisely three components and each component of $\Sigma\setminus \mathbf P$ has a non-abelian fundamental group.}\label{exitenceofessentialpairofpants2}  
\end{subsublemma}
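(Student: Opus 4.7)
The plan is to build the required essential pair of pants inside a carefully chosen initial portion of Goldman's inductive construction of $\Sigma$. In any such construction, the boundary count of $K_i$ increases by exactly one whenever a pair of pants $S_{0,3}$ is glued, and stays constant when an $S_{0,2}$ or $S_{1,2}$ is glued; moreover, since each piece is attached along a single boundary circle, the gluings form a tree rooted at $K_1$, giving a bijection between the boundary circles of $K_i$ and the non-compact components of $\Sigma\setminus K_i$. Because $\Sigma$ has at least six ends, this forces at least five copies of $S_{0,3}$ to appear in any inductive construction of $\Sigma$.

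I then apply the interchange lemma (\Cref{interchange}) to those five pair of pants to obtain a homeomorphic copy $\Sigma'\cong\Sigma$ with an inductive construction $\mathscr I'$ whose first five pieces after $K_1=S_{0,1}$ are pair of pants $\mathcal B_1,\ldots,\mathcal B_5$. The proof of \Cref{interchange} explicitly allows us to choose any inductive construction of $K_{n_0}'$ that re-uses the prescribed pieces in the prescribed order, so I prescribe the following gluing pattern: $\mathcal B_1$ is glued to $K_1$ along one of its three boundary circles; $\mathcal B_2$ and $\mathcal B_5$ are glued to the two remaining boundaries $a_1,a_2$ of $\mathcal B_1$; and $\mathcal B_3,\mathcal B_4$ are glued to the two remaining boundaries $b_1,b_2$ of $\mathcal B_2$. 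Then $K_6':=K_1\cup\mathcal B_1\cup\cdots\cup\mathcal B_5$ has genus zero and six boundary circles (so $K_6'\cong S_{0,6}$), and in the gluing tree the vertex $\mathcal B_2$ is adjacent to exactly three other pair of pants, namely $\mathcal B_1,\mathcal B_3,\mathcal B_4$. I take $\mathbf P:=\mathcal B_2$, smoothed so that its three boundary circles are smoothly embedded in $\Sigma'$.

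Every piece attached beyond $K_6'$ in $\mathscr I'$ is again glued along a single circle, so the tree structure forces $\Sigma'\setminus K_6'$ to decompose into exactly six non-compact components $C_1,\ldots,C_6$, one per free boundary circle of $K_6'$, each carrying at least one end of $\Sigma'$. These six components distribute $2$-$2$-$2$ among the three sides of $\mathbf P$: two free boundaries from $\mathcal B_5$ (on the $\mathcal B_1$-side, grouped with $K_1$ and $\mathcal B_1$), two from $\mathcal B_3$, and two from $\mathcal B_4$. Hence $\Sigma'\setminus\mathbf P=R_1\sqcup R_2\sqcup R_3$ is the disjoint union of three non-compact bordered subsurfaces, each with one boundary circle and at least two ends, and a standard Euler-characteristic computation yields $\mathrm{rank}\,\pi_1(R_i)\geq 2$, so each $\pi_1(R_i)$ is non-abelian. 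Transferring $\mathbf P$ back along $\Sigma'\cong\Sigma$ produces the required essential pair of pants in $\Sigma$. The main subtlety is exploiting the freedom in \Cref{interchange} to prescribe the gluing pattern within $K_6'$; once this is in hand, the rest of the argument is a direct consequence of the tree structure of the inductive construction.
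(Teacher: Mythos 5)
Your proposal is correct and follows essentially the same route as the paper: deduce from the six ends that any inductive construction uses at least five copies of $S_{0,3}$, invoke \Cref{interchange} to bring them to the front in a branching pattern, and take as $\mathbf P$ the pair of pants all three of whose boundary circles are shared with other pants, so that the complement has exactly three components, each of non-abelian fundamental group. You are merely more explicit than the paper about the points it delegates to \Cref{cantor} — the counting argument for the five pants, the freedom in the proof of \Cref{interchange} to prescribe the gluing pattern (including placing $\mathcal B_5$ on the disk side), and the verification that each complementary component has free fundamental group of rank at least two.
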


\begin{figure}[ht]$$\adjustbox{trim={0.0\width} {0.0\height} {0.0\width} {0.0\height},clip,scale=1}{\def\svgwidth{\linewidth}
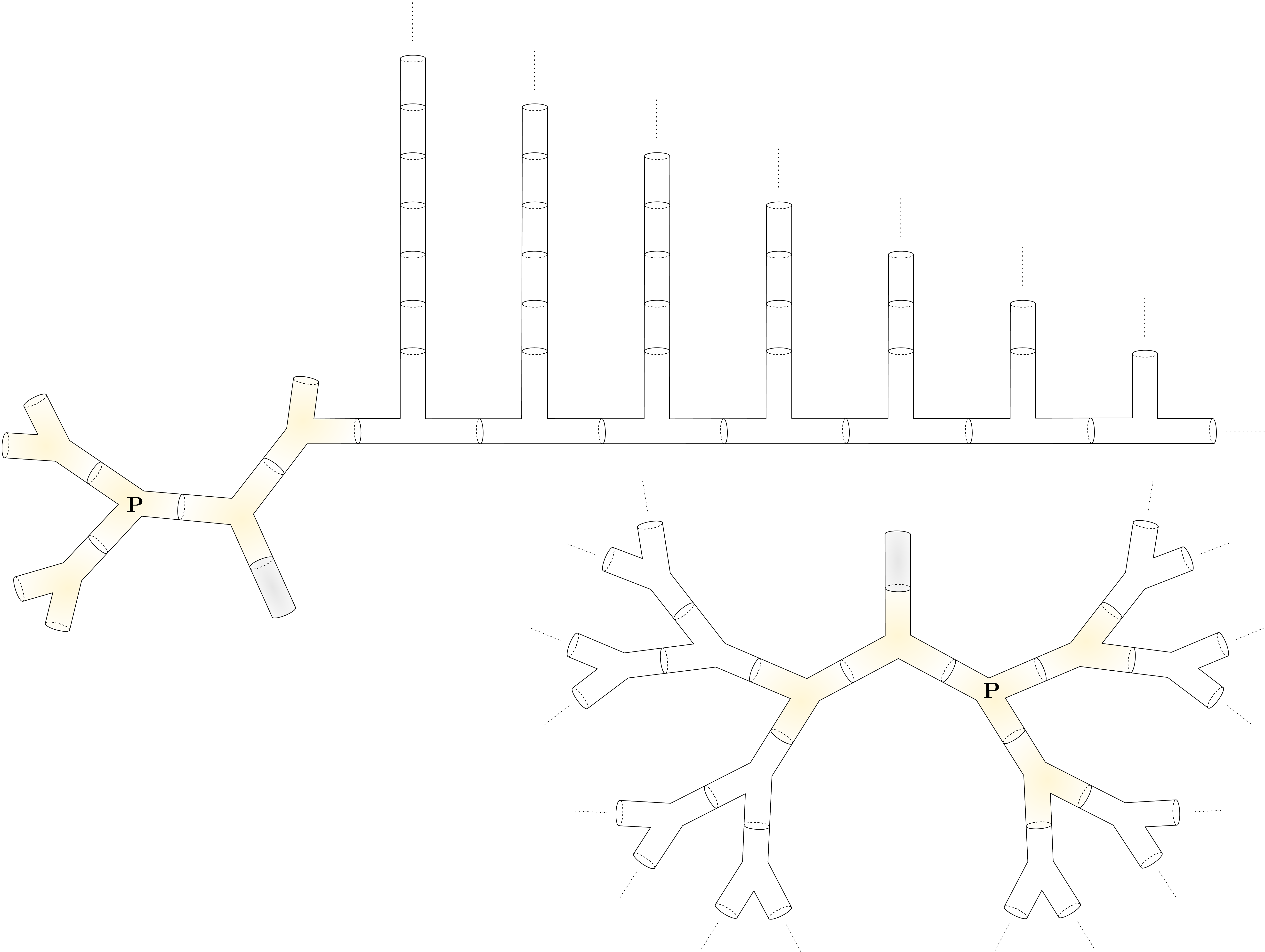
}$$\caption{Finding an essential pair of pants $\mathbf P$ in a non-compact surface with at least six ends.}\label{cantor}\end{figure}

\begin{proof}
Consider an inductive construction of $\Sigma$; see \Cref{incudctiveconstruction}. Since $\big|\text{Ends}(\Sigma)\big|\geq 6$, at least five smoothly embedded copies of $S_{0,3}$ are used in this inductive construction. By \Cref{interchange}, without loss of generality, we may assume that five smoothly embedded copies of $S_{0,3}$ are used successively just after the initial disk. Let $\mathbf P$ be the copy that shares all three boundary components with three other copies of this sequence of five copies of $S_{0,3}$; see \Cref{cantor}. Thus,  $\Sigma\setminus \mathbf P$ has precisely three components, and each component of $\Sigma\setminus \mathbf P$ has a non-abelian fundamental group.

In \Cref{cantor}, inductive constructions (up to a sufficient number of steps) of two surfaces have been given: the surface at the top contains a copy of $\{z\in \Bbb C: |z|\geq 1, z\not\in \Bbb N\times 0\}$, and the bottom is the Cantor tree surface (i.e., the planar surface whose space of ends is homeomorphic to the Cantor set). In each surface, an essential pair of pants $\mathbf P$ is contained in the shaded compact bordered subsurface.\end{proof}

We can prove the following Lemma by a similar argument given the proof of \Cref{applicationofsurjectivitybetweenfundamentalgroups}.

\begin{subsublemma}
 \textup{Let $f\colon \Sigma'\to \Sigma$ be a $\pi_1$-surjective proper map between two non-compact surfaces, where $\Sigma$ has at least six ends. Consider an essential pair of pants $\mathbf P$ in $\Sigma$ given by \Cref{exitenceofessentialpairofpants2}. Then $f^{-1}(\text{int }\mathbf P)\neq \varnothing$ and $f^{-1}(c)\neq \varnothing$ for each component $c$ of $\partial \mathbf P$.}\label{applicationofsurjectivitybetweenfundamentalgroups1}
\end{subsublemma}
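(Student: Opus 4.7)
The plan is to follow the proof of \Cref{applicationofsurjectivitybetweenfundamentalgroups}, with one geometric adjustment: now all three boundary components $c_1, c_2, c_3$ of $\mathbf P$ are separating in $\Sigma$, because $\Sigma \setminus \mathbf P$ has exactly three components $X_1, X_2, X_3$ and each $c_i$ bounds exactly one $X_i$ (see \Cref{cantor}). So the bigon-criterion argument that handled the non-separating circles $c_2, c_3$ in the previous lemma is unnecessary here; only the van Kampen / amalgamated-product argument that was used there for the separating circle $c_1$ is needed, and it applies uniformly to all three circles.

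For the first assertion, suppose for contradiction that $f^{-1}\big(\text{int}(\mathbf P)\big) = \varnothing$. By continuity and connectedness of $\Sigma'$, the image $f(\Sigma')$ is contained in the closure $\bar X_i$ of one of the three components of $\Sigma \setminus \mathbf P$. Cutting $\Sigma$ along $c_i$ and applying van Kampen gives
$$\pi_1(\Sigma) \cong \pi_1(\bar X_i) *_{\mathbb Z} \pi_1(\mathbf P \cup \bar X_j \cup \bar X_k),$$
amalgamated along $\pi_1(c_i) \cong \mathbb Z$, where $\{i,j,k\} = \{1,2,3\}$. Both factors strictly contain $\mathbb Z$ (the first because $\pi_1(X_i)$ is non-abelian by the construction, the second because it contains the non-abelian $\pi_1(X_j)$), so by the standard theory of amalgamated free products, $\pi_1(\bar X_i)$ embeds as a proper subgroup of $\pi_1(\Sigma)$, and hence $\pi_1(f)$, which factors through $\pi_1(\bar X_i)$, cannot be surjective — a contradiction.

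For the second assertion, suppose $f^{-1}(c_k) = \varnothing$ for some $k$; then $f(\Sigma')$ lies in the closure of one of the two components of $\Sigma \setminus c_k$, namely $\bar X_k$ or $\mathbf P \cup \bar X_i \cup \bar X_j$ (with $\{i,j\} = \{1,2,3\} \setminus \{k\}$). Exactly the same amalgamated-product decomposition $\pi_1(\Sigma) \cong \pi_1(\bar X_k) *_{\mathbb Z} \pi_1(\mathbf P \cup \bar X_i \cup \bar X_j)$ along $\pi_1(c_k) \cong \mathbb Z$ makes each of the two factors a proper subgroup of $\pi_1(\Sigma)$, since both factors are non-abelian (the first by hypothesis on $X_k$, the second because it contains the non-abelian $\pi_1(X_i)$). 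Whichever side contains $f(\Sigma')$ therefore contradicts $\pi_1$-surjectivity of $f$. I do not expect any real obstacle here; the only subtlety is verifying that each $c_k$ separates $\Sigma$, and this is immediate from the construction of $\mathbf P$ in \Cref{exitenceofessentialpairofpants2}, where each boundary circle of $\mathbf P$ is adjacent to exactly one of $X_1, X_2, X_3$.
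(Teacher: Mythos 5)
Your proposal is correct and is essentially the argument the paper intends: the paper gives no separate proof here, only the remark that the lemma follows "by a similar argument" to \Cref{applicationofsurjectivitybetweenfundamentalgroups}, and your adaptation (all three boundary circles of $\mathbf P$ now separate $\Sigma$, so the van Kampen/amalgamated-product argument used there for the separating circle applies uniformly and the bigon-criterion step is not needed) is exactly that intended adaptation.
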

The following theorem completes the whole process of finding an essential pair of pants, which will be used to find the degree of a pseudo-proper homotopy equivalence.
\begin{subsubtheorem}
\textup{Let $f\colon \Sigma'\to \Sigma$ be a $\pi_1$-surjective proper map between two non-compact surfaces. Suppose $\Sigma$ is either an infinite-type surface or a finite-type surface $S_{g,0,p}$ with high complexity (i.e., $g+p\geq 4$ or $p\geq 6$). Then $\Sigma$ contains an essential pair of pants $\mathbf P$ such that $f^{-1}(\text{int }\mathbf P)\neq \varnothing$ and $f^{-1}(c)\neq \varnothing$ for each component $c$ of $\partial \mathbf P$. }\label{essentialpairofpants}
\end{subsubtheorem}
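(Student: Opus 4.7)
My approach is a case analysis reducing the theorem to the two sublemmas \Cref{exitenceofessentialpairofpants1} and \Cref{exitenceofessentialpairofpants2} together with their $\pi_1$-surjectivity companions \Cref{applicationofsurjectivitybetweenfundamentalgroups} and \Cref{applicationofsurjectivitybetweenfundamentalgroups1}. The first pair already produces an essential pair of pants $\mathbf P$ with the required nonemptiness of $f^{-1}(\text{int }\mathbf P)$ and of $f^{-1}(c)$ for each boundary circle $c$ whenever $g(\Sigma)\geq 2$, and the second pair does so whenever $|\text{Ends}(\Sigma)|\geq 6$. The theorem thus reduces to verifying that every $\Sigma$ permitted by the hypothesis lands in at least one of these two regimes.

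For the infinite-type case, I would appeal to \Cref{infinitetypefinitegenusmeansinfinitelymanyends}: an infinite-type $\Sigma$ either has infinite genus, in which case $g(\Sigma)\geq 2$ and the first pair of sublemmas produces $\mathbf P$; or else it has finite genus, in which case the proposition forces $\Sigma$ to have infinitely many ends, so $|\text{Ends}(\Sigma)|\geq 6$ and the second pair applies. For the finite-type case $\Sigma\cong S_{g,0,p}$, I would split according to the two disjuncts of the \emph{high complexity} condition: if $g\geq 2$, invoke \Cref{exitenceofessentialpairofpants1} and \Cref{applicationofsurjectivitybetweenfundamentalgroups}; if $p\geq 6$, invoke \Cref{exitenceofessentialpairofpants2} and \Cref{applicationofsurjectivitybetweenfundamentalgroups1}. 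For any borderline finite-type $(g,p)$ satisfying $g+p\geq 4$ but not already falling in one of these two subcases, one produces $\mathbf P$ by a direct construction mimicking the proofs of the sublemmas: choose three disjoint simple closed curves on $\Sigma$ bounding a pair of pants whose complementary subsurfaces each have Euler characteristic at most $-1$ (hence non-abelian free $\pi_1$), then the bigon-criterion argument of \Cref{applicationofsurjectivitybetweenfundamentalgroups} combined with the $\pi_1$-surjectivity of $f$ forces each preimage to be nonempty.

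There is no real geometric obstacle; the present theorem is essentially a packaging step, with the dichotomy from \Cref{infinitetypefinitegenusmeansinfinitelymanyends} as the key bookkeeping input in the infinite-type case. The main delicacy is merely confirming exhaustiveness of the case split, after which nothing remains to prove.
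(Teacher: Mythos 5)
Your overall route coincides with the paper's: reduce to the two sublemma pairs (\Cref{exitenceofessentialpairofpants1} with \Cref{applicationofsurjectivitybetweenfundamentalgroups}, and \Cref{exitenceofessentialpairofpants2} with \Cref{applicationofsurjectivitybetweenfundamentalgroups1}), use \Cref{infinitetypefinitegenusmeansinfinitelymanyends} to settle the infinite-type case, and finish the leftover finite-type cases by a direct construction in the spirit of \Cref{applicationofsurjectivitybetweenfundamentalgroups}. The paper carries out exactly this plan, listing the leftovers as $S_{1,0,3}$, $S_{1,0,4}$, $S_{1,0,5}$ and producing $\mathbf P$ there by splitting an $S_{1,2}$ block of an inductive construction into two pairs of pants.

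However, your claim that for \emph{any} borderline $(g,p)$ with $g+p\geq 4$ one can ``choose three disjoint simple closed curves bounding a pair of pants whose complementary subsurfaces each have Euler characteristic at most $-1$'' is false, and this is a genuine gap, not mere bookkeeping. The borderline set is $\{(1,3),(1,4),(1,5),(0,4),(0,5)\}$, and for $(0,4)$ and $(0,5)$ no essential pair of pants exists at all: if $\mathbf P\subset S_{0,0,p}$ is an embedded pair of pants, then $\chi\big(\Sigma\setminus \mathbf P\big)=(2-p)-(-1)=3-p$, while at least two complementary components each with non-abelian (free, rank $\geq 2$) fundamental group would force $\chi\big(\Sigma\setminus \mathbf P\big)\leq -2$; moreover in genus zero the punctures must all lie in the complement, and one checks directly that no distribution of $4$ or $5$ punctures among the complementary pieces makes every piece non-abelian. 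So your construction cannot be carried out for $S_{0,0,4}$ and $S_{0,0,5}$, and the assertion that ``there is no real geometric obstacle'' is incorrect for precisely these two surfaces. (You are in good company: the paper's own proof lists only the genus-one exceptions and silently omits $S_{0,0,4}$ and $S_{0,0,5}$, for which the statement as literally phrased is problematic; in the paper's global scheme this is harmless because every finite-type target is independently handled by the punctured-disk argument of \Cref{cl2}, but a self-contained proof of the present theorem must either exclude these two surfaces from the ``high complexity'' hypothesis or treat them by a different mechanism.) Your treatment of the genus-one leftovers, and of the infinite-type dichotomy, is fine and matches the paper.
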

\begin{proof}
If an infinite-type surface has a finite genus, then it must have infinitely many ends; see \Cref{infinitetypefinitegenusmeansinfinitelymanyends}. Thus using \Cref{exitenceofessentialpairofpants1}, \Cref{applicationofsurjectivitybetweenfundamentalgroups}, \Cref{exitenceofessentialpairofpants2}, and \Cref{applicationofsurjectivitybetweenfundamentalgroups1}, the proof is complete in all cases, except when $\Sigma$ is homeomorphic to either $S_{1,0,3}$ or $S_{1,0,4}$ or $S_{1,0,5}$. We consider the case when $\Sigma\cong S_{1,0,3}$; the other cases are similar. 

Define an inductive construction of $S_{1,0,3}$ in the following way: Start with a copy of $S_{0,1}$, then consecutively add two copies of $S_{0,3}$, and after that, a copy of $S_{1,2}$; and finally, add three sequences of annuli to obtain three planar ends; see \Cref{figureofinductiveconstruction}. Therefore, in this inductive construction, $K_4$ is obtained from $K_3$, adding a copy $\mathcal S$ of $S_{1,2}$. Let $\mathbf P$ be a smoothly embedded copy of the pair of pants in $\mathcal S$ such that $\mathbf P$ is obtained from decomposing $\mathcal S$ into two copies of the pair of pants and $\mathbf P\cap K_3\neq \varnothing$. Now, an argument similar to that given in \Cref{applicationofsurjectivitybetweenfundamentalgroups} completes the proof.
\end{proof}

At this stage, we need a couple of lemmas. The first one, \Cref{HNN}, is well-known; its proof has been given for reader convenience.
\begin{subsublemma}
\textup{Let $\Sigma$ be a surface, and let $\mathbf S$ be a smoothly embedded bordered sub-surface of $\Sigma$. Then the inclusion induced map $\pi_1(\mathbf S)\to \pi_1(\Sigma)$ is injective if either of the following satisfies:}
\begin{itemize}
    \item[$(1)$] \textup{$\partial\mathbf S$ is a separating primitive circle on $\Sigma$ and $\mathbf S$ is one of the two sides of $\partial \mathbf S$ in $\Sigma$;}
    \item[$(2)$] \textup{$\mathbf S$ is compact and each component of $\partial \mathbf S$ is a primitive circle on $\Sigma$.}
\end{itemize}\label{HNN}
\end{subsublemma}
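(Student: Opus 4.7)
The plan is to handle both cases via van Kampen / Bass--Serre type decompositions, reducing injectivity of $\pi_1(\mathbf S) \to \pi_1(\Sigma)$ in each case to injectivity of the edge groups $\pi_1(c_i) \cong \mathbb{Z}$ into the adjacent vertex groups.

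For Case (1), I set $\mathbf S' := \overline{\Sigma \setminus \mathbf S}$, so that $\Sigma = \mathbf S \cup_{\partial \mathbf S} \mathbf S'$. Seifert--van Kampen will then give
\[
\pi_1(\Sigma) \;\cong\; \pi_1(\mathbf S) *_{\pi_1(\partial \mathbf S)} \pi_1(\mathbf S')
\]
provided both $\pi_1(\partial \mathbf S) \to \pi_1(\mathbf S)$ and $\pi_1(\partial \mathbf S) \to \pi_1(\mathbf S')$ are injective. Using the standard fact that a boundary circle of a non-disk bordered surface is $\pi_1$-injective (a consequence of $\pi_1$ being free with the boundary curve representing an element of a basis), I only need to rule out $\mathbf S$ or $\mathbf S'$ being a disk; but if either were, then $\partial \mathbf S$ would bound a disk in $\Sigma$, contradicting primitivity. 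Standard theory of amalgamated free products then gives the desired embedding $\pi_1(\mathbf S) \hookrightarrow \pi_1(\Sigma)$.

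For Case (2), compactness of $\mathbf S$ yields $\partial \mathbf S = c_1 \sqcup \cdots \sqcup c_k$ with $k < \infty$, so $\overline{\Sigma \setminus \mathbf S}$ decomposes into finitely many connected components $\mathbf S'_1, \ldots, \mathbf S'_m$. I assemble the natural graph of spaces whose vertices are $\mathbf S$ and the $\mathbf S'_j$, with one edge (a collar of $c_i$) for each boundary circle $c_i$. The same primitivity-versus-disk argument as above rules out any vertex piece from being a disk (if some $\mathbf S'_j$ were a disk, its unique boundary would be a $c_i$ bounding a disk in $\Sigma$), so every edge group $\pi_1(c_i) \cong \mathbb Z$ injects into each incident vertex group. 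Bass--Serre theory then presents $\pi_1(\Sigma)$ as the fundamental group of this graph of groups, and every vertex group embeds into it --- in particular $\pi_1(\mathbf S) \hookrightarrow \pi_1(\Sigma)$.

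The main technical step, and essentially the only place where primitivity is used, is ruling out the disk-complement case so that the edge groups embed into the vertex groups. A secondary wrinkle is that two boundary components of $\mathbf S$ may cobound an annulus $\mathbf S'_j$ in $\Sigma$, producing parallel edges (hence an HNN factor) in the graph of groups; this does not obstruct the vertex-injection conclusion from Bass--Serre but should be noted when invoking it. The injectivity of a boundary circle into $\pi_1$ of a non-disk bordered surface can alternatively be derived from $\pi_2 = 0$ and the long exact sequence of the pair $(\mathbf S, c_i)$, echoing the argument used in the paper's proof of \Cref{deg1tohomeo}.
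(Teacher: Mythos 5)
Your argument is correct, and for part (1) it coincides with the paper's: both write $\Sigma=\mathbf S\cup_{\partial\mathbf S}\overline{\Sigma\setminus\mathbf S}$, note that primitivity forbids either side from being a disk (so the boundary circle is $\pi_1$-injective in each side), and invoke the amalgamated-product form of Seifert--van Kampen. For part (2) you take a genuinely different route: you cut along all of $\partial\mathbf S$ at once, organize $\mathbf S$ and the finitely many complementary pieces into a graph of spaces, and quote Bass--Serre theory to conclude that every vertex group, in particular $\pi_1(\mathbf S)$, embeds into the fundamental group of the graph of groups, which van Kampen identifies with $\pi_1(\Sigma)$. The paper instead argues by induction on the boundary circles, cutting one at a time: a separating circle gives an amalgam exactly as in part (1), while a non-separating circle exhibits the larger piece as an HNN extension of the smaller one, with injectivity supplied by Britton's lemma. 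Both arguments rest on the same key point --- no component of $\partial\mathbf S$ can bound a disk in any piece, since such a disk would be a disk in $\Sigma$ contradicting primitivity --- and your parallel edges/loops are precisely where the paper's HNN steps occur. Your packaging buys brevity and avoids tracking the intermediate subsurfaces $\mathbf S_k$; the paper's buys self-containedness, using only the two elementary normal-form facts (amalgams and Britton's lemma) rather than general graph-of-groups machinery, at the cost of verifying the inductive bookkeeping.

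Two side justifications should be repaired, though neither affects the structure of the proof. First, a boundary circle of a non-disk bordered surface need not represent a basis element of the free group $\pi_1$: for a one-holed torus it is the commutator $[a,b]$, which lies in the commutator subgroup. What you need, and what is true, is only that it represents a non-trivial element, hence one of infinite order, so $\pi_1(c_i)\cong\Bbb Z$ injects; equivalently, apply \Cref{primitivecircle} to $c_i$ viewed inside the piece, where it is primitive because any disk it bounded there would be a disk in $\Sigma$. Second, the closing aside is circular: for aspherical pieces, $\pi_2(\mathbf S,c_i)=0$ is equivalent to the injectivity of $\pi_1(c_i)\to\pi_1(\mathbf S)$, and the exact-sequence argument in the paper (in the proof of \Cref{diskremoval}) goes in the opposite direction, deducing the vanishing of the relative $\pi_2$ from the already-established $\pi_1$-injectivity.
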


\begin{proof}[Proof of part (1) of \Cref{HNN}] Since $\pi_1(\Sigma)\cong\pi_1(\mathbf S)*_{\pi_1(\partial\mathbf S)}\pi_1(\Sigma\setminus \text{int }\mathbf S)$ (by Seifert-van Kampen theorem) and the inclusions $\partial\mathbf S\hookrightarrow\mathbf S, \Sigma\setminus \text{int}(\mathbf S)$ are $\pi_1$-injective, we are done. \label{proofHNN1} \end{proof}\begin{proof}[Proof of part (2) of \Cref{HNN}]It is enough to construct a sequence $\Sigma=\mathbf S_0\supseteq \mathbf S_1\supseteq\cdots\supseteq \mathbf S_n=\mathbf S$ of sub-surfaces of $\Sigma$, where $n$ is the number of components of $\partial \mathbf S$, such that for each $k=1,..., n$, the following hold: \begin{itemize}
    \item[$\mathbf{(i)}$] $\mathbf S_k$ is a bordered sub-surface of $\mathbf S_{k-1}$ and the inclusion map $\mathbf S_k\hookrightarrow \mathbf S_{k-1}$ is $\pi_1$-injective;
    \item[$\mathbf{(ii)}$] $\partial \mathbf S_k\setminus \partial \mathbf S_{k-1}$ is either a component of $\partial\mathbf S_{k}$ or union of two components of $\partial\mathbf S_{k}$. In either case, $\partial \mathbf S_k\setminus \partial \mathbf S_{k-1}$ shares only one component with $\partial\mathbf S$.
\end{itemize}We construct this sequence inductively as follows: To construct $\mathbf S_{k}$ from $\mathbf S_{k-1}$, pick a component $\mathbf c$ of $\partial\mathbf S\setminus \partial \mathbf S_{k-1}$. If $\mathbf c$ separates $\mathbf S_{k-1}$, define $\mathbf S_k$ as that side of $\mathbf c$ in $\mathbf S_{k-1}$, which contains $\mathbf S$; then consider an argument similar to the proof of part (1) of \Cref{HNN}. Now, if $\mathbf c$ doesn't separate $\mathbf S_{k-1}$, pick a smoothly embedded annulus $\mathbf A\subset \text{int}(\mathbf S_{k-1})$ such that $\mathbf A\cap\mathbf S=\mathbf c$. Define $\mathbf S_k:=\mathbf S_{k-1}\setminus\text{int}(\mathbf A)$. Now, $\mathbf S_{k-1}$ is obtained from $\mathbf S_k$ identifying $\mathbf c$ with $\partial \mathbf A\setminus \mathbf c$ by an orientation-reversing diffeomorphism $\varphi\colon \mathbf c\to\partial \mathbf A\setminus \mathbf c$. By HNN-Seifert-van Kampen theorem, $\pi_1(\mathbf S_{k-1})\cong \pi_1(\mathbf S_k)*_{\pi_1(\varphi)}$; where the map $\pi_1(\mathbf S_k)\to \pi_1(\mathbf S_{k-1})$ (which is inclusion induced) is injective due to Britton's lemma. This completes the proof.\end{proof}

The following lemma roughly says that the degree of a map between two compact bordered surfaces can be determined from the degree of its restriction on the boundaries.

\begin{subsublemma}
\textup{Let $\varphi\colon \mathbf S_{g_1,b_1}\to  \mathbf S_{g_2,b_2}$ be a map between two compact bordered surfaces. If $\varphi\vert\partial \mathbf S_{g_1,b_1}\hookrightarrow \partial  \mathbf S_{g_2,b_2}$ is an embedding, then $\varphi(\partial \mathbf S_{g_1,b_1})= \partial \mathbf S_{g_2,b_2}$ and $\deg(\varphi)=\pm 1$.}\label{homeomorphismoboundarymensdegreeone}
\end{subsublemma}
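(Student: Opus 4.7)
The plan is to extract the conclusion from the naturality square of the connecting map in the long exact sequence of a pair:
\[
\begin{tikzcd}[column sep=large]
H_2(\mathbf S_{g_1,b_1},\partial\mathbf S_{g_1,b_1};\mathbb{Z}) \arrow[r,"\partial_\ast"] \arrow[d,"\varphi_\ast"'] & H_1(\partial\mathbf S_{g_1,b_1};\mathbb{Z}) \arrow[d,"(\varphi|_\partial)_\ast"] \\
H_2(\mathbf S_{g_2,b_2},\partial\mathbf S_{g_2,b_2};\mathbb{Z}) \arrow[r,"\partial_\ast"'] & H_1(\partial\mathbf S_{g_2,b_2};\mathbb{Z})
\end{tikzcd}
\]
After fixing orientations, each $\partial_\ast$ sends the relative fundamental class $[\mathbf S_{g_k,b_k},\partial]$ to $\sum_{j=1}^{b_k}[c_j^{(k)}]$, the sum of the fundamental classes of the boundary circles $c_j^{(k)}$ with the induced orientation, and these classes form a $\mathbb Z$-basis of $H_1(\partial\mathbf S_{g_k,b_k};\mathbb{Z})$. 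Since the cohomological degree used in the paper agrees numerically with the homological degree for compact oriented manifolds (via Poincar\'e--Lefschetz duality), I would carry out the whole analysis in homology.

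The next ingredient is the structure of $\varphi|_\partial$. A continuous injection $S^1\hookrightarrow S^1$ is automatically a homeomorphism (its image is compact and connected, and a proper arc cannot be a continuous bijective image of $S^1$). Hence each boundary circle $c_i^{(1)}$ of $\mathbf S_{g_1,b_1}$ is sent by $\varphi$ homeomorphically onto some boundary circle $c_{\sigma(i)}^{(2)}$ of $\mathbf S_{g_2,b_2}$, and distinct $c_i^{(1)}$ go to distinct $c_{\sigma(i)}^{(2)}$; this defines an injection $\sigma\colon\{1,\dots,b_1\}\hookrightarrow\{1,\dots,b_2\}$ together with orientation signs $\varepsilon_i\in\{\pm 1\}$ so that $(\varphi|_\partial)_\ast[c_i^{(1)}]=\varepsilon_i[c_{\sigma(i)}^{(2)}]$.

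Setting $d=\deg(\varphi)$, chasing $[\mathbf S_{g_1,b_1},\partial]$ around the diagram produces the identity
\[
\sum_{i=1}^{b_1}\varepsilon_i\,[c_{\sigma(i)}^{(2)}]\;=\;d\sum_{j=1}^{b_2}[c_j^{(2)}]
\]
in the free abelian group $H_1(\partial\mathbf S_{g_2,b_2};\mathbb{Z})$. Assuming $b_1\geq 1$ (the only case relevant for the paper), some $j$ lies in $\mathrm{im}(\sigma)$, so the coefficient of $[c_j^{(2)}]$ on the left is $\varepsilon_{\sigma^{-1}(j)}=\pm 1$ and on the right is $d$, forcing $d=\pm 1$. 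Then for any $j\notin\mathrm{im}(\sigma)$ the left coefficient would be $0$ while the right is $\pm 1$, a contradiction; hence $\sigma$ is a bijection. This yields $b_1=b_2$, $\varphi(\partial\mathbf S_{g_1,b_1})=\partial\mathbf S_{g_2,b_2}$, and $\deg(\varphi)=\pm 1$, as claimed.

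The only bookkeeping concern is to confirm that $\partial_\ast[\mathbf S_{g_k,b_k},\partial]$ equals $\sum_j[c_j^{(k)}]$ on the nose with the induced boundary orientations, so that the right-hand coefficients in the displayed equation are exactly $d$ (and not $\pm d$ with some floating global sign); this is standard and can be verified by computing with a smooth triangulation compatible with the boundary. I do not expect any serious obstacle here.
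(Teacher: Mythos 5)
Your proposal is correct and follows essentially the same route as the paper: both chase the relative fundamental class through the naturality square for the connecting homomorphism $H_2(\mathbf S,\partial \mathbf S)\to H_1(\partial \mathbf S)$, use that it maps to the sum of the boundary circles' fundamental classes, and compare coefficients against the sign-permutation matrix induced by the boundary embedding to force $b_1=b_2$ and $\deg(\varphi)=\pm 1$. Your extra bookkeeping (the injection $\sigma$, the signs $\varepsilon_i$, and the case $b_1\geq 1$, which is automatic since bordered surfaces have non-empty boundary by the paper's convention) only makes explicit what the paper's displayed diagram asserts.
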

 \begin{proof}Notice that $\varphi$ maps each component of $\partial \mathbf S_{g_1,b_1}$ homeomorphically onto a component of $\partial \mathbf S_{g_2,b_2}$, and any two distinct components of $\partial \mathbf S_{g_1,b_1}$ have distinct $\varphi$-images. Now, naturality of homology long exact sequences of $\left(\mathbf S_{g_1,b_1}, \partial \mathbf S_{g_1,b_1}\right)$ and $\left(\mathbf S_{g_2,b_2}, \partial \mathbf S_{g_2,b_2}\right)$ give following commutative diagram:
 $$\begin{tikzcd}
{H_2\left(\mathbf S_{g_1,b_1}, \partial \mathbf S_{g_1,b_1}\right)\cong \Bbb Z} \arrow[d, "\times \deg(\varphi)"'] \arrow[rr, "{1\longmapsto \overset{b_1}{\oplus} 1}"] &  & { \overset{b_1}{\oplus}\Bbb Z\cong H_1\left(\partial \mathbf S_{g_1,b_1}\right)} \arrow[d, "{\overset{b_1}{\oplus} 1\longmapsto \overset{b_1}{\oplus} (\pm 1)\bigoplus \overset{b_2-b_1}{\oplus} 0}"] \\
{ H_2\left(\mathbf S_{g_2,b_2}, \partial \mathbf S_{g_2,b_2}\right)\cong \Bbb Z} \arrow[rr, "{ 1\longmapsto \overset{b_2}{\oplus}1}"]                                   &  & { \overset{b_2}{\oplus}\Bbb Z\cong H_1\left(\partial \mathbf S_{g_2,b_2}\right)}                                                     
\end{tikzcd}$$
The horizontal maps are the connecting homomorphisms for homology long exact sequences, and for their description, see \cite[Exercise 31 of Section 3.3]{MR1867354}. Now, commutativity of this diagram gives $b_2=b_1$ \big(the integer $\deg(\varphi)$ can't be simultaneously $0$ as well as $\pm 1$\big), and thus $\deg(\varphi)=\pm 1$.
\end{proof}

The proof of \Cref{rigidityofpairofpants} below can be found in \cite[Theorem 4.6.2.]{scottbook}. It also follows from the much more general result, \cite[Theorem 3.1.]{MR541331}. Since compact bordered surfaces are aspherical, an application of the Whitehead theorem says that the assumption ``$\varphi\colon \mathbf S'\to \mathbf S$ is a homotopy equivalence'' in  \Cref{rigidityofpairofpants} is equivalent to the assumption ``$\pi_1(\varphi)$ is an isomorphism''.
 \begin{subsubtheorem}{\textup{(Rigidity of compact bordered surfaces)}} \textup{Let $\varphi\colon \mathbf S'\to \mathbf S$ be a homotopy equivalence between two compact bordered surfaces such that $\varphi^{-1}(\partial \mathbf S)=\partial\mathbf S'$. If $\varphi\vert\partial\mathbf S'\to \partial\mathbf S$ is a homeomorphism, then $\varphi$ is homotopic to a homeomorphism relative to $\partial \mathbf S'$. } \label{rigidityofpairofpants}\end{subsubtheorem}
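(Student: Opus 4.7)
The plan is to reduce the theorem to the combinatorial situation where $\varphi$ maps a maximal arc system in $\mathbf{S}'$ homeomorphically onto one in $\mathbf{S}$, and then apply Alexander's trick on the complementary disks.

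First, I would establish that $\mathbf{S}'\cong \mathbf{S}$. Since both are compact bordered surfaces and $\varphi$ is a homotopy equivalence, their fundamental groups (free of rank $2g+b-1$) are isomorphic; and $\varphi|\partial\mathbf{S}'$ being a homeomorphism forces $b'=b$, so $g'=g$. Then I would pick a maximal system $\mathscr A = \{\alpha_1,\dots,\alpha_k\}$ of pairwise disjoint, essential, properly embedded arcs in $\mathbf{S}$ cutting $\mathbf{S}$ into a single disk $\mathbf{D}$, and fix a corresponding system $\mathscr A' = \{\alpha_1',\dots,\alpha_k'\}$ in $\mathbf{S}'$ whose endpoints are $(\varphi|\partial\mathbf{S}')^{-1}(\partial\mathscr A)$ and whose complement is a disk $\mathbf{D}'$.

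The technical heart of the proof is the following cleanup step. Deform $\varphi$ relative to $\partial\mathbf{S}'$ to be smooth and transverse to $\mathscr A$. Then $\varphi^{-1}(\mathscr A)$ is a properly embedded $1$-submanifold of $\mathbf{S}'$ meeting $\partial\mathbf{S}'$ precisely in $\partial\mathscr A'$. I would then use $\pi_1$-injectivity of $\varphi$ (together with asphericity and the fact that free groups are torsion-free) to successively eliminate, via homotopies of $\varphi$ rel $\partial\mathbf{S}'$: (i) closed-loop components of $\varphi^{-1}(\mathscr A)$, which bound disks in $\mathbf{S}'$ that can be pushed off because $\pi_2(\mathbf{S})=0$ and their image circles bound disks in $\mathbf{S}$; (ii) boundary-parallel arc components, which together with a subarc of $\partial\mathbf{S}'$ cut off a disk in $\mathbf{S}'$ that maps into a disk in $\mathbf{S}$ (justified by \Cref{HNN}); and (iii) pairs of arcs forming bigons, eliminated by the bigon criterion. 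After this cleanup, $\varphi^{-1}(\alpha_i)$ is a single arc isotopic rel endpoints to $\alpha_i'$ for each $i$; a further isotopy of $\mathbf{S}'$ rel $\partial\mathbf{S}'$ (lifted to a homotopy of $\varphi$ via the homotopy extension property) arranges $\varphi^{-1}(\mathscr A) = \mathscr A'$ with $\varphi|\alpha_i'\to\alpha_i$ a homeomorphism.

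At this point $\varphi$ restricts to a map $\mathbf{D}'\to \mathbf{D}$ which is a homeomorphism on $\partial\mathbf{D}'$; Alexander's trick then provides a homotopy rel $\partial\mathbf{D}'$ from $\varphi|\mathbf{D}'$ to a homeomorphism, and pasting produces the desired homeomorphism homotopic to $\varphi$ rel $\partial\mathbf{S}'$. I expect the main obstacle to be step (iii) in the cleanup: executing each bigon elimination by a homotopy of $\varphi$ that is the identity on the entire boundary of $\mathbf{S}'$, not just on a neighborhood of the involved arcs. This is handled by observing that each bigon we find in $\mathbf{S}'$ maps, under $\varphi$, into a bigon in $\mathbf{S}$ (by $\pi_1$-injectivity applied to the resulting null-homotopic loop in $\mathbf{S}$), so the eliminating homotopy has image contained in a disk and can be combined with the identity on the boundary via a bump function.
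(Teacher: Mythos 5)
The paper does not actually prove this statement: it quotes it from \cite[Theorem 4.6.2.]{scottbook} and notes it also follows from Edmonds \cite[Theorem 3.1.]{MR541331}. So your cut-along-arcs argument stands on its own, and while the strategy (transversality, cleanup of $\varphi^{-1}(\mathscr A)$, Alexander trick on the complementary disk) is the standard one and can be made to work, as written it has a genuine gap exactly at its central step. You fix an arc system $\mathscr A'$ in $\mathbf S'$ in advance, requiring only that it have the correct endpoints and disk complement, and then assert that after cleanup each $\varphi^{-1}(\alpha_i)$ is a single arc isotopic rel endpoints to $\alpha_i'$. The single-arc part is fine: since $\varphi\vert\partial\mathbf S'$ is a homeomorphism, $\varphi^{-1}(\alpha_i)$ meets $\partial\mathbf S'$ in exactly two points, so after deleting circle components exactly one arc remains. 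But the isotopy claim is unjustified and is false for an arbitrary choice of $\mathscr A'$: already for $\varphi=\mathrm{id}$ on an annulus, take $\alpha_1'$ to be the image of $\alpha_1$ under a Dehn twist; it has the same endpoints and disk complement, yet $\varphi^{-1}(\alpha_1)=\alpha_1$ is not isotopic to it rel endpoints. This is precisely where the content of the theorem sits, because what the Alexander trick needs is that $\varphi^{-1}(\mathscr A)$ cuts $\mathbf S'$ into a \emph{single disk}, and nothing in your write-up establishes that.

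The repair is to drop $\mathscr A'$ entirely. Cutting $\mathbf S'$ along the pairwise disjoint arcs $\beta_i\coloneqq\varphi^{-1}(\alpha_i)$ gives a decomposition of $\pi_1(\mathbf S')$ as a graph of groups with trivial edge groups (the arcs are simply connected), so each complementary piece is $\pi_1$-injective in $\mathbf S'$; each piece maps into the disk obtained by cutting $\mathbf S$ along $\mathscr A$, so composing with the injective $\pi_1(\varphi)$ shows every piece is simply connected, hence a disk; and then $\chi(\mathbf S')=\chi(\mathbf S)$ — this is where the homotopy-equivalence hypothesis, and not mere $\pi_1$-injectivity, enters — forces the number of pieces to equal $\chi(\mathbf S)+k=1$. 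Note also that your steps (ii) and (iii) are misdirected: the endpoints of the unique arc component of $\varphi^{-1}(\alpha_i)$ are pinned by the boundary homeomorphism, so it cannot be ``eliminated'' rel $\partial\mathbf S'$ (at best one shows it is essential, which the argument above does not even require), and the arcs $\beta_i$ are automatically disjoint from one another, so there are no bigons to remove; the step you single out as the main obstacle is vacuous, while the missing step is the one just described. Finally, before gluing you must also homotope $\varphi$, supported near each $\beta_i$ and rel its endpoints, so that $\varphi\vert\beta_i\to\alpha_i$ is a homeomorphism and the two sides of $\beta_i$ go to the two sides of $\alpha_i$ (an ambient isotopy of $\mathbf S'$ does neither); only then is $\varphi$ a homeomorphism on the boundary of the complementary disk and the Alexander trick applies.
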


The following lemma gives some sufficient conditions so that the pre-image of a compact bordered subsurface under a proper map becomes a compact bordered subsurface of the same homeomorphism type. Its usage is two-fold: firstly, in \Cref{cl1}, to find the degree of a pseudo proper homotopy equivalence; and secondly, in the proof of \Cref{MR1}.

\begin{subsublemma}
\textup{Let $f\colon \Sigma'\to \Sigma$ be a $\pi_1$-injective proper map between two non-compact oriented surfaces, and let $\mathbf S$ be a smoothly embedded compact bordered subsurface of $\Sigma$ with $f^{-1}(\textup{int }\mathbf S)\neq\varnothing$. Suppose $f^{-1}(\partial \mathbf S)$ is a pairwise disjoint collection of smoothly embedded primitive circles on $\Sigma'$ such that $f$ sends $f^{-1}(\partial \mathbf S)$ homeomorphically onto $\partial \mathbf S$. Then $f^{-1}(\mathbf S)$ is a copy $\mathbf S$ in $\Sigma'$ with $\partial f^{-1}(\mathbf S)=f^{-1}(\partial \mathbf S)$, and $\deg(f)=\pm 1$.} \label{inversepairofpants} 
\end{subsublemma}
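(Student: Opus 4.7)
The plan is to recognize $\mathbf S':=f^{-1}(\mathbf S)$ as a compact bordered subsurface of $\Sigma'$, prove that the restriction $f|\mathbf S'\colon\mathbf S'\to\mathbf S$ is a homotopy equivalence via $\pi_1$-considerations, and then promote it to a homeomorphism by the rigidity of compact bordered surfaces. The degree statement will follow by extending this boundary-relative homotopy to a proper homotopy of $f$ and invoking \Cref{degreeonemapchecking}.

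First I would observe that $\mathbf S'$ is compact (as $f$ is proper and $\mathbf S$ is compact) and that, under the standing transversality of $f$ to $\partial\mathbf S$ implicit in the hypothesis that $f$ maps $f^{-1}(\partial\mathbf S)$ homeomorphically onto $\partial\mathbf S$, $\mathbf S'$ is a smoothly embedded compact bordered subsurface with $\partial\mathbf S'=f^{-1}(\partial\mathbf S)$; moreover $f(\partial\mathbf S')=\partial\mathbf S$ and $f|\partial\mathbf S'\to\partial\mathbf S$ is a homeomorphism. \Cref{homeomorphismoboundarymensdegreeone} then yields $\deg(f|\mathbf S')=\pm1$, and Olum's theorem (\Cref{degreeonemapsarepi1surjective}) gives that $\pi_1(f|\mathbf S')\colon\pi_1(\mathbf S')\to\pi_1(\mathbf S)$ is surjective. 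For injectivity, I would exploit the commutative square
$$
\begin{tikzcd}
\pi_1(\mathbf S') \arrow[r,"\pi_1(f|\mathbf S')"] \arrow[d,hook,"i'_*"'] & \pi_1(\mathbf S) \arrow[d,"i_*"] \\
\pi_1(\Sigma') \arrow[r,hook,"\pi_1(f)"'] & \pi_1(\Sigma)
\end{tikzcd}
$$
in which $i'_*$ is injective by part (2) of \Cref{HNN} (here using compactness of $\mathbf S'$ together with primitivity of each component of $\partial\mathbf S'$ on $\Sigma'$) and $\pi_1(f)$ is injective by hypothesis; hence $\pi_1(f|\mathbf S')$ is injective, and thus an isomorphism.

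Since compact bordered surfaces are aspherical ($K(\mathrm{free},1)$'s by \Cref{spine}), Whitehead's theorem promotes $f|\mathbf S'$ to a homotopy equivalence, and the rigidity of compact bordered surfaces (\Cref{rigidityofpairofpants}) then furnishes a homotopy rel $\partial\mathbf S'$ from $f|\mathbf S'$ to a homeomorphism $h\colon\mathbf S'\xrightarrow{\cong}\mathbf S$. This proves the first assertion: $f^{-1}(\mathbf S)\cong\mathbf S$ and $\partial f^{-1}(\mathbf S)=f^{-1}(\partial\mathbf S)$. For the degree, extend this boundary-relative homotopy by the identity outside $\mathbf S'$ to obtain a proper homotopy (it is stationary off the compact set $\mathbf S'$) from $f$ to a smooth proper map $g\colon\Sigma'\to\Sigma$ with $g|\mathbf S'=h$. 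Choosing any disk $\mathcal D\subset\mathrm{int}(\mathbf S)$ we have $g^{-1}(\mathcal D)\subset\mathbf S'$, because $f^{-1}(\mathrm{int}\,\mathbf S)\subseteq\mathbf S'$ and $g$ agrees with $f$ outside $\mathbf S'$; and $g|g^{-1}(\mathcal D)\to\mathcal D$ is a homeomorphism since $h$ is. Applying \Cref{degreeonemapchecking} to $g$ gives $\deg(g)=\pm1$, and proper homotopy invariance of the degree yields $\deg(f)=\pm1$.

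The main obstacle is the injectivity of $\pi_1(f|\mathbf S')$; this step is where the primitive-circle hypothesis on $\partial\mathbf S'$ does essential work, since without it the inclusion $\pi_1(\mathbf S')\to\pi_1(\Sigma')$ might fail to be injective and the diagram argument collapses. Everything else is bookkeeping: compactness, transversality, and then the interplay between Olum, Whitehead, the compact-surface rigidity theorem, and the disk characterization of degree $\pm1$.
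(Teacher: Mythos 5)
Your overall strategy (boundary-degree lemma, Olum for $\pi_1$-surjectivity, the commutative square with part (2) of \Cref{HNN} for injectivity, then \Cref{rigidityofpairofpants} and \Cref{degreeonemapchecking}) is the same as the paper's, but there is a genuine gap at the very first step: you set $\mathbf S'\coloneqq f^{-1}(\mathbf S)$ and treat it as a \emph{connected} compact bordered subsurface with $\partial \mathbf S'=f^{-1}(\partial\mathbf S)$. Neither claim is justified. Transversality of $f$ to $\partial\mathbf S$ is not among the hypotheses (the hypothesis only says $f^{-1}(\partial\mathbf S)$ is a disjoint union of smooth primitive circles mapped homeomorphically onto $\partial\mathbf S$; $f$ could a priori fold along such a circle, sending both sides into $\mathbf S$, in which case that circle would lie in the interior of $f^{-1}(\mathbf S)$), and nothing in your argument rules out $f^{-1}(\mathbf S)$ having several components. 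Connectedness is not cosmetic here: \Cref{homeomorphismoboundarymensdegreeone}, \Cref{degreeonemapsarepi1surjective} and \Cref{rigidityofpairofpants} are stated for (connected) compact bordered surfaces, the degree of $f\vert\mathbf S'$ is only defined in that setting, and the conclusion ``$f^{-1}(\mathbf S)$ is a copy of $\mathbf S$'' would simply be false if the preimage were disconnected.

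The paper resolves exactly this point. It does not take $f^{-1}(\mathbf S)$ wholesale; it cuts $\Sigma'$ along $f^{-1}(\partial\mathbf S)$ and takes \emph{one} complementary component $\mathbf S'$ whose image lies in $\mathbf S$ (such a component exists because $f^{-1}(\textup{int }\mathbf S)\neq\varnothing$). Applying \Cref{homeomorphismoboundarymensdegreeone} to $f\vert\mathbf S'$ forces the number of boundary circles of $\mathbf S'$ to equal $|\partial\mathbf S|$, hence $\partial\mathbf S'=f^{-1}(\partial\mathbf S)$ in its entirety; the $\pi_1$ and rigidity arguments then give $\mathbf S'\cong\mathbf S$. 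Finally, if a second complementary component $\mathbf S''$ also mapped into $\mathbf S$, it too would be a copy of $\mathbf S$ with boundary all of $f^{-1}(\partial\mathbf S)$, so $\Sigma'$ would equal the closed surface $\mathbf S'\cup_{\partial\mathbf S'=\partial\mathbf S''}\mathbf S''$, contradicting non-compactness of $\Sigma'$; this is what yields $f^{-1}(\mathbf S)=\mathbf S'$ and simultaneously rules out the folding scenario above. Your proof needs this (or an equivalent) global argument; with it inserted, the remaining steps, including the extension of the boundary-relative homotopy to a proper homotopy and the appeal to \Cref{degreeonemapchecking}, go through as you wrote them.
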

\begin{proof}
Since $f^{-1}(\textup{int }\mathbf S)\neq\varnothing$ and $f$ is proper, the continuity of $f\big|\Sigma'\setminus f^{-1}(\partial \mathbf S)\to \Sigma\setminus \partial \mathbf S$ tells that  $\Sigma'\setminus f^{-1}(\partial \mathbf S)$ is disconnected. Let $\mathbf S'\subset \Sigma'$ be a bordered sub-surface obtained as a complementary component of the decomposition of $\Sigma'$ by $f^{-1}(\partial \mathbf S)$ such that $f(\mathbf S')\subseteq \mathbf S$. That is, $\mathbf S'$ is the closure of one of the components of $\Sigma'\setminus f^{-1}(\partial \mathbf S)$ and $\mathbf S'$ is contained in the compact set $f^{-1}(\mathbf S)$. So, $\mathbf S'$ is a compact bordered subsurface of $\Sigma'$, and each component of $\partial \mathbf S'$ is a component of $f^{-1}(\partial \mathbf S)$. In the following few lines, we will show that each component of $f^{-1}(\partial \mathbf S)$ is also a component of $\partial \mathbf S'$. Anyway, since $f\vert f^{-1}(\partial \mathbf S)\to \partial \mathbf S$ is a homeomorphism, we can say that $f\vert \partial \mathbf S'\hookrightarrow \partial \mathbf S$ is an embedding. Now, by \Cref{homeomorphismoboundarymensdegreeone}, $\partial \mathbf S'=f^{-1}(\partial \mathbf S)$ and $\deg\left(f\vert\mathbf S'\to\mathbf  S\right)=\pm 1$.  Next, by \Cref{degreeonemapsarepi1surjective},  $f\vert\mathbf S'\to\mathbf  S$ is $\pi_1$-surjective. Since the inclusion $\mathbf S'\hookrightarrow \Sigma'$ and $f$ are $\pi_1$-injective, $f\vert\mathbf S'\to\mathbf  S$ is also so \big(see part (2) of \Cref{HNN}\big). Thus, $f\vert\mathbf S'\to\mathbf  S$ is $\pi_1$-bijective, and so  \Cref{rigidityofpairofpants} tells that $\mathbf S'\cong \mathbf S$. Finally, if  $\mathbf S''$ is another bordered sub-surface obtained as a complementary component of decomposition of $\Sigma'$ by $f^{-1}(\partial \mathbf S)$ with $f(\mathbf S'')\subseteq \mathbf S$, then similarly, $\mathbf S''\cong \mathbf S$. Since $f\vert f^{-1}(\partial\mathbf S)\to\partial\mathbf  S$ is a homeomorphism and $\Sigma'$ is connected, $\mathbf S''=\mathbf S'$ (otherwise, $\displaystyle \Sigma'$ would be the compact surface $\mathbf S'\cup_{\partial \mathbf S'=\partial \mathbf S''} \mathbf S''$). Therefore, $f^{-1}(\mathbf S)=\mathbf S'\cong \mathbf S$, and thus the proof of the first part is completed.
 
Now, we will prove that $\deg(f)=\pm 1$. Since $\deg(f)$ remains invariant after any proper homotopy of $f$, we can properly homotope $f$ as we want. So, apply \Cref{rigidityofpairofpants} to $f\vert\mathbf S'\to\mathbf  S$. Thus, $f\colon \Sigma'\to \Sigma$ can be properly homotoped relative to $\Sigma'\setminus\text{int}(\mathbf S')$ to map $\mathbf S'=f^{-1}(\mathbf S)$ is homeomorphically onto $\mathbf S$. Now, by \Cref{degreeonemapchecking}, $\deg(f)=\pm 1$.\end{proof}
 
We are now ready to prove that a pseudo proper homotopy equivalence is a map of degree $\pm 1$ if the co-domain contains an essential pair of pants, as said before.
\begin{subsubtheorem}
\textup{Let $f\colon \Sigma'\to \Sigma$ be a pseudo proper homotopy equivalence between two non-compact oriented surfaces, where $\Sigma$ is either an infinite-type surface or a finite-type non-compact surface $S_{g,0, p}$ with high complexity (to us, high complexity means $g+p\geq 4$ or $p\geq 6$). Then $\deg(f)=\pm 1$.}\label{cl1}
\end{subsubtheorem}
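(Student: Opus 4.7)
The plan is to combine the existence of an essential pair of pants (\Cref{essentialpairofpants}) with the normalisation machinery built above to properly homotope $f$ so that some disk in $\Sigma$ has a single disk as its $f$-preimage on which $f$ restricts to a homeomorphism; \Cref{degreeonemapchecking} will then immediately give $\deg(f)=\pm 1$.

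Since $f$ is a homotopy equivalence, $\pi_1(f)$ is an isomorphism, in particular surjective, and the hypothesis on $\Sigma$ puts us precisely in the scope of \Cref{essentialpairofpants}. Thus there is an essential pair of pants $\mathbf P\subseteq \Sigma$ with $f^{-1}(\textup{int }\mathbf P)\neq\varnothing$ and $f^{-1}(c)\neq\varnothing$ for each component $c$ of $\partial\mathbf P$. Each such $c$ is primitive in $\Sigma$, for otherwise one complementary side of $c$ in $\Sigma$ would be a disk, contradicting the fact that both sides of $\mathbf P$ in $\Sigma$ have non-abelian fundamental group. Hence the finite collection $\mathscr A\coloneqq \partial \mathbf P$ is a preferred \textup{LFCS} on $\Sigma$ in the sense of case $(i)$ of \Cref{preferredLFCS}. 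Moreover, the hypothesis on $\Sigma$ forces $\pi_1(\Sigma)$ to be neither trivial nor infinite cyclic, so by $f$ being a homotopy equivalence together with \Cref{finitetypenoncompactsurace}, $\Sigma'\not\cong \Bbb R^2$ and $\Sigma'\not\cong \Bbb S^1\times \Bbb R$; thus \Cref{diskremoval}, \Cref{deg1tohomeo}, and \Cref{annulusremovalfinal} all apply.

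The next step is to put $f$ into normal form with respect to $\mathscr A$. First, invoke \Cref{transtoLFCS} to properly homotope $f$ to be smooth and transverse to $\mathscr A$. Next, apply \Cref{diskremoval} to remove all trivial components of $f^{-1}(\mathscr A)$ by a single proper homotopy, and then \Cref{deg1tohomeo} to map each remaining (primitive) component of $f^{-1}(\mathscr A)$ homeomorphically onto some component of $\mathscr A$. Finally, invoke \Cref{annulusremovalfinal} separately for each component $c\in \mathscr A$ (so the ``no two components co-bound an annulus'' hypothesis is vacuous in each single-circle application, and the ``keep a tubular neighbourhood stationary'' feature of \Cref{deg1tohomeo} and \Cref{diskremoval} lets us sequence the three applications without disturbing the earlier normalisations) to arrange that for every component $c$ of $\partial\mathbf P$, either $f^{-1}(c)=\varnothing$ or $f\vert f^{-1}(c)\to c$ is a homeomorphism. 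Throughout this process $\pi_1(f)$ stays an isomorphism (homotopy-invariance), so \Cref{essentialpairofpants} continues to apply to the current map, forcing $f^{-1}(c)\neq\varnothing$ for every $c\in\partial\mathbf P$. Hence $f\vert f^{-1}(\partial\mathbf P)\to\partial\mathbf P$ is now a homeomorphism, and the same $\pi_1$-surjectivity argument (together with essentiality of $\mathbf P$) yields $f^{-1}(\textup{int }\mathbf P)\neq\varnothing$ after the homotopy.

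At this stage \Cref{inversepairofpants} applies verbatim to $(f,\mathbf P)$, since $f$ is $\pi_1$-injective as a homotopy equivalence, and delivers $\deg(f)=\pm 1$. I expect the main technical hurdle to be the iterative use of \Cref{annulusremovalfinal}: one must choose the one-sided tubular neighbourhoods from its proof coherently for the three boundary circles of $\mathbf P$ so that applying the procedure for $c_2$ does not introduce trivial or extraneous components into the already-normalised $f^{-1}(c_1)$, and analogously for $c_3$; this is precisely the ``stationary on a neighbourhood'' mechanism encoded in \Cref{claim00} and used in \Cref{deg1tohomeo}, so the argument should extend cleanly from the single-circle case to the three-circle case.
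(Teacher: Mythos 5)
Your overall route is the paper's: produce the essential pair of pants via \Cref{essentialpairofpants}, note that its conclusion persists under proper homotopy, normalise $f$ over $\partial \mathbf P$ with \Cref{transtoLFCS} and the circle-removal machinery, and conclude with \Cref{inversepairofpants} (hence \Cref{degreeonemapchecking}). One hypothesis of \Cref{inversepairofpants} that the paper checks explicitly at the end --- that the surviving components of $f^{-1}(\partial\mathbf P)$ are \emph{primitive} circles on $\Sigma'$, verified there via \Cref{primitivecircle} --- is only implicit in your write-up, but it is covered by your pipeline since only primitive components survive \Cref{diskremoval}; so that point is fine.

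The genuine gap is the one place you deviate: applying \Cref{annulusremovalfinal} separately to each boundary circle. The proper homotopies built in \Cref{diskremoval}, \Cref{deg1tohomeo} and the annulus-compression step for the single-circle LFCS $\{c_1\}$ take values in tubular neighbourhoods of $c_1$ and in complementary components of $\{c_1\}$ --- and those regions contain $c_2$ and $c_3$. So the homotopy normalising $f^{-1}(c_1)$ can sweep across $c_2,c_3$, destroying transversality and creating new (possibly trivial) components of their preimages; symmetrically the later rounds can undo the normalisation of $f^{-1}(c_1)$. The ``stationary'' clauses you invoke (\Cref{claim00}, \Cref{diskremoval}) only freeze neighbourhoods of primitive components of the \emph{same} LFCS being processed, not preimages of circles outside it, so they do not close this loop, and no relative version of \Cref{annulusremovalfinal} is available. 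The remedy is what the paper does: apply \Cref{annulusremovalfinal} once to the whole finite preferred LFCS $\mathscr A=\partial\mathbf P$. Its extra hypothesis is automatic for an essential pair of pants: if two distinct components $c,c'$ of $\partial\mathbf P$ co-bounded an embedded annulus $A\subseteq\Sigma$, then $\textup{int}(A)$ would be open and closed in $\Sigma\setminus(c\cup c')$, hence the closure of a full complementary component of $c\cup c'$; but every such component either contains $\textup{int}(\mathbf P)$, which is $\pi_1$-injective in $\Sigma$ by \Cref{HNN} since the components of $\partial\mathbf P$ are primitive, or is a component of $\Sigma\setminus\mathbf P$, and in either case has non-abelian fundamental group, so it cannot be an annulus. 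With that observation the simultaneous application is legitimate and your argument becomes the paper's.
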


\begin{proof}Since $\deg(f)$ remains invariant after any proper homotopy of $f$, we can properly homotope $f$ as we want.
Now, \Cref{essentialpairofpants} gives an essential pair of pants $\mathbf P$ in $\Sigma$ such that $f^{-1}\big(\text{int}(\mathbf P)\big)\neq \varnothing$ and $f^{-1}(c)\neq \varnothing$ for each component $c$ of $\partial \mathbf P$, even after any proper homotopy of $f$. Using \Cref{transtoLFCS} and then \Cref{annulusremovalfinal}, after a proper homotopy, we may assume that  $f^{-1}(\text{int }\mathbf P)\neq \varnothing$ and $f^{-1}(\partial \mathbf P)$ is a pairwise-disjoint collection of three smoothly embedded circles on $\Sigma$ such that $f\vert f^{-1}(\partial \mathbf P)\to  \partial \mathbf P$ is a homeomorphism. 

Now, if possible, let $c'$ be a component of $f^{-1}(\partial \mathbf P)$ such that there is an embedding $i'\colon \Bbb D^2\hookrightarrow \Sigma'$ with $c'=i'(\Bbb S^1)$.  Then the embedding $f\circ i'\vert\Bbb S^1\hookrightarrow \Sigma$ is null-homotopic and $c\coloneqq f\circ i'(\Bbb S^1)$ is a component of $\partial \mathbf P$. But $\mathbf P$ is an essential pair of pants in $\Sigma$ implies each component of  $\partial \mathbf P$ is a primitive circle on $\Sigma$. Now, \Cref{primitivecircle} tells us we have reached a contradiction.  Hence, each component of $f^{-1}(\partial \mathbf P)$ is a primitive circle on $\Sigma'$. Finally, applying \Cref{inversepairofpants}, we complete the proof.
\end{proof}

\subsubsection{An essential punctured disk of a proper map and the degree of a pseudo proper homotopy equivalence}\label{essentialpuncureddiskanddegree}

We first build up notations for \cref{essentialpuncureddiskanddegree}. Let $\Sigma$ be a non-compact surface. Since the $\text{Ends}(\Sigma)$ is independent of the choice of efficient exhaustion of $\Sigma$ by compacta, we will use Goldman's inductive construction to define $\text{Ends}(\Sigma)$; see \Cref{ends}. So, consider an inductive construction of $\Sigma$. For each $i\geq 1$, define $K_i$ to be the compact bordered subsurface of $\Sigma$ after the $i$-th step of the induction. Then $\{K_i\}_{i=1}^\infty$ is an efficient exhaustion of $\Sigma$ by compacta. Also, notice that $\text{int}(K_1)\subseteq \text{int}(K_2)\subseteq \cdots$ is an increasing sequence of open subsets of $\Sigma$ such that $\cup_{i=1}^\infty\text{int}(K_i)=\Sigma$; and thus every compact subset of $\Sigma$ is contained in some $\text{int}(K_i)$.

Suppose $\Sigma'$ is another non-compact surface and $f\colon \Sigma'\to \Sigma$ is a proper map. Let $(V_1, V_2,...)$ be an end of $\Sigma$, i.e., $V_i$ is a component of $\Sigma\setminus K_i$ and $V_1\supseteq V_2\supseteq \cdots$. With this setup, we are now ready to state a lemma that is more or less related to \Cref{endssandproperhomotopy}.
\begin{subsubtheorem}
\textup{Assume that $f^{-1}(V_i)\neq \varnothing$ for each $i\geq 1$. Then for every proper map $g\colon \Sigma'\to \Sigma$, which is properly homotopic to $f$, we have $g^{-1}(V_i)\neq \varnothing$ for each $i\geq 1$.} \label{surjectivityonendpreservesbyproperhomotopy}
\end{subsubtheorem}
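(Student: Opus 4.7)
The plan is to use Proposition~\ref{endssandproperhomotopy}(2), which guarantees that properly homotopic proper maps induce the same map on spaces of ends. The overall strategy is to translate the hypothesis $f^{-1}(V_i)\neq\varnothing$ for all $i$ into a statement purely about $\mathrm{Ends}(f)$, then transport that statement to $g$ via the proper homotopy, and finally translate back to a preimage statement.

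First, I would show that the hypothesis is equivalent to the assertion $e\coloneqq(V_1,V_2,\ldots)\in \mathrm{im}\bigl(\mathrm{Ends}(f)\bigr)$. For the direction we need, pick a point $x_i\in f^{-1}(V_i)$ for each $i\geq 1$. Since $V_i$ eventually escapes every compact subset of $\Sigma$ and $f$ is proper, the sequence $\{x_i\}$ must escape every compact subset of $\Sigma'$ (otherwise infinitely many $x_i$ would sit in some compact $K\subseteq\Sigma'$, forcing $\{f(x_i)\}$ to meet the compact set $f(K)$ infinitely often, a contradiction). Hence every cluster point of $\{x_i\}$ in the Freudenthal compactification $\Sigma'^{\dag}$ lies in $\mathrm{Ends}(\Sigma')$. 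By compactness of $\Sigma'^{\dag}$, extract a subsequence $x_{i_k}\to \omega\in\mathrm{Ends}(\Sigma')$. On the other side, $f(x_{i_k})\in V_{i_k}\subseteq V_j$ for all large $k$, and $\{V_j^{\dag}\}$ is a neighborhood basis of $e$ in $\Sigma^{\dag}$; thus $f(x_{i_k})\to e$ in $\Sigma^{\dag}$. Continuity of $f^{\dag}$ then gives $\mathrm{Ends}(f)(\omega)=f^{\dag}(\omega)=e$.

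Next, by Proposition~\ref{endssandproperhomotopy}(2), $\mathrm{Ends}(g)=\mathrm{Ends}(f)$, so there exists $\omega'\in\mathrm{Ends}(\Sigma')$ with $g^{\dag}(\omega')=e$. Write $\omega'=(W_1,W_2,\ldots)$ with respect to some efficient exhaustion $\{K_j'\}$ of $\Sigma'$. To recover $g^{-1}(V_i)\neq\varnothing$ for every $i$, fix $i$ and use that $V_i^{\dag}$ is an open neighborhood of $e$ in $\Sigma^{\dag}$; by continuity of $g^{\dag}$ at $\omega'$ together with the fact that $\{W_j^{\dag}\}_{j\geq 1}$ forms a neighborhood basis at $\omega'$, some $W_j^{\dag}$ satisfies $g^{\dag}(W_j^{\dag})\subseteq V_i^{\dag}$. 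Restricting to $\Sigma'$ yields $g(W_j)\subseteq V_i$, and since $W_j$ is non-empty (being a component of $\Sigma'\setminus K_j'$), we conclude $g^{-1}(V_i)\neq\varnothing$.

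The only non-routine step is the first one, where properness is used twice: to extend $f$ continuously to the Freudenthal compactifications and to force the sequence $\{x_i\}$ to escape compacta in $\Sigma'$. Everything else is formal manipulation with neighborhood bases in $\Sigma^{\dag}$ and $\Sigma'^{\dag}$. I expect no genuine obstacle; the whole argument fits in under a page once the three translations (hypothesis $\Rightarrow$ end in image; image unchanged under proper homotopy; end in image $\Rightarrow$ conclusion) are written out.
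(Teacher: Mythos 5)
Your argument is correct, but it is a genuinely different route from the one in the paper. You convert the hypothesis into the end-theoretic statement $e=(V_1,V_2,\ldots)\in\operatorname{im}\bigl(\text{Ends}(f)\bigr)$ via a compactness/subsequence argument in the Freudenthal compactification, invoke part (2) of \Cref{endssandproperhomotopy} to get $\text{Ends}(g)=\text{Ends}(f)$, and then unwind the continuity of $g^\dag$ at the end $\omega'$ using the neighborhood bases $\{W_j^\dag\}$ and $\{V_i^\dag\}$; each of these translations checks out (the only implicit inputs are the metrizability of $\Sigma'^\dag$ and the continuity of the extensions $f^\dag,g^\dag$, both of which are supplied by \Cref{ends} and part (1) of \Cref{endssandproperhomotopy}). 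The paper instead never touches the compactification: it works directly with the proper homotopy $\mathcal H$, noting that $V_i\to\infty$ forces $f^{-1}(V_i)\to\infty$, so for each $n$ one can pick $x\in f^{-1}(V_{i_n})$ whose entire track $\mathcal H(x\times[0,1])$ misses $K_n$; connectedness of the track then pins it inside the single component $V_n$, whence $g(x)\in V_n$. Your version is shorter once \Cref{endssandproperhomotopy} is taken as a black box, and it isolates the reusable fact that the hypothesis is exactly the condition $e\in\operatorname{im}\bigl(\text{Ends}(f)\bigr)$ (which is how the introduction informally phrases this theorem); the paper's version buys self-containedness, using only the definition of properness and connectedness, with no appeal to the topology of $X^\dag$ or to the continuity of the extended maps.
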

\begin{proof}
Let $g\colon \Sigma'\to \Sigma$ be a proper map, and let $\mathcal H\colon \Sigma'\times [0,1]\to \Sigma$ be a proper homotopy from $f$ to $g$.  Notice that $V_i\to \infty$: If $\mathcal X$ is a compact subset of $\Sigma$, then $\mathcal X\subseteq \text{int}(K_{i_0})$ for some positive integer $i_0$, i.e., $\mathcal X\cap V_i=\varnothing$ for all $i\geq i_0$. Therefore, $f^{-1}(V_i)\to \infty$: If $\mathcal X'$ is a compact subset of $\Sigma'$, then $f(\mathcal X')$ is compact, so $f(\mathcal X')\cap V_i=\varnothing$ for all but finitely many $i$, i.e., $\mathcal X'\cap f^{-1}(V_i)=\varnothing$ for all but finitely many $i$.

Let $n$ be any positive integer. Consider the compact subset $p\big(\mathcal H^{-1}( K_{n})\big)$ of $\Sigma'$, where $p\colon \Sigma'\times [0,1]\to \Sigma'$ is the projection. Since $f^{-1}(V_i)\to \infty$, we have an integer $i_n>n$ such that $f^{-1}(V_{i_n})\subseteq \Sigma'\setminus p\big(\mathcal H^{-1}( K_{n})\big)$. Now, consider any $x_{i_n}\in f^{-1}(V_{i_n})$. Then $\mathcal H\big(x_{i_n}\times[0,1]\big)\subseteq \Sigma\setminus K_{n}$, i.e., the connected set $\mathcal H\big(x_{i_n}\times[0,1]\big)$ is contained in one of the components of $\Sigma\setminus K_{n}$. But $\mathcal H(x_{i_n},0)=f(x_{i_n})\in V_{i_n}\subseteq V_n$, i.e., $\mathcal H\big(x_{i_n}\times[0,1]\big)\subseteq V_n$. In particular, this means $g(x_{i_n})=\mathcal H(x_{i_n},1)\in V_n$. Since $n$ is an arbitrary positive integer, we are done.
\end{proof}

\begin{subsubdefinition}
\textup{Let $e=(V_1, V_2,...)$ be an end of $\Sigma$ such that for some non-negative integer $i_e$, $\overline{V_i}\cong S_{0,1,1}$ for all $i\geq i_e$ (i.e., $e$ is an isolated planar end of $\Sigma$).  If $f^{-1}(V_i)\neq \varnothing$ for all $i\geq 1$, then for each integer $i\geq i_e$, we say $\overline{V_i}$ is an \emph{essential punctured disk} of $f$.} 
\end{subsubdefinition}

\Cref{surjectivityonendpreservesbyproperhomotopy} says that the notion of an essential punctured disk is invariant under the proper homotopy. In \Cref{cl2}, we show that, after a proper homotopy, the pre-image of the boundary of an essential punctured disk under a pseudo proper homotopy equivalence bounds a planar end of the domain. But before moving into its proof, we need to prove the following lemma, which gives some sufficient conditions so that the pre-image of a punctured disk in the co-domain under a proper map becomes a punctured disk in the domain.

\begin{subsublemma}
\textup{Let $f\colon \Sigma'\to \Sigma$ be a $\pi_1$-injective proper map between two non-compact oriented surfaces, and let $\mathcal C$ be a smoothly embedded separating circle on $\Sigma$ such that one of the two sides of $\mathcal C$ in $\Sigma$ is a punctured disk $\mathcal D_*$. Also, let $\Sigma'$ is  homeomorphic to neither $\Bbb S^1\times \Bbb R$ nor $\Bbb R^2$. If $f^{-1}(\mathcal C)$ is a smoothly embedded primitive circle on $\Sigma'$ so that $f\vert f^{-1}(\mathcal C)\to \mathcal C$ is a homeomorphism and $f^{-1}(\textup{int }\mathcal D_*)\neq\varnothing$, then $f^{-1}(\mathcal D_*)$ is a copy the  punctured disk in $\Sigma'$ with $\partial f^{-1}(\mathcal D_*)=f^{-1}(\mathcal C)$ and $\deg(f)=\pm 1$.} \label{inversepunctureddisk}
\end{subsublemma}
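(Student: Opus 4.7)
The argument will parallel that of Lemma \ref{inversepairofpants}, substituting the proper rigidity of the punctured disk (Theorem \ref{Alexandertrick}) for the compact rigidity used there. The plan is: first, to show $f^{-1}(\mathcal C)$ separates $\Sigma'$; next, to identify $\mathbf S'\coloneqq f^{-1}(\mathcal D_*)$ as a punctured disk via a $\pi_1$-computation; and finally, to pin down $\deg(f)=\pm 1$ using the proper Alexander trick together with Theorem \ref{degreeonemapchecking}.

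For separation, the argument proceeds by contradiction. If $\Sigma'\setminus f^{-1}(\mathcal C)$ were connected, continuity of $f$ would force its $f$-image into a single component of $\Sigma\setminus \mathcal C$, and the hypothesis $f^{-1}\big(\textup{int}\,\mathcal D_*\big)\ne \varnothing$ would pin down that component as $\textup{int}(\mathcal D_*)$, giving $f(\Sigma')\subseteq \mathcal D_*$. Since part (1) of Lemma \ref{HNN} makes $\pi_1(\mathcal D_*)\hookrightarrow \pi_1(\Sigma)$ injective and $\pi_1(f)$ is injective by hypothesis, this would produce an embedding $\pi_1(\Sigma')\hookrightarrow \pi_1(\mathcal D_*)\cong \Bbb Z$. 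But Theorem \ref{finitetypenoncompactsurace} rules out cyclic (or trivial) $\pi_1$ for any non-compact surface other than $\Bbb R^2$ and $\Bbb S^1\times \Bbb R$, so no such embedding exists---a contradiction. Hence $f^{-1}(\mathcal C)$ separates $\Sigma'$, and $\mathbf S'=f^{-1}(\mathcal D_*)$ is a two-submanifold of $\Sigma'$ whose only boundary component is $f^{-1}(\mathcal C)$.

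To identify $\mathbf S'$, one more application of part (1) of Lemma \ref{HNN} gives $\pi_1(\mathbf S')\hookrightarrow \pi_1(\Sigma')$; composing with the injection $\pi_1(f)$ and factoring through $\pi_1(\mathcal D_*)$ yields an embedding $\pi_1(\mathbf S')\hookrightarrow \pi_1(\mathcal D_*)\cong \Bbb Z$. Since $f\vert \partial \mathbf S'\to \partial \mathcal D_*$ is a homeomorphism and $\partial \mathcal D_*\hookrightarrow \mathcal D_*$ induces an isomorphism on $\pi_1$, the composite $\pi_1(\partial \mathbf S')\to \pi_1(\mathbf S')\to \pi_1(\mathcal D_*)$ is surjective, forcing the embedding $\pi_1(\mathbf S')\hookrightarrow \Bbb Z$ itself to be an isomorphism. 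Capping off the single boundary circle of $\mathbf S'$ with a disk then produces a simply-connected boundaryless non-compact surface, which by Theorem \ref{finitetypenoncompactsurace} must be $\Bbb R^2$; hence $\mathbf S'\cong S_{0,1,1}$, with $\partial \mathbf S'=f^{-1}(\mathcal C)$ as required.

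For the degree, the restriction $f\vert \mathbf S'\to \mathcal D_*$ is a proper $\pi_1$-isomorphism that is a homeomorphism on the boundary, so Theorem \ref{Alexandertrick} properly homotopes it rel $\partial \mathbf S'$ to a homeomorphism; extending this homotopy by the identity on $\Sigma'\setminus \mathbf S'$ gives a proper homotopy of $f$, after which for any smoothly embedded disk $\mathcal D\subseteq \textup{int}(\mathcal D_*)$ the preimage is a disk mapped homeomorphically, and Theorem \ref{degreeonemapchecking} then yields $\deg(f)=\pm 1$ (proper-homotopy invariance of the degree completes the argument). The main obstacle is expected to be the separation step: the compact bookkeeping of Lemma \ref{inversepairofpants} (where the boundary components of a pair of pants give a finite-rank matching via Lemma \ref{homeomorphismoboundarymensdegreeone}) is unavailable, so the whole argument hinges on the purely algebraic fact that $\pi_1(\Sigma')$ cannot embed into $\Bbb Z$---a fact that crucially uses the hypothesis $\Sigma'\not\cong \Bbb R^2,\ \Bbb S^1\times \Bbb R$.
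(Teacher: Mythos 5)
Your argument is essentially the paper's own proof: separation of $\Sigma'$ by $f^{-1}(\mathcal C)$ from the fact that $\pi_1(\Sigma')$ cannot embed in $\Bbb Z$ (via \Cref{finitetypenoncompactsurace}), identification of the relevant side as a punctured disk using part (1) of \Cref{HNN}, and the degree via \Cref{Alexandertrick} followed by \Cref{degreeonemapchecking} and proper-homotopy invariance of the degree. The one step that does not hold as written is the sentence immediately after the separation argument, where you set $\mathbf S'\coloneqq f^{-1}(\mathcal D_*)$ and declare it a bordered subsurface whose sole boundary component is $f^{-1}(\mathcal C)$. Separation alone does not give this: a priori \emph{both} components of $\Sigma'\setminus f^{-1}(\mathcal C)$ could map into $\text{int}(\mathcal D_*)$, in which case $f^{-1}(\mathcal D_*)=\Sigma'$ has empty boundary, and your computation of $\pi_1(\partial\mathbf S')\to\pi_1(\mathbf S')\to\pi_1(\mathcal D_*)$, as well as the capping step, has nothing to act on. The fix is one line and is already in your toolkit: if both sides mapped into $\mathcal D_*$, then $f(\Sigma')\subseteq\mathcal D_*$ and $\pi_1$-injectivity would embed $\pi_1(\Sigma')$ into $\Bbb Z$, the same contradiction as in your separation step. (The paper handles this case differently: it shows each side mapping into $\mathcal D_*$ is a punctured disk and observes that two such sides glued along $f^{-1}(\mathcal C)$ would force $\Sigma'\cong\Bbb S^1\times \Bbb R$.)

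Two smaller points are glossed over but do not break anything. First, the capped-off surface must be non-compact before \Cref{finitetypenoncompactsurace} applies; this follows from the isomorphism $\pi_1(\mathbf S')\cong\Bbb Z$ you establish beforehand (or from primitivity of $f^{-1}(\mathcal C)$ together with \Cref{primitivecircle}, which is how the paper excludes the disk case). Second, \Cref{Alexandertrick} needs no $\pi_1$ hypothesis at all, only that $f\vert f^{-1}(\mathcal D_*)\to\mathcal D_*$ is proper, boundary-preserving, and a homeomorphism on the boundary, all of which you have.
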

\begin{proof}Notice that $\Sigma'\not \cong \Bbb R^2,\Bbb S^1\times \Bbb R$, i.e., $\pi_1(\Sigma')$ is non-abelian by \Cref{finitetypenoncompactsurace}. Since $f^{-1}(\textup{int }\mathcal D_*)\neq\varnothing$ and  $\pi_1(f)\big(\pi_1(\Sigma')\big)$ is non-abelian, by continuity of $f\vert\Sigma'\setminus f^{-1}(\mathcal C)\to \Sigma\setminus \mathcal C$, we can say that $\Sigma'\setminus f^{-1}(\mathcal C)$ is disconnected. Let $\mathbf S'$ be a side of $f^{-1}(\mathcal C)$ in $\Sigma'$ for which $f(\mathbf S')\subseteq \mathcal D_*$. Since $f$ is $\pi_1$-injective, by  part $(1)$ of \Cref{HNN}, $f\vert\mathbf S'\to \mathcal D_*$ is also so. Thus, $\pi_1(\mathbf S')$ is a subgroup of $\Bbb Z$. Now, $\text{int}(\mathbf S')$ is homotopy equivalent to $\mathbf S'$ and bounded by the primitive circle $f^{-1}(\mathcal C)$ on $\Sigma'$; so, using \Cref{finitetypenoncompactsurace}, $\mathbf S'\cong S_{0,1,1}$. Next, if $\mathbf S''$ is another side of $f^{-1}(\mathcal C)$ in $\Sigma'$ for which $f(\mathbf S'')\subseteq \mathcal D_*$, then similarly, $\mathbf S''\cong S_{0,1,1}$. Since $f\vert f^{-1}(\mathcal C)\to \mathcal C$ is homeomorphism and $\Sigma'$ is connected, $\mathbf S''=\mathbf S'$; otherwise, $\displaystyle \Sigma'$ would be $\mathbf S'\cup_{\mathcal C} \mathbf S''\cong \Bbb S^1\times \Bbb R$. Therefore, $f^{-1}(\mathcal D_*)=\mathbf S'\cong \mathcal D_*$, and thus the proof of the first part is completed.

Now, we will prove that $\deg(f)=\pm 1$. Since $\deg(f)$ remains invariant after any proper homotopy of $f$, we can properly homotope $f$ as we want. So, apply \Cref{Alexandertrick}  to $f\vert\mathbf S'\to \mathcal D_*$. Thus, $f\colon \Sigma'\to \Sigma$ can be properly homotoped relative to $\Sigma'\setminus\text{int}(\mathbf S')$ to map $\mathbf S'=f^{-1}(\mathcal D_*)$ is homeomorphically onto $\mathcal D_*$.  Now, by \Cref{degreeonemapchecking}, $\deg(f)=\pm 1$.
\end{proof}

Now, we prove a well-known theorem used in the previous lemma.

 \begin{subsubtheorem}{\textup{(Proper rigidity of the punctured disk)}} \textup{Let $\mathbf D_*$ be a punctured disk, and let $\varphi\colon \mathbf D_*\to \mathbf D_*$ be a proper map such that $\varphi^{-1}(\partial \mathbf D_*)=\partial \mathbf D_*$ and $\varphi\vert\partial \mathbf D_*\to \partial \mathbf D_*$ is a homeomorphism. Then $\varphi$ is properly homotopic to a homeomorphism $\mathbf D_*\to \mathbf D_*$ relative to the boundary $\partial \mathbf D_*$. }\label{Alexandertrick}\end{subsubtheorem}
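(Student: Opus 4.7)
My plan is to identify $\mathbf D_{*}$ with the punctured closed disk $\{z\in\Bbb C:0<|z|\le 1\}$, so that $\partial\mathbf D_{*}=\Bbb S^{1}$ and the unique end of $\mathbf D_{*}$ corresponds to $0\in\Bbb D^{2}$. Since $\mathbf D_{*}$ has exactly one end and $\Bbb D^{2}$ is a compactification of $\mathbf D_{*}$ whose remainder $\{0\}$ is totally disconnected, the uniqueness of the Freudenthal compactification gives $\mathbf D_{*}^{\dag}\cong\Bbb D^{2}$. Because $\varphi$ is proper, \Cref{endssandproperhomotopy}(1) extends it continuously to $\varphi^{\dag}\colon\Bbb D^{2}\to\Bbb D^{2}$ with $\varphi^{\dag}(0)=0$. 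Let $\theta\coloneqq\varphi|\partial\mathbf D_{*}\colon\Bbb S^{1}\to\Bbb S^{1}$, which is a homeomorphism by hypothesis, and define the cone extension $h\colon\Bbb D^{2}\to\Bbb D^{2}$ by $h(re^{i\alpha})\coloneqq r\,\theta(e^{i\alpha})$. Then $h$ is a self-homeomorphism of $\Bbb D^{2}$ fixing $0$, and $h|\mathbf D_{*}\to\mathbf D_{*}$ is a homeomorphism extending $\theta$; this will be the target of the homotopy.

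Next, I would implement the Alexander trick to build a homotopy $F\colon\Bbb D^{2}\times[0,1]\to\Bbb D^{2}$, rel $\partial\Bbb D^{2}$, from $\varphi^{\dag}$ to $h$. Explicitly, set
$$F(z,t)\coloneqq\begin{cases}(1-t)\,\varphi^{\dag}\!\left(\dfrac{z}{1-t}\right)&\text{if }|z|\le 1-t\text{ and }t<1,\\[4pt] |z|\,\theta\!\left(\dfrac{z}{|z|}\right)&\text{if }|z|\ge 1-t,\end{cases}$$
with $F(z,1)\coloneqq h(z)$. The two formulas agree on the interface $|z|=1-t$, and continuity at $t=1$ follows from the elementary bounds $|\varphi^{\dag}|\le 1$ and $\varphi^{\dag}(0)=0$. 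One checks directly that $F(z,0)=\varphi^{\dag}(z)$, $F(z,1)=h(z)$, and that $F(z,t)=\theta(z)$ for every $z\in\partial\Bbb D^{2}$ and $t\in[0,1]$, since $|z|=1\ge 1-t$ always throws us into the second case.

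It remains to verify that $F$ restricts to a proper homotopy $\mathbf D_{*}\times[0,1]\to\mathbf D_{*}$. For $z\ne 0$ in the second case, $|F(z,t)|=|z|>0$; in the first case, $z/(1-t)\in\mathbf D_{*}$ and $\varphi$ carries $\mathbf D_{*}$ into itself, so $|\varphi^{\dag}(z/(1-t))|>0$ and hence $F(z,t)\ne 0$. Thus $F$ maps $\mathbf D_{*}\times[0,1]$ into $\mathbf D_{*}$. For properness, fix a compact $K\subseteq\mathbf D_{*}$ contained in $\{|w|\ge\varepsilon\}$ for some $\varepsilon>0$; by continuity of $\varphi^{\dag}$ at $0$ there exists $\delta>0$ with $|\varphi^{\dag}(w)|<\varepsilon$ whenever $|w|<\delta$. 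A short case analysis then shows that on $F^{-1}(K)$ one has $|z|\ge\varepsilon$ in the second case, while in the first case the inequality $(1-t)|\varphi^{\dag}(z/(1-t))|\ge\varepsilon$ forces both $1-t\ge\varepsilon$ and $|z/(1-t)|\ge\delta$, giving $|z|\ge\delta\varepsilon$. Hence $F^{-1}(K)\subseteq\{|z|\ge\min(\varepsilon,\delta\varepsilon)\}\times[0,1]$, which is compact in $\mathbf D_{*}\times[0,1]$. Combined with the previous paragraph, this proves that $F|\mathbf D_{*}\times[0,1]$ is a proper homotopy rel $\partial\mathbf D_{*}$ from $\varphi$ to the homeomorphism $h|\mathbf D_{*}$.

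The main technical hurdle is the properness check in the final paragraph: one must quantitatively control $\varphi^{\dag}$ near the puncture, which is where the hypothesis that $\varphi$ is proper (rather than merely a boundary-preserving continuous self-map) genuinely enters. Once this estimate is in place, the Alexander trick does the rest of the work.
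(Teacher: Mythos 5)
Your proof is correct and takes essentially the same route as the paper: both use the identical Alexander-trick homotopy, $(z,t)\mapsto (1-t)\,\varphi\big(z/(1-t)\big)$ for $|z|\le 1-t$ and $|z|\,\varphi(z/|z|)$ otherwise, rel $\partial \mathbf D_*$, terminating at the cone homeomorphism on the boundary map. The only cosmetic difference is in the properness check: you first extend $\varphi$ over the Freudenthal compactification $\Bbb D^2$ (so that $\varphi^\dag(0)=0$) and run an $\varepsilon$--$\delta$ estimate, while the paper argues directly with sequences $z_n\to 0$ using properness of $\varphi$; the two verifications are equivalent.
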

\begin{proof}
  Without loss of generality, we may assume $\mathbf D_*=\{z\in \Bbb C: 0<|z|\leq 1\}$. Define $\mathcal H\colon \mathbf D_*\times [0,1]\to \mathbf D_*$  by $$\mathcal H(z,t)\coloneqq \begin{cases}(1-t)\cdot \varphi\left(\frac{z}{1-t}\right) & \text{ if }0<|z|\leq 1-t,\\  |z|\cdot \varphi\left(\frac{z}{|z|}\right) & \text{ if }1-t<|z|\leq 1.
      
  \end{cases}$$ Notice that $\varphi\simeq \mathcal H(-,1)$ relative to $\partial \mathbf D_*$, and $\mathcal H(-,1)\colon \mathbf D_*\to \mathbf D_*$ is a homeomorphism. 

Now, we prove $\mathcal H$ is a proper map. So let $\{(z_n,t_n)\}$ is a sequence in $\mathbf D_*\times [0,1]$ with $z_n\to 0$. We need to show that $\mathcal H(z_n,t_n)\to 0$. Define $\mathscr A\coloneqq\big \{n\in \Bbb N: 1-t_n<|z_n|\big\}$ and $\mathscr B\coloneqq \big \{n\in \Bbb N: |z_n|\leq 1-t_n\big\}$. Then $\Bbb N=\mathscr A\cup \mathscr B$. Therefore, it is enough to show $\{\mathcal H(z_n,t_n):n\in \mathscr A\}\to 0$ \big(resp. $\{\mathcal H(z_n,t_n):n\in \mathscr B\}\to 0$\big) whenever $\mathscr A$ (resp. $\mathscr B$) is infinite.  

If $\mathscr A$ is infinite, then $\{\mathcal H(z_n,t_n):n\in \mathscr A\}\to 0$, since $\big|\mathcal H(z_n,t_n)\big|=|z_n|\cdot \left|\varphi\left(\frac{z_n}{|z_n|}\right)\right|\leq |z_n|$ for all $n\in \mathscr A$. 

Next, assume $\mathscr B$ is infinite. We will prove that $\{\mathcal H(z_n,t_n):n\in \mathscr B\}\to 0$. So consider any $\varepsilon>0$. We need to show $\big|\mathcal H(z_n,t_n)\big|<\varepsilon$ for all but finitely many $n\in \mathscr B$. Let $\mathscr B'_\varepsilon\coloneqq \{n\in \mathscr B:1-t_n<\varepsilon\}$. Therefore $\big|\mathcal H(z_n,t_n)\big|=(1-t_n)\cdot \left|\varphi\left(\frac{z_n}{1-t_n}\right)\right|\leq (1-t_n)<\varepsilon$ for all $n\in \mathscr B'_\varepsilon$. Also, if $\mathscr B\setminus \mathscr B'_\varepsilon$ is infinite, then $\left\{\frac{z_n}{1-t_n}:n\in \mathscr B\setminus \mathscr B'_\varepsilon\right\}\to 0$, which implies $\left\{\varphi\left(\frac{z_n}{1-t_n}\right):n\in \mathscr B\setminus \mathscr B'_\varepsilon\right\}\to 0$ (as $\varphi$ is proper), and thus $\big|\mathcal H(z_n,t_n)\big|\leq \left|\varphi\left(\frac{z_n}{1-t_n}\right)\right|<\varepsilon$ for all but finitely many $n\in \mathscr B\setminus \mathscr B'_\varepsilon$. Now, the previous two lines together imply that  $\big|\mathcal H(z_n,t_n)\big|<\varepsilon$ for all but finitely many $n\in \mathscr B$. 
\end{proof}

\begin{subsubremark}
    \textup{Notice that \Cref{Alexandertrick} is obtained from a straightforward modification of the Alexander trick \cite[Lemma 2.1]{MR2850125}.}
\end{subsubremark}

\begin{subsubtheorem}
\textup{Let $f\colon \Sigma'\to \Sigma$ be a pseudo proper homotopy equivalence between two non-compact oriented surfaces. Suppose $\pi_1(\Sigma)$ is finitely-generated non-abelian group \big(equivalently $\Sigma\cong S_{g, 0,p}$  for some $(g,p)\neq (0,1), (0,2)$\big). Then $\deg(f)=\pm 1$.}\label{cl2}
\end{subsubtheorem}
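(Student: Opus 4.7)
The strategy is to replace the essential pair of pants used in \Cref{cl1} with an essential punctured disk in the finite-type surface $\Sigma$. Since $f$ is a homotopy equivalence, $\pi_1(\Sigma')\cong\pi_1(\Sigma)$ is a non-abelian free group; by \Cref{finitetypenoncompactsurace} neither $\Sigma$ nor $\Sigma'$ is homeomorphic to $\mathbb{R}^2$ or $\mathbb{S}^1\times\mathbb{R}$, so in particular $\Sigma'$ is non-compact and $\mathrm{Ends}(\Sigma')\neq\varnothing$. Consequently $\mathrm{im}\bigl(\mathrm{Ends}(f)\bigr)\subseteq\mathrm{Ends}(\Sigma)$ is non-empty, and I would pick an end $e$ of $\Sigma$ lying in this image. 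Since $\Sigma\cong S_{g,0,p}$ is finite-type, $e$ is an isolated planar end, so a smoothly embedded closed punctured disk $\mathbf{S}\subset\Sigma$ with puncture $e$ exists; its boundary $\partial\mathbf{S}$ is a separating primitive circle, and $\overline{\Sigma\setminus\mathbf{S}}$ is a compact bordered surface homotopy equivalent to $S_{g,1,p-1}$, whose fundamental group is free of rank $2g+p-1\geq 2$ under the hypothesis $(g,p)\neq(0,1),(0,2)$.

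By the choice of $e$, every deleted neighborhood of $e$ meets $\mathrm{im}(f)$, and by \Cref{surjectivityonendpreservesbyproperhomotopy} this remains true under any proper homotopy of $f$. Using \Cref{transtoLFCS}, I would properly homotope $f$ to be smooth and transverse to the preferred LFCS $\mathscr{A}:=\{\partial\mathbf{S}\}$ (a finite collection of primitive circles, case $(i)$ of \Cref{preferredLFCS}), and then apply \Cref{diskremoval}, \Cref{deg1tohomeo}, and \Cref{annulusremovalfinal} in sequence. The outcome is a proper homotopy after which $f^{-1}(\partial\mathbf{S})$ is either empty or a single primitive circle $\mathcal{C}'\subset\Sigma'$ mapped homeomorphically onto $\partial\mathbf{S}$. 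To rule out the empty case, if $f^{-1}(\partial\mathbf{S})=\varnothing$ then $\mathrm{im}(f)$ lies entirely in either $\mathbf{S}$ or $\Sigma\setminus\mathbf{S}$: the first possibility contradicts $\pi_1$-surjectivity of $f$ since $\pi_1(\mathbf{S})\cong\mathbb{Z}$ is abelian while $\pi_1(\Sigma)$ is not, and the second contradicts the preserved essential-punctured-disk property applied to a sufficiently small deleted neighborhood of $e$ inside $\mathrm{int}(\mathbf{S})$. Hence exactly one primitive circle survives, and the same property also forces $f^{-1}(\mathrm{int}\,\mathbf{S})\neq\varnothing$.

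Finally, since $f$ is $\pi_1$-injective and $\Sigma'\not\cong\mathbb{R}^2,\mathbb{S}^1\times\mathbb{R}$, the hypotheses of \Cref{inversepunctureddisk} are satisfied with $\mathcal{C}=\partial\mathbf{S}$ and $\mathcal{D}_{*}=\mathbf{S}$, so that lemma directly yields $\deg(f)=\pm 1$. The main obstacle I anticipate is the bookkeeping in the middle step: verifying that after each of the three reduction moves on $\mathscr{A}=\{\partial\mathbf{S}\}$ the non-emptiness of $f^{-1}(\partial\mathbf{S})$ truly persists, which requires combining \Cref{surjectivityonendpreservesbyproperhomotopy} with the non-abelianness of $\pi_1(\Sigma\setminus\mathbf{S})$ rather than appealing only to the a priori properties of the original map.
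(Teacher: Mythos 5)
Your proposal is correct and takes essentially the same route as the paper's proof: pick an end of $\Sigma$ in the image of $\text{Ends}(f)$, take the essential punctured disk $\mathbf S$ at that end, combine \Cref{surjectivityonendpreservesbyproperhomotopy} with the non-abelian fundamental group of the complement to keep $f^{-1}(\partial \mathbf S)$ non-empty through the reductions (\Cref{transtoLFCS}, \Cref{annulusremovalfinal}), and finish with \Cref{inversepunctureddisk}. The only cosmetic slip is calling $\overline{\Sigma\setminus\mathbf S}\cong S_{g,1,p-1}$ compact (true only when $p=1$), which is immaterial since you use only that its fundamental group is non-abelian.
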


\begin{proof}
Since $\deg(f)$ remains invariant after any proper homotopy of $f$, we can properly homotope $f$ as we want. Now, $\Sigma$ is a finite-type non-compact surface implies each end of $\Sigma$ is an isolated planar end, i.e., for every $e=(V_1, V_2,...)\in \text{Ends}(\Sigma)$, we have an integer $i_e$ such that  $\overline{V_i}$ is homeomorphic to the punctured disk for each $i\geq i_e$. Next, $f$ is proper implies there exists  $\mathcal E=(\mathcal W_{1}, \mathcal W_{2},...)\in \text{Ends}(\Sigma)$ such that $f^{-1}(\mathcal W_{i})\neq \varnothing$ for each $i\geq 1$. Notice that $\overline{\mathcal W_{i_{\mathcal E}}}$ is an essential punctured disk and $\mathcal C_{i_{\mathcal E}}\coloneqq \partial\overline{\mathcal W_{i_{\mathcal E}}}$ is a smoothly embedded circle separating on $\Sigma$. Also, $\mathcal C_{i_{\mathcal E}}$  is a primitive circle on $\Sigma'$ as $\mathcal C_{i_{\mathcal E}}$ bound the punctured disk $\overline{\mathcal W_{i_{\mathcal E}}}$ on $\Sigma'\not \cong \Bbb R^2$.

We aim to use \Cref{inversepunctureddisk}, but some observations are needed before that. Let $g\colon \Sigma'\to \Sigma$ be a proper map such that $g$ is properly homotopic to $f$ (note that $f$ is properly homotopic to itself, i.e., $g$ can be $f$). If possible, assume $g^{-1}(\mathcal C_{i_{\mathcal E}})=\varnothing$. Then continuity of $g$ implies $g(\Sigma')$ is contained in one of the two components of $\Sigma\setminus \mathcal C_{i_{\mathcal E}}$. By \Cref{surjectivityonendpreservesbyproperhomotopy}, $g(\Sigma')$ must be contained in $\mathcal W_{i_{\mathcal E}}$. But, then $\pi_1(f)=\pi_1(g)$ is non-surjective as $\pi_1(\Sigma\setminus\mathcal W_{i_{\mathcal E}})=\pi_1(\Sigma)$ is non-abelian. Therefore, $g^{-1}(\mathcal C_{i_{\mathcal E}})\neq\varnothing$. Also, by \Cref{surjectivityonendpreservesbyproperhomotopy}, $g^{-1}(\mathcal W_{i})\neq \varnothing$ for each $i\geq 1$, and thus  $g^{-1}(\mathcal W_{i_{\mathcal E}})\neq \varnothing$.

Now, we are ready to apply \Cref{inversepunctureddisk} after the observation given in the previous paragraph. At first, notice that $\Sigma'$ is homeomorphic to neither the plane nor the punctured plane as $\pi_1(\Sigma')=\pi_1(\Sigma)$ is non-abelian.  After a proper homotopy of $f$, we may assume that $f\ \stackinset{c}{}{c}{0.1ex}{$\top$}{$\abxpitchfork$}\ \mathcal C_{i_{\mathcal E}}$; see \Cref{transtoLFCS}. By the previous paragraph, $f^{-1}(\mathcal C_{i_{\mathcal E}})$  is a pairwise disjoint non-empty collection of finitely many smoothly embedded circles on $\Sigma'$. Now, by \Cref{annulusremovalfinal} and the previous paragraph, after a proper homotopy of $f$, we may further assume that $\mathcal C_{i_{\mathcal E}}'\coloneqq f^{-1}(\mathcal C_{i_{\mathcal E}})$ is a (single) smoothly embedded circle on $\Sigma'$ and  $f\vert \mathcal C_{i_{\mathcal E}}'\to \mathcal C_{i_{\mathcal E}}$ is a homeomorphism. The previous paragraph also tells that after all these proper homotopies, $f^{-1}(\mathcal W_{i_{\mathcal E}})$ remains non-empty.

We show that $\mathcal C_{i_{\mathcal E}}'$ is a primitive circle on $\Sigma'$. On the contrary, let there be an embedding $i'\colon \Bbb D^2\hookrightarrow \Sigma'$ with $\mathcal C_{i_{\mathcal E}}'=i'(\Bbb S^1)$.  Then the embedding $f\circ i'\vert\Bbb S^1\hookrightarrow \Sigma$ is null-homotopic and $\mathcal C_{i_{\mathcal E}}=f\circ i'(\Bbb S^1)$. But $\mathcal C_{i_{\mathcal E}}$ is a primitive circle on $\Sigma$. Now, \Cref{primitivecircle} tells us we have reached a contradiction. Finally, applying \Cref{inversepunctureddisk}, we can say that $\deg(f)=\pm 1$.
\end{proof}

\subsubsection{Most pseudo proper homotopy equivalences between non-compact surfaces are maps of degree  \texorpdfstring{$\pm 1$}{±1}}
\begin{subsubtheorem}
\textup{Let $f\colon \Sigma'\to \Sigma$ be a pseudo proper homotopy equivalence between two non-compact oriented surfaces. If $\Sigma\not\cong \Bbb S^1\times \Bbb R, \Bbb R^2$ (equivalently $\Sigma'\not\cong \Bbb S^1\times \Bbb R, \Bbb R^2$), then $\deg(f)=\pm 1$.} \label{properplushomotopyequivalnceisofdegreeone}
\end{subsubtheorem}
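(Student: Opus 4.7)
The plan is to reduce the statement to a dichotomy that has already been handled, namely \Cref{cl1} and \Cref{cl2}, by a simple case split on whether $\Sigma$ is of finite or infinite type. First I would observe that the hypotheses on $\Sigma$ and $\Sigma'$ are equivalent: since $f$ is a homotopy equivalence, $\pi_1(\Sigma')\cong \pi_1(\Sigma)$, and \Cref{finitetypenoncompactsurace} characterizes $\Bbb R^2$ and $\Bbb S^1\times \Bbb R$ among non-compact surfaces by having trivial and infinite cyclic fundamental group, respectively. Hence $\Sigma\not\cong \Bbb R^2,\Bbb S^1\times \Bbb R$ if and only if $\Sigma'\not\cong \Bbb R^2,\Bbb S^1\times \Bbb R$, and in particular $\pi_1(\Sigma)$ (equivalently $\pi_1(\Sigma')$) is non-abelian.

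Next, I would split into two cases. If $\Sigma$ is of infinite type, then $\pi_1(\Sigma)$ is not finitely generated, so \Cref{cl1} directly applies and yields $\deg(f)=\pm 1$. If $\Sigma$ is of finite type, then $\Sigma\cong S_{g,0,p}$ for some non-negative integers $g,p$ with $p\geq 1$, and $\pi_1(\Sigma)$ is the free group of rank $2g+p-1$. The only finite-type non-compact surfaces with abelian fundamental group are $S_{0,0,1}\cong \Bbb R^2$ and $S_{0,0,2}\cong \Bbb S^1\times \Bbb R$; excluding these, $\pi_1(\Sigma)$ is a finitely-generated non-abelian free group, so the hypothesis of \Cref{cl2} is satisfied and again $\deg(f)=\pm 1$.

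This exhausts both cases, completing the proof. There is no real obstacle here since the two preceding theorems were designed to cover complementary ranges of $\Sigma$; the only content of the present theorem is to package them together after observing that the excluded surfaces $\Bbb R^2$ and $\Bbb S^1\times \Bbb R$ are precisely those finite-type non-compact surfaces whose fundamental group is abelian, and that the homotopy equivalence hypothesis transfers the exclusion from $\Sigma$ to $\Sigma'$.
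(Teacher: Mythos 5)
Your proposal is correct and matches the paper's own proof, which likewise just combines \Cref{cl1} (infinite-type $\Sigma$) with \Cref{cl2} (finite-type $\Sigma$ with non-abelian, i.e.\ not $\Bbb R^2$ or $\Bbb S^1\times\Bbb R$, fundamental group). Your additional remark that the exclusion transfers from $\Sigma$ to $\Sigma'$ via $\pi_1$ and \Cref{finitetypenoncompactsurace} is exactly the justification behind the parenthetical ``equivalently'' in the statement.
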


 \begin{proof}Combining \Cref{cl1} and \Cref{cl2}, we complete the proof.
 \end{proof}
 
The following proposition, \emph{which we don't need to use anywhere}, says that if either of the integers $1$ and $-1$ appears as the degree of a pseudo proper homotopy equivalence between two non-compact oriented surfaces, then the other also appears.

 \begin{subsubproposition}
 \textup{Let $f\colon \Sigma'\to \Sigma$ be a pseudo proper homotopy equivalence between two non-compact oriented surfaces. Then there exists another pseudo proper homotopy equivalence $\overline f\colon \Sigma'\to \Sigma$ such that $\deg(\overline f)=-\deg(f)$.}
 \end{subsubproposition}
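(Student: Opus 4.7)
The plan is to postcompose $f$ with an orientation-reversing self-homeomorphism of $\Sigma$. Concretely, using Ian Richards' representation theorem (\Cref{richard1}), $\Sigma$ is homeomorphic to the double $D\mathbf S$ of a non-compact bordered surface $\mathbf S\subseteq\Bbb D$, where $D\mathbf S=(\mathbf S\times\{0\})\sqcup(\mathbf S\times\{1\})$ with $(p,0)\sim (p,1)$ for $p\in\partial\mathbf S$. This double carries a canonical involution
\[
\iota\colon D\mathbf S\longrightarrow D\mathbf S,\qquad \iota(p,k)=(p,1-k),
\]
which is fixed on the image of $\partial\mathbf S$, swaps the two copies of $\mathbf S$, and is a homeomorphism of $\Sigma$.

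I would then define $\overline f\coloneqq \iota\circ f\colon \Sigma'\to\Sigma$. Since $\iota$ is a homeomorphism (so in particular a proper homotopy equivalence) and $f$ is a pseudo proper homotopy equivalence, the composition $\overline f$ is proper and is a homotopy equivalence (with homotopy inverse $g\circ\iota^{-1}$, where $g$ is a homotopy inverse of $f$). Hence $\overline f$ is again a pseudo proper homotopy equivalence. By multiplicativity of the compactly-supported cohomological degree (see \Cref{degreeofapropermap}), $\deg(\overline f)=\deg(\iota)\cdot\deg(f)$, so it remains only to verify $\deg(\iota)=-1$.

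The main point to check is therefore that $\iota$ is orientation-reversing. Fix an orientation on $\Sigma\cong D\mathbf S$ so that the local orientation at any interior point $(p,0)$ in the copy $\mathbf S\times\{0\}$ agrees with a chosen orientation of $\mathbf S$. Then at the corresponding point $(p,1)=\iota(p,0)$ on $\mathbf S\times\{1\}$, the orientation inherited from $D\mathbf S$ must be the opposite of the chosen orientation on $\mathbf S$—otherwise the two induced orientations along the gluing locus $\partial\mathbf S$ would disagree and the manifold would fail to be orientable. Hence $\iota$ sends a positively oriented chart near $(p,0)$ to a negatively oriented chart near $(p,1)$, so $\iota$ is orientation-reversing and $\deg(\iota)=-1$ by \Cref{degreeonemapchecking} applied to any small disk. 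Consequently $\deg(\overline f)=-\deg(f)$, as desired.

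The only potential obstacle is the orientation-reversal statement for $\iota$, which is entirely local and is the standard fact that the double of an orientable bordered surface carries a canonical orientation-reversing involution; no further pseudo-proper-homotopy-equivalence machinery is needed.
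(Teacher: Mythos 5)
Your proposal is correct and matches the paper's own argument: the paper likewise writes $\Sigma$ as the double given by Richards' representation theorem, postcomposes $f$ with the canonical swap involution (an orientation-reversing homeomorphism), and concludes by multiplicativity of the degree. Your extra verifications (that $\overline f$ is again a pseudo proper homotopy equivalence and that the involution reverses orientation) are fine and only make explicit what the paper leaves implicit.
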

 \begin{proof}
 Write $\Sigma$ as the double of a bordered surface $\mathbf S$; see \Cref{richard1}. Define a homeomorphism $\varphi\colon \Sigma\to \Sigma$ by sending $[p,t]\in \Sigma$ to $[p,1-t]\in \Sigma$ for all $(p,t)\in \mathbf S\times \{0,1\}$.  Then $\varphi$ is an orientation-reversing homeomorphism. Therefore, the degree of $\overline f\coloneqq \varphi\circ f$ is $-\deg(f)$ as the degree is multiplicative; see \Cref{degreeofapropermap}. 
 \end{proof}
 
 \subsubsection{An application of the non-vanishing degree of a pseudo proper homotopy equivalence}
Consider a non-surjective map $\varphi\colon \mathscr M\to \mathscr N$ between two closed, oriented, connected $n$-manifolds. Then for any $p\in \mathscr N\setminus \text{im}(\varphi)$,  the map $H^n(\varphi)$ factors through the inclusion-induced zero map $H^n(\mathscr N) \cong \Bbb Z \longrightarrow 0\cong H^n(\mathscr N\setminus p)$ (recall that top integral singular cohomology of any connected, non-compact, boundaryless manifold is zero), i.e., $\deg(\varphi)=0$. The lemma below generalizes this phenomenon in the proper category.

\begin{subsublemma}
 \textup{Let $\Phi\colon M\to N$ be a proper map between two connected, oriented, boundaryless, smooth $k$-dimensional manifolds. If $\deg(\Phi)\neq 0$, then $\Phi$ is surjective.} \label{non-surjectivepropermaphasdegreezero}
\end{subsublemma}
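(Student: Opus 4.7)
The plan is to prove the contrapositive: assuming $\Phi$ is not surjective, show $\deg(\Phi)=0$. The key ingredient is the characterization of the fundamental class $[N]\in H^{k}_{\textbf{c}}(N)$ recalled just before \Cref{degreeonemapchecking}: for every point $p\in N\setminus\partial N=N$, the class $[N]$ is the image of the local orientation class $o_{p}\in H^{k}(N, N\setminus p)\cong \Bbb Z$ under the canonical map $H^{k}(N, N\setminus p)\to H^{k}_{\textbf{c}}(N)$ induced by taking $K=\{p\}$ in the direct system defining compactly supported cohomology.

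Since $\Phi$ is not surjective, I would pick any $p\in N\setminus \text{im}(\Phi)$; then $\Phi^{-1}(p)=\varnothing$, which is the whole point of the argument. Properness of $\Phi$ guarantees that preimages of compact sets are compact, so for $\{p\}$ the pullback $\Phi^{*}\colon H^{k}(N, N\setminus p)\to H^{k}\bigl(M, M\setminus \Phi^{-1}(p)\bigr)$ is defined and, by the construction of the induced map on compactly supported cohomology as a direct limit of such relative pullbacks, fits into a commutative square together with $\Phi^{*}\colon H^{k}_{\textbf{c}}(N)\to H^{k}_{\textbf{c}}(M)$ and the two canonical maps into the direct limits. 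Since $M\setminus \Phi^{-1}(p)=M$, the bottom-left corner of this square is $H^{k}(M,M)=0$.

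Chasing $o_{p}$ around the square therefore yields $\Phi^{*}([N])=0$ in $H^{k}_{\textbf{c}}(M)$. On the other hand, the definition of degree gives $\Phi^{*}([N])=\deg(\Phi)\cdot [M]$, and $[M]$ generates the infinite cyclic group $H^{k}_{\textbf{c}}(M)$, so $\deg(\Phi)=0$, contradicting the hypothesis. I do not anticipate any real obstacle here: orientability, boundarylessness, and $k$-dimensionality of $M$ and $N$ are precisely what make $H^{k}_{\textbf{c}}$ infinite cyclic with the prescribed generator, and the smooth structure plays no explicit role; the only thing to verify is the commutative square above, which is immediate from the construction of $\Phi^{*}$ on $H^{k}_{\textbf{c}}$ as the direct limit of pullbacks indexed by compact subsets of $N$, applied to the compact subset $\{p\}$.
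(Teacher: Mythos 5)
Your argument is correct, but it takes a genuinely different route from the paper's proof of \Cref{non-surjectivepropermaphasdegreezero}. The paper also argues the contrapositive, but it does so by geometry plus duality: properness makes $\Phi$ a closed map, so one can choose a closed ball $B$ inside the open set $N\setminus \Phi(M)$, factor $\Phi$ as the proper composition $M\to N\setminus \textup{int}(B)\hookrightarrow N$, and then kill $H^{k}_{\textbf{c}}\big(N\setminus\textup{int}(B);\Bbb Z\big)$ by Poincar\'e duality, since it is isomorphic to $H_0\big(N\setminus\textup{int}(B),\partial B;\Bbb Z\big)=0$; functoriality of $H^{k}_{\textbf{c}}$ then forces $H^{k}_{\textbf{c}}(\Phi)=0$. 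You instead stay at the level of the defining direct limit $H^{k}_{\textbf{c}}(N)=\varinjlim_K H^{k}(N,N\setminus K)$: since $H^{k}_{\textbf{c}}(\Phi)$ is induced by the compatible family of relative pullbacks $H^{k}(N,N\setminus K)\to H^{k}\big(M,M\setminus \Phi^{-1}(K)\big)$ (properness ensuring $\Phi^{-1}(K)$ is compact), taking $K=\{p\}$ with $p\notin\textup{im}(\Phi)$ makes the relevant lower corner $H^{k}(M,M)=0$, and since the local class $o_p$ maps to $[N]$, you get $\deg(\Phi)\cdot[M]=H^{k}_{\textbf{c}}(\Phi)\big([N]\big)=0$ directly. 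Your route avoids both the ball construction and the duality computation, at the cost of unwinding the limit-level construction of the induced map (which the paper treats as a black box via its citation to Spanier); it also makes transparent that smoothness is irrelevant, whereas the paper's factorization needs only minor adjustment to drop it. Both proofs use properness in the same essential way, and both rest on the characterization of $[N]$ and $[M]$ recalled before \Cref{degreeonemapchecking}.
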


\begin{proof}Being a proper map between two manifolds, $\Phi$ is a closed map; see \cite{MR254818}. Now, if possible, let $\Phi$ be non-surjective. Therefore, $N\setminus \Phi(M)$ is a non-empty open subset of $N$. Pick a point $y\in N\setminus \Phi(M)$. Since $N$ is locally Euclidean, there is a smoothly embedded closed ball $B\subset N$ such that $B\subseteq N\setminus \Phi(M)$. Notice that $N\setminus \text{int}(B)$ is a smoothly embedded co-dimension zero submanifold of $N$ with $\partial\big(N\setminus \text{int}(B)\big)=\partial B$. By Poincaré duality \big(see \cite[Exercise 35 of Section 3.3]{MR1867354}\big), $H^k_\textbf{c}\big(N\backslash\textup{int}(B);\Bbb Z\big)\cong H_0\big(N\backslash\textup{int}(B), \partial B;\Bbb Z\big)$. Also,  $H_0\big(N\backslash\textup{int}(B), \partial B;\Bbb Z\big)=0$ as $N$ is path-connected; see \cite[Exercise 16.(a) of Section 2.1]{MR1867354}. Now, $\Phi\colon M\to N$ can be thought as the composition $M\xrightarrow{\Phi^\dag} N\setminus \text{int}(B)\xhookrightarrow{i} N$, where $i$ is the inclusion map and $\Phi^\dag(m)\coloneqq \Phi(m)$ for all $m\in M$. Certainly, $\Phi^\dag$ and $i$ are both proper maps. Therefore, $H_\textbf{c}^k(\Phi)$ is the composition $$H^k_\textbf{c}(N;\Bbb Z)\xrightarrow{H^k_\textbf{c}(i)}H^k_\textbf{c}\big(N\backslash\textup{int}(B);\Bbb Z\big)=0\xrightarrow{H^k_\textbf{c}\left(\Phi^\dag\right)}H^k_\textbf{c}(M;\Bbb Z),$$ i.e., $H_\textbf{c}^k(\Phi)=0$, which contradicts $\deg(\Phi)\neq0$. Thus, $\Phi$ must be a surjective map.
\end{proof}The above lemma, together with  \Cref{properplushomotopyequivalnceisofdegreeone}, gives the following corollary.
\begin{subsubcorollary}
\textup{A pseudo proper homotopy equivalence between two non-compact surfaces is a surjective map, provided surfaces are homeomorphic to neither the plane nor the punctured plane.}\label{properplushomotopyequivalnceissurjective}
\end{subsubcorollary}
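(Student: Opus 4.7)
The plan is simply to apply the two results just proved in sequence. First, fix orientations on both $\Sigma'$ and $\Sigma$ (permissible since all surfaces in this paper are assumed orientable) so that the compactly supported cohomological degree $\deg(f)$ is defined. The hypothesis that $\Sigma$ is homeomorphic to neither the plane nor the punctured plane is the same as $\Sigma\not\cong \Bbb R^2$ and $\Sigma\not\cong \Bbb S^1\times \Bbb R$ (the punctured plane being homeomorphic to $\Bbb S^1\times\Bbb R$), which is precisely the exclusion in \Cref{properplushomotopyequivalnceisofdegreeone}; and since $f$ is a homotopy equivalence, $\pi_1(\Sigma')\cong \pi_1(\Sigma)$, so \Cref{finitetypenoncompactsurace} forces $\Sigma'$ to satisfy the same exclusion automatically.

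With these verifications in place, \Cref{properplushomotopyequivalnceisofdegreeone} immediately gives $\deg(f)=\pm 1$, hence $\deg(f)\neq 0$. Both $\Sigma'$ and $\Sigma$ are connected, oriented, boundaryless smooth two-manifolds, so \Cref{non-surjectivepropermaphasdegreezero} applies to the proper map $f$ and yields that $f$ is surjective, which is exactly the conclusion of the corollary.

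There is essentially no genuine obstacle remaining at this stage: all of the non-trivial work has already been absorbed into \Cref{properplushomotopyequivalnceisofdegreeone} (case analysis on whether $\Sigma$ is infinite-type or a high-complexity finite-type surface or a low-complexity $S_{g,0,p}$, and locating either an essential pair of pants or an essential punctured disk whose pre-image computes the degree via \Cref{inversepairofpants} or \Cref{inversepunctureddisk}) and into the Poincar\'e-duality argument underlying \Cref{non-surjectivepropermaphasdegreezero}. The corollary is the direct composition of these two implications.
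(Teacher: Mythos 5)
Your proposal is correct and is exactly the paper's argument: the corollary is stated as the immediate combination of \Cref{properplushomotopyequivalnceisofdegreeone} (degree $\pm 1$) with \Cref{non-surjectivepropermaphasdegreezero} (non-zero degree implies surjective), and your verification that the plane/punctured-plane exclusion and orientability hypotheses are satisfied matches what the paper implicitly assumes.
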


The following lemma tells that one way to achieve the surjectivity throughout a proper homotopy is to assume that the initial map of this proper homotopy is a map of non-zero degree. Note that any proper map $f\colon X\to Y$ is properly homotopic to itself due to the proper homotopy $X\times [0,1]\ni (x,t)\longmapsto f(x)\in Y$.
\begin{subsublemma}
 \textup{Let $\Phi\colon M\to N$ be a proper map of non-zero degree between two connected, oriented, boundaryless, smooth $k$-dimensional manifolds, and let $\Psi\colon M\to N$ be a proper map such that $\Psi$ is properly homotopic to $\Phi$. Then $\Psi$ is a surjective map.} \label{properhomotopypreservessurjectivity}
\end{subsublemma}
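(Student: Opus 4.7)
The plan is to reduce the surjectivity of $\Psi$ to the preceding lemma \Cref{non-surjectivepropermaphasdegreezero}, which states that a proper map of non-zero degree between two connected, oriented, boundaryless, smooth manifolds of the same dimension is surjective. So it suffices to show that $\deg(\Psi)\neq 0$.

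First, I would invoke the proper-homotopy invariance of the compactly supported cohomological degree recorded in \Cref{degreeofapropermap} (property $(ii)$): if two proper maps between connected oriented manifolds are properly homotopic via a homotopy sending boundary into boundary throughout, they have equal degrees. Here $M$ and $N$ are boundaryless, so the boundary condition on the homotopy is vacuous, and any proper homotopy $\mathcal H\colon M\times[0,1]\to N$ from $\Phi$ to $\Psi$ automatically satisfies the hypothesis. Consequently $\deg(\Psi)=\deg(\Phi)\neq 0$.

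Finally, applying \Cref{non-surjectivepropermaphasdegreezero} to $\Psi$ yields that $\Psi$ is surjective, completing the proof.

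There is essentially no obstacle: the statement is a one-line corollary built from two previously established facts (proper-homotopy invariance of degree, and the vanishing of the degree for non-surjective proper maps). The only thing worth double-checking is that the ambient hypotheses on $M$ and $N$ (connected, oriented, boundaryless, smooth, same dimension) match the hypotheses required by both \Cref{non-surjectivepropermaphasdegreezero} and the proper-homotopy invariance statement of \Cref{degreeofapropermap}, which they do verbatim.
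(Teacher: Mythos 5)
Your proposal is correct and is essentially identical to the paper's own proof: both deduce $\deg(\Psi)=\deg(\Phi)\neq 0$ from the proper-homotopy invariance of the compactly supported degree in \Cref{degreeofapropermap} and then conclude surjectivity via \Cref{non-surjectivepropermaphasdegreezero}. Your extra remark that the boundary condition is vacuous for boundaryless manifolds is a harmless clarification, not a deviation.
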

\begin{proof}Since $\Psi$ is properly homotopic to $\Phi$, $\deg(\Psi)=\deg(\Phi)\neq0$; see \Cref{degreeofapropermap}. Now, to conclude, consider \Cref{non-surjectivepropermaphasdegreezero}.
\end{proof}

 Here is the main application of the non-vanishing degree of a pseudo proper homotopy equivalence.

 \begin{subsubtheorem}
\textup{Let $f\colon \Sigma'\to \Sigma$ be a smooth pseudo proper homotopy equivalence between two non-compact surfaces, where $\Bbb S^1\times \Bbb R\not\cong \Sigma\not\cong \Bbb R^2$; and let  $\mathscr A$ be a preferred \textup{LFCS} on $\Sigma$ such that $f\ \stackinset{c}{}{c}{0.1ex}{$\top$}{$\abxpitchfork$}\ \mathscr A$. Suppose any two distinct components of $\mathscr A$ don't co-bound an annulus in $\Sigma$. In that case, $f$ can be properly homotoped to a proper map $g$ such that for each component $\mathcal C$ of $\mathscr A$, $g^{-1}(\mathcal C)$ is a component of $f^{-1}(\mathscr A)$ that is mapped homeomorphically onto $\mathcal C$ by $g$.} \label{annulusremovalfinal2}
\end{subsubtheorem}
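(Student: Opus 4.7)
The plan is to derive this statement as a direct strengthening of \Cref{annulusremovalfinal}: the latter already yields a proper map $g$, properly homotopic to $f$, such that for each component $\mathcal C$ of $\mathscr A$, $g^{-1}(\mathcal C)$ is either empty or a single component of $f^{-1}(\mathscr A)$ mapped homeomorphically onto $\mathcal C$. The only new content is to rule out the empty alternative, and this will follow from surjectivity of $g$.

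First I would verify that the hypotheses of \Cref{annulusremovalfinal} are in force. The assumption gives $\Sigma\not\cong \Bbb R^2, \Bbb S^1\times\Bbb R$. Since $f$ is a homotopy equivalence, $\pi_1(\Sigma')\cong \pi_1(\Sigma)$, so \Cref{finitetypenoncompactsurace} (which characterises $\Bbb R^2$ and $\Bbb S^1\times\Bbb R$ among non-compact surfaces by having trivial, respectively infinite cyclic, fundamental group) guarantees $\Sigma'\not\cong\Bbb R^2, \Bbb S^1\times\Bbb R$ as well. Hence \Cref{annulusremovalfinal} applies to $f$ and delivers the required $g$, along with the transversality and homeomorphism properties on each non-empty $g^{-1}(\mathcal C)$.

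The key step is to show that $g$ is surjective. For this I would invoke \Cref{properplushomotopyequivalnceisofdegreeone}, whose hypotheses are met precisely because $\Sigma\not\cong\Bbb R^2, \Bbb S^1\times\Bbb R$, to obtain $\deg(f)=\pm 1$. Since the compactly supported cohomological degree is a proper-homotopy invariant (see \Cref{degreeofapropermap}), it follows that $\deg(g)=\deg(f)=\pm 1\neq 0$. \Cref{non-surjectivepropermaphasdegreezero} then forces $g$ to be surjective.

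Consequently, for every component $\mathcal C$ of $\mathscr A$ and every point $p\in\mathcal C$, there exists $x\in \Sigma'$ with $g(x)=p$, so $g^{-1}(\mathcal C)\neq\varnothing$. Combined with the dichotomy supplied by \Cref{annulusremovalfinal}, this forces $g^{-1}(\mathcal C)$ to be a single component of $f^{-1}(\mathscr A)$ that $g$ maps homeomorphically onto $\mathcal C$, which is exactly the statement to be proved. There is no genuine obstacle at this stage; all of the work is concentrated in \Cref{annulusremovalfinal} and in the degree-one conclusion of \Cref{properplushomotopyequivalnceisofdegreeone}, and the corollary is obtained simply by assembling them via the surjectivity test of \Cref{non-surjectivepropermaphasdegreezero}.
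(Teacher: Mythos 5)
Your proposal is correct and follows essentially the same route as the paper: apply \Cref{annulusremovalfinal} to get the dichotomy, use \Cref{properplushomotopyequivalnceisofdegreeone} to get $\deg(f)=\pm 1$, and conclude surjectivity of $g$ from proper homotopy invariance of the degree together with \Cref{non-surjectivepropermaphasdegreezero} (the paper packages these last two ingredients as \Cref{properhomotopypreservessurjectivity}). Your additional verification that $\Sigma'\not\cong\Bbb R^2,\Bbb S^1\times\Bbb R$ via $\pi_1$ is a reasonable, if implicit in the paper, check of the hypotheses of \Cref{annulusremovalfinal}.
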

 \begin{proof}
\Cref{annulusremovalfinal} gives a proper map $g\colon \Sigma'\to \Sigma$ such that the following hold: $(1)$ $g$ is properly homotopic to $f$, and  $(2)$ for each component $\mathcal C$ of $\mathscr A$, if $g^{-1}(\mathcal C)\neq \varnothing$, then  $g^{-1}(\mathcal C)$ is a component of $f^{-1}(\mathscr A)$ such that $g\vert g^{-1}(\mathcal C)\to \mathcal C$ is homeomorphism. But $\deg(f)=\pm 1$, by \Cref{properplushomotopyequivalnceisofdegreeone}. Thus, the map $g$ is surjective since it is properly homotopic to the non-zero degree map $f$; see  \Cref{properhomotopypreservessurjectivity}. So, for each component $\mathcal C$ of $\mathscr A$, $g^{-1}(\mathcal C)$ is a component of $f^{-1}(\mathscr A)$ such that $g\vert g^{-1}(\mathcal C)\to \mathcal C$ is homeomorphism.
 \end{proof}
 \begin{subsubremark}
 \textup{For closed surfaces, the analog of \Cref{annulusremovalfinal2} can be stated far before, exactly in the ``annulus removal'' section, as every homotopy equivalence between two closed manifolds has a homotopy inverse, hence is a map of degree $\pm 1$, and hence is surjective. But, before \Cref{degreeofapseudoproperhomotopyequivalence}, we didn't know the degree of a pseudo proper homotopy equivalence; even in this stage, we don't know whether a pseudo proper homotopy equivalence has a proper homotopy inverse or not.}
 \end{subsubremark}

 \end{subsection}
 
 \section{Finishing the proofs of \texorpdfstring{\Cref{MR1}}{Theorem \ref{MR1}}, \texorpdfstring{\Cref{MC1.1.}}{Theorem \ref{MC1.1.}}, and \texorpdfstring{\Cref{MR2}}{Theorem \ref{MR2}}}\label{allproofs}

 \begin{proof}[Proof of \cref{MR1}] Consider an \textup{LFCS} $\mathscr C$ on $\Sigma$ provided by \Cref{completedecomposition}. Using \Cref{transtoLFCS}, assume $f$ is smooth as well as $f\ \stackinset{c}{}{c}{0.1ex}{$\top$}{$\abxpitchfork$}\ \mathscr C$. Thus $f^{-1}(\mathscr C)$ is a non-empty LFCS on $\Sigma'$; see \Cref{properplushomotopyequivalnceissurjective} and \Cref{remarktranshomotopy}. By \Cref{annulusremovalfinal2}, $f$ can be properly homotoped to a proper map $g$ such that for each component $\mathcal C$ of $\mathscr C$, $g^{-1}(\mathcal C)$ is a component of $f^{-1}(\mathscr C)$ that is mapped homeomorphically onto $\mathcal C$ by $g$. Thus, $g^{-1}(\mathscr C)$ decomposes $\Sigma'$ into bordered sub-surfaces and each component of $\Sigma\setminus \mathscr C$ has non-empty pre-image; see \Cref{properplushomotopyequivalnceissurjective}. Let $\mathbf S\subset \Sigma$ be a bordered sub-surface obtained as a complementary component of the decomposition of $\Sigma$ by $\mathscr C$. Now,  $\mathbf S\cong g^{-1}(\mathbf S)$; by \Cref{inversepairofpants} (see its proof also) and \Cref{inversepunctureddisk}. Since $g$ sends $\text{int}\big(g^{-1}(\mathbf S)\big)$ onto $\text{int}(\mathbf S)$ and $\partial g^{-1}(\mathbf S)$ homeomorphically onto $\partial \mathbf S$, we can properly homotope $g\vert g^{-1}(\mathbf S)\to \mathbf S$ relative to $\partial g^{-1}(\mathbf S)$ to a homeomorphism $g^{-1}(\mathbf S)\to \mathbf S$; see \Cref{rigidityofpairofpants} and \Cref{Alexandertrick}. Finally, vary $\mathbf S$ over different complementary components of $\Sigma$ decomposed by $\mathscr C$ to collect these boundary-relative proper homotopies and then paste them to get a proper homotopy from $g$ to a homeomorphism $\Sigma'\to \Sigma$. Since $g$ is properly homotopic to $f$, we are done. \label{proofoftheorem1}\end{proof} The proof of \Cref{MR1} shows that we are using the non-zero degree assumption of the pseudo proper homotopy equivalence  (which is gifted by \Cref{properplushomotopyequivalnceisofdegreeone}) to ensure surjectivity after each proper homotopy. Thus, by a similar argument, we can prove the \Cref{rigidityofnonzerodegreemap} below.
\begin{sectheorem}
\textup{Let $f\colon \Sigma'\to \Sigma$ be a pseudo proper homotopy equivalence between two non-compact oriented surfaces. Suppose $\Sigma$ is not homeomorphic to $\Bbb R^2$ and $\deg(f)\neq0$ . Then $\Sigma'$  is homeomorphic to $\Sigma$ and $f$ is properly homotopic to a homeomorphism.} \label{rigidityofnonzerodegreemap}
 \end{sectheorem}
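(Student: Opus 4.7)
The plan is to imitate the proof of \Cref{MR1} step by step, observing that the only role of the excluded hypothesis $\Sigma \not\cong \Bbb S^1 \times \Bbb R$ in that argument was to force $\deg(f) = \pm 1$ via \Cref{properplushomotopyequivalnceisofdegreeone}. Since the non-vanishing of the degree is now built into our hypothesis, this detour is unnecessary: \Cref{non-surjectivepropermaphasdegreezero} and \Cref{properhomotopypreservessurjectivity} already furnish what we need, namely that every proper map properly homotopic to $f$ is surjective (degree is a proper-homotopy invariant, and non-zero degree implies surjectivity).

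Concretely, I will take the LFCS $\mathscr C$ on $\Sigma$ produced by \Cref{completedecomposition}, whose complementary pieces are homeomorphic to $S_{1,1}$, $S_{0,3}$, or $S_{0,1,1}$, and use \Cref{transtoLFCS} to make $f$ smooth and transverse to $\mathscr C$. Then run the familiar three-stage cleanup of $f^{-1}(\mathscr C)$: \Cref{diskremoval} removes trivial components, \Cref{deg1tohomeo} maps each surviving primitive component homeomorphically onto its image, and \Cref{annulusremovalfinal} removes redundant parallel duplicates. Because the degree hypothesis guarantees surjectivity at every intermediate stage, the outcome is the exact analog of \Cref{annulusremovalfinal2}: a proper map $g$, properly homotopic to $f$, such that for every component $\mathcal C$ of $\mathscr C$, $g^{-1}(\mathcal C)$ is a single smoothly embedded circle mapped homeomorphically onto $\mathcal C$.

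For each complementary bordered subsurface $\mathbf S$ of $\Sigma$, I will invoke \Cref{inversepairofpants} (when $\mathbf S \cong S_{0,3}$ or $S_{1,1}$) or \Cref{inversepunctureddisk} (when $\mathbf S \cong S_{0,1,1}$) to identify $g^{-1}(\mathbf S)$ as a copy of $\mathbf S$ in $\Sigma'$, on whose boundary $g$ restricts to a homeomorphism. \Cref{rigidityofpairofpants} and \Cref{Alexandertrick} then provide, on each such piece, a boundary-relative proper homotopy from $g\vert g^{-1}(\mathbf S) \to \mathbf S$ to a homeomorphism; pasting these, via the same local-finiteness mechanism as in \Cref{properhomotopy}, produces a proper homotopy from $g$, and hence from $f$, to a homeomorphism $\Sigma' \to \Sigma$.

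The one point requiring care is the hypothesis $\Sigma' \not\cong \Bbb S^1 \times \Bbb R,\, \Bbb R^2$ in \Cref{inversepunctureddisk}. When $\Sigma \not\cong \Bbb S^1 \times \Bbb R$, the $\pi_1$-isomorphism $\pi_1(f)$ transfers the non-abelianness of $\pi_1(\Sigma)$ to $\pi_1(\Sigma')$, so \Cref{finitetypenoncompactsurace} excludes both forbidden forms automatically. In the residual exceptional case $\Sigma \cong \Bbb S^1 \times \Bbb R$, which forces $\Sigma' \cong \Bbb S^1 \times \Bbb R$, the disconnection of $\Sigma' \setminus g^{-1}(\mathcal C)$ that the proof of \Cref{inversepunctureddisk} derives from non-abelianness has to be obtained instead from the fact that any primitive circle on $\Bbb S^1 \times \Bbb R$ is separating; once this substitute is in place, \Cref{Alexandertrick} closes out the argument on each of the two resulting half-infinite annuli exactly as before. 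This is the subtle point I expect to demand the most care.
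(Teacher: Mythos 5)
Your proposal is correct and takes essentially the same route as the paper, which proves this theorem simply by rerunning the proof of \Cref{MR1} with the hypothesis $\deg(f)\neq 0$ (through \Cref{non-surjectivepropermaphasdegreezero} and \Cref{properhomotopypreservessurjectivity}) substituted for the appeal to \Cref{properplushomotopyequivalnceisofdegreeone}, exactly as you describe. Your closing paragraph on the residual case $\Sigma\cong \Bbb S^1\times\Bbb R$, where \Cref{inversepunctureddisk} does not literally apply, is a refinement the paper leaves implicit; just make sure to record that, besides separation of the primitive circle $g^{-1}(\mathcal C)$, it is the surjectivity of $g$ (again a consequence of $\deg(f)\neq 0$ and proper-homotopy invariance of degree) that forbids both complementary half-infinite annuli of $\Sigma'$ from mapping into the same side of $\mathcal C$, after which \Cref{Alexandertrick} finishes each side as you say.
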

 \begin{sectheorem}
 \textup{Let $f\colon \mathbb R^2\to \mathbb R^2$ be a proper map of degree $\pm 1$. Then $f$ is properly homotopic to a homeomorphism $\mathbb R^2\to \mathbb R^2$.} \label{degreeoneinplane}
 \end{sectheorem}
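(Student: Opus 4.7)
The plan is to reduce to the situation handled by Hopf's theorem and then apply the proper rigidity of the punctured disk. Since $\mathbb R^2$ is smooth, connected, orientable and boundaryless, and $\deg(f)=\pm 1$, Theorem \ref{reverseprocessindegreefinding} applies (with $M=N=\mathbb R^2$, whose boundaries are empty): after a proper homotopy I may assume that there is a closed disk $D \subseteq \mathbb R^2$ such that $f^{-1}(D)$ is a closed disk in $\mathbb R^2$ and $f\vert f^{-1}(D) \to D$ is a homeomorphism. In particular, because $f^{-1}(D)$ is the full preimage of $D$, no point of $\mathbb R^2 \setminus f^{-1}(D)$ maps into $D$, and $f$ sends $\partial f^{-1}(D)$ homeomorphically onto $\partial D$.

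Now set $\mathbf D_*' \coloneqq \mathbb R^2 \setminus \textup{int}(f^{-1}(D))$ and $\mathbf D_* \coloneqq \mathbb R^2 \setminus \textup{int}(D)$. Both are copies of the punctured disk $S_{0,1,1}$. The restriction $\varphi \coloneqq f\vert \mathbf D_*' \to \mathbf D_*$ is well-defined (by the previous paragraph $f(\mathbf D_*') \subseteq \mathbf D_*$), is proper (if $K \subseteq \mathbf D_*$ is compact then $\varphi^{-1}(K) = f^{-1}(K) \cap \mathbf D_*'$ is closed in the compact set $f^{-1}(K)$), satisfies $\varphi^{-1}(\partial \mathbf D_*) = \partial \mathbf D_*'$, and restricts to a homeomorphism on the boundary. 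Identifying $\mathbf D_*'$ with $\mathbf D_*$ via a boundary-preserving homeomorphism (both are punctured disks), I am in the hypotheses of Theorem \ref{Alexandertrick}, which produces a proper homotopy $\widetilde{\mathcal H}\colon \mathbf D_*' \times [0,1] \to \mathbf D_*$ relative to $\partial \mathbf D_*'$ from $\varphi$ to a homeomorphism $\varphi_1\colon \mathbf D_*' \xrightarrow{\cong} \mathbf D_*$.

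Finally, I paste $\widetilde{\mathcal H}$ with the stationary homotopy $f\vert f^{-1}(D)$: define $\mathcal H\colon \mathbb R^2 \times [0,1] \to \mathbb R^2$ by $\mathcal H(x,t) = \widetilde{\mathcal H}(x,t)$ on $\mathbf D_*' \times [0,1]$ and $\mathcal H(x,t) = f(x)$ on $f^{-1}(D) \times [0,1]$. These agree on $\partial f^{-1}(D) \times [0,1]$ because $\widetilde{\mathcal H}$ is relative to the boundary, so $\mathcal H$ is continuous. The terminal map $\mathcal H(-,1)$ equals the homeomorphism $f\vert f^{-1}(D)$ on $f^{-1}(D)$ and equals $\varphi_1$ on $\mathbf D_*'$; both agree on the common boundary $\partial f^{-1}(D)$ and give a bijection to $\mathbb R^2$, so by invariance of domain (or directly from the pasting of two homeomorphisms along their common boundary) $\mathcal H(-,1)$ is a homeomorphism $\mathbb R^2 \to \mathbb R^2$.

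The only technical point I would check carefully is the properness of the pasted homotopy $\mathcal H$: for $K \subseteq \mathbb R^2$ compact, $\mathcal H^{-1}(K)$ is the union of $f^{-1}(K \cap D) \times [0,1]$ (compact because $f$ is proper) and $\widetilde{\mathcal H}^{-1}(K \cap \mathbf D_*)$ (compact because $\widetilde{\mathcal H}$ is proper), hence compact. This is the only step where one might worry, but it is immediate. Thus $f$ is properly homotopic to the homeomorphism $\mathcal H(-,1)$, completing the proof.
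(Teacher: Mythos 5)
Your proof is correct and follows essentially the same route as the paper: properly homotope $f$ via Hopf's theorem (\Cref{reverseprocessindegreefinding}) to obtain a disk $D$ with $f\vert f^{-1}(D)\to D$ a homeomorphism, then apply the proper rigidity of the punctured disk (\Cref{Alexandertrick}) to the complementary regions and paste the boundary-relative proper homotopy with the stationary one. The only detail the paper makes explicit that you leave implicit is the appeal to the Jordan--Schönflies theorem to identify $\mathbb R^2\setminus\textup{int}\big(f^{-1}(D)\big)$ and $\mathbb R^2\setminus\textup{int}(D)$ with standard punctured disks before invoking \Cref{Alexandertrick}.
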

 \begin{proof}
 By \Cref{reverseprocessindegreefinding}, $f$ can be properly homotoped to get smoothly embedded closed disks $\mathbf D, \mathbf D'\subseteq \mathbb R^2$ such that $\mathbf D'=f^{-1}(\mathbf D)$ and $f\vert\mathbf D'\to \mathbf D$ is a homeomorphism. Using the Jordan-Schönflies theorem, $f\vert\overline{\mathbb R^2\setminus  \mathbf D'}\to \overline{\mathbb R^2\setminus \mathbf D}$ resembles a map between two punctured disks, on which applying \Cref{Alexandertrick}  we conclude.
 \end{proof}

 \begin{proof}[Proof of \cref{MR2}] Since 
$\deg(f)=\pm 1$, by \Cref{degreeonemapsarepi1surjective}, $\pi_1(f)$ is surjective. Thus, $\pi_1(f)$ is bijective. Now, both $\Sigma'$ and $\Sigma$ are homotopy equivalent to $\bigvee_{\mathscr I}\Bbb S^1$ for some index set $\mathscr I$ with $|\mathscr I|\leq \aleph_0$, i.e., $\pi_k(\Sigma')=0=\pi_k(\Sigma)$ for all $k\geq 2$. So, by Whitehead theorem, $f$ is a homotopy equivalence (note that each surface has a CW-complex structure due to its $C^\infty$-smooth structure). Now, a simply-connected non-compact surface is homeomorphic to $\Bbb R^2$; see \Cref{finitetypenoncompactsurace}. So,  combining \Cref{rigidityofnonzerodegreemap} and \Cref{degreeoneinplane}, we are done.
 \end{proof}
 \begin{proof}[Proof of \Cref{MC1.1.}]
 A proper homotopy equivalence is a $\pi_1$-injective map of degree $\pm 1$. Now, apply \Cref{MR2}.
 \end{proof}
 The following proposition is an application of \Cref{MR1}.  
 
 \begin{secproposition} \textup{Let $\Sigma$ be a non-compact surface such that $\Bbb S^1\times \Bbb R\not\cong\Sigma\not \cong \Bbb R^2$. Suppose $f,g\colon (\Sigma,x)\to (\Sigma,y)$ are two pseudo proper homotopy equivalences with $\pi_1(f)=\pi_1(g)\colon \pi_1(\Sigma,x)\to \pi_1(\Sigma,y)$. Then $f$ is properly homotopic to $g$.}  
 \label{pseudoproperhomotopyequivalencesinducingsamemapsonthefundamentalgroupareproperlhomotopic}
 \end{secproposition}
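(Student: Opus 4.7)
The plan is to reduce, via two applications of \Cref{MR1}, to the assertion that a self-homeomorphism $H$ of $\Sigma$ inducing the identity on $\pi_1(\Sigma)$ is properly homotopic to $\mathrm{id}_\Sigma$, and then to prove this by running the decomposition argument of \Cref{MR1} with $H$ and $\mathrm{id}_\Sigma$ in parallel.

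First, by \Cref{MR1} applied to $g$, there is a homeomorphism $g_0 \colon \Sigma \to \Sigma$ properly homotopic to $g$; its set-theoretic inverse $g_0^{-1}$ is then a proper homotopy inverse of $g$. Set $h := g_0^{-1} \circ f$. This is a pseudo proper homotopy equivalence (composition of a pseudo proper homotopy equivalence with a homeomorphism), and $\pi_1(h) = \pi_1(g)^{-1} \circ \pi_1(f) = \mathrm{id}$ (after the obvious basepoint adjustment by a path from $y$ to $g_0^{-1}(y)$, which can be absorbed into an ambient proper isotopy). If $h$ is properly homotopic to $\mathrm{id}_\Sigma$, then $f = g_0 \circ h$ is properly homotopic to $g_0$, hence to $g$, completing the proof. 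Applying \Cref{MR1} a second time to $h$ gives a self-homeomorphism $H$ of $\Sigma$ properly homotopic to $h$ and still inducing the identity on $\pi_1$; thus it suffices to show that $H$ is properly homotopic to $\mathrm{id}_\Sigma$.

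To prove the latter, fix an LFCS $\mathscr C$ on $\Sigma$ decomposing it into pairs of pants, at most one one-holed torus, and punctured disks, as in \Cref{completedecomposition}. Since $H$ is a homeomorphism inducing the identity on $\pi_1$, each component of $H^{-1}(\mathscr C)$ is a primitive simple closed curve freely homotopic in $\Sigma$ to the corresponding component of $\mathscr C$; by \Cref{AnnulusEmbedding} the two cobound an annulus. Mimicking the annulus-compression and annulus-pushing proper homotopies of \Cref{annulusremovalfinal}, with $H$ playing the role of $f$ and $\mathrm{id}_\Sigma$ providing the target location of each compressed annulus, one properly homotopes $H$ so that $H$ agrees with the identity on $\mathscr C$. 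On each complementary piece $P$ of $\Sigma \setminus \mathscr C$, the restriction $H|P$ is then a self-homeomorphism fixing $\partial P$ and inducing the identity on $\pi_1(P)$; boundary-relative rigidity (\Cref{rigidityofpairofpants} for the compact pieces, \Cref{Alexandertrick} for the punctured disk) produces a boundary-relative proper homotopy from $H|P$ to $\mathrm{id}_P$. Pasting these over the locally finite family of complementary pieces via \Cref{properhomotopy} yields the desired proper homotopy from $H$ to $\mathrm{id}_\Sigma$.

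The main obstacle is the annulus-alignment step: one must verify that the family of annuli cobounded by paired components of $\mathscr C$ and $H^{-1}(\mathscr C)$ is locally finite in $\Sigma$, so that the individual compression homotopies can be amalgamated into a single proper homotopy via \Cref{properhomotopy}. This local finiteness follows from the properness of $H$ together with \Cref{remarktranshomotopy} applied to $H^{-1}(\mathscr C)$, but the careful verification requires an outermost-annulus analysis paralleling that in the proof of \Cref{annulusremovalfinal}, and is where the exclusions $\Sigma \not\cong \mathbb{R}^2, \mathbb{S}^1 \times \mathbb{R}$ become essential (mirroring their role throughout \Cref{MR1}).
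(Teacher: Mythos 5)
Your reduction to a single self-homeomorphism $H$ of $\Sigma$ that is homotopic to $\mathrm{id}_\Sigma$ is fine (modulo the basepoint bookkeeping you wave at), but the second half of the argument has genuine gaps. First, the alignment step: \Cref{AnnulusEmbedding} requires the two circles to be disjoint, and nothing you do arranges $H^{-1}(\mathscr C)\cap\mathscr C=\varnothing$; more importantly, \Cref{annulusremovalfinal} is not the tool you describe --- it removes redundant components of a transversal preimage and makes the map restrict to \emph{a} homeomorphism over each decomposition circle, but it never produces a proper homotopy making $H$ equal to the identity pointwise on $\mathscr C$ (its output is $H\vert H^{-1}(\mathcal C)\to\mathcal C$ a homeomorphism, with $H^{-1}(\mathcal C)$ some circle, not $\mathcal C$ itself). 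Turning that into ``$H$ fixes each decomposition circle pointwise'' needs an ambient, locally finite simultaneous isotopy carrying $H^{-1}(\mathscr C)$ onto $\mathscr C$ plus a further adjustment on each circle, none of which is supplied. Second, and more seriously, even granting $H\vert\mathscr C=\mathrm{id}$ and that $H$ preserves each complementary piece $P$, the lemmas you cite do not give a homotopy rel $\partial P$ from $H\vert P$ to $\mathrm{id}_P$: \Cref{rigidityofpairofpants} and \Cref{Alexandertrick} produce a boundary-relative homotopy to \emph{some} homeomorphism, not to a prescribed one. A self-homeomorphism of $P$ fixing $\partial P$ pointwise and inducing the identity on $\pi_1(P)$ can still be nontrivial rel boundary (a Dehn twist along a boundary-parallel curve is exactly such a map), so the piecewise step fails as stated; excluding such residual twists requires using the global hypothesis $H\simeq\mathrm{id}_\Sigma$ across pieces, and distributing/cancelling twists along the decomposition circles is precisely the delicate local-to-global point your outline does not address. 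The local-finiteness issue you single out as the main obstacle is, by comparison, minor.

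For contrast, the paper's proof avoids all of this: after using \Cref{MR1} to replace $f$ and $g$ by homeomorphisms, asphericity of $\Sigma$ gives an ordinary homotopy $f\simeq g$, and then Epstein's homotopy-implies-isotopy theorem \cite[Theorem 6.4]{MR214087} (applicable because $\Bbb S^1\times\Bbb R\not\cong\Sigma\not\cong\Bbb R^2$) yields a level-preserving homeomorphism of $\Sigma\times[0,1]$ from $f^{-1}g$ to $\mathrm{Id}_\Sigma$; being level-preserving, this isotopy is automatically proper. What your plan amounts to is reproving a proper version of Epstein's theorem by rerunning the decomposition machinery; that may be possible, but it requires the intersection-removal and twist-bookkeeping described above, which is substantially more than the cited lemmas provide.
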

 \begin{proof}
 By applying \Cref{MR1} up to proper homotopy, we may assume both $f$ and $g$ are homeomorphisms without loss of generality. Now, $\Sigma$ is homotopy equivalent to $K\big(\pi_1(\Sigma,y),1\big)$, i.e., $f$ is homotopic to $g$ \big(see \cite[Proposition 1B.9.]{MR1867354}\big). Since $\Bbb S^1\times \Bbb R\not\cong\Sigma\not \cong \Bbb R^2$ and $f^{-1}g$ is homotopic to $\text{Id}_\Sigma$, by \cite[Theorem 6.4.]{MR214087}, there exists a level-preserving homeomorphism $\mathcal H\colon \Sigma\times [0,1]\to \Sigma\times [0,1]$ which agrees with $f^{-1}g$ on $\Sigma\times 0$ and with $\text{Id}_\Sigma$ on $\Sigma\times1$. The projection $\Sigma\times [0,1]\to \Sigma$ is proper implies $f^{-1}g$ is properly homotopic to $\text{Id}_\Sigma$, so we are done. 
 \end{proof}
\begin{section}{Appendix}

\begin{subsection}{Approximation and transversality in the proper category}\label{appendix}

Throughout \Cref{appendix}, $M, N$ will denote two smooth boundaryless manifolds, possibly non-compact. Let $F\colon N\to M$ be a smooth map, and let $X$ be a smoothly embedded boundaryless submanifold of $M$. We say \emph{$F$ is transverse to $X$}, and write $F\ \stackinset{c}{}{c}{0.1ex}{$\top$}{$\abxpitchfork$}\ X$, if for every $p\in F^{-1}(X)$, we have $T_{F(p)}X+ dF_p(T_pN)=T_{F(p)}M$. If $F$ is transverse to $S$, then $F^{-1}(X)$ is a smoothly embedded boundaryless submanifold of $N$ such that $\dim (N)-\dim \left(F^{-1}(X)\right)=\dim(M)-\dim(X)$; see \cite[Theorem 6.30.(a)]{MR2954043}.

The Whitney approximation theorem \cite[Theorem 6.26]{MR2954043} says that any continuous map $N\to M$ is homotopic to a smooth map. The transversality homotopy theorem \cite[Theorem 6.36]{MR2954043} says that for any smooth map $F\colon N\to M$ and for any smoothly embedded boundaryless submanifold $X$ of $M$,  the smooth map $F$ can be homotoped to another smooth map $\widetilde F\colon N\to M$ such that $\widetilde F\ \stackinset{c}{}{c}{0.1ex}{$\top$}{$\abxpitchfork$}\ X$. We modify these two theorems in the proper category. Our interest is in the properness of homotopies; the extra stuff not related to properness is in \cite[Theorems 6.26, 6.36]{MR2954043}.   
\begin{theorem}{\textup{(Proper Whitney approximation theorem)}}
\textup{Let $f\colon N\to M$ be a continuous proper map. Then $f$ is properly homotopic to a smooth proper map.}\label{whiteny}
\end{theorem}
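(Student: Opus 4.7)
The plan is to realise $M$ as a closed submanifold of some $\mathbb R^k$ and run the classical Whitney approximation construction for $f$ (approximate in the ambient space, then retract by a tubular neighborhood retraction), with the approximation tolerance calibrated so that both the smooth approximation and the straight-line retracted homotopy stay within Euclidean distance one of $f$. Since $M$ is closed in $\mathbb R^k$, the closed Euclidean one-neighborhood of any compact $K\subseteq M$ is compact; together with properness of $f$, this will force the homotopy to be proper.

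The first step is to fix, via the Whitney embedding theorem, a closed embedding $M\hookrightarrow\mathbb R^k$ with a smooth tubular neighborhood retraction $r\colon U\to M$. The key calibration is then to produce a continuous positive function $\delta\colon M\to(0,\infty)$ such that $\overline{B}_{\mathbb R^k}\bigl(p,\delta(p)\bigr)\subseteq U$ and $|r(q)-p|_{\mathbb R^k}<1$ for every $p\in M$ and every $q\in\overline{B}_{\mathbb R^k}\bigl(p,\delta(p)\bigr)$. On each member $K_i$ of a compact exhaustion of $M$, compactness together with continuity of $r$ (and the identity $r|_M=\operatorname{id}$) supplies a uniform threshold $\epsilon_i>0$ doing the job, and a standard partition-of-unity argument stitches these thresholds into one continuous $\delta$.

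With $\delta$ in hand, the standard smoothing by partitions of unity in $\mathbb R^k$ produces a smooth map $F\colon N\to\mathbb R^k$ with $|F(x)-f(x)|_{\mathbb R^k}<\delta\bigl(f(x)\bigr)$; the straight segment from $f(x)$ to $F(x)$ then lies inside $U$, so
\[
H\colon N\times[0,1]\longrightarrow M,\qquad H(x,t):=r\bigl((1-t)f(x)+tF(x)\bigr),
\]
is a well-defined continuous homotopy from $f$ to the smooth map $\tilde f:=r\circ F$, and by the choice of $\delta$ it satisfies $|H(x,t)-f(x)|_{\mathbb R^k}<1$ everywhere.

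Properness of $H$ then falls out: for any compact $K\subseteq M$, the closed Euclidean one-neighborhood $K^{+}\subseteq M$ is compact (since $M$ is closed in $\mathbb R^k$), and $(x,t)\in H^{-1}(K)$ forces $f(x)\in K^{+}$, so $H^{-1}(K)\subseteq f^{-1}(K^{+})\times[0,1]$ is a closed subset of a compact set by properness of $f$. Taking $t=1$ also gives that $\tilde f$ itself is proper. The only genuinely delicate point is the calibration step — producing a single continuous $\delta$ whose Euclidean smallness controls the post-retraction displacement by a uniform constant — but this is routinely handled by combining the local uniform control afforded by each compact $K_i$ with a partition-of-unity construction.
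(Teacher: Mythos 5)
Your proposal is correct and follows essentially the same route as the paper: embed $M$ as a closed (properly embedded) submanifold of Euclidean space, smoothly approximate $f$ in the ambient space within a calibrated continuous bound $\delta\circ f$, retract the straight-line homotopy through the tubular neighborhood, and deduce properness of the homotopy from a uniform bound on its Euclidean displacement from $f$ together with properness of $f$. The only differences are cosmetic: you build the post-retraction displacement bound directly into $\delta$ (via a compact exhaustion and partition of unity) and check properness through the containment $H^{-1}(K)\subseteq f^{-1}(K^{+})\times[0,1]$ with $K^{+}$ compact, whereas the paper extracts the same displacement bound from its tubular-neighborhood construction (\Cref{retraction}) and packages the properness check as \Cref{distance}.
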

\begin{theorem}{\textup{(Proper transversality homotopy theorem)}}
\textup{Let $f\colon N\to M$ be a smooth proper map, and let $X$ be a smoothly embedded boundaryless submanifold of $M$. Then $f$ is properly homotopic to a smooth proper map  $g\colon N\to M$ which is transverse to $X$.} \label{transhomotopy}
\end{theorem}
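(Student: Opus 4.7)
The plan is to iterate the classical (non-proper) transversality homotopy theorem along a compact exhaustion, keeping each perturbation supported in a compact ``annular'' region so that the concatenated homotopy stays proper. First, I would fix efficient exhaustions $\{L_n\}_{n\geq 0}$ of $M$ and set $K_n \coloneqq f^{-1}(L_n)$; the latter are compact by properness of $f$, and by enlarging $L_{n+1}$ if needed I may assume $f(K_n)\subseteq \mathrm{int}(L_{n+1})$. I also fix a complete Riemannian metric on $M$ with induced distance $d$, and pick a sequence $\varepsilon_n\downarrow 0$ such that the $\varepsilon_n$-neighborhood of $L_n$ lies in $L_{n+1}$.

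The main step is an inductive construction of smooth proper maps $f_0=f,f_1,f_2,\ldots$ and smooth homotopies $H_n\colon N\times[0,1]\to M$ from $f_{n-1}$ to $f_n$ satisfying: (a) $f_n\ \transv\ X$ on an open neighborhood of $K_n$; (b) $H_n$ is stationary outside a compact set $W_n\subseteq K_{n+1}$ with $W_n\cap K_{n-2}=\varnothing$; and (c) $d\bigl(H_n(x,t),f_{n-1}(x)\bigr)<\varepsilon_n$ for all $(x,t)$, whence $H_n(W_n\times[0,1])\subseteq L_{n+1}$. To produce $H_n$ I would restrict $f_{n-1}$ to a compact manifold-with-boundary containing $W_n$ and apply the classical transversality homotopy theorem in a \emph{relative} form: since $f_{n-1}\ \transv\ X$ already holds on a neighborhood of $K_{n-1}$ by (a) at stage $n-1$, and transversality is an open condition, a sufficiently $C^0$-small perturbation preserves it there. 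Concretely, I would smoothly embed $M$ into some $\mathbb{R}^q$ with tubular retraction $r$ and take the perturbation $x\mapsto r\bigl(f_{n-1}(x)+\rho(x)s\bigr)$, where $\rho$ is a smooth bump supported in $W_n$ vanishing on $K_{n-1}$ and $s\in\mathbb{R}^q$ is a small vector chosen by parametric transversality (Sard's theorem) so that the resulting map is transverse to $X$ on all of $K_n$.

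Finally, I would define $g(x)\coloneqq f_n(x)$ for $x\in K_n$; condition (b) forces the sequence $\{f_n\}$ to stabilize on every compact subset of $N$, so $g$ is a well-defined smooth map, transverse to $X$ everywhere by (a). The global homotopy $H\colon N\times[0,1]\to M$ from $f$ to $g$ is obtained by running $H_n$ on the time subinterval $[1-2^{-(n-1)},\,1-2^{-n}]$ (reparameterized) and setting $H(\cdot,1)=g$; continuity at $t=1$ follows from stabilization on compacta. Properness of $H$ follows from (c): if $(x,t)\in H^{-1}(L_n)$ with $x\in K_{m+1}\setminus K_m$ for some $m\geq n+1$, then $H(x,t)$ lies within $\varepsilon_{m+1}$ of $f_m(x)\in M\setminus L_m$, contradicting $H(x,t)\in L_n$; hence $H^{-1}(L_n)$ has projection contained in some fixed $K_{n_0}$ and is therefore compact in $N\times[0,1]$.

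The step I expect to take the most care with is the relative transversality perturbation inside the induction, where I must simultaneously (i) achieve transversality on the new compact region $K_n\setminus K_{n-1}$, (ii) preserve transversality already obtained near $K_{n-1}$, and (iii) control the $C^0$-size of the perturbation so that condition (c) holds. All three ingredients are standard in isolation, but they need to be carefully synchronized: in particular, $\varepsilon_n$ must be chosen \emph{only after} $f_{n-1}$ is known, small relative to the openness radius of the transverse condition already achieved, so the ordering of the choices at each inductive stage is the delicate point.
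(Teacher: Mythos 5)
Your exhaustion-and-patching induction is a legitimate strategy and genuinely different from the paper's proof. The paper makes a single global perturbation: it properly embeds $M$ in $\Bbb R^\ell$, takes a tubular neighborhood with retraction $r$ and the continuous radius function $\delta$, chooses a smooth $e$ with $0<e<\delta\circ f$, and applies parametric transversality to the family $F(p,\mathbf s)=r\bigl(f(p)+e(p)\mathbf s\bigr)$, which is a submersion because $e>0$ everywhere; the straight-line homotopy $\mathcal H(p,t)=r\bigl(f(p)+te(p)\mathbf s_0\bigr)$ then stays within a uniformly bounded distance of $f$, and properness is immediate from \Cref{retraction} and \Cref{distance}. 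That route buys brevity (no induction, no concatenation or stabilization argument, no appeal to openness of transversality) and works verbatim for an arbitrary embedded $X$, not necessarily closed in $M$. Your properness bookkeeping via displacement bounds along the exhaustion is essentially an exhaustion-indexed version of the same \Cref{distance}-type estimate, and your stabilization argument for $g$ and the concatenated homotopy is fine.

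The one step you should repair is the preservation claim inside the induction: transversality is \emph{not} preserved by $C^0$-small perturbations (it is open only in the $C^1$ sense, and for a non-closed $X$ it is not open at all, since a small perturbation can create non-transverse intersections near $\overline X\setminus X$, which your theorem's hypotheses do not exclude). Fortunately your explicit perturbation $x\mapsto r\bigl(f_{n-1}(x)+\rho(x)s\bigr)$ is $C^1$-small on compacta as $s\to 0$, and there is a cleaner fix that avoids openness altogether: take $\rho\ge 0$ vanishing on an entire neighborhood of $K_{n-1}$ (on which $f_{n-1}\transv X$ by induction) and positive on the rest of a neighborhood of $K_n$. Because $\rho\ge 0$, one has $d\rho=0$ wherever $\rho=0$, so the parametric family is transverse to $X$ at every point of the neighborhood of $K_n$: where $\rho\neq 0$ it is a submersion in the parameter, and where $\rho=0$ its value and differential coincide with those of $f_{n-1}$, which is transverse there. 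Sard then supplies a single small $s_0$ making $f_n$ transverse on all of that neighborhood, including the previously handled region, with no openness argument and no closedness assumption on $X$. With that adjustment (and choosing $\varepsilon_n$ only after $f_{n-1}$ is fixed, as you note), your proof goes through.
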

We start by summarizing key facts in and around the tubular neighborhood theorem. Let $M\hookrightarrow \mathbb R^\ell$ be a smooth proper embedding; see \cite[Theorems 6.15]{MR2954043}. For each $x\in M$, define the \emph{normal space} $\mathcal N_x M$ to $M$ at $x$ as $\mathcal N_xM\coloneqq \{v\in \Bbb R^\ell: v\perp T_x M\}$. Then $\mathcal NM\coloneqq\big\{(x,v)\in \mathbb R^\ell\times \mathbb R^\ell:x\in M,v\perp T_xM\big\}$ is a smoothly embedded $\ell$-dimensional submanifold of $\Bbb R^\ell\times \Bbb R^\ell$ and  $\pi\colon \mathcal NM\ni (x,v)\longmapsto x\in M$ is vector bundle of rank $\ell-\dim(M)$,  called the \emph{normal bundle} of $M$ in $\Bbb R^\ell$; see \cite[Corollary 10.36]{MR2954043}.

Consider the smooth map $E\colon \mathcal NM\ni (x,v)\longmapsto x+v\in \mathbb R^\ell$. One can show that $dE_{(x,0)}$ is bijective for each $x\in M$. Thus, for each $x\in M$, we have $\delta>0$ such that $E$ maps diffeomorphically $V_\delta(x)\coloneqq\big\{(x',v')\in \mathcal NM: |x-x'|<\delta,|v'|<\delta\big\}$ onto an open neighborhood of $x$ in $\mathbb R^\ell$.
Now, the map $\rho\colon M\to (0,1]$ defined by   $$\rho(x)\coloneqq\sup\left\{\delta\leq 1: E\text{ maps }V_\delta(x)\text{ diffeomorphically }\text{onto an open neighborhood of }x\text{ in }\mathbb R^\ell\right\}$$ is continuous. Further, 
$V\coloneqq\left\{(x,v)\in \mathcal NM:|v|<\frac{1}{2}\rho(x)\right\}$ is an open subset of $NM$ and $E$ maps diffeomorphically $V$ onto an open subset $U$ of $\mathbb R^\ell$ with $M\subseteq U$, i.e., $U$ is a tubular neighborhood of $M$ in $\Bbb R^\ell$; see \cite[Theorem 6.24]{MR2954043}. Note that the map $r\colon U\to M$ defined by $r\coloneqq\pi\circ \left(E\vert V\to U\right)^{-1}$ is a retraction and submersion; see \cite[Proposition 6.25.]{MR2954043}. Denote $\{y\in \Bbb R^\ell: |y-x|<\varepsilon\}$ by $B_\varepsilon(x)$. By an argument similar to showing the continuity of $\rho$, one can prove that $\delta\colon M\to (0,1]$ defined by $\delta(x)\coloneqq\sup\big\{\varepsilon \leq 1:B_\varepsilon(x)\subseteq U\big\}$ for any $x\in M$, is also continuous.

 With this setup, we are now ready to state a crucial lemma, which in particular says that if two points are at the most unit distance, then the distance between their images under the tubular neighborhood retraction can be, at most, $2$.
\begin{lemma}
\textup{Let $\varepsilon>0$. If $y,y'\in U$ with $|y-y'|<\varepsilon$, then  $\big|r(y)-r(y')\big|\leq\varepsilon+1$.}\label{retraction}
\end{lemma}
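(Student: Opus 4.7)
The proof is a one-line triangle-inequality argument. The key observation is that the normal displacement of any point $y\in U$ from its retract $r(y)\in M$ is bounded by a universal constant, because the tubular neighborhood $V$ was defined using the cutoff $|v|<\tfrac{1}{2}\rho(x)$ with $\rho(x)\le 1$ for every $x\in M$.

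The plan is as follows. First, I would unpack what $r$ does to a point $y\in U$: by definition, $(E|V)^{-1}(y)=(x,v)$ with $y=x+v$ and $|v|<\tfrac{1}{2}\rho(x)$, so $r(y)=x$ and the Euclidean distance from $y$ to its retract satisfies
\[
\bigl|y-r(y)\bigr| \;=\; |v| \;<\; \tfrac{1}{2}\rho(x) \;\le\; \tfrac{1}{2},
\]
where the last inequality uses that $\rho$ takes values in $(0,1]$. The same bound applies to $y'$, giving $|y'-r(y')|<\tfrac{1}{2}$.

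Next, I would combine these two bounds with the hypothesis $|y-y'|<\varepsilon$ via the triangle inequality in $\mathbb{R}^\ell$:
\[
\bigl|r(y)-r(y')\bigr| \;\le\; \bigl|r(y)-y\bigr| + |y-y'| + \bigl|y'-r(y')\bigr| \;<\; \tfrac{1}{2}+\varepsilon+\tfrac{1}{2} \;=\; \varepsilon+1.
\]
This gives the desired estimate $\bigl|r(y)-r(y')\bigr|\le \varepsilon+1$.

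There is essentially no obstacle: the entire content of the lemma is that the chosen cutoff $\tfrac{1}{2}\rho(x)\le \tfrac{1}{2}$ in the definition of $V$ produces a universal bound $\tfrac{1}{2}$ on the normal component, and triangle inequality then contributes an additive constant $1$ independent of $x$ and $y$. The only thing worth double-checking is that the inequality is stated non-strictly ($\le \varepsilon+1$ rather than $<\varepsilon+1$), which is automatic since the strict inequality already holds. No additional facts about $M$, smoothness, or properness are invoked in the estimate itself.
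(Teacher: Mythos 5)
Your proof is correct and is essentially the same as the paper's: both bound $|y-r(y)|$ and $|y'-r(y')|$ by $\tfrac12\rho(r(y))\le\tfrac12$ using the cutoff in the definition of $V$, and then apply the triangle inequality. No issues.
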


\begin{proof}
Notice $\big|r(y)-r(y')\big|-|y-y'|\leq \big|y-r(y)\big|+\big|y'-r(y')\big|\leq \frac{1}{2}\rho\circ r(y)+\frac{1}{2}\rho\circ r(y')$ to conclude.
\end{proof} 

Consider another smooth proper embedding $N\hookrightarrow \Bbb R^k$ for the proof of the following three facts. The following lemma says that a homotopy lying in a $\lambda$-neighborhood (where $\lambda$ is a fixed positive number) of a proper map is a proper homotopy.
\begin{lemma}
\textup{Let $h\colon N\to M$ be a continuous proper map, and let $\mathcal H\colon N\times[0,1]\to M$ be a homotopy. If there exists a constant $\lambda$ so that  $\big|\mathcal H(p,t)-h(p)\big|\leq \lambda$ for each $(p,t)\in N\times [0,1]$, then $\mathcal H$ is proper.}\label{distance}
\end{lemma}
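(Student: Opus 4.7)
The plan is to show that for every compact $K \subseteq M$, the preimage $\mathcal H^{-1}(K) \subseteq N \times [0,1]$ is compact, by trapping it inside a product of compact sets. I will work throughout with the ambient proper embeddings $N \hookrightarrow \Bbb R^k$ and $M \hookrightarrow \Bbb R^\ell$ already fixed in the appendix. The key elementary facts I will use are that a proper continuous embedding of a locally compact Hausdorff space into a Hausdorff space has closed image, so both $N$ and $M$ are closed subsets of their respective Euclidean spaces, and therefore any bounded closed subset of $N$ (resp.\ $M$), measured in the ambient Euclidean metric, is compact.

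First, since $M$ is closed in $\Bbb R^\ell$, the compact set $K \subseteq M$ is also compact in $\Bbb R^\ell$, hence bounded. Let $K_\lambda \coloneqq \{y \in \Bbb R^\ell : d(y,K) \leq \lambda\}$ be the closed $\lambda$-neighborhood of $K$ in $\Bbb R^\ell$. Then $K_\lambda$ is closed and bounded in $\Bbb R^\ell$, hence compact; and $K_\lambda \cap M$ is a closed subset of the compact set $K_\lambda$, hence compact in $\Bbb R^\ell$, hence compact in $M$.

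Next, for any $(p,t) \in \mathcal H^{-1}(K)$, the hypothesis $|\mathcal H(p,t) - h(p)| \leq \lambda$ and the fact that $\mathcal H(p,t) \in K$ together imply $h(p) \in K_\lambda$; combined with $h(p) \in M$, this gives $h(p) \in K_\lambda \cap M$. So $p \in h^{-1}(K_\lambda \cap M)$, which is compact because $h$ is proper. Consequently,
\[
\mathcal H^{-1}(K) \;\subseteq\; h^{-1}(K_\lambda \cap M) \times [0,1],
\]
and the right-hand side is compact. Since $\mathcal H^{-1}(K)$ is closed in $N \times [0,1]$ by continuity of $\mathcal H$, it is a closed subset of a compact set, hence compact. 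As $K$ was an arbitrary compact subset of $M$, this shows $\mathcal H$ is a proper map.

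There is no real obstacle here; the lemma is essentially a packaging statement that says ``a bounded perturbation of a proper map remains proper.'' The only conceptual point is that the bound in the hypothesis is given in the extrinsic Euclidean metric of the ambient $\Bbb R^\ell$ (as in the earlier discussion with the tubular neighborhood $U$ and the retraction $r$), so the argument must be carried out inside $\Bbb R^\ell$ rather than inside $M$ directly — which is precisely what properness of the embedding $M \hookrightarrow \Bbb R^\ell$ makes legitimate.
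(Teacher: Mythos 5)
Your proof is correct. It differs from the paper's in mechanics, though both rest on the same underlying point that the $\lambda$-bound confines $\mathcal H^{-1}(K)$ near $h^{-1}$ of a compact thickening of $K$. The paper equips both $M$ and $N\times[0,1]$ with the metrics inherited from the proper embeddings into $\Bbb R^\ell$ and $\Bbb R^k\times[0,1]$, so that compact means closed and bounded on both sides, and then argues by contradiction: an unbounded sequence in $\mathcal H^{-1}(C)$ would force $\{h(n_i)\}$ to be unbounded (by properness of $h$) while lying in the $\lambda$-neighborhood of the bounded set $C$. You instead trap the preimage directly: $h(p)$ lies in the closed $\lambda$-neighborhood $K_\lambda$ of $K$, so $\mathcal H^{-1}(K)\subseteq h^{-1}(K_\lambda\cap M)\times[0,1]$, which is compact by properness of $h$, and a closed subset of a compact set is compact. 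What your route buys is that you never need Heine--Borel on the domain side, nor in fact the embedding $N\hookrightarrow\Bbb R^k$ at all: you only use that $M$ is closed in $\Bbb R^\ell$ (to make $K_\lambda\cap M$ compact) and that $h$ is proper, so the argument works verbatim for an arbitrary topological space $N$. The paper's sequence argument is tied to the metric setup but is equally valid; yours is the slightly more economical and more general packaging of the same phenomenon.
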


\begin{proof}
Note that the embeddings $M\hookrightarrow \mathbb R^\ell$ and $N\hookrightarrow \Bbb R^k$ are closed maps as they are proper maps; see \cite{MR254818}. Consider the induced metric $d_M$ on $M$ inherited from $\Bbb R^\ell$, i.e., $d_M(m,m')=|m-m'|$ for all $m,m'\in  M$. Also, we have the induced metric $d_{N\times [0,1]}$ on $N\times [0,1]$ inherited from $\Bbb R^k\times [0,1]$, i.e., $d_{N\times [0,1]}\big((n,t), (n',t')\big)=|n-n'|+|t-t'|$ for all $(n,t), (n',t')\in N\times [0,1]$. Thus, a subset of $N\times [0,1]$ (respectively, $M$) is compact if and only if it is closed and bounded in $N\times [0,1]$ (respectively, $M$).

Let $C$ be a compact subset of $M$. Continuity of $\mathcal H$ implies $\mathcal H^{-1}(C)$ is closed in $N\times [0,1]$. Also, if there were an unbounded sequence $\big\{(n_i,t_i)\big\}\subseteq  \mathcal H^{-1}(C)$, then $\{n_i\}$, and hence $\big\{h(n_i)\big\}$ would be unbounded (as $h$ is proper); and thus the unbounded set $\big\{h(n_i)\big\}$ would be inside the $\lambda$-neighborhood of the bounded set $C$, a contradiction. Therefore, $\mathcal H^{-1}(C)$ is closed and bounded in $N\times [0,1]$; and hence $\mathcal H^{-1}(C)$ is compact. Since $C$ is an arbitrary compact subset of $M$, we are done.
\end{proof}

Now we are ready to prove the analogs of the Whitney approximation theorem and transversality homotopy theorem in the proper category.

\begin{proof}[Proof of \Cref{whiteny}]
 Whitney Approximation theorem gives a smooth function $\widetilde f\colon N\to \mathbb R^\ell$ such that $\left|\widetilde f(y)-f(y)\right|<\delta\big(  f(y)\big)$ for each $y\in M$; see \cite[Theorem 6.21]{MR2954043}. Now, define $\mathcal H\colon N\times [0,1]\to M$ as $ \mathcal H(p,t)\coloneqq r\left((1-t)f(p)+t\widetilde f(p)\right)$ for all $(p,t)\in N\times [0,1].$ If $(p,t)\in N\times [0,1]$, then $$\left|(1-t)f(p)+t\widetilde f(p)-f(p)\right|\leq t\cdot\left|\widetilde f(p)-f(p)\right|\leq 1.$$ Therefore,   $\big|\mathcal H(p,t)-r\circ f(p)\big|=\big|\mathcal H(p,t)-f(p)\big|\leq 2$ for all $(p,t)\in N\times [0,1]$ by \Cref{retraction}.  Now, \Cref{distance} tells $\mathcal H$ is proper. Therefore, $\mathcal H(-,1)=r\circ \widetilde f$ is a smooth proper map that is properly homotopic to $f$ (recall that $r$ is a smooth retraction). So, we are done.\label{proofwhiteny}
\end{proof}
\begin{proof}[Proof of \Cref{transhomotopy}]
Whitney Approximation theorem gives a smooth function $e\colon N\to (0,\infty)$ with $0<e<\delta\circ f$; see \cite[Corollary 6.22]{MR2954043}. Let $\mathbb B^\ell\coloneqq \{\mathbf s\in \mathbb R^\ell:|\mathbf s|<1\}$. Define $F\colon N\times \mathbb B^\ell\to M$ as $F(p,\mathbf s)\coloneqq r\big(f(p)+e(p)\mathbf s\big)\text{ for any }(p,\mathbf s)\in N\times \Bbb B^\ell.$ If $p\in N$, the restriction of $F$ to $\{p\}\times \Bbb B^\ell$ is the composition of the local diffeomorphism $\mathbf s\longmapsto f(p)+e(p)\mathbf s$ followed by the smooth submersion $r$, so $F$ is a smooth submersion and hence transverse to $X$. 

By parametric transversality theorem \cite[Theorem 6.35]{MR2954043}, $F(-,\mathbf s_0)$ is transverse to $X$ for some $\mathbf s_0\in \mathbb B^\ell$. Now, define $\mathcal H\colon N\times [0,1]\to M$ as $\mathcal H(p,t)\coloneqq r\big(f(p)+te(p) \mathbf s_0\big)\text{ for all }(p,t)\in N\times [0,1].$ If $(p,t)\in N\times [0,1]$, then $$\left|\big(f(p)+te(p)\mathbf s_0\big)-f(p)\right|\leq te(p)\cdot|\mathbf s_0|< \delta\big(f(p)\big)\leq 1.$$ Therefore, $\big|\mathcal H(p,t)-r\circ f(p)\big|=\big| \mathcal H(p,t)-f(p)\big|\leq 2$ for all $(p,t)\in N\times [0,1]$ by \Cref{retraction}. Now, \Cref{distance} tells that $\mathcal H$ is proper. Define $g\coloneqq\mathcal H(-,1)$, i.e., $g=r\big(f(-)+e(-) \mathbf s_0\big)=F(-,\mathbf s_0)$ is properly homotopic to $f$ (recall that $r$ is a smooth retraction) as well as transverse to $X$.
\end{proof}
\end{subsection}

\begin{subsection}{Transversality of a proper map between two surfaces with respect to a circle}\label{Transversalitynearcircle}
Here are a couple of notations that will be used throughout \Cref{Transversalitynearcircle}. Let  $f\colon \Sigma'\to \Sigma$ be a smooth \emph{proper} map between two surfaces, and let $\mathcal C$ be a smoothly embedded circle on $\Sigma$ such that $f$ is transverse to $\mathcal C$. Also, let $\varphi\colon  \mathcal C\times [-1,1]\hookrightarrow \Sigma$ be a smooth embedding with $\varphi(\mathcal C,0)=\mathcal C$, i.e., $\text{im}(\varphi)$ is a \emph{two-sided (trivial) tubular neighborhood} of $\mathcal C$.  We call each of $\varphi\big(\mathcal C\times [-1,0]\big)$ and $\varphi\big(\mathcal C\times [0,1]\big)$ a \emph{one-sided tubular neighborhood of $\mathcal C$} (in short, \emph{a side of $\mathcal C$}). By scaling, we may replace $[-1,0]$ and $[0,1]$ with other closed intervals.

The following theorem says that $f$ is transverse to all circles, which are parallel to $\mathcal C$ and sufficiently near to $\mathcal C$.
\begin{theorem}
 \textup{There exists $\varepsilon_0\in (0,1)$ such that $f$ is transverse to $\mathcal C_\varepsilon\coloneqq\varphi(\mathcal C,\varepsilon)$ for each $\varepsilon\in [-\varepsilon_0,\varepsilon_0]$. Thus for any $\varepsilon\in [-\varepsilon_0,\varepsilon_0]$, $f^{-1}(\mathcal C_\varepsilon)$ is either empty or a pairwise disjoint collection of finitely many smoothly embedded circles on $\Sigma'$.} \label{trans1}
\end{theorem}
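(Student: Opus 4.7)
The plan is to reformulate transversality as a condition on regular values of a scalar function. Using the tubular neighborhood $\varphi\colon \mathcal C\times[-1,1]\hookrightarrow\Sigma$, I would let $\pi_2\colon \mathcal C\times[-1,1]\to[-1,1]$ be the second projection and define the smooth map
\[
g\colon f^{-1}\!\left(\varphi\big(\mathcal C\times(-1,1)\big)\right)\longrightarrow (-1,1),\qquad g\coloneqq \pi_2\circ \varphi^{-1}\circ f.
\]
Since $d(\pi_2\circ\varphi^{-1})$ at any point of $\mathcal C_\varepsilon$ annihilates $T\mathcal C_\varepsilon$ and is surjective onto $\mathbb R$, the chain rule gives the following elementary equivalence: for $p\in f^{-1}(\mathcal C_\varepsilon)$, one has $f\transv \mathcal C_\varepsilon$ at $p$ if and only if $p$ is a regular point of $g$. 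Thus $f\transv \mathcal C_\varepsilon$ globally iff $\varepsilon$ is a regular value of $g$.

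Next, I would use compactness. Since $\mathcal C$ is compact and $f$ is proper, $f^{-1}(\mathcal C)=g^{-1}(0)$ is compact, and more generally $f^{-1}\!\left(\varphi(\mathcal C\times[-1/2,1/2])\right)$ is compact. Let $R\subseteq \text{dom}(g)$ be the closed set of critical points of $g$. The assumption $f\transv\mathcal C$ means $R\cap g^{-1}(0)=\varnothing$. I claim there exists $\varepsilon_0\in(0,1/2)$ with $R\cap g^{-1}([-\varepsilon_0,\varepsilon_0])=\varnothing$. If not, pick $p_n\in R$ with $g(p_n)\to 0$; eventually $p_n\in f^{-1}\!\left(\varphi(\mathcal C\times[-1/2,1/2])\right)$, which is compact, so a subsequence converges to some $p$. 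Then $g(p)=0$ and $p\in R$ (as $R$ is closed), contradicting transversality of $f$ to $\mathcal C$ at $p$. Hence every $\varepsilon\in[-\varepsilon_0,\varepsilon_0]$ is a regular value of $g$, i.e.\ $f\transv\mathcal C_\varepsilon$.

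For the second assertion, for any such $\varepsilon$ the set $f^{-1}(\mathcal C_\varepsilon)$ is either empty or, by transversality, a smoothly embedded boundaryless one-dimensional submanifold of $\Sigma'$; it is compact because $\mathcal C_\varepsilon$ is compact and $f$ is proper. The classification of closed one-manifolds then gives a pairwise disjoint finite collection of smoothly embedded circles on $\Sigma'$.

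The only genuine obstacle is the compactness extraction in the contradiction argument, which is where properness of $f$ is essential; without it, critical points could escape to infinity along $g^{-1}(0_n)$ with $0_n\to 0$ and the uniform $\varepsilon_0$ would fail. Everything else is a direct unpacking of the definition of transversality together with standard properties of smooth submersions.
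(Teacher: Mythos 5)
Your proof is correct. The overall skeleton is the same as the paper's --- a contradiction argument in which properness of $f$ confines a sequence of would-be bad points to the compact set $f^{-1}\big(\varphi(\mathcal C\times[-1/2,1/2])\big)$ and continuity of the derivative transfers the defect to a limit point lying on $f^{-1}(\mathcal C)$ --- but your technical packaging is genuinely different and cleaner. The paper works directly with the tangent-space condition: failure of transversality at $x_n\in f^{-1}(\mathcal C_{\varepsilon_n})$ forces $\textup{im}(df_{x_n})\subseteq T_{f(x_n)}\mathcal C_{\varepsilon_n}$, and an auxiliary lemma (\Cref{lemmafortrans1}) about limits of tangent lines to concentric circles in $\Bbb R^2$, together with a small case analysis on whether $df_x=0$, yields $\textup{im}(df_x)=T_{f(x)}\mathcal C$ at the limit, contradicting $f\transv\mathcal C$. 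You instead note that transversality of $f$ to the level set $\mathcal C_\varepsilon$ of the submersion $\pi_2\circ\varphi^{-1}$ is equivalent to $\varepsilon$ being a regular value of the scalar function $g=\pi_2\circ\varphi^{-1}\circ f$, after which the whole limiting argument collapses to the observation that the critical set of $g$ is closed and hence cannot accumulate on $g^{-1}(0)$ inside a compact set; this eliminates both the auxiliary lemma and the case analysis. The only point worth making explicit is that $\varphi\big(\mathcal C\times(-1,1)\big)$ is open in $\Sigma$ (invariance of domain), so $g$ is a smooth function on an open subset of $\Sigma'$ that contains $f^{-1}(\mathcal C_\varepsilon)$ for every $\varepsilon\in(-1,1)$, which is what makes the regular-value reformulation legitimate. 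Your handling of the second assertion (compactness of $f^{-1}(\mathcal C_\varepsilon)$ from properness plus the classification of closed $1$-manifolds) matches the paper's.
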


At first, we need a lemma to prove \Cref{trans1}.
\begin{lemma}
\textup{Let $g\colon \Bbb R^2\to \Bbb R^2$ be a smooth map and $x_n\to x$ in $\Bbb R^2$ with $r_n\coloneqq|g(x_n)|\to 1$. Write $S_r\coloneqq\{z\in \Bbb R^2:|z|=r\}$ and assume $\textup{im}(dg_{x_n})=T_{g(x_n)}(S_{r_n})$ for all $n$. If $dg_x\neq 0$, then $\textup{im}(dg_x)=T_{g(x)}(S_1)$.} \label{lemmafortrans1}
\end{lemma}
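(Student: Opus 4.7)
The plan is to use continuity of $dg$ together with the observation that the hypothesis forces each $dg_{x_n}$ to have rank exactly one, so that $dg_x$ inherits both rank and image from the limit.

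First I would note that since $g$ is continuous and $|g(x_n)|\to 1$, we have $|g(x)|=1$, so that $T_{g(x)}(S_1)$ is a well-defined one-dimensional subspace of $\mathbb R^2$, namely the orthogonal complement of the position vector $g(x)$. Similarly, for each $n$, the subspace $T_{g(x_n)}(S_{r_n})$ is the orthogonal complement of $g(x_n)$ in $\mathbb R^2$. The hypothesis $\operatorname{im}(dg_{x_n})=T_{g(x_n)}(S_{r_n})$ therefore says two things: (i) $\operatorname{rank}(dg_{x_n})=1$, and (ii) $\langle dg_{x_n}(v),g(x_n)\rangle=0$ for every $v\in\mathbb R^2$.

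Next I would control the rank at the limit point. Because $g$ is smooth, $dg_{x_n}\to dg_x$ as linear maps. The set of rank-$2$ linear maps $\mathbb R^2\to\mathbb R^2$ is open (it is the complement of the zero locus of the determinant), so if $dg_x$ had rank $2$, then $dg_{x_n}$ would have rank $2$ for all sufficiently large $n$, contradicting (i). Hence $\operatorname{rank}(dg_x)\le 1$, and combined with the hypothesis $dg_x\ne 0$, we get $\operatorname{rank}(dg_x)=1$, i.e.\ $\operatorname{im}(dg_x)$ is a one-dimensional subspace of $\mathbb R^2$.

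Finally I would pass the orthogonality condition to the limit: fixing an arbitrary $v\in\mathbb R^2$ and letting $n\to\infty$ in (ii), continuity of the inner product gives $\langle dg_x(v),g(x)\rangle=0$. Thus $\operatorname{im}(dg_x)\subseteq g(x)^{\perp}=T_{g(x)}(S_1)$, and since both are one-dimensional, equality holds. There is no serious obstacle here; the only thing to be careful about is remembering that rank is lower semi-continuous (so one cannot directly conclude equality of ranks, only the inequality used above), which is precisely why the hypothesis $dg_x\ne 0$ is needed to rule out the degenerate case.
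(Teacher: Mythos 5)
Your proof is correct and follows essentially the same route as the paper: pass the orthogonality relation $\langle dg_{x_n}(v),g(x_n)\rangle=0$ to the limit by continuity of $dg$ and of the inner product, conclude $\operatorname{im}(dg_x)\subseteq T_{g(x)}(S_1)$, and use $dg_x\neq 0$ with one-dimensionality to get equality. Your rank lower-semicontinuity step is harmless but redundant, since the containment in the one-dimensional space $T_{g(x)}(S_1)$ together with $dg_x\neq 0$ already forces the image to be exactly that line.
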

\begin{proof}
The derivative map $dg\colon \Bbb R^2\to \textup{L}(\Bbb R^2,\Bbb R^2)$ is continuous implies $dg_{x_n}\to dg_x$, and this convergence can be thought as convergence of $2\times 2$-matrices. In particular, if $\mathbf i,\mathbf j\in \Bbb R^2$ are two perpendicular unit vectors, then $dg_{x_n}(\mathbf i)\to dg_x(\mathbf i)$ and $dg_{x_n}(\mathbf j)\to dg_x(\mathbf j)$.

Recall that the tangent space at any point of a circle is the vector space of all points perpendicular to this point. So, $\big\langle dg_{x_n}(\mathbf i), g(x_n)\big\rangle=0=\big\langle dg_{x_n}(\mathbf j), g(x_n)\big\rangle$ by hypothesis, and now $\big\langle dg_{x}(\mathbf i), g(x)\big\rangle=0=\big\langle dg_{x}(\mathbf j), g(x)\big\rangle$ by the convergence of inner-product. Hence, $\text{im}(dg_x)\subseteq T_{g(x)}(S_1)$. Since $dg_x\neq 0$ and $\dim T_{g(x)}(S_1)=1$, we are done.
\end{proof}

\begin{proof}[Proof of \Cref{trans1}]
Suppose not. So, a sequence $\varepsilon_n\to 0$ and points $x_n\in f^{-1}(\mathcal C_{\varepsilon_n})$ exist such that $\text{im}(df_{x_n})+ T_{f(x_n)}\mathcal C_{\varepsilon_n}\subsetneqq T_{f(x_n)}\Sigma$ for all $n$. Hence,  $\text{im}(df_{x_n})\subseteq T_{f(x_n)}\mathcal C_{\varepsilon_n}$ as $T_{f(x_n)}\mathcal C_{\varepsilon_n}\oplus \mathcal N_{f(x_n)}\mathcal C_{\varepsilon_n}=T_{f(x_n)}\Sigma$ for all $n$. Now, $\{x_n\}$ is contained in the compact set $f^{-1}\big(\text{im}(\varphi)\big)$ (recall that $f$ is a proper map), i.e., passing to sub-sequence, if needed, assume $x_n\to x\in f^{-1}(\mathcal C)$. 

The continuity of the derivative map says $df_{x_n}\to df_x$. After discarding first few terms, we may assume $df_{x_n}\neq 0$ for all $n$ (otherwise, we would have $df_x=0$, i.e., $T_{f(x)}\mathcal C+\text{im}(df_x)=T_{f(x)}\mathcal C$ wouldn't be equal to $T_{f(x)}\Sigma$, i.e.,  $f$ wouldn't be transverse  to $\mathcal C$). So, $\text{im}(df_{x_n})=T_{f(x_n)}(\mathcal C_{\varepsilon_n})$ for all $n$ (a non-zero vector subspace of a one-dimensional vector space is equal to the whole space). 

Now, restricting $f$ to a coordinate ball containing $x$ and then post composing with $\varphi^{-1}$, we can consider \Cref{lemmafortrans1} above, which gives $\text{im}(df_{x})=T_{f(x)}(\mathcal C)$, a contradiction to the assumption  $f\ \stackinset{c}{}{c}{0.1ex}{$\top$}{$\abxpitchfork$}\ \mathcal C$. 
\end{proof}

The previous theorem guarantees transversality near $\mathcal C$. In the rest part of \Cref{Transversalitynearcircle}, we aim to prove that every small one-sided tubular neighborhood of a component of $f^{-1}(\mathcal C)$ maps into a small one-sided tubular neighborhood of $\mathcal C$.

At first, we fix some notations. So, let $\mathcal C'$ be a component of $f^{-1}(\mathcal C)$. Also, consider an $\varepsilon_0\in(0,1)$  such that $f\ \stackinset{c}{}{c}{0.1ex}{$\top$}{$\abxpitchfork$}\ \mathcal C_\varepsilon$ for every $\varepsilon\in [-\varepsilon_0,\varepsilon_0]$; see \Cref{trans1}.
\begin{theorem}
    \textup{Let $\varepsilon\in(0,\varepsilon_0]$, and let $\mathcal T'$ be a two-sided compact tubular neighborhood of $\mathcal C'$ in $\Sigma'$. Then there exist two one-sided compact tubular neighborhoods $\mathcal U_l', \mathcal U_r'$ of $\mathcal C'$ in $\Sigma'$ such that $\mathcal U_l'\cup \mathcal U_r'$ is a two-sided tubular neighborhood of $\mathcal C'$ with $\mathcal U_l'\cup \mathcal U_r'\subseteq \mathcal T'$, and for each $s\in\{l,r\}$ the following hold: $(i)$ $f^{-1}(\mathcal C)\cap \mathcal U'_s=\mathcal C'$, $(ii)$ either $f(\mathcal U'_s)\subseteq \varphi\big(\mathcal C\times [0,\varepsilon]\big)$ or $f(\mathcal U'_s)\subseteq \varphi\big(\mathcal C\times [-\varepsilon,0]\big)$. }\label{technicallemma}
\end{theorem}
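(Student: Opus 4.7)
The plan is to start by choosing a trivialisation $\psi \colon \mathcal{C}' \times [-1,1] \xrightarrow{\cong} \mathcal{T}'$ with $\psi(\mathcal{C}' \times 0) = \mathcal{C}'$, so that $\psi(\mathcal{C}' \times [0,1])$ and $\psi(\mathcal{C}' \times [-1,0])$ are the two sides of $\mathcal{C}'$ inside $\mathcal{T}'$. Since $f$ is proper and $\mathcal{T}'$ is compact, only finitely many components of the \textup{LFCS} $f^{-1}(\mathcal{C})$ (see \Cref{remarktranshomotopy}) intersect $\mathcal{T}'$; as $\mathcal{C}'$ is closed and disjoint from the (finite) union of the other such components, after rescaling the parameter I may assume that $f^{-1}(\mathcal{C}) \cap \psi(\mathcal{C}' \times [-1,1]) = \mathcal{C}'$.

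Next, I will use continuity of $f$ together with compactness of $\mathcal{C}'$ to shrink the parameter further: since $f(\mathcal{C}') \subseteq \mathcal{C} \subseteq \varphi(\mathcal{C} \times (-\varepsilon, \varepsilon))$ and the latter is open in $\Sigma$, a tube lemma argument yields $\eta \in (0,1]$ with
\[
f\bigl(\psi(\mathcal{C}' \times [-\eta,\eta])\bigr) \subseteq \varphi\bigl(\mathcal{C} \times (-\varepsilon, \varepsilon)\bigr).
\]
Now for the sided part: consider the connected set $\psi(\mathcal{C}' \times (0, \eta])$. By the previous shrinking, $f$ sends this set into $\varphi(\mathcal{C} \times (-\varepsilon, \varepsilon)) \setminus \mathcal{C}$, which has exactly two components, namely $\varphi(\mathcal{C} \times (0, \varepsilon))$ and $\varphi(\mathcal{C} \times (-\varepsilon, 0))$. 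Connectedness therefore forces $f\bigl(\psi(\mathcal{C}' \times (0, \eta])\bigr)$ to lie in one of them; taking closures (and remembering that $f(\mathcal{C}') \subseteq \mathcal{C}$), I obtain $f\bigl(\psi(\mathcal{C}' \times [0,\eta])\bigr) \subseteq \varphi(\mathcal{C} \times [0,\varepsilon])$ or $\subseteq \varphi(\mathcal{C} \times [-\varepsilon,0])$. The same dichotomy applies independently to the side $\psi(\mathcal{C}' \times [-\eta, 0])$.

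I then define $\mathcal{U}_r' := \psi(\mathcal{C}' \times [0,\eta])$ and $\mathcal{U}_l' := \psi(\mathcal{C}' \times [-\eta, 0])$, which are one-sided compact tubular neighbourhoods of $\mathcal{C}'$ whose union $\psi(\mathcal{C}' \times [-\eta, \eta])$ is a two-sided tubular neighbourhood of $\mathcal{C}'$ contained in $\mathcal{T}'$. Property (i) holds because the preimage condition was arranged in the first shrinking step, and property (ii) is exactly the dichotomy established above. The only genuine point to verify carefully is the connectedness argument that pins each side to a single component of $\varphi(\mathcal{C} \times (-\varepsilon,\varepsilon)) \setminus \mathcal{C}$; I expect this to be the main (and only nontrivial) obstacle, and it is resolved by the observation that $\psi(\mathcal{C}' \times (0,\eta])$, being homeomorphic to $\mathbb{S}^1 \times (0,\eta]$, is path-connected.
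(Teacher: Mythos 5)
Your proof is correct, and it takes a slightly different route from the paper's. The paper stays inside the transversality framework it has just set up: it invokes \Cref{trans1} to know that $f^{-1}(\mathcal C_{-\varepsilon})\cup f^{-1}(\mathcal C)\cup f^{-1}(\mathcal C_{\varepsilon})$ is a finite disjoint union of circles, chooses $\mathcal U_l',\mathcal U_r'$ small enough to miss all of these except $\mathcal C'$, and then argues that the connected set $\mathcal U_s'\setminus\mathcal C'$ maps into a single component of $\Sigma\setminus(\mathcal C_{-\varepsilon}\cup\mathcal C\cup\mathcal C_{\varepsilon})$, which the condition $f(\mathcal C')\subseteq\mathcal C$ then identifies as one of the two collar components. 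You instead arrange only $f^{-1}(\mathcal C)\cap\psi(\mathcal C'\times[-1,1])=\mathcal C'$ and use the tube lemma on the open collar $\varphi\big(\mathcal C\times(-\varepsilon,\varepsilon)\big)$ to force the image of a small two-sided neighborhood into that collar; the dichotomy then comes from connectedness of each side inside $\varphi\big(\mathcal C\times(-\varepsilon,\varepsilon)\big)\setminus\mathcal C$, which has exactly two components. Your version is marginally more self-contained: it never uses transversality to the parallel circles $\mathcal C_{\pm\varepsilon}$ (so the hypothesis $\varepsilon\le\varepsilon_0$ plays no role in your argument), and the tube-lemma containment makes it automatic that the relevant component is one of the two collar sides, a point the paper's proof settles implicitly via $f(\mathcal C')\subseteq\mathcal C$ and continuity. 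What the paper's phrasing buys is uniformity with the sequel: the circles $\mathcal C_{\pm\varepsilon}$, $\varepsilon\le\varepsilon_0$, and their transversal preimages are exactly the objects reused immediately afterwards in \Cref{thick}, so conditions such as $\mathcal U_s'\cap f^{-1}(\mathcal C_{\pm\varepsilon})=\varnothing$ are stated in the form in which they are needed later. Both arguments are complete; only make sure, as you note, that you justify the rescaling step by observing that the union of the components of $f^{-1}(\mathcal C)$ other than $\mathcal C'$ is closed (indeed finite, by \Cref{remarktranshomotopy}) and disjoint from the compact $\mathcal C'$.
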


\begin{proof}
    By \Cref{trans1}, $f^{-1}(\mathcal C_{-\varepsilon})\cup f^{-1}(\mathcal C)\cup f^{-1}(\mathcal C_{\varepsilon})$ is a pairwise disjoint collection of finitely many smoothly embedded circles on $\Sigma'$. Now, consider two one-sided compact tubular neighborhoods $\mathcal U'_l, \mathcal U'_r$ of $\mathcal C'$ in $\Sigma'$ such that such that $\mathcal U_l\cup \mathcal U_r$ is a two-sided tubular neighborhood of $\mathcal C'$ with $\mathcal U_l'\cup \mathcal U_r'\subseteq \mathcal T'$, and for each $s\in\{l,r\}$ the following hold: $(i)$ $f^{-1}(\mathcal C)\cap \mathcal U'_s=\mathcal C'$, $(ii)$ $\mathcal U'_s\cap f^{-1}(\mathcal C_{\varepsilon})=\varnothing=\mathcal U'_s\cap f^{-1}(\mathcal C_{-\varepsilon})$.

    Now, fix $s\in\{l,r\}$. Since $\mathcal U'_s\setminus \mathcal C'$ is connected and $f$ is continuous, $f(\mathcal U'_s\setminus \mathcal C')$ is contained in one of the components of $\Sigma\setminus (\mathcal C_{-\varepsilon}\cup \mathcal C\cup \mathcal C_{\varepsilon})$. But $f(\mathcal C')\subseteq \mathcal C$ implies either $f(\mathcal U'_s)\subseteq \varphi\big(\mathcal C\times [0,\varepsilon]\big)$ or $f(\mathcal U'_s)\subseteq \varphi\big(\mathcal C\times [-\varepsilon,0]\big)$. So, we are done.
\end{proof}

\begin{remark}
    \textup{In \Cref{technicallemma}, it is possible that $f(\mathcal U'_l\cup \mathcal U_r')$ is contained in either $\varphi\big(\mathcal C\times [0,\varepsilon]\big)$ or $\varphi\big(\mathcal C\times [-\varepsilon, 0]\big)$, i.e., $f$ may map both sides of $\mathcal C'$ in one of the two sides of $\mathcal C$.}
\end{remark}
Consider the one-sided compact tubular neighborhoods $\mathcal U_l,\mathcal U_r'$ of $\mathcal C'$ in $\Sigma'$ given by \Cref{technicallemma}. Notice that for some $s\in \{l,r\}$, it is possible that $f\big((\partial \mathcal U_s')\setminus \mathcal C'\big)\not\subseteq \varphi(\mathcal C\times t)$ for any $t\in [-\varepsilon, \varepsilon]$. A remedy for this is given in the following theorem.

\begin{theorem}
\textup{Let $\varepsilon\in (0,\varepsilon_0]$, and let $\mathcal U'$ be a one-sided compact tubular neighborhood of $\mathcal C'$ such that $f^{-1}(\mathcal C)\cap \mathcal U'=\mathcal C'$, and $f(\mathcal U')\subseteq \varphi\big(\mathcal C\times [0,\varepsilon]\big)$. Then there is a  $\delta\in (0,\varepsilon)$ and there is a component $\mathcal C_\delta'$ of $f^{-1}(\mathcal C_\delta)$ such that the following hold:}
\begin{itemize}
    \item[$(1)$] \textup{$\mathcal C_\delta'$ together with $\mathcal C'$ co-bounds an annulus $\mathcal A'\subseteq \mathcal U'$  so that any other component of $f^{-1}(\mathcal C_\delta)$ in $\textup{int}(\mathcal A')$, if any, bounds a disk inside $\mathcal A'$.}
    \item[$(2)$] \textup{The map $f$ sends $\mathcal A'$ into $\varphi\big(\mathcal C\times [0,\varepsilon]\big)$. Also, after  removing the interiors of all disks bounded by components of $f^{-1}(\mathcal C_\delta)$ from $\mathcal A'$, we can send it to $\varphi\big(\mathcal C\times [0,\delta]\big)$ by $f$.}
\end{itemize}
\label{thick}
\end{theorem}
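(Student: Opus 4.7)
The plan is to reduce to a local model in which $f$ effectively submerges onto the normal direction of $\mathcal C$. Parametrize the one-sided tubular neighborhood as $\mathcal U' \cong \mathcal C' \times [0, 1]$ with $\mathcal C' \equiv \mathcal C' \times 0$, and define the \emph{normal height} function $\rho \coloneqq \pi_2 \circ \varphi^{-1} \circ f \colon \mathcal U' \to [0, \varepsilon]$, where $\pi_2 \colon \mathcal C \times [-1, 1] \to [-1, 1]$ is the second-factor projection. The hypothesis $f^{-1}(\mathcal C) \cap \mathcal U' = \mathcal C'$ translates to $\rho^{-1}(0) = \mathcal C'$, and $f(\mathcal U') \subseteq \varphi(\mathcal C \times [0, \varepsilon])$ says $\rho \geq 0$ throughout $\mathcal U'$.

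The crux of the argument is to promote transversality on $\mathcal C'$ to uniform strict positivity of the transverse derivative on a sub-collar. Since $\rho$ vanishes on $\mathcal C'$, the differential $d\rho_p$ kills $T_p \mathcal C'$ at every $p \in \mathcal C'$; since $\pi_2$ is a submersion with kernel exactly $T\mathcal C$, the transversality relation $\mathrm{im}(df_p) + T_{f(p)}\mathcal C = T_{f(p)}\Sigma$ forces $d\rho_p \neq 0$, hence nonzero in every direction transverse to $\mathcal C'$. The one-sided constraint $\rho \geq 0$, with minimum attained on $\mathcal C'$, pins down the sign as positive: $\partial \rho / \partial t > 0$ throughout $\mathcal C' \times 0$. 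Continuity and compactness of $\mathcal C'$ then supply an $\alpha \in (0, 1]$ on which $\rho(p, \cdot)$ is strictly increasing on $[0, \alpha]$ for every $p \in \mathcal C'$.

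Setting $m \coloneqq \min_{p \in \mathcal C'} \rho(p, \alpha) > 0$ and picking any $\delta \in \big(0, \min(m, \varepsilon)\big)$, the implicit function theorem realizes $\rho^{-1}(\delta) \cap (\mathcal C' \times [0, \alpha])$ as the graph of a smooth function $g_\delta \colon \mathcal C' \to (0, \alpha)$; this graph is the sought smoothly embedded circle $\mathcal C_\delta'$, and $\mathcal A' \coloneqq \{(p, t) : 0 \leq t \leq g_\delta(p)\} \subseteq \mathcal U'$ is a smoothly embedded annulus co-bounded by $\mathcal C'$ and $\mathcal C_\delta'$. Strict monotonicity yields $\rho \leq \delta$ on $\mathcal A'$ with equality only on $\mathcal C_\delta'$, so $f(\mathcal A') \subseteq \varphi(\mathcal C \times [0, \delta]) \subseteq \varphi(\mathcal C \times [0, \varepsilon])$ and $f^{-1}(\mathcal C_\delta) \cap \mathcal A' = \mathcal C_\delta'$. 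In particular no other component of $f^{-1}(\mathcal C_\delta)$ meets $\mathrm{int}(\mathcal A')$, making the ``if any'' clause in~(1) vacuous and leaving nothing to remove in~(2). That $\mathcal C_\delta'$ is a full connected component of $f^{-1}(\mathcal C_\delta)$ (not merely a subset) follows because \Cref{trans1} combined with properness of $f$ forces $f^{-1}(\mathcal C_\delta)$ to be a finite disjoint union of smoothly embedded circles, and a circle embedded in such a union is automatically a component.

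I do not anticipate a serious obstacle: the entire argument rests on the essentially routine upgrade from pointwise transversality on the compact submanifold $\mathcal C'$ to uniform strict positivity of $\partial \rho / \partial t$ on a sub-collar, after which the implicit function theorem and continuous monotonicity finish the proof.
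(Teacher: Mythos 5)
Your proof is correct, but it takes a genuinely different route from the paper. The paper argues point-set topologically: it picks $\delta$ merely so that $\varphi\big(\mathcal C\times [0,\delta]\big)$ misses $f\big((\partial\mathcal U')\setminus\mathcal C'\big)$, studies the open set $\mathcal W'=\textup{int}(\mathcal U')\cap f^{-1}\big(\varphi(\mathcal C\times(0,\delta))\big)$, shows $\partial\mathcal W'\subseteq \mathcal C'\cup f^{-1}(\mathcal C_\delta)$, and then takes $\mathcal C_\delta'$ to be the $\mathcal C'$-parallel component of $f^{-1}(\mathcal C_\delta)$ closest to $\mathcal C'$; with that choice, extra components of $f^{-1}(\mathcal C_\delta)$ may survive inside $\textup{int}(\mathcal A')$ and one argues they must bound disks, which is why part (2) speaks of removing disk interiors. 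You instead exploit smoothness quantitatively: the normal height $\rho=\pi_2\circ\varphi^{-1}\circ f$ vanishes exactly on $\mathcal C'$, transversality forces $d\rho\neq 0$ there while $\rho\geq 0$ pins the sign, so $\partial\rho/\partial t>0$ on $\mathcal C'$, and compactness plus the implicit function theorem produce a monotone sub-collar in which $\rho^{-1}(\delta)$ is a single graph circle (your check that this graph is a full component of $f^{-1}(\mathcal C_\delta)$, via finiteness from \Cref{trans1} and openness/closedness in a $1$-manifold, is sound). What this buys is a strictly stronger conclusion — $f^{-1}(\mathcal C_\delta)\cap\mathcal A'=\mathcal C_\delta'$, so the ``if any'' clause of (1) is vacuous and nothing needs removing in (2) — which of course still serves the later applications in \Cref{diskremoval} and \Cref{deg1tohomeo}; what the paper's argument buys is that it only uses continuity of $f$ away from $\mathcal C'$ together with transversality at the circles themselves, with no derivative analysis in the interior of the collar, and its $\delta$ depends only on avoiding $f\big((\partial\mathcal U')\setminus\mathcal C'\big)$. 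The only points to make explicit in your write-up are that the product parametrization $\mathcal U'\cong\mathcal C'\times[0,1]$ is smooth (it is, by the paper's definition of a one-sided tubular neighborhood as one half of the smooth embedding $\varphi$) and that $\partial\rho/\partial t$ at $t=0$ is a one-sided derivative, continuous up to the boundary, so the compactness argument for the uniform $\alpha$ goes through.
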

\begin{proof}[Proof of part (1) of \Cref{thick}]Choose a $\delta\in (0,\varepsilon)$ such that $\varphi\big(\mathcal C\times [0,\delta]\big)\cap f\big((\partial \mathcal U')\setminus\mathcal C'\big)=\varnothing$. Note that such a $\delta$ exists; otherwise, using the compactness of $(\partial \mathcal U')\setminus\mathcal C'$, we would have a sequence $\{x_n'\}\subseteq (\partial \mathcal U')\setminus\mathcal C'$ converging to some $x'\in (\partial \mathcal U')\setminus\mathcal C'$ such that $f(x_n')\in \varphi\left(\mathcal C\times [0,1/n]\right)$, i.e., $f(x')$ would belong to $\mathcal C$, a contradiction to the assumption $f^{-1}(\mathcal C)\cap \mathcal U'=\mathcal C'$. Define an open set $\mathcal W'$ as follows $$
    \mathcal W'\coloneqq\text{int}(\mathcal U')\cap f^{-1}\big(\varphi\big(\mathcal C\times (0,\delta)\big)\big). $$  Notice that no sequence in $\mathcal W'$ converges to some point of $(\partial \mathcal U')\setminus \mathcal C'$. Otherwise, if we assume $\mathcal W_n'\ni w_n'\longrightarrow x'\in (\partial \mathcal U')\setminus \mathcal C'$, then $\varphi\big(\mathcal C\times (0,\delta)\big)\ni f(w_n')\longrightarrow f(x')$. Since $\varphi\big(\mathcal C\times [0,\delta]\big)$ is a closed set containing the sequence $\big\{f(w_n')\big\}$, we can say that  $f(x')\in f\big((\partial \mathcal U')\setminus\mathcal C'\big)\cap \varphi\big(\mathcal C\times [0,\delta]\big)$, which is impossible by our choice of $\delta$.

Therefore,  $\overline{\mathcal  W'}\subseteq \mathcal U'$ (as $\mathcal U'$ is compact) but $\big((\partial \mathcal U')\setminus \mathcal C'\big)\cap \overline{\mathcal  W'}=\varnothing$. In particular, $\partial \mathcal  W'\subseteq \mathcal U'$ but $\big((\partial \mathcal U')\setminus \mathcal C'\big)\cap \partial \mathcal W'=\varnothing$. 
\begin{claim}
    \textup{$\partial\mathcal W' \subseteq \mathcal C'\cup f^{-1}(\mathcal C_\delta)$. Thus $\partial\mathcal W'$ is contained in a finite union of pairwise disjoint circles. } \label{claim0}
\end{claim}
\begin{proof}[Proof of \Cref{claim0}]
    Let $y\in \partial \mathcal W'$ and consider a sequence $\{y_n'\}\subseteq \mathcal W'$ converging to $y'$. Then $\varphi\big(\mathcal C\times (0,\delta)\big)\ni f(y_n')\longrightarrow f(y')\in\varphi\big(\mathcal C\times [0,\delta]\big)$. If $f(y')\in \varphi\big(\mathcal C\times \{0,\delta\}\big)=\mathcal C\cup \mathcal C_\delta$, then we are done since $f^{-1}(\mathcal C)\cap \mathcal U'=\mathcal C'$. On the other hand, if $f(y')\in \varphi\big(\mathcal C\times (0,\delta)\big)$, then the definition of $\mathcal W'$ and $\mathcal W'\cap \partial \mathcal W'=\varnothing$ (as $\mathcal W'$ is open) together imply $y'\in \mathcal U'\setminus\text{int}(\mathcal U')=\partial \mathcal U'$, i.e., $y'\in \mathcal C'$ as $\big((\partial \mathcal U')\setminus \mathcal C'\big)\cap \partial \mathcal W'=\varnothing$. Since $y\in \partial \mathcal W'$ is arbitrary, we are done.
\end{proof}

The definition of $\mathcal W'$ tells that each point of $\text{int}(\mathcal U')$ that is sufficiently near to $\mathcal C'$ must belong to $\mathcal W'$. Now, using \Cref{claim0}, we can say that there is at least one component of $f^{-1}(\mathcal C_\delta)$, which co-bounds an annulus with $\mathcal C'$ inside $\mathcal U'$. Of all the $\mathcal C'$-parallel components of $f^{- 1}(\mathcal C_\delta)$, we consider the closest to $\mathcal C'$ as $\mathcal C_\delta'$. \end{proof}
\begin{proof}[Proof of part (2) of \Cref{thick}]
Certainly, $f(\mathcal A')\subseteq f(\mathcal U')\subseteq\varphi\big(\mathcal C\times [0,\varepsilon]\big)$. Now, the rest follows, once we observe that removing the interiors of all disks  bounded by components of $f^{-1}(\mathcal C_\delta)$ from $\mathcal A'$, $\mathcal A'$ remains connected, so by continuity of $f\vert\Sigma'\setminus f^{-1}(\mathcal C\cup\mathcal C_\delta)\to \Sigma\setminus(\mathcal C\cup \mathcal C_\delta)$, it maps into $\varphi\big(\mathcal C\times (0,\delta)\big)$. 
\end{proof}
\end{subsection}
\subsection*{Acknowledgments}
I want to thank my thesis adviser, Siddhartha Gadgil, for suggesting the proper rigidity problem, lots of helpful discussions, and considerable improvements to the first draft of this paper. I also want to thank Ajay Kumar Nair for several fruitful discussions. I am grateful to the anonymous referee for his careful reading of the paper and his comments and detailed suggestions, which helped considerably in improving the manuscript. This work is supported by a scholarship from the National Board for Higher Mathematics (Ref. No. 0203/14/2019/R\&D-II/7481).

\bibliographystyle{plain}
\bibliography{references.bib}

\begin{thebibliography}{10}

\bibitem{arxiv}
Yael Algom-Kfir and Mladen Bestvina.
\newblock {Groups of proper homotopy equivalences of graphs and Nielsen
  Realization}, preprint (2021).
\newblock \href{https://doi.org/10.48550/arxiv.2109.06908}{arxiv.2109.06908}.

\bibitem{MR2025333}
Venancio \'{A}lvarez and Jos\'{e}~M. Rodr\'{\i}guez.
\newblock
  {\href{https://doi.org/10.1112/S0024610703004836}{\color{blue}Structure
  theorems for {R}iemann and topological surfaces}}.
\newblock {\em J. London Math. Soc. (2)}, 69(1):153--168, 2004.

\bibitem{MR1082009}
R.~Ayala, E.~Dominguez, A.~M\'{a}rquez, and A.~Quintero.
\newblock {\href{https://doi.org/10.1112/blms/22.5.417}{\color{blue}Proper
  homotopy classification of graphs}}.
\newblock {\em Bull. London Math. Soc.}, 22(5):417--421, 1990.

\bibitem{unclassified}
Magarida Barros.
\newblock {Classificação das superfícies não compactas cujo bordo é
  reunião de curvas de Jordan}.
\newblock {\em Anais da Faculdade de Ciências}, 57:33--41, 1974.
\newblock Available at
  \url{https://www.fc.up.pt/fa/index.php?p=nav&f=books.0092.W_0092_000019#faimg}.

\bibitem{MR0356041}
Edward~M. Brown.
\newblock {\href{https://doi.org/10.1007/BFb0064009}{\color{blue}Proper
  homotopy theory in simplicial complexes}}.
\newblock In {\em Topology {C}onference ({V}irginia {P}olytech. {I}nst. and
  {S}tate {U}niv., {B}lacksburg, {V}a., 1973)}, Lecture Notes in Math., Vol.
  375, pages 41--46. Springer, Berlin, 1974.

\bibitem{MR542887}
Edward~M. Brown and Robert Messer.
\newblock {\href{https://doi.org/10.2307/1998182}{\color{blue}The
  classification of two-dimensional manifolds}}.
\newblock {\em Trans. Amer. Math. Soc.}, 255:377--402, 1979.

\bibitem{MR3403758}
Stanley Chang and Shmuel Weinberger.
\newblock {\href{https://doi.org/10.1002/cpa.21554}{\color{blue}Modular symbols
  and the topological nonrigidity of arithmetic manifolds}}.
\newblock {\em Comm. Pure Appl. Math.}, 68(11):2022--2051, 2015.

\bibitem{MR4207574}
Stanley Chang and Shmuel Weinberger.
\newblock {\em {\href{https://doi.org/10.1515/9780691200354}{\color{blue}A
  Course on Surgery Theory}}}, volume 211 of {\em Annals of Mathematics
  Studies}.
\newblock Princeton University Press, Princeton, NJ, 2021.

\bibitem{MR881797}
Max Dehn.
\newblock {\em
  \href{https://doi.org/10.1007/978-1-4612-4668-8}{\color{blue}Papers on group
  theory and topology}}.
\newblock Springer-Verlag, New York, 1987.
\newblock Translated from the German and with introductions and an appendix by
  John Stillwell, With an appendix by Otto Schreier.

\bibitem{MR541331}
Allan~L. Edmonds.
\newblock {\href{https://doi.org/10.2307/1971246}{\color{blue}Deformation of
  maps to branched coverings in dimension two}}.
\newblock {\em Ann. of Math. (2)}, 110(1):113--125, 1979.

\bibitem{MR150745}
C.~H. Edwards, Jr.
\newblock {\href{https://doi.org/10.2307/2033807}{\color{blue}Open
  {$3$}-manifolds which are simply connected at infinity}}.
\newblock {\em Proc. Amer. Math. Soc.}, 14:391--395, 1963.

\bibitem{MR214087}
D.~B.~A. Epstein.
\newblock {\href{https://doi.org/10.1007/BF02392203}{\color{blue}Curves on
  {$2$}-manifolds and isotopies}}.
\newblock {\em Acta Math.}, 115:83--107, 1966.

\bibitem{MR192475}
D.~B.~A. Epstein.
\newblock {\href{https://doi.org/10.1112/plms/s3-16.1.369}{\color{blue}The
  degree of a map}}.
\newblock {\em Proc. London Math. Soc. (3)}, 16:369--383, 1966.

\bibitem{MR2850125}
Benson Farb and Dan Margalit.
\newblock {\em
  {\href{https://mathscinet.ams.org/mathscinet-getitem?mr=2850125}{\color{blue}A
  primer on mapping class groups}}}, volume~49 of {\em Princeton Mathematical
  Series}.
\newblock Princeton University Press, Princeton, NJ, 2012.

\bibitem{MR1216623}
F.~T. Farrell and L.~E. Jones.
\newblock {\href{https://doi.org/10.1090/pspum/054.3}{\color{blue}Topological
  rigidity for compact non-positively curved manifolds}}.
\newblock In {\em Differential geometry: {R}iemannian geometry ({L}os
  {A}ngeles, {CA}, 1990)}, volume~54 of {\em Proc. Sympos. Pure Math.}, pages
  229--274. Amer. Math. Soc., Providence, RI, 1993.

\bibitem{MR334226}
F.~T. Farrell, L.~R. Taylor, and J.~B. Wagoner.
\newblock {\href{http://www.numdam.org/item/CM_1973__27_1_1_0/}{\color{blue}The
  {W}hitehead theorem in the proper category}}.
\newblock {\em Compositio Math.}, 27:1--23, 1973.

\bibitem{MR679066}
Michael~Hartley Freedman.
\newblock
  {\href{http://projecteuclid.org/euclid.jdg/1214437136}{\color{blue}The
  topology of four-dimensional manifolds}}.
\newblock {\em J. Differential Geometry}, 17(3):357--453, 1982.

\bibitem{MR1973051}
David Gabai, G.~Robert Meyerhoff, and Nathaniel Thurston.
\newblock
  {\href{https://annals.math.princeton.edu/wp-content/uploads/annals-v157-n2-p01.pdf}{\color{blue}Homotopy
  hyperbolic 3-manifolds are hyperbolic}}.
\newblock {\em Ann. of Math. (2)}, 157(2):335--431, 2003.

\bibitem{MR275436}
Martin~E. Goldman.
\newblock {\href{https://doi.org/10.2307/1995610}{\color{blue}An algebraic
  classification of noncompact {$2$}-manifolds}}.
\newblock {\em Trans. Amer. Math. Soc.}, 156:241--258, 1971.

\bibitem{MR2616858}
Martin~Edward Goldman.
\newblock {\em
  {\href{https://www.proquest.com/openview/9ef59efca2b6e87b0e005495d3d5ebec/1?pq-origsite=gscholar&cbl=18750&diss=y}{\color{blue}Open
  surfaces and an algebraic study of ends}}}.
\newblock ProQuest LLC, Ann Arbor, MI, 1967.
\newblock Thesis (Ph.D.)--Yale University.

\bibitem{MR3598162}
Craig~R. Guilbault.
\newblock {\href{https://doi.org/10.1007/978-3-319-43674-6_3}{\color{blue}Ends,
  shapes, and boundaries in manifold topology and geometric group theory}}.
\newblock In {\em Topology and geometric group theory}, volume 184 of {\em
  Springer Proc. Math. Stat.}, pages 45--125. Springer, [Cham], 2016.

\bibitem{MR1867354}
Allen Hatcher.
\newblock {\em
  \href{https://mathscinet.ams.org/mathscinet-getitem?mr=1867354}{\color{blue}Algebraic
  topology}}.
\newblock Cambridge University Press, Cambridge, 2002.
\newblock Available at \url{https://pi.math.cornell.edu/~hatcher/AT/AT.pdf}.

\bibitem{MR1410261}
Bruce Hughes and Andrew Ranicki.
\newblock {\em
  {\href{https://doi.org/10.1017/CBO9780511526299}{\color{blue}Ends of
  complexes}}}, volume 123 of {\em Cambridge Tracts in Mathematics}.
\newblock Cambridge University Press, Cambridge, 1996.

\bibitem{MR2954043}
John~M. Lee.
\newblock {\em
  {\href{https://mathscinet.ams.org/mathscinet-getitem?mr=2954043}{\color{blue}Introduction
  to smooth manifolds}}}, volume 218 of {\em Graduate Texts in Mathematics}.
\newblock Springer, New York, second edition, 2013.

\bibitem{MR137105}
D.~R. McMillan, Jr.
\newblock {\href{https://doi.org/10.2307/1993684}{\color{blue}Some contractible
  open {$3$}-manifolds}}.
\newblock {\em Trans. Amer. Math. Soc.}, 102:373--382, 1962.

\bibitem{MR254818}
Richard~S. Palais.
\newblock {\href{https://doi.org/10.2307/2037337}{\color{blue}When proper maps
  are closed}}.
\newblock {\em Proc. Amer. Math. Soc.}, 24:835--836, 1970.

\bibitem{MR1361888}
Tim Porter.
\newblock
  {\href{https://doi.org/10.1016/B978-044481779-2/50004-3}{\color{blue}Proper
  homotopy theory}}.
\newblock In {\em Handbook of algebraic topology}, pages 127--167.
  North-Holland, Amsterdam, 1995.

\bibitem{MR133103}
M.~Reichbach.
\newblock {\href{https://doi.org/10.2307/2033764}{\color{blue}The power of
  topological types of some classes of {$0$}-dimensional sets}}.
\newblock {\em Proc. Amer. Math. Soc.}, 13:17--23, 1962.

\bibitem{MR143186}
Ian Richards.
\newblock {\href{https://doi.org/10.2307/1993768}{\color{blue}On the
  classification of noncompact surfaces}}.
\newblock {\em Trans. Amer. Math. Soc.}, 106:259--269, 1963.

\bibitem{MR842428}
Jonathan Rosenberg.
\newblock
  {\href{https://doi.org/10.1016/0040-9383(86)90047-9}{\color{blue}{$C^\ast$}-algebras,
  positive scalar curvature, and the {N}ovikov conjecture. {III}}}.
\newblock {\em Topology}, 25(3):319--336, 1986.

\bibitem{scottbook}
Peter Scott.
\newblock {\em An Introduction to 3-manifolds (Course notes)}.
\newblock AMS Open Math Notes, 03 2017.
\newblock Reference \# OMN:201703.110690. Available at
  \url{https://www.ams.org/open-math-notes/omn-view-listing?listingId=110690}.

\bibitem{MR238325}
L.~C. Siebenmann.
\newblock {\href{https://doi.org/10.1007/BF01404826}{\color{blue}On detecting
  {E}uclidean space homotopically among topological manifolds}}.
\newblock {\em Invent. Math.}, 6:245--261, 1968.

\bibitem{MR1325242}
Edwin~H. Spanier.
\newblock {\em
  {\href{https://doi.org/10.1007/978-1-4684-9322-1}{\color{blue}Algebraic
  topology}}}.
\newblock Springer-Verlag, New York, [1994].
\newblock Corrected reprint of the 1966 original.

\bibitem{MR561583}
Richard~J. Tondra.
\newblock {\href{https://doi.org/10.4064/fm-105-2-79-85}{\color{blue}Homeotopy
  groups of surfaces whose boundary is the union of {$1$}-spheres}}.
\newblock {\em Fund. Math.}, 105(2):79--85, 1979/80.
\newblock Available at \url{https://bibliotekanauki.pl/articles/1364041}.

\bibitem{v_Ker_kj_rt__1923}
B.~v.~Ker{\'{e}}kj{\'{a}}rt{\'{o}}.
\newblock {\em
  {\href{https://doi.org/10.1007/978-3-642-50825-7}{\color{blue}Vorlesungen
  über Topologie}}}.
\newblock Springer Berlin Heidelberg, 1923.

\bibitem{MR224099}
Friedhelm Waldhausen.
\newblock {\href{https://doi.org/10.2307/1970594}{\color{blue}On irreducible
  {$3$}-manifolds which are sufficiently large}}.
\newblock {\em Ann. of Math. (2)}, 87:56--88, 1968.

\end{thebibliography}

\newcommand{\Addresses}{{
  \bigskip
  \footnotesize

  \textsc{Department of Mathematics, Indian Institute of Science,
   Bangalore 560012, India}\par\nopagebreak
  \textit{E-mail address}: \texttt{\href{mailto:sumantadas@iisc.ac.in}{\color{black}sumantadas@iisc.ac.in}}

}}
\Addresses

\end{section}
\end{document}